\newtheorem{Theorem}{Theorem}[section]
\newtheorem{Proposition}[Theorem]{Proposition}
\newtheorem{Lemma}[Theorem]{Lemma}
\newtheorem{Question}[Theorem]{Question}
\newtheorem{Corollary}[Theorem]{Corollary}
\newtheorem{Conjecture}[Theorem]{Conjecture}
\newtheorem{Definition}[Theorem]{Definition}
\newtheorem{Remark}[Theorem]{Remark}
\newcommand{\nc}{\newcommand}
\nc{\h}{\mathfrak h}
\nc{\g}{\mathfrak g}
\nc{\fM}{\mathfrak M}
\nc{\bM}{\mathbf M}
\nc{\bR}{\mathbf R}
\nc{\bm}{\mathbf m}
\nc{\bS}{\mathbf S}
\nc{\bT}{\mathbf T}
\nc{\bU}{\mathbf U}
\nc{\bV}{\mathbf V}
\nc{\bi}{\mathbf i}
\nc{\bs}{\mathbf s}
\nc{\C}{\mathbb C}
\nc{\Z}{\mathbb Z}
\nc{\N}{\mathbb N}
\nc{\B}{\mathcal B}
\nc{\con}{\sim}
\nc{\pgl}{\mathfrak{pgl}}
\nc{\ev}{\mathsf{ev}}
\nc{\Hom}{\operatorname{Hom}}
\nc{\one}{\mathbf{1}}
\nc{\bb}{\mathbf{b}}
\nc{\ext}{\operatorname{Ext}}
\nc{\out}{\operatorname{out}}
\nc{\inn}{\operatorname{in}}
\nc{\diam}{\diamond}
\nc{\la}{\lambda}
\nc{\Yml}{Y_\mu^\lambda}
\nc{\bgam}{{\boldsymbol{\gamma}}}
\nc{\blam}{{\boldsymbol{\lambda}}}
\nc{\gr}{\operatorname{gr}}
\nc{\Spec}{\operatorname{Spec}}
\nc{\MaxSpec}{\operatorname{MaxSpec}}
\nc{\Cartan}{\mathscr{H}}
\newcommand{\arxiv}[1]{\href{http://arxiv.org/abs/#1}{\tt arXiv:\nolinkurl{#1}}}
\renewcommand{\O}{\mathcal O}
\nc{\Gr}{\mathsf{Gr}}
\nc{\Grlmbar}{\Gr^{\overline{\lambda}}_\mu}
\nc{\excise}[1]{}
\title[Highest weights and product monomial crystals]{Highest weights for truncated shifted Yangians and product monomial crystals}
\date{\today}
\begin{document}

\begin{abstract}
Truncated shifted Yangians are a family of algebras which are natural quantizations of slices in the affine Grassmannian.  We study the highest weight representations of these algebras.  In particular, we conjecture that the possible highest weights for these algebras are described by product monomial crystals, certain natural subcrystals of Nakajima's monomials.  We prove this conjecture in type A.  We also  place our results in the context of symplectic duality and prove a conjecture of Hikita in this situation.
\end{abstract}

\author[Kamnitzer]{Joel Kamnitzer}
\address{J.~Kamnitzer: Department of Mathematics, University of Toronto, Canada}
\email{jkamnitz@math.toronto.edu}

\author[Tingley]{Peter Tingley}
\address{P.~Tingley: Department of Mathematics and Statistics, Loyola University, Chicago, United States}
\email{ptingley@luc.edu}

\author[Webster]{Ben Webster}
\address{B.~Webster: Dept. of Pure Mathematics, University of Waterloo \&
Perimeter Institute for Theoretical Physics, Canada}
\email{ben.webster@uwaterloo.ca}

\author[Weekes]{Alex Weekes}
\address{A.~Weekes: Perimeter Institute for Theoretical Physics, Canada}
\email{aweekes@perimeterinstitute.ca}

\author[Yacobi]{Oded Yacobi}
\address{O.~Yacobi: School of Mathematics and Statistics, University of Sydney, Australia}
\email{oded.yacobi@sydney.edu.au}
\maketitle

\tableofcontents
\section{Introduction}

\subsection{Affine Grassmannian slices and their quantizations}

Let $ \g $ be a semisimple simply-laced Lie algebra.  In recent years, there has been increasing interest in affine Grassmanian slices (see for example \cite{BFI}).  These varieties $ \Grlmbar $ are indexed by a pair of dominant coweights of $ \g $, and are defined as transversal slices between two spherical Schubert varieties inside the affine Grassmannian of $ G $ (the group of adjoint type integrating $ \g $).  The
slice $ \Grlmbar $ is an important object of study in geometric representation
theory because under the geometric Satake correspondence it is related
to the $\mu$ weight space in the irreducible representation of $
G^\vee $ of highest weight $ \lambda $.

In our previous paper \cite{KWWY}, we defined a Poisson structure on these slices $ \Grlmbar $.  Associated to the same parameters $ \lambda, \mu $, we defined a subquotient, $ Y^\lambda_\mu $, of the Yangian of $ \g $, which we call a truncated shifted Yangian (see \ref{subsection: The truncated shifted Yangians}).  More precisely, we defined a family of  algebras $ Y^\lambda_\mu(\bR) $ depending on a set of parameters $ \bR $.  Our construction is motivated by Brundan and Kleshchev's work on shifted Yangians \cite{BK}; in type A when $\lambda$ is a multiple of the first fundamental weight, $ Y^\lambda_\mu(\bR) $ is isomorphic to a central quotient of a finite W-algebra \cite{WWY}.

In \cite{KWWY}, we conjectured that this family $ Y^\lambda_\mu(\bR) $ was the universal family of quantizations of $ \Grlmbar $, in the sense of Bezrukavnikov-Kaledin \cite{BeKa}.  For each $ \bR $, we constructed a map from the associated graded of $  Y^\lambda_\mu(\bR) $ to $ \C[\Grlmbar] $ which is an isomorphism modulo nilpotents.

\subsection{Representation theory of truncated shifted Yangians}
The varieties $ \Grlmbar $ are examples of conical symplectic singularities.  Moreover they carry a natural Hamiltonian torus action (from the maximal torus $T$ of $ G $).  This torus action carries over to the quantizations $ Y^\lambda_\mu(\bR) $ as well; if we choose a generic cocharacter $\C^*\to T$, this gives us a natural notion of the positive half $ Y^\lambda_\mu(\bR)^\ge $ of this algebra.

In \cite{BLPWgco}, the third author and his collaborators initiated a program to study the ``category $\O$'' of quantizations of conical symplectic resolutions equipped with Hamiltonian torus actions.  Though the affine Grassmannian slices only sometimes admit symplectic resolutions (see Theorem 2.9 of \cite{KWWY} for a precise statement), we are guided by the \cite{BLPWgco} perspective.

In this paper, we study representations of $Y^\lambda_\mu(\bR) $ for integral values of the parameters $ \bR $.  Our main interest are the Verma modules $ M^\lambda_\mu(J,\bR) $ for $ Y^\lambda_\mu(\bR) $ which are obtained by inducing from a 1-dimensional representation of $ Y^\lambda_\mu(\bR)^\ge $ associated to a collection of power series $ J $.  These Verma modules can also be defined as quotients of Verma modules for the shifted Yangians $Y_\mu $ (see Section \ref{Verma modules for truncated shifted Yangians}).  The Verma modules for $Y_\mu $ are quite simple and many aspects of their structure is independent of $ J$.  On the other hand, the Verma modules $ M^\lambda_\mu(J,\bR) $ depend heavily on $ J $;  in fact they are non-zero for only finitely many $ J$.

We write $ H_\mu^\lambda(\bR) $ for the set of $ J$ for which $M^\lambda_\mu(J, \bR) $ is non-zero. The basic question that we will attempt to answer is the following.
\begin{Question}
How can we describe the set $H_\mu^\lambda(\bR) $?
\end{Question}

\subsection{Product monomial crystals}
In order to combinatorially describe the set $ H_\mu^\lambda(\bR) $, we will use Nakajima's monomial crystal for the Langlands dual Lie algebra $ \g^\vee $.  Given $ \bR $, we define a product monomial crystal $ \B(\lambda, \bR)$, a certain subcrystal of Nakajima's monomials. It is the set of all products of monomials lying in fundamental monomial crystals indexed by the parameters $ \bR $ (see Section \ref{Tsection}).  This crystal depends in a subtle way on the set of parameters $ \bR$.  Generically, $ \B(\lambda, \bR)$ is the crystal of the tensor product of fundamental representations whose highest weights add up to $ \lambda $, but at special values, it can be smaller.  At the most degenerate values, it will simply be the irreducible crystal with highest weight $\lambda$.

For any $ J \in H_\mu^\lambda(\bR) $, we construct a Nakajima monomial $ y(J)$ by writing $ J $ as a collection of rational functions (see Section \ref{subsec:highest}).
\begin{Conjecture} \label{co:intro}
 For any dominant coweights $ \lambda, \mu $, the map $ J \mapsto y(J) $ is a bijection between $ H^\lambda_\mu(\bR) $ and $ \B(\lambda, \bR)_\mu $.
\end{Conjecture}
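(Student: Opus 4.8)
The plan is to prove the bijection by identifying both sides with one and the same explicit combinatorial set: tuples of monic polynomials $(\mathsf{A}_i(z))_{i \in I}$ in a single variable, whose degrees are prescribed by $\mu$ and whose roots are constrained by $\lambda$ and the parameters $\bR$. Once $H^\lambda_\mu(\bR)$ and $\B(\lambda,\bR)_\mu$ are each described in these terms, the map $J \mapsto y(J)$ becomes the evident identification, and the conjecture reduces to checking that the two sets of constraints coincide.

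First I would unwind the Yangian side. A one-dimensional representation $J$ of $Y^\lambda_\mu(\bR)^{\ge}$ is the same data as a highest weight for the shifted Yangian $Y_\mu$, and through the GKLO-type presentation used in \cite{KWWY} this highest weight is encoded by monic polynomials $\mathsf{A}_i(z)$ with $\deg \mathsf{A}_i = \langle \mu, \alpha_i^\vee\rangle$-type quantities. The effect of truncating to $Y^\lambda_\mu(\bR)$ is a finite list of linear relations among the Cartan generators (an ideal involving $\lambda$ and $\bR$), which forces an auxiliary series attached to each node to be a polynomial. I would show, using these relations together with a Shapovalov-form computation, that $M^\lambda_\mu(J,\bR)$ is non-zero precisely when (a) the roots of each $\mathsf{A}_i(z)$ lie in the arithmetic progressions at node $i$ dictated by $\bR$, and (b) an integrality/non-degeneracy condition holds, which in the integral situation collapses to an interlacing-type condition relating the root multisets at adjacent nodes. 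Conceptually, this is the statement that $H^\lambda_\mu(\bR)$ is $\MaxSpec$ of the B-algebra of $Y^\lambda_\mu(\bR)$ in the weight $\mu$ (this is also the bridge to Hikita's conjecture, identifying it with fixed points on the symplectic dual).

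Next I would unwind the crystal side. By construction $\B(\lambda,\bR)$ is the set of products of monomials, one taken from each fundamental monomial crystal $\B(\varpi_{i_k},a_k)$ indexed by $\bR$, inside Nakajima's monomials for $\g^\vee$. Using the explicit description of the fundamental monomial crystals (elementary in type $A$), every element of $\B(\lambda,\bR)_\mu$ has the form $y_\lambda \prod A_{i,c}^{-1}$, and one reads off directly that membership amounts to: the multiset of spectral parameters $c$ at each node $i$ being admissible inside the $\bR$-progressions, with total multiplicities matching $\mu$. Comparing with $y(J)$, whose negative $A$-factors are exactly the roots of the $\mathsf{A}_i(z)$, condition (a) above matches the progression constraint verbatim, and the crystal admissibility — that cancellations in the product never push a monomial outside the union of the fundamental crystals — matches the interlacing condition (b). This matching is the routine part.

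The hard part will be the surjectivity (equivalently, sufficiency) direction: for every $J$ passing the combinatorial test, producing a non-zero $M^\lambda_\mu(J,\bR)$, i.e. hitting every element of $\B(\lambda,\bR)_\mu$. For generic $\bR$ this should follow from a factorization of the Verma module reflecting the tensor-product structure of $\B(\lambda,\bR)$, but the subtlety is precisely at the special values of $\bR$ where the crystal genuinely shrinks and one cannot argue by specialization (sets of highest weights are only upper semicontinuous, so they may jump down). In type $A$ I would resolve this by transporting the problem to category $\O$ for finite W-algebras via \cite{WWY}, where for $\lambda$ a multiple of $\varpi_1$ the existence of the required simple modules — hence non-vanishing of the Vermas — follows from \cite{BK} and the block/combinatorial analysis of Brundan--Kleshchev and Losev, and where parabolic induction reduces an arbitrary dominant $\lambda$ to this minuscule case. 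As a secondary check that also tightens injectivity, I would verify that $\mathfrak{sl}_2$-strings of Verma modules match the $e_i, f_i$-strings of monomials, so that the bijection is in fact a crystal isomorphism.
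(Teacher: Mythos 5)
Your high-level plan is right in shape: the paper also proceeds by identifying both sides with tuples of multisets $\bS = (S_i)$ (equivalently, monic polynomials $\mathsf A_i$ whose roots give $S_i$), and then checking that the defining constraints agree. But the two places where your proposal is vague are exactly where the paper's hard work happens, and your proposed substitute for one of them is unlikely to work as stated.

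First, you describe the combinatorial characterization of $\B(\lambda,\bR)_\mu$ as "the routine part," amounting to an "interlacing-type condition relating the root multisets at adjacent nodes." That is far from what the paper proves, and I do not believe the condition is so simple. The actual characterization (Theorem \ref{Theorem: regular = product monomial crystal}) is that $p = y_\bR z_\bS^{-1}$ lies in $\B(\lambda,\bR)$ if and only if $E_q(p)\geq 0$ for \emph{every} monomial $q$ in \emph{every} opposite-parity fundamental crystal $\B(\varpi_j,n)$, a vastly larger family of inequalities than anything indexed by adjacent pairs of nodes. Moreover the proof of this characterization is not combinatorial: it goes through Nakajima's quiver varieties (Lemma \ref{Lemma: other multiplications} and Corollary \ref{Cor: equiv rel on monomial crystal} are proved geometrically, and the paper explicitly notes no combinatorial proof is known). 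Without this characterization, there is nothing to compare the $B$-algebra relations to.

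Second, your surjectivity argument — transporting to W-algebra category $\O$ via \cite{WWY}, invoking \cite{BK} for $\lambda = N\varpi_1$, and then "parabolic induction reduces an arbitrary dominant $\lambda$ to this minuscule case" — is not what the paper does, and the reduction step is not a known argument. The algebras $Y^\lambda_\mu(\bR)$ for different $\lambda$ (equivalently parabolic vs. principal W-algebras) are genuinely different algebras with different sets of highest weights; there is no parabolic-induction functor that reduces the highest-weight combinatorics for a sum of fundamentals to the $N\varpi_1$ case. The paper instead computes the image $\Pi(I^\lambda_\mu)$ directly: it lifts generalized minors to elements $T_{\beta,\gamma}(u)\in Y$, identifies them with quantum determinants in the RTT presentation of $Y(\mathfrak{gl}_n)$ (Corollary \ref{Cor: type A quantum determinants}), and thereby obtains explicit generators $\Pi(\underline{G_\gamma(u)T_{\gamma,\gamma}(u)})$ of the $B$-algebra ideal (Corollary \ref{B algebra}, Proposition \ref{multiset conditions}). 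The resulting conditions on $\bS$, indexed by elements $\gamma$ of fundamental representations — equivalently by elements of fundamental monomial crystals — then match the $E_q$-inequalities verbatim (Lemma \ref{Lemma: multiset conditions from series}, Theorem after it). This simultaneously handles injectivity and surjectivity, because the two sets of constraints are shown to be literally identical, not just to cut out sets of the same size. You should either carry out the direct $B$-algebra computation, or, if you want to route through \cite{WWY}, you would need to extract from the W-algebra picture the full list of highest-weight constraints for parabolic (not just principal) truncations and match them to the crystal; neither is a short argument.
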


Our main tool for studying the set $ H_\mu^\lambda(\bR) $ is the $ B$-algebra of $ Y^\lambda_\mu(\bR) $.  This is a finite-dimensional algebra whose maximal ideals coincide with the highest weights $ H_\mu^\lambda(\bR) $.  In Section \ref{se:Balgebra}, we give a precise description of the $B$-algebra in type A.  We also obtain a partial description of the $ B$-algebra for any $ \g $.

In Section \ref{se:ProofConjecture}, we give a combinatorial characterization of the product monomial crystal (in terms of inequalities) valid in any simply-laced type.  We then use this characterization and the description of the $B$-algebra to prove the following result.

\begin{Theorem}
\label{thm:mainthm}
 Conjecture \ref{co:intro} is true when $ \g = \mathfrak{sl}_n $.
\end{Theorem}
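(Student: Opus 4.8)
The plan is to deduce the theorem by matching two explicit combinatorial descriptions, one for each side of the claimed bijection. For the left-hand side, I would use the fact that the highest weights $H^\lambda_\mu(\bR)$ are exactly the maximal ideals of the \emph{$B$-algebra} $B$ of $Y^\lambda_\mu(\bR)$: a power-series datum $J$ lies in $H^\lambda_\mu(\bR)$ if and only if the character $\chi_J$ of the Cartan subalgebra factors through the quotient $\Cartan \twoheadrightarrow B$. In type $A$ one has an explicit presentation of $B$ (the description given in Section~\ref{se:Balgebra}), whose defining relations encode (i) the truncation, which forces each component of $J$ to be a ratio of monic polynomials of bounded degree determined by $\lambda$ and $\mu$, and (ii) the monodromy/integrality conditions imposed by the parameters $\bR$, which constrain the zeros and poles of these rational functions. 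The net effect is that an element of $H^\lambda_\mu(\bR)$ is repackaged as a tuple of monic polynomials whose roots satisfy a finite list of equalities and non-coincidence conditions governed by $\lambda$, $\mu$, $\bR$.

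For the right-hand side, I would pass through the monomial model: $\B(\lambda,\bR)$ is by definition the set of products $y = \prod y^{(k)}$ of Nakajima monomials for $\g^\vee$, one factor $y^{(k)}$ from each fundamental monomial crystal indexed by $\bR$. The key input here is the combinatorial characterization of Section~\ref{se:ProofConjecture}: a given Nakajima monomial $\prod_{i,a} Y_{i,a}^{c_{i,a}}$ lies in $\B(\lambda,\bR)$ precisely when its exponents $c_{i,a}$ satisfy an explicit system of inequalities in the parameters $\bR$, and it lies in $\B(\lambda,\bR)_\mu$ when in addition $\sum_{i,a} c_{i,a}\,\omega_i = \mu$.

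The heart of the proof is then to show that the assignment $J \mapsto y(J)$ of Section~\ref{subsec:highest} carries the first description onto the second. Concretely, $y(J)$ is built from the zeros and poles of the rational functions comprising $J$, so the degree bookkeeping from the truncation translates directly into the weight condition $\sum c_{i,a}\,\omega_i = \mu$; and the equalities and non-coincidence conditions cutting out $\MaxSpec B$ in type $A$ translate, root by root, into exactly the inequality system defining $\B(\lambda,\bR)_\mu$. Given this translation, bijectivity is formal: the map is injective because $y(J)$ records the multiset of roots of the numerator and denominator polynomials of $J$ (hence determines $J$), and surjective because any monomial obeying the inequalities can be read backwards to produce a tuple of rational functions satisfying the $B$-algebra relations, i.e.\ a genuine element of $H^\lambda_\mu(\bR)$.

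I expect the main obstacle to lie precisely at this interface: making the $B$-algebra in type $A$ concrete enough — generators, relations, and in particular its set of maximal ideals — that one can literally see the product monomial crystal inside it. Two subtleties deserve care. First, $B$ need not be reduced, so it is essential that one compares $\MaxSpec B$ with $\B(\lambda,\bR)_\mu$ rather than $\Spec B$ or the ring $B$ itself, and that the non-vanishing of $M^\lambda_\mu(J,\bR)$ is equivalent to $\chi_J$ descending to $B$. Second, one must check that the type-$A$ truncation relations carve out neither more nor fewer tuples than the inequality characterization does; this is where the specific combinatorics of $\mathfrak{sl}_n$ (and the underlying link with finite $W$-algebras) is used, and it is the step I would expect to be the most delicate. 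The companion task of establishing the inequality description of the product monomial crystal in the first place is the other genuinely substantial ingredient; once both are in hand, the comparison above is essentially a dictionary between roots of Cartan polynomials and exponents of Nakajima variables.
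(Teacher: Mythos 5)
Your plan coincides with the paper's actual proof: identify $H^\lambda_\mu(\bR)$ with $\MaxSpec$ of the $B$-algebra, use the explicit type-$A$ presentation of that algebra (via lifted minors/quantum determinants, Corollary \ref{B algebra} and Proposition \ref{multiset conditions}), translate the vanishing of the principal parts of the defining series into multiset-containment/inequality conditions (Lemma \ref{Lemma: multiset conditions from series}), and match these with the regularity characterization of $\B(\lambda,\bR)$ from Theorem \ref{Theorem: regular = product monomial crystal}. You have correctly isolated the two substantial ingredients and the point where the dictionary between roots of the Cartan series and exponents of Nakajima monomials is used, so this is essentially the paper's argument.
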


In section 6.5, we explain how the appearance of the product monomial crystal is closely related to the comultiplication for shifted Yangians, as defined in \cite{FKPRW}.

In the case when $\lambda$ is a multiple of the first fundamental weight, this conjecture can be deduced by the results of Brundan and Kleshchev \cite{BK}.
In Section \ref{se:resolutions}, we concentrate on the case when $ \Grlmbar $ does admit a symplectic resolution and we relate $ \B(\lambda, \bR)$ to the set of $ T$-fixed points in the resolution.  This makes further contact with \cite{BLPWgco}.

\subsection{Motivation from categorification}
Assuming our main conjecture, the set $ H_\mu^\lambda(\bR) $ of highest weights of Verma modules is the $\mu $-weight space of a $ \g^\vee $-crystal.  In fact, we conjecture that the direct sum (over $ \mu $) of the category $ \O $s for $ Y^\lambda_\mu(\bR) $ should carry a categorical action (in the sense of Khovanov-Lauda and Rouquier) of $ \g^\vee $ categorifying a representation $ V(\lambda, \bR)$ of $ \g^\vee$.  In this paper, we only consider the case where $\mu$ is dominant.  However the definition of $Y^\lambda_\mu(\bR)$ can be extended to the case of general $\mu$ following \cite{FKPRW}, \cite[Appendix B]{BFN}.

 For any $ \bR $, we conjecture that the category of finite-dimensional representations of $ Y^\lambda_\mu(\bR) $ should categorify $ \Hom_{\g^\vee}( V(\mu), V(\lambda, \bR))$.  As explained in Section \ref{se:finitedim}, this latter statement is compatible with our main conjecture.  More precisely, we show the following.

 \begin{Theorem} \label{th:intro1}
 The Verma module $ M(J, \bR) $ has a finite-dimensional simple quotient if and only if $ y(J) $ is a highest weight element of the monomial crystal.
 \end{Theorem}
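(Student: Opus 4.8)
The plan is to reduce the statement, one simple root at a time, to a rank-one computation, and then to match the resulting numerology with Nakajima's combinatorial formula for $\varepsilon_i$ on monomials.

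First I would record the standard finiteness reductions. Write $L(J,\bR)$ for the unique simple quotient of $M(J,\bR)$ and let $v$ be its highest weight vector. For each vertex $i$ of the Dynkin diagram of $\g^\vee$, the generators of the shifted Yangian attached to $i$ generate a subalgebra that is a homomorphic image of a shifted version of the Yangian $Y(\mathfrak{sl}_2)$, and in particular contains a copy of $U(\mathfrak{sl}_2)$. If $L(J,\bR)$ is finite-dimensional then each such subalgebra acts locally finitely, so each of these copies of $\mathfrak{sl}_2$ acts integrably on $v$. Conversely, suppose all of them do. Then the set of $\g^\vee$-weights of $L(J,\bR)$ -- which lies in $\mu$ minus the positive root lattice because $L(J,\bR)$ is a quotient of the Verma module $M(J,\bR)$ -- is stable under the Weyl group of $\g^\vee$, hence finite; combined with the finite length and finiteness of weight spaces built into category $\O$, this gives $\dim L(J,\bR) < \infty$. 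So $L(J,\bR)$ is finite-dimensional if and only if, for every $i$, the cyclic module generated by $v$ over the $i$-th copy of $Y(\mathfrak{sl}_2)$ is finite-dimensional.

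Next comes the rank-one analysis, and this is where I expect the real work to be. Restricting $L(J,\bR)$ along the $i$-th copy of $Y(\mathfrak{sl}_2)$, the vector $v$ generates a highest weight module over a shifted, truncated $Y(\mathfrak{sl}_2)$ whose ``Drinfeld datum'' is the rational function $J_i(u)$, corrected by the shift contributions coming from the neighbours of $i$ and from the truncation -- precisely the data packaged into the $Y_{i,\bullet}$-exponents of the Nakajima monomial $y(J)$. By the classification of finite-dimensional highest weight modules for (shifted, truncated) $Y(\mathfrak{sl}_2)$ this cyclic module is finite-dimensional exactly when that corrected rational function is a genuine ratio of monic polynomials with suitably interlacing zeros, and this is exactly the assertion that the appropriate running sum of the $Y_{i,\bullet}$-exponents of $y(J)$ never dips below zero, i.e. $\varepsilon_i(y(J)) = 0$. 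Here I would lean on the explicit description of $y(J)$ from Section~\ref{subsec:highest}, the combinatorial characterization of the product monomial crystal from Section~\ref{se:ProofConjecture}, and the $B$-algebra description of Section~\ref{se:Balgebra}, the last of which certifies that the functions $J_i(u)$ have the shape this bookkeeping requires. Stringing together the equivalences, $L(J,\bR)$ is finite-dimensional $\iff$ $\varepsilon_i(y(J))=0$ for all $i$ $\iff$ $y(J)$ is a highest weight element of the monomial crystal.

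The main obstacle is the shift/truncation bookkeeping in the rank-one step: one has to check that, after restricting to the $i$-th $\mathfrak{sl}_2$, the accumulated shift from the adjacent nodes is reflected faithfully by the negative $Y_{i,\bullet}$-exponents of $y(J)$, so that the Drinfeld-polynomial condition and Nakajima's $\varepsilon_i = 0$ condition literally coincide rather than merely resemble one another. A secondary point requiring care is the implication ``integrable at every $i$ $\Rightarrow$ finite-dimensional'', which needs the relevant category $\O$ to be well enough behaved (finitely many weights of each simple, finite-dimensional weight spaces) for the Weyl-invariance argument to close. In type $\mathfrak{sl}_n$ one can also short-circuit the rank-one analysis: by Conjecture~\ref{co:intro}, which holds there, $H^\la_\mu(\bR) \cong \B(\la,\bR)_\mu$, and the finite-dimensional simple $\Yml(\bR)$-modules should correspond to the $\mu$-weight highest weight vectors of $\B(\la,\bR)$, which are precisely the highest weight elements of the crystal.
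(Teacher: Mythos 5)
Your ``only if'' direction and the combinatorial translation are essentially the paper's: restrict to the $i$-th root subalgebra $Y_{\mu,i}\cong Y_{\mu_i}(\mathfrak{sl}_2)$, invoke the Brundan--Kleshchev classification of finite-dimensional highest weight modules for shifted $\mathfrak{sl}_2$-Yangians to get $J_i(u)=\frac{P_i(u+1)}{P_i(u)}\frac{Q_i(u)}{u^{\mu_i}}$, and then match the factorization condition with $\varepsilon_i(y(J))=0$ (in the paper this is Lemma \ref{chain decomp}, which converts $\frac{\prod(u-b_k)}{\prod(u-a_k)}=\frac{P(u+1)}{P(u)}$ into a weakly decreasing bijection of multisets, together with Lemma \ref{le:highweight}). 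That part is fine, modulo spelling out how the degree-$\mu_i$ factor $Q_i$ is peeled off from $T_i$.

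The ``if'' direction, however, has a genuine gap. You assert that the subalgebra attached to node $i$ ``in particular contains a copy of $U(\mathfrak{sl}_2)$'' and then run the usual integrability argument (all $\tilde{\mathfrak{sl}}_2$'s integrable $\Rightarrow$ weights Weyl-invariant $\Rightarrow$ finitely many weights $\Rightarrow$ finite-dimensional). But for dominant $\mu$ with $\mu_i>0$ the shifted Yangian $Y_\mu$ contains $F_i^{(s)}$ only for $s>\mu_i$; it does \emph{not} contain $F_i^{(1)}$, so there is no copy of $U(\mathfrak{sl}_2)$ at node $i$ and no lowering operator generating a Weyl reflection. Consequently the set of $\h$-weights of a finite-dimensional $Y_\mu$-module need not be Weyl-invariant (the one-dimensional module $\C\mathbf{1}_Q$ of Proposition \ref{drinfeld poly prod} already illustrates this), and the argument collapses exactly in the shifted case that the theorem is about. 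The paper circumvents this entirely: given the factorization data $(P_i,Q_i)$, it builds a finite-dimensional quotient directly as a subquotient of $M_P\otimes M_Q$, where $M_P$ is a Verma for the full Yangian with Drinfeld polynomials $P_i$ (finite-dimensional quotient by Drinfeld's theorem), $M_Q$ has a one-dimensional $Y_\mu$-quotient $\C\mathbf{1}_Q$ carrying the $\frac{Q_i(u)}{u^{\mu_i}}$ part of the weight, and the left-coideal property $\Delta(Y_\mu)\subset Y\otimes Y_\mu$ (Lemma \ref{Lemma: coideal}) makes the tensor product a $Y_\mu$-module with the correct highest weight. You need some such construction; ``integrable at every node'' is not available. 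Finally, the type A ``short-circuit'' in your last paragraph is circular: the claim that finite-dimensional simples correspond to highest weight crystal elements is the statement being proved, not a consequence of the bijection $H^\lambda_\mu(\bR)\cong\B(\lambda,\bR)_\mu$ alone.
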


In type A, 60\% of the authors use the main results of this paper to prove that the algebra $Y^\lambda_\mu(\bR) $ is isomorphic to a central quotient of a parabolic W-algebra \cite{WWY}.  Applying the main result of \cite{WebWO} shows in turn that category $\O$ for $Y^\lambda_\mu(\bR) $ is equivalent to a singular block of parabolic category $\O$ for $\mathfrak{gl}_N$, where $N$ is the sum of the parts in $ \lambda $.  Here $\la$ describes the parabolic, and $\mu$ describes the singularity of the central character.  After this equivalence, one can obtain the desired categorical action using tensor product with the standard and dual standard representations, as described  in \cite[7.4]{CR}.

We note that, together with Theorem \ref{thm:mainthm}, the isomorphism from \cite{WWY} mentioned above implies a new result about parabolic W-algebras, namely that the highest weights of the central quotient of a parabolic W-algebra are also governed by the crystals $\B(\lambda, \bR)$. 

\subsection{Symplectic duality and quiver varieties}
The product monomial crystal $\B(\lambda,\bR) $ also appears as the indexing set of connected components of Nakajima's graded quiver varieties (see Section \ref{section: Link to quiver varieties}).  In fact, this was Nakajima's original motivation for the introduction of the monomial crystal.  We use this relationship in order to prove that $ \B(\lambda,\bR) $ is actually a subcrystal of the monomial crystal.

Nakajima's graded quiver variety is defined as the fixed point set of a $\C^\times$-action on a Nakajima quiver variety $ \fM(\bm, W) $.  The quiver variety depends on $\lambda, \mu $ and the action depends on the parameters $ \bR $.

In \cite{BLPWgco}, it was observed that conical symplectic resolutions come in pairs, called symplectic duals.  While the framework of \cite{BLPWgco} is specific to varieties with resolutions, it can be extended in many ways to those which do not have resolutions.
 The variety  $ \fM(\bm, W) $ is conjectured to be the symplectic dual of $ \Grlmbar$.  In particular, as shown in \cite[\S 5.5]{Webqui} and \cite[10.4 \& 10.29]{BLPWgco}, this duality is already established in type A.  This is further motivated by the results of Braverman-Finkelberg-Nakajima \cite{BFN} who show that $\Grlmbar $ is the Coloumb branch associated to the gauge theory for which $ \fM(\bm, W) $ is the Higgs branch.

\subsection{Conjectures of Hikita and Nakajima}
Recently, Hikita \cite{H} has conjectured that if $ Y \rightarrow X $ and $Y^! \rightarrow X^! $ are symplectic dual pairs, then there is an isomorphism $ \C[X^{\C^\times}] \cong H^*(Y^!)$, where $ X^{\C^\times} $ denotes the $ \C^\times$-fixed subscheme with respect to a certain Hamiltonian $\C^\times$-action. In our context, this becomes the following statement which we prove in Section \ref{se:Hikita}.

\begin{Theorem} \label{th:intro2}
 There is an isomorphism of graded algebras $ \C[(\Grlmbar)^{\C^\times}] \cong H^*(\fM(\bm, W)) $.
\end{Theorem}

In unpublished work, Nakajima has extended Hikita's conjecture to a statement that the $ B$-algebra of the quantization of $ X $ is isomorphic to $ H_{\C^\times}^*(Y^!) $, where the $ \C^\times $ action depends on the quantization parameter.  In Section \ref{sec:conjecture-nakajima}, we explain how in our context, Nakajima's conjecture is closely related to Conjecture \ref{co:intro}.

\subsection{Organization of this paper}
The paper is organized as follows.  In section 2, we work purely combinatorially and define the product monomial crystal.  In section 3, we define truncated shifted Yangians and explain how to think of highest weights as elements of the monomial crystal.  In this section, we also formulate Conjecture \ref{co:intro} and prove Theorem \ref{th:intro1}.  In section 4, we describe the related geometry of affine Grassmannian slices.  In section 5, the most technical part of the paper, we study the $B$-algebra of $Y^\lambda_\mu $ and give a precise description of it in type A.  In section 6, we give an algebraic description of the product monomial crystal and combine it with the results of section 5 in order to prove Theorem \ref{thm:mainthm}.  In section 7, we explain the relationship between the product monomial crystal and graded quiver varieties.  Finally, in section 8, we prove Hikita's conjecture (Theorem \ref{th:intro2}) and relate our main conjecture to Nakajima's extension of Hikita's conjecture.

\subsection*{Acknowledgements}
We thank Roman Bezrukavnikov, Alexander Braverman, Pavel Etingof, Dinakar Muthiah, Hiraku Nakajima, Peng Shan, and Xinwen Zhu for useful conversations.  We also thank Alexander Tsymbaliuk for helpful comments on an earlier version of this paper.  J.K. was supported by NSERC, Sloan Foundation, Simons Foundation, and SwissMAP, and he thanks the geometry group at {\'E}PFL for their hospitality. P.T. is supported by the NSF.  B.W. is supported by the NSF and the Sloan Foundation. A.W. was supported by NSERC and the Government of Ontario. O.Y. is supported by the Australian Research Council.

\section{Monomial crystals}

\subsection{Notation}
\label{subsection:Notation}
As above, fix a semsimple simply-laced Lie algebra $ \g $.  We write $
i \con j $ if $ i, j $ are connected in the Dynkin diagram.  Fix a
bipartition $ I = I_{\bar{0}} \cup I_{\bar{1}} $ of the Dynkin diagram of $ \g $ (where $ \bar{0}, \bar{1} \in \Z/2$).  We
call vertices in $I_{\bar{0}}$ {\bf even} and those in $I_{\bar{1}}$ {\bf odd} and assume that all edges run between vertices of different parity.  Thus, we say that $i \in I $ and $k \in \Z$ have the {\bf same parity} if $i\in I_{\bar{k}}$.

Let $\Lambda $ be the coweight lattice of $ \g $.  Fix a dominant coweight $ \lambda$.  Let us write $ \lambda = \sum_i \lambda_i \varpi_i $.

We let $ \C^\lambda = \prod_i \C^{\lambda_i}/S_i $, the set of all collections of multisets of sizes $ (\lambda_i)_{i \in I} $.  A point in $ \C^\lambda $ will be written as $\bR = (R_i)_{i \in I} $ where $ R_i $ is a multiset of size $ \lambda_i $ and it will be called a \textbf{set of parameters} of weight $ \lambda $.

We say that $ \bR $ is \textbf{integral}, if for all $ i$, all elements of $R_i$ are integers and have
the same parity as $i$.   So an integral set of parameters consists of a multiset of $ \lambda_i $ even integers for every $ i \in I_0 $ and $ \lambda_i $ odd integers for every $ i \in I_1 $.

Recall that a {\bf crystal} of $ \g^\vee $ is a set $ \B $ along with a partial inverse permutations $ \tilde{e}_i, \tilde{f}_i : \B \rightarrow \B $ for all $ i \in I $ and a weight map $ wt : \B \rightarrow \Lambda $.  For each dominant coweight $ \lambda $, there is a crystal $ \B(\lambda) $.  We say that a crystal $ \B $ is {\bf normal} if it is  the disjoint union of these crystals $ \B(\lambda) $ (for varying $ \lambda $).

\subsection{Definition of the monomial crystal}
\label{subsection:Definitions}
Let $ \B $ denote the set of all monomials in the variables $ y_{i,k}
$, for $i \in I$ and $k \in \Z$ of the same parity.
Let
$$
z_{i,k} = \frac{y_{i,k} y_{i,k+2}}{\prod_{j  \con i} y_{j,k+1}}
$$

Given a monomial $ p = \prod_{i,k} y_{i,k}^{a_{i,k}} $, let
\begin{equation*} wt(p) = \sum_{i,k} a_{i,k} \varpi_i \quad
\varepsilon_i^k(p) = - \sum_{l \le k} a_{i,l} \quad
\varphi_i^k(p) = \sum_{l \ge k} a_{i,l}
\end{equation*}
and
\begin{align*}
\varepsilon_i(p) = \max_k \varepsilon_i^k(p) \quad
  \varphi_i(p) = \max_k \varphi_i^k(p)
\end{align*}
(In each case, the max is taken over those integers $ k $ of the same parity as $ i $.)

We can define the Kashiwara operators on this set of monomials by the rules:
\begin{align*}
\tilde{e}_i(p) &= \begin{cases} 0, \text{ if $\varepsilon_i(p) = 0 $} \\
z_{i,k}p, \begin{aligned}
            &\text{otherwise, where $ k $ is the smallest integer (of the same parity as $ i $)} \\
&\text{such that $ \varepsilon_i^k(p) = \varepsilon_i(p)$}
           \end{aligned}
\end{cases} \\
\tilde{f}_i(p) &= \begin{cases} 0, \text{ if $\varphi_i(p) = 0 $} \\
z_{i,k-2}^{-1}p,
\begin{aligned} &\text{otherwise, where $ k $ is the largest integer (of the same parity as $ i$)} \\
&\text{such that $ \varphi_i^k(p) = \varphi_i(p)$}
\end{aligned}
\end{cases}
\end{align*}
The following result is due to Kashiwara \cite[Proposition 3.1]{Kash}.

\begin{Theorem}
\label{Thm: B normal crystal}
$\B $ is a normal crystal.
\end{Theorem}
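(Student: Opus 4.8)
The plan is to proceed in two stages: first check that $\B$ with the stated operators is an abstract $\g^\vee$-crystal which is moreover \emph{seminormal} (every $i$-string is finite and $\varepsilon_i,\varphi_i$ compute string lengths), and then upgrade seminormality to normality by identifying each connected component with some $\B(\lambda)$. Since the result is attributed to Kashiwara, this amounts to reconstructing his argument.

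For the first stage the checks are routine but should be set up carefully. Reading off the definition, $wt(z_{i,k}) = 2\varpi_i - \sum_{j \con i}\varpi_j$, which is exactly the simple root $\alpha_i^\vee$ of $\g^\vee$, so $\tilde{e}_i$ and $\tilde{f}_i$ shift weights by $\pm\alpha_i^\vee$; and the elementary identity $\varphi_i(p) - \varepsilon_i(p) = \sum_k a_{i,k}$ is the required relation $\varphi_i(p) - \varepsilon_i(p) = \langle wt(p),\alpha_i\rangle$. One then verifies that $\tilde{e}_i$ and $\tilde{f}_i$ are mutually inverse partial bijections: multiplication by $z_{i,k_0}$ perturbs the step functions $l \mapsto \varepsilon_i^l$ and $l \mapsto \varphi_i^l$ in a controlled way, from which $\varepsilon_i(\tilde{e}_i p) = \varepsilon_i(p) - 1$ and $\tilde{f}_i\tilde{e}_i p = p$ follow. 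Seminormality is then immediate, since $\varepsilon_i^l$ changes by the exponent $a_{i,l}$ as $l$ varies, so every $i$-string is finite with $\varepsilon_i(p) = \max\{n : \tilde{e}_i^n p \neq 0\}$ and dually for $\varphi_i$.

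The substance is the second stage, where I would use that $\g$ is simply-laced and invoke a local characterization of normal crystals of Stembridge type: a connected seminormal $\g^\vee$-crystal is isomorphic to some $\B(\lambda)$ precisely when, for each pair $i \neq j$, a short list of relations among $\tilde{e}_i,\tilde{e}_j,\varepsilon_i,\varepsilon_j$ holds at every vertex. For $i \not\con j$ these reduce to the commutativity $\tilde{e}_i\tilde{e}_j = \tilde{e}_j\tilde{e}_i$, which is immediate for monomials: $z_{j,l}$ involves only the variables $y_{j,\bullet}$ and $y_{m,\bullet}$ with $m \con j$, so when $i \not\con j$ applying $\tilde{e}_j$ changes no $\varepsilon_i^k$, hence does not change the index selected by $\tilde{e}_i$, and monomials commute. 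For $i \con j$ the factor $z_{j,l}$ does contain $y_{i,l+1}^{-1}$, so $\tilde{e}_j$ perturbs the step function $k \mapsto \varepsilon_i^k(p)$ by $+1$ in one place; the remaining relations become a finite, if fiddly, analysis of how the leftmost maximum of this step function moves. As a complementary perspective (and a useful sanity check) one can observe that the monomial crystal is \emph{multiplicative}: since $\varepsilon_i^k(p p') = \varepsilon_i^k(p) + \varepsilon_i^k(p')$, the map $(p,p')\mapsto p p'$ is a strict morphism $\B \otimes \B \to \B$; combined with the identification of the component of $y_{i,0}$ with the fundamental crystal $\B(\varpi_i)$ (readily verified) and with the fact that a strict morphism out of $\B(\lambda)$ is injective (a collision would propagate to strictly higher weight, which is impossible since weights in $\B(\lambda)$ are bounded above by $\lambda$), this shows directly that the component of any dominant monomial $\prod_i y_{i,k_i}^{\lambda_i}$ is isomorphic to $\B(\sum_i \lambda_i\varpi_i)$.

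I expect the main obstacle to be exactly the rank-two ($i \con j$) verification above: pinning down that adjacent Kashiwara operators interact so as to neither collapse nor omit vertices is the combinatorial heart of the theorem. A secondary point, relevant only if one argues component by component rather than via the global Stembridge criterion, is that one must then also show every monomial lies in the component of a \emph{source} (a monomial $p$ with $\tilde{e}_i p = 0$ for all $i$, which automatically has dominant weight since $\langle wt(p),\alpha_i\rangle = \varphi_i(p) \geq 0$, but need not have all exponents nonnegative): proving that the raising process terminates needs a genuine monovariant, and the naive candidate $\sum_{i,k}|a_{i,k}|$ is not monotone under $\tilde{e}_i$. Using the Stembridge characterization sidesteps this last issue, since normality of $\B$ as a whole then follows from the local axioms with no separate analysis of each component.
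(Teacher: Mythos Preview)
The paper does not give a proof; it attributes the result to Kashiwara and cites \cite[Proposition~3.1]{Kash}. So there is nothing in the paper itself to compare against.

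Your primary route via Stembridge's local characterization is a legitimate strategy in the simply-laced setting, and is different from Kashiwara's own argument. As you acknowledge, the substance is the rank-two verification for $i \con j$, which you have outlined but not carried out.

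Your ``complementary perspective'', however, contains a genuine error: multiplication is \emph{not} a crystal morphism $\B \otimes \B \to \B$. The additivity $\varepsilon_i^k(pp') = \varepsilon_i^k(p) + \varepsilon_i^k(p')$ is correct, but it does not follow that $\varepsilon_i(pp')$ equals the tensor-product value $\varepsilon_i(p\otimes p')$: the former is $\max_k\bigl(\varepsilon_i^k(p) + \varepsilon_i^k(p')\bigr)$, which generally differs from $\max\bigl(\varepsilon_i(p),\,\varepsilon_i(p')-\langle wt(p),\alpha_i\rangle\bigr)$ once the variable indices of $p$ and $p'$ interleave. For $\mathfrak{sl}_2$ (dropping the index $i$), take $p = y_{4}^{-1}$ and $p' = y_{0}$: then $\varepsilon(p\otimes p') = \max(1,\,0-(-1)) = 1$ while $\varepsilon(pp') = \varepsilon(y_{0} y_{4}^{-1}) = 0$, so $\tilde e(p\otimes p')\neq 0$ but $\tilde e(pp')=0$. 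Multiplication \emph{shuffles} the two $\pm$-signatures rather than concatenating them; only when the supports are well separated does it agree with the tensor product (this is exactly the content of the ``well-spaced'' proposition in the paper). The same obstruction is behind Theorem~\ref{th:subcrystal}, for which the paper explicitly says it knows no combinatorial proof and instead appeals to quiver-variety geometry.

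Consequently the multiplicative shortcut does not let you bypass the rank-two Stembridge verification; that verification, or Kashiwara's original embedding argument, is where the work actually lies.
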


\subsection{Monomial product crystals}
\label{Tsection}
For any $ c \in \Z $ and $ i \in I $ of the same parity, the monomial $ y_{i,c} $ is clearly highest weight and we can consider the monomial subcrystal $ \B(\varpi_i,c) $ generated by $ y_{i,c} $.  Since $ y_{i,c} $ has weight $ \varpi_i $, we see that $ \B(\varpi_i,c) \cong \B(\varpi_i) $.

The fundamental monomial crystals for different $ c $ all look the same, they differ simply by translating the variables.  In fact, for any complex number $ c \in \C $, we can consider $ \B(\varpi_i,c) $, the crystal obtained by translating $ y_{i,k} \mapsto y_{i,k+c-c_0} $ all variables appearing in all monomials in $ \B(\varpi_i, c_0) $ (for $ c_0 $ an integer of the same parity as $ i $).  This crystal $ \B(\varpi_i, c) $ does not sit inside $ \B $, but we will need to use it on occasion in this paper.

Given a dominant coweight $\lambda $ and an integral set of parameters $ \bR $ of weight $\lambda $ as above, we define the product monomial crystal $ \B(\lambda, \bR) $ by
$$
\B(\lambda, \bR) = \prod_{i \in I, c \in R_i} \B(\varpi_i, c)
$$
In other words, for each parameter $ c \in R_i$, we form its monomial crystal $ \B(\varpi_i, c) $ and then take the product of all monomials appearing in all these crystals.  The order of the product is irrelevant because taking products of monomials is commutative.  The definition of the product monomial crystal appears in some special cases in the work of Kim and Shin \cite{KS}.

The following result will be proved as a consequence of the link with graded quiver varieties.  We do not know a combinatorial proof.
\begin{Theorem} \label{th:subcrystal}
$\B(\lambda, \bR) $ is a subcrystal of $ \B $.  In particular it is a normal crystal.  Moreover, there exists embeddings $ \B(\lambda) \subseteq \B(\lambda, \bR) \subseteq \otimes_i \B(\varpi_i)^{\otimes \lambda_i} $.
\end{Theorem}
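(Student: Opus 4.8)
The plan is to prove Theorem~\ref{th:subcrystal} by realizing $\B(\lambda,\bR)$ geometrically, as the crystal of Lagrangian components of a graded Nakajima quiver variety, and then transporting all three assertions along Nakajima's monomial labelling of these components. This is the approach announced in Section~\ref{section: Link to quiver varieties}, and we do not know how to argue purely combinatorially.

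First I would set up the dictionary. To the pair $(\lambda,\bR)$ associate the framing space $W = \bigoplus_{i\in I}\bigoplus_{c\in R_i} W_{i,c}$, where $W_{i,c}$ is one-dimensional placed in degree $(i,c)$; its total dimensions record $\lambda = \sum_i\lambda_i\varpi_i$ and its grading records $\bR$. For each graded dimension vector $\bm$ one has the Nakajima quiver variety $\fM(\bm,W)$ together with the $\C^\times$-action determined by $\bR$, and inside the fixed locus a Lagrangian subvariety $\mathfrak L^\bullet(\bm,W)$. By the results recalled in Section~\ref{section: Link to quiver varieties}, the set of irreducible components of the $\mathfrak L^\bullet(\bm,W)$, ranging over all $\bm$, carries Nakajima's geometric crystal structure, and the essential points are two. (i) This crystal is, by construction, \emph{normal}: it is non-canonically a disjoint union of crystals $\B(\nu)$, with the component of the class of $V=0$ (which has weight $\lambda$) isomorphic to $\B(\lambda)$. (ii) Nakajima's monomial labelling, which reads off a monomial from the dimensions of the canonical complexes at a generic point of a component, defines an \emph{injective morphism of crystals} $M$ from this geometric crystal into Kashiwara's $\B$.

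Second, and this is where the real content lies, I would identify the image of $M$ with $\B(\lambda,\bR)$. When $\bR$ is supported at a single pair $(i,c)$ this is the fundamental case: the geometric crystal is $\B(\varpi_i)$ and $M$ identifies it with $\B(\varpi_i,c)$, with the class of $V=0$ going to $y_{i,c}$. For general $\bR$ one must show that $M$ is multiplicative with respect to direct sums of framing spaces, i.e.\ that the monomial image of the components for $W = W'\oplus W''$ is the setwise product of the images for $W'$ and for $W''$; granting this, an induction on the number of parameters identifies $\operatorname{im} M$ with $\prod_{i\in I,\, c\in R_i}\B(\varpi_i,c) = \B(\lambda,\bR)$. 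Since $M$ is an injective crystal morphism with normal source, this simultaneously shows that $\B(\lambda,\bR)$ is a subcrystal of $\B$ and that it is normal. The inclusion $\B(\lambda)\subseteq\B(\lambda,\bR)$ then follows at once: $\B(\lambda,\bR)$ is normal and contains the highest weight monomial $\prod_{i}\prod_{c\in R_i} y_{i,c}$ (the $M$-image of $V=0$) of weight $\lambda$, so the connected component of that monomial is a copy of $\B(\lambda)$.

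Finally, for the embedding $\B(\lambda,\bR)\subseteq \otimes_i\B(\varpi_i)^{\otimes\lambda_i}$, I would compare $\bR$ with a generic integral set of parameters $\bR'$ of weight $\lambda$. For such $\bR'$ the monomial factorization of Section~\ref{Tsection} is unique, so $\B(\lambda,\bR') = \otimes_i\B(\varpi_i)^{\otimes\lambda_i}$; geometrically, $\bR'$ corresponds to a more generic $\C^\times$-action on the same $\fM(\bm,W)$, and passing from $\bR'$ to $\bR$ specializes the action, under which the Lagrangian components organize so as to yield an embedding of the associated crystals $\B(\lambda,\bR)\hookrightarrow\B(\lambda,\bR')$. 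I expect the main obstacle to be precisely the second and fourth steps: matching Nakajima's geometric crystal operators and monomial labelling with Kashiwara's $\tilde e_i,\tilde f_i$ and with the combinatorial product $\prod_{i,c\in R_i}\B(\varpi_i,c)$ — in particular the multiplicativity of $M$ under direct sums of framing and the behaviour under specialization of the $\C^\times$-action. Once these compatibilities are established, normality and both embeddings are formal.
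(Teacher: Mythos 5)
Your strategy is the one the paper itself follows in Section \ref{section: Link to quiver varieties}, and the skeleton — fundamental case, an injective crystal morphism into $\B$ via Nakajima's monomial labelling, identification of the image with the setwise product of the fundamental images — is exactly Proposition \ref{pr:pi0andB} and its corollary. Two remarks on the points you leave open. First, the paper indexes the geometric crystal by the connected components $X(\bS)$ of the $\C^\times$-fixed locus of $\fM(\bm,W)$ (each non-empty $X(\bS)$ is a full connected component by Proposition \ref{prop:ccomp}), with the crystal structure given by attracting sets as in \cite[\S 8]{Nak010}; to invoke Nakajima's construction one also needs the comparison of the original action $*$ with the weight-one action $\diam$ (Section \ref{se:WeightOne}), which your sketch omits. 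That the labelling $X(\bS)\mapsto y_\bR z_\bS^{-1}$ is an injective crystal morphism is then Lemma 8.4 of \cite{Nak010} combined with the Euler-characteristic computation of Lemma \ref{le:aikandN}.

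Second, the ``multiplicativity of $M$ under direct sums of framing,'' which you rightly identify as the real content, splits into two containments, only one of which is formal. That $\B(\lambda,\bR)$ lies in the image follows by taking direct sums $x_1\oplus\cdots\oplus x_N$ of points realizing the fundamental factors. The converse — that the monomial of \emph{every} non-empty $X(\bS)$ factors through the fundamental crystals — is where you would get stuck without a further idea: the paper's argument is that $X(\bS)$ is invariant under the maximal torus $T_W\subset G_W$, is homotopy equivalent to the projective variety $X(\bS)\cap\mathcal L(W)$, hence contains a $T_W$-fixed point, and such a fixed point decomposes as a direct sum according to the $T_W$-weights of the framing, reducing to the fundamental case. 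You should supply this (or an equivalent) argument rather than ``granting'' multiplicativity. Your deductions of normality and of $\B(\lambda)\subseteq\B(\lambda,\bR)$ from the subcrystal property are fine; but for the containment in $\otimes_i\B(\varpi_i)^{\otimes\lambda_i}$ your proposed specialization from a generic $\bR'$ is not what the paper does and is the vaguest step of your plan — note that a weight-multiplicity bound alone never forces an embedding of normal crystals, so this containment also requires a genuine input (e.g., Nakajima's tensor-product structure on quiver varieties with framing $W=W'\oplus W''$).
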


Thus $ \B(\lambda, \bR) $ is a crystal which depends on the set of parameters $ \bR$ and lies between the crystal of the irreducible representation and the crystal of the corresponding tensor product of fundamental representations.  Exactly how $ \B(\lambda, \bR) $ depends on $\bR $ is somewhat mysterious, though we will soon discuss some results in this direction.

\subsection{Collections of multisets and monomials}
\label{subsection: Collections of multisets and monomials}
Given a collection of multisets $ \bS = ( S_i )_{i \in I} $, we can define
$$ y_\bS = \prod_{i \in I, k \in S_i} y_{i,k}, \quad z_\bS = \prod_{i \in I, k \in S_i} z_{i,k}.$$

From the definition of the monomial crystal, it is easy to see that every monomial $ p $ in $ \B(\lambda, \bR) $ is of the form
\begin{equation} \label{eq:CSmonomial} p = y_{\bR} z_\bS^{-1} = \prod_{i,k \in R_i} y_{i,k} \prod_{i,k \in S_i} \frac{\prod_{ j \con i} y_{j,k+1}}{y_{i,k} y_{i,k+2}}
\end{equation}
 for some collection of multisets $ \bS $.  Thus an alternative combinatorics for labelling elements of the monomial crystal are these collections of multisets $ \bS $.

\begin{Remark}
\label{Remark: uniqueness of monomial factorization}
For $p\in \B(\lambda,\bR)$, $\bS$ is uniquely determined.  In fact for any tuples of multisets $\bS$ and  $\bS'$, $z_\bS = z_{\bS'}$ implies $\bS = \bS'$ .
\end{Remark}

\begin{Lemma}
\label{Lemma: eq:contain}
Suppose that $ y_\bR z_\bS^{-1} \in \B(\lambda, \bR) $.  Then for all $ i$,
\begin{equation} \label{eq:contain}
S_i + 2 \subset \Big( R_i \cup \bigcup_{j \con i} ( S_j + 1 ) \Big)
\end{equation}
\end{Lemma}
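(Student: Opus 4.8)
The plan is to extract the containment \eqref{eq:contain} directly from the structure of the monomial crystal, namely from the rule for $\tilde f_i$. Recall that by \eqref{eq:CSmonomial}, writing $p = y_\bR z_\bS^{-1}$, the exponent $a_{i,k}$ of $y_{i,k}$ in $p$ equals $(\text{mult of } k \text{ in } R_i) - (\text{mult of } k \text{ in } S_i) - (\text{mult of } k-2 \text{ in } S_i) + \sum_{j\con i}(\text{mult of } k-1 \text{ in } S_j)$. So the statement \eqref{eq:contain} is precisely the assertion that whenever $k-2 \in S_i$ (i.e. $k \in S_i + 2$), either $k \in R_i$ or $k-1 \in S_j$ for some $j \con i$. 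Equivalently: if $k-2 \in S_i$ but $k \notin R_i \cup \bigcup_{j\con i}(S_j+1)$, we reach a contradiction with $p \in \B(\lambda,\bR)$.

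First I would set up the combinatorics by fixing $i$ and analyzing $\varphi_i^k(p) = \sum_{l \ge k} a_{i,l}$ as a function of $k$ (over integers $k$ of the same parity as $i$). Note that every monomial in $\B(\lambda,\bR)$ is obtained from the highest weight monomial $y_\bR$ by a sequence of $\tilde f_j$'s; the key observation is that for each $i$, the numbers $\varphi_i^k(p)$ are controlled — in a normal crystal they are non-negative and the application of $\tilde f_i$ lowers $\varphi_i(p)$ by one by multiplying by $z_{i,k-2}^{-1}$ at the largest $k$ achieving $\varphi_i^k(p) = \varphi_i(p)$. I would track how the "profile" $(\varphi_i^k(p))_k$ changes under the various $\tilde f_j$: applying $\tilde f_i$ modifies it in the $i$-direction, while applying $\tilde f_j$ for $j \con i$ shifts one of the $a_{i,l}$ up by $+1$ via the numerator $y_{i,k+1}$ of $z_{j,k}$, and $\tilde f_j$ for $j \not\con i$, $j \ne i$ leaves the $i$-profile untouched.

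The heart of the argument: suppose for contradiction that some $k_0$ has $k_0 - 2 \in S_i$ but $k_0 \notin R_i$ and $k_0 - 1 \notin S_j$ for all $j \con i$. The multiplicity of $k_0 - 2$ in $S_i$ counts how many times a factor $z_{i,k_0-2}^{-1}$ was introduced, i.e. how many times $\tilde f_i$ acted "at position $k_0$." Each such application requires, at that moment in the sequence, that $\varphi_i^{k_0}(p)$ be the (positive) maximum of the profile, in particular $\varphi_i^{k_0}(p) \geq 1 > \varphi_i^{k_0+2}(p)$ would be consistent, but crucially $\varphi_i^{k_0}(p) > 0$ forces $\sum_{l \ge k_0} a_{i,l} > 0$ at that stage. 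I would argue that the only ways to have ever made $\sum_{l\ge k_0}a_{i,l}$ positive are: a contribution from $R_i$ at some $l \ge k_0$, a contribution from some $S_j + 1$ with $j\con i$ at some $l \ge k_0$, or cancellation bookkeeping that ultimately traces back to one of these. Then a careful induction on the length of the $\tilde f$-word producing $p$, combined with the minimality built into the $\tilde f_i$ rule (it always acts at the *largest* maximizing $k$), should show that a $z_{i,k_0-2}^{-1}$ factor can only be "justified" if there is a nearby $y_{i,k_0}$ coming from $R_i$ or a $y_{i,k_0}$ coming from the numerator of some $z_{j,k_0-1}^{-1}$ with $j \con i$ — i.e. $k_0 \in R_i$ or $k_0 - 1 \in S_j$. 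This contradicts our assumption.

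The main obstacle I anticipate is making the induction on the length of the crystal word clean: one must show that the profile inequality "$k-2 \in S_i \implies k \in R_i \cup \bigcup_{j\con i}(S_j+1)$" is preserved under each Kashiwara lowering operator, and the subtle case is a single $\tilde f_i$ application that simultaneously consumes a $y_{i,k}$ (dropping $k$ out of the "available" set) while producing a new $y_{i,k-2}$ in the denominator (adding $k$ to $S_i + 2$) — one has to check the $+1$ contributions from neighbors haven't been exhausted, using the normality of $\B$ (Theorem~\ref{Thm: B normal crystal}) to guarantee $\varphi_i^l(p) \ge 0$ throughout. An alternative, possibly cleaner route that I would try in parallel: reduce to the rank-one and rank-two sub-situations by restricting to the sub-Dynkin-diagram on $\{i\} \cup \{j : j \con i\}$, since \eqref{eq:contain} only involves these vertices, and in low rank the monomial crystal is explicit enough (it is a product of $\B(\varpi)$'s for $A_1$, $A_2$, $A_1\times A_1$, $\dots$) to verify the inequality by direct inspection of the shapes of monomials; the subcrystal structure from Theorem~\ref{th:subcrystal} would let one pass between the full crystal and these restrictions.
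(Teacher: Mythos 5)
Your opening analysis of a single application of $\tilde f_i$ is essentially the same as the first step of the paper's proof: if $\tilde f_i(p)=z_{i,k-2}^{-1}p$, then $k$ is the largest index with $\varphi_i^k(p)=\varphi_i(p)>0$, so $y_{i,k}$ has positive exponent in $p$, forcing $k$ to lie in $R_i\cup\bigcup_{j\con i}(S_j+1)$ with multiplicity strictly exceeding its multiplicity in $S_i+2$. (One caveat: state and use this at the level of multiplicities, as \eqref{eq:contain} is a containment of multisets; your reformulation ``whenever $k-2\in S_i$, either $k\in R_i$ or $k-1\in S_j$'' is only the set-theoretic shadow of the claim.)

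The fatal gap is the sentence ``every monomial in $\B(\lambda,\bR)$ is obtained from the highest weight monomial $y_\bR$ by a sequence of $\tilde f_j$'s.'' This is false in general: $\B(\lambda,\bR)$ is a normal crystal with many connected components (by Theorem \ref{th:subcrystal} it contains $\B(\lambda)$ but is generically the full tensor product $\otimes_i\B(\varpi_i)^{\otimes\lambda_i}$; Lemma \ref{le:highweight} exhibits its many highest-weight elements). So your induction on the length of the $\tilde f$-word never reaches elements outside the component of $y_\bR$, and the base case does not cover the highest-weight elements of the other components, for which \eqref{eq:contain} is not tautological. The paper's proof avoids this in two moves you are missing: first it proves the lemma when $\lambda$ is a fundamental coweight, where $\B(\varpi_i,c)\cong\B(\varpi_i)$ \emph{is} irreducible and your induction is legitimate; then for general $\lambda$ it invokes the \emph{definition} of the product monomial crystal to factor $p=\prod_{j,c}p_{j,c}$ with $p_{j,c}\in\B(\varpi_j,c)$, applies the fundamental case to each factor, and observes that the containment \eqref{eq:contain} is additive under multiset union of the data $(\bR^{(j,c)},\bS^{(j,c)})$. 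Your alternative route via restriction to rank-one and rank-two subdiagrams does not repair this, since it still presupposes control of all of $\B(\lambda,\bR)$ rather than of the individual fundamental factors.
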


\begin{proof}
Suppose that $p=y_\bR z_\bS^{-1} \in \B(\lambda, \bR)$  and $\tilde{f}_i(p)\neq0$.  Then $\tilde{f}_i(p)=z_{i,k-2}^{-1}p$ for some $k$.  We  first note that
$$
k\in \Big( R_i \cup \bigcup_{j \con i} ( S_j + 1 ) \Big)\smallsetminus (S_i+2).
$$
Indeed, since $\varphi_i(p)\neq 0$ and $k$ is the largest integer such that $\varphi_i^k(p)=\varphi_i(p)$, $y_{i,k}$ must have positive degree in $p$.  Therefore we must have $k\in R_i \cup \bigcup_{j \con i} ( S_j + 1 )$.  If the multiplicity of $k$ in $S_i+2$ is greater than or equal to the multiplicity of $k$ in $R_i \cup \bigcup_{j \con i} ( S_j + 1 )$, then $y_{i,k}$ would have nonpositive degree in $p$, a contradiction.

Now we prove the lemma in the case where $\lambda$ is a fundamental weight.  We proceed by induction on $|\bS|$.  If $|\bS|=0$ then (\ref{eq:contain}) is  tautologically true.  Suppose the (\ref{eq:contain}) is true for all $y_\bR z_\bS^{-1}\in \B(\lambda, \bR)$ such that $|S|=\ell$.  Choose $p=y_\bR z_{\bS'}^{-1}$ with $|\bS'|=\ell+1$.  Since $ \lambda $ is a fundamental weight, $ \B(\lambda, \bR) \cong B(\lambda) $  is an irreducible crystal.  Thus, $p=\tilde{f}_i(q)$ for some $i$ and $q=y_{i,c} z_\bS^{-1}\in \B(\lambda, \bR)$ with $|S|=\ell$.   Therefore $p=z_{i,k-2}^{-1}q$ for some $k$.  Then $S'_j = S_j $ for $ j\ne i $ and $ S'_i = S_i \cup\{k-2\}$.  By the inductive hypothesis and the previous paragraph, we see that the condition (\ref{eq:contain}) holds for $ \bS' $.  Thus by induction, the lemma is true when $ \lambda $ is fundamental.

Finally we prove the lemma for general $\lambda$.  Let  $p=y_\bR z_\bS^{-1} \in \B(\lambda, \bR)$ and write
$$
p=\prod_{j \in I, c\in R_j}p_{j,c},
$$
where $p_{j,c}\in \B(\varpi_j,c)$.  Define $\bR^{(j,c)}=(R^{(j,c)}_i)$ by $R^{(j,c)}_i=\{c\}$ if $i=j$ and $R^{(k,c)}_i=\emptyset$ otherwise.  Then we have collections of multisets $\bS^{(k,c)}=(S^{(k,c)}_i)$ such that
$$
p_{j,c}=y_{\bR^{(j,c)}}z_{\bS^{(j,c)}}^{-1}.
$$
By the previous case we have that for all $k,c$ and for all $i$
$$ S_i^{(j,c)} + 2 \subset \Big( R_i^{(j,c)} \cup \bigcup_{i' \con i} ( S_{i'}^{(j,c)} + 1 ) \Big). $$
Taking the multiset union over all $j,c$ proves the lemma.
 \end{proof}

Thus we can define another sequence of multisets $ \bT = (T_i)_{i \in I} $ by
$$
T_i :=  \Big( R_i \cup \bigcup_{j \con i} ( S_j + 1 ) \Big) \smallsetminus (S_i + 2)
$$
With this notation, it is easy to see that equation (\ref{eq:CSmonomial}) gives
$$
y_{\bR} z_\bS^{-1} = y_\bT y_\bS^{-1}
$$

There is a nice criterion for the highest-weight elements of the
monomial crystal in terms of $ \bS, \bT $.  Recall that an element of
a crystal is called {\bf highest-weight} if it is annihilated by all $\tilde{e}_i $.

\begin{Lemma} \label{le:highweight}
A monomial $ y_\bT y_\bS^{-1} \in \B(\lambda, \bR) $ is highest-weight if and only if for all $ i$, there exists an injective weakly decreasing map $ S_i \rightarrow T_i $.
\end{Lemma}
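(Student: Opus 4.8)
\subsection*{Proof proposal}

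The plan is to unwind ``highest-weight'' into an explicit inequality on the exponents of $ p $, re-express that inequality in terms of the multisets $ \bS $ and $ \bT $, and then recognise the resulting condition as a classical matching statement.

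First I would recall that $ p $ is highest-weight exactly when $ \varepsilon_i(p) = 0 $ for every $ i $. Writing $ a_{i,l} $ for the exponent of $ y_{i,l} $ in $ p $, we have $ \varepsilon_i^k(p) = -\sum_{l \le k} a_{i,l} $, and $ \varepsilon_i^k(p) = 0 $ for $ k \ll 0 $, so $ \varepsilon_i(p) \ge 0 $ automatically; hence $ \varepsilon_i(p) = 0 $ is equivalent to $ \sum_{l \le k} a_{i,l} \ge 0 $ for all $ k $ (restricting to $ k $ of the parity of $ i $ is harmless, since $ a_{i,l} = 0 $ unless $ l $ and $ i $ have the same parity). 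Now by the identity $ y_\bR z_\bS^{-1} = y_\bT y_\bS^{-1} $ recorded above --- whose validity uses Lemma \ref{Lemma: eq:contain} to ensure that $ \bT $ is an honest multiset --- the exponent $ a_{i,l} $ equals the multiplicity of $ l $ in $ T_i $ minus its multiplicity in $ S_i $. Therefore $ \sum_{l \le k} a_{i,l} = |\{ t \in T_i : t \le k \}| - |\{ s \in S_i : s \le k \}| $, and ``highest-weight'' becomes the dominance condition
\begin{equation*}
|\{ s \in S_i : s \le k \}| \ \le\ |\{ t \in T_i : t \le k \}| \qquad \text{for all } i \text{ and all } k .
\end{equation*}

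It then remains to prove the purely combinatorial equivalence: for finite multisets $ S, T $ of integers, this dominance condition holds if and only if there is an injective map $ \phi : S \to T $ with $ \phi(s) \le s $ for all $ s $, and moreover such a $ \phi $ can be chosen weakly decreasing (listing $ S $ in decreasing order, the sequence of images is weakly decreasing), which is the form stated in the lemma. The ``if'' direction is immediate: an injective $ \phi $ with $ \phi(s) \le s $ carries $ \{ s \in S : s \le k \} $ injectively into $ \{ t \in T : t \le k \} $. For ``only if'' I would invoke Hall's marriage theorem for the bipartite multigraph joining each occurrence of $ s $ in $ S $ to each occurrence of $ t $ in $ T $ with $ t \le s $: for any sub-multiset $ A \subseteq S $ the neighbourhood of $ A $ is $ \{ t \in T : t \le \max A \} $, of size $ \ge |\{ s \in S : s \le \max A \}| \ge |A| $ by the dominance hypothesis, so Hall's condition holds and a matching saturating $ S $ exists; a standard exchange argument upgrades it to a weakly monotone one. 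Applying this for each $ i $ with $ S = S_i $, $ T = T_i $ completes the proof.

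I do not anticipate a serious obstacle. The only points needing care are (a) citing Lemma \ref{Lemma: eq:contain} so that reading off the exponents of $ p $ directly from $ \bS $ and $ \bT $ is legitimate, and (b) formulating and proving the matching lemma for multisets rather than sets --- a routine Hall's-theorem (or greedy) argument, essentially the combinatorics underlying majorization.
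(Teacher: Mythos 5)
Your proof is correct and follows the paper's argument exactly: reduce highest-weight to $\varepsilon_i(p)=0$, translate that into the dominance inequality $|\{l\in S_i: l\le k\}|\le |\{l\in T_i: l\le k\}|$, and identify this with the existence of a weakly decreasing injection. The only difference is that the paper dismisses the final combinatorial equivalence as ``clearly equivalent,'' whereas you supply the Hall's-theorem (or greedy) justification.
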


\begin{proof}
We note that $ \tilde{e}_i(y_\bT y_\bS^{-1}) = 0 $ if and only if $ \varepsilon_i(y_\bT y_\bS^{-1}) = 0 $ for all $ i$.  From the definition of $ \varepsilon_i $, we see that $ \varepsilon_i(y_\bT y_\bS^{-1}) = 0 $ if and only for all $ k$, we have
$$
| \{ l \in S_i : l \le k \} | \le | \{ l \in T_i : l \le k \} |
$$
This condition is clearly equivalent to the existence of an injective weakly decreasing map $ S_i \rightarrow T_i $.
\end{proof}

\subsection{Genericity and singularity}
We have an obvious upper bound on the size of the set $ \B(\lambda, \bR) $.  Since every element of $ \B(\lambda, \bR) $ is the product of monomials from the $ \B(\varpi_i, c) $, we deduce that
$$
| \B(\lambda, \bR) | \le \prod_{i \in I, c \in R_i} | \B(\varpi_i, c) |
$$
If we have equality, then we say that $ \bR $ is a \textbf{generic} integral set of parameters.  Otherwise, we say that $ \bR  $ is \textbf{singular}.

Suppose that we order all elements of $ \bR $ as $ c_1 \le \dots \le c_N $ where $ N = \sum \lambda_i $.  Let $ i_1, \dots, i_N $ be such that $ c_p \in R_{i_p} $ for all $ p $.  Let $ M = \max_i \rho^\vee(\varpi_i) $.  We say that $ \bR $ is a \textbf{well-spaced} set of parameters if $ c_p - c_{p-1} > 4M $ for all $ p $.

The following result generalizes Theorem 2.3 from \cite{KS}.

\begin{Proposition}
Suppose that $ \bR $ is a well-spaced set of parameters.  Then $ \bR $ is generic.  Moreover, the natural map of sets
$$ \bigotimes_{p=1}^N \B(\varpi_{i_p}, c_p) \rightarrow \B(\lambda, \bR) $$
is a crystal isomorphism.
\end{Proposition}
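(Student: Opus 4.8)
The plan is to reduce the claim to the combinatorial characterization of $\B(\lambda,\bR)$ already available via Lemma \ref{Lemma: eq:contain} and the $\bS,\bT$ description, and to the fact (Theorem \ref{th:subcrystal}) that $\B(\lambda,\bR)$ is a normal subcrystal of $\B$ equipped with the natural product map from $\bigotimes_p \B(\varpi_{i_p},c_p)$. Since the target is normal and the map is a crystal morphism in the obvious sense (products of monomials behave additively on the weight and, more to the point, the Kashiwara operators on a product monomial are controlled by the individual factors when the relevant indices $k$ do not interfere), it suffices to show that this map is injective; surjectivity is immediate from the definition $\B(\lambda,\bR)=\prod_{i,c\in R_i}\B(\varpi_i,c)$, and a surjective morphism of crystals between normal crystals of the same (finite) cardinality is automatically an isomorphism. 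Genericity is then just the statement that the cardinalities agree, i.e. that the map is a bijection, so the two assertions of the Proposition are really one.

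The heart of the argument is therefore injectivity, and here is where well-spacedness enters. Suppose $p=\prod_{p=1}^N p_{i_p,c_p}$ with $p_{i_p,c_p}\in\B(\varpi_{i_p},c_p)$, and write each factor as $y_{i_p,c_p} z_{\bS^{(p)}}^{-1}$ as in \eqref{eq:CSmonomial}. The key observation is that the monomial $z_{\bS^{(p)}}$ only involves variables $y_{j,k}$ with $k$ in a bounded window around $c_p$: more precisely, if $y_{j,k}$ appears in some element of $\B(\varpi_{i_p},c_p)$, then $|k-c_p|\le \rho^\vee(\varpi_{i_p})\le M$ — this is the standard bound on the "spread" of a fundamental monomial crystal, which one extracts from Lemma \ref{Lemma: eq:contain} by induction on $|\bS^{(p)}|$ (each application of $\tilde f_j$ can only push the support out by a bounded amount, and the total number of steps is at most $\rho^\vee(\varpi_{i_p})$). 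Under the hypothesis $c_p-c_{p-1}>M$, these windows $[c_p-M,\,c_p+M]$... actually one needs the half-width: windows of radius $M$ centered at the $c_p$ are pairwise disjoint once consecutive gaps exceed $2M$; with the sharper bound that each factor's support lies in $[c_p,\,c_p+2\rho^\vee(\varpi_{i_p})]\subseteq[c_p,\,c_p+2M]$ after a suitable normalization, a gap $>M$ already suffices — I would state the spread bound carefully so that the chosen threshold $M$ matches. Granting disjointness of supports, the product $p$ determines each factor $p_{i_p,c_p}$: reading off the exponents of $y_{j,k}$ for $k$ in the $p$-th window recovers $p_{i_p,c_p}$ exactly. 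Hence the map is injective.

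With injectivity in hand I would then upgrade it to a crystal isomorphism. Because the supports are disjoint, for any $i$ the function $\varepsilon_i^k(p)$ restricted to a neighborhood of the $p$-th window is governed entirely by the $p$-th factor (the other factors contribute nothing to $y_{i,l}$ for $l$ near $c_p$), so $\varepsilon_i(p)=\max_p \varepsilon_i(p_{i_p,c_p})$ and $\tilde e_i$ acts on $p$ by acting on the unique factor achieving the maximum at the smallest such $k$ — which is exactly the tensor-product rule for Kashiwara operators. The same holds for $\tilde f_i$. Therefore the bijection intertwines the crystal structures, and combined with the weight computation $wt(p)=\sum_p \varpi_{i_p} - \sum_p(\text{stuff})$, which matches the tensor product weight, we conclude that $\bigotimes_{p=1}^N\B(\varpi_{i_p},c_p)\to\B(\lambda,\bR)$ is a crystal isomorphism; in particular $|\B(\lambda,\bR)|=\prod_p|\B(\varpi_{i_p},c_p)|$, so $\bR$ is generic.

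The main obstacle I anticipate is pinning down the precise spread bound for a fundamental monomial crystal and checking that the threshold in the definition of "well-spaced" (the gap $>M$ with $M=\max_i\rho^\vee(\varpi_i)$) is exactly what makes the windows disjoint — this requires being careful about whether the relevant width is $M$, $2M$, or something in between, and about the asymmetry of the monomial crystal (it extends "downward" from $y_{i,c}$ as one applies $\tilde f_i$). Everything else — surjectivity, the cardinality count, and the verification of the tensor-product rule once supports are disjoint — is routine bookkeeping. I would isolate the spread bound as a small sublemma proved by induction using Lemma \ref{Lemma: eq:contain}, and then the rest of the proof is short.
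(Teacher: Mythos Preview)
Your approach is essentially the one the paper takes: prove a support bound for each fundamental monomial crystal, use the well-spacing hypothesis to make the supports of the different factors disjoint (in fact linearly ordered), read off injectivity, and then observe that disjoint ordered supports force the monomial Kashiwara operators on the product to match the tensor-product rule. The paper's proof is three sentences long and does exactly this.

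The one place where you are hedging and the paper is definite is the shape of the support bound. It is one-sided: every variable $y_{j,b}$ occurring in any monomial of $\B(\varpi_{i_p},c_p)$ has $c_p - M \le b \le c_p$. This is because each $\tilde f_i$ multiplies by $z_{i,k-2}^{-1}$, which only introduces variables with second index $\le k$; starting from $y_{i_p,c_p}$ the indices can only go down. Your first guess $|k-c_p|\le M$ is too weak (it would need a $2M$ gap) and your second guess $[c_p,\,c_p+2M]$ has the direction backwards. With the one-sided bound $[c_p-M,\,c_p]$ and the hypothesis $c_p-c_{p-1}>M$, every index in the $p$-th factor strictly exceeds every index in the $(p-1)$-st, which is exactly what you need both for injectivity and for the tensor rule to match (the ordering of the factors matters for the latter). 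So your instinct to isolate the spread bound as a sublemma is correct, and once you state it in this one-sided form the rest of your argument goes through as written.
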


\begin{proof}
Suppose we have a variable $ y_{j,b} $ which occurs in a monomial in $ B(\varpi_{i_p}, c_p) $.  Then from the definition of the monomial crystal, we can see that $ c_p - 4M \le b \le c_p $.
(Indeed, $\rho^\vee(\varpi_i-w_0\varpi_i)=2\rho^\vee(\varpi_i)$ is the greatest number of times we can apply lowering operators to $y_{i,c}$, and each time we apply a lowering operator we at most lower the smallest ``b-value'' in the variables $y_{j,b}$ by 2.)  Thus any variable occurring in some $ B(\varpi_{i_p}, c_p) $ is bigger than any variable occurring in  $ B(\varpi_{i_{p-1}}, c_{p-1}) $.  This immediately shows that the map is injective and thus $ \bR $ is generic.

The fact that the map is a crystal morphism follows from comparing the rule for tensor product of crystals with the definition of monomial crystal.
\end{proof}

%

\begin{Remark}
Whenever $ \bR $ is a generic integral set of parameters, then $
\B(\lambda, \bR) $ is isomorphic to $ \otimes_i \B(\varpi_i)^{\otimes \lambda_i} $, since we have an embedding by Theorem \ref{th:subcrystal} and the two crystals are the same size by the definition of generic.  However, unless $ \bR $ is well-spaced, it does not seem easy to construct an isomorphism.
\end{Remark}

Note that $ y_{\bR} = \prod_{i \in I, c \in R_i} y_{i,c} $ is a highest weight element of weight $ \lambda $ in $ \B(\lambda, \bR)$.  Thus we always have a crystal containment $ \B(\lambda) \subset \B(\lambda, \bR) $.  When this containment is an equality, we say that $ \bR $ is \textbf{maximally singular}.

Define an integral set of parameters $\bR$ by $ R_i = \{ 0^{\lambda_i} \} $ for $ i \in I_{\bar{0}} $ and $ R_i = \{-1^{\lambda_i} \} $ for $ i \in I_{\bar{1}} $.  The following result will be proven in Section \ref{se:earlier}.
\begin{Proposition} \label{pr:MaxSing}
The above choice of $ \bR $ is a maximally singular set of parameters.
\end{Proposition}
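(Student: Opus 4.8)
The plan is to prove that $y_\bR$ is the \emph{only} highest-weight element of $\B(\lambda,\bR)$ for this choice of $\bR$. This suffices: by Theorem~\ref{th:subcrystal}, $\B(\lambda,\bR)$ is a normal crystal, hence a disjoint union of irreducible crystals, one for each of its highest-weight elements; if the only one is $y_\bR$, which has weight $\lambda$, then $\B(\lambda,\bR)=\B(\lambda)$, i.e. $\bR$ is maximally singular.

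So suppose for contradiction that $p=y_\bR z_\bS^{-1}=y_\bT y_\bS^{-1}\in\B(\lambda,\bR)$ is highest weight with $\bS\neq\emptyset$, where $\bT=(T_i)$ is the tuple of multisets defined just before Lemma~\ref{le:highweight}. Put $L:=\max\big(\bigcup_i S_i\big)$ and $m:=\min\big(\bigcup_i S_i\big)$, which make sense since $\bS\neq\emptyset$ and which satisfy $m\le L$. We use the following facts about the notation: every element of $S_i$ has the same parity as $i$; if $i\con j$ then $i,j$ have opposite parities (so $S_j+1$ consists of integers of the parity of $i$); and for our $\bR$ we have $R_i=\{0^{\lambda_i}\}$ for even $i$ and $R_i=\{(-1)^{\lambda_i}\}$ for odd $i$. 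The contradiction will come from deducing $-1\le m\le L\le-2$.

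To bound $L$, choose $i$ with $L\in S_i$, so $L=\max S_i$. By Lemma~\ref{Lemma: eq:contain}, $L+2\in S_i+2\subseteq R_i\cup\bigcup_{j\con i}(S_j+1)$. It cannot lie in any $S_j+1$, since that would give $L+1\in S_j$, contradicting maximality of $L$; hence $L+2\in R_i$, which forces $L=-2$ if $i$ is even and $L=-3$ if $i$ is odd. Either way $L\le-2$. To bound $m$, choose $i$ with $m\in S_i$. The inequality established in the proof of Lemma~\ref{le:highweight}, applied with this $i$ and $k=m$, gives $|\{\ell\in T_i:\ell\le m\}|\ge|\{\ell\in S_i:\ell\le m\}|\ge1$, so $T_i$ contains some $x\le m$. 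Since $T_i\subseteq R_i\cup\bigcup_{j\con i}(S_j+1)$ and $x\le m$ precludes $x\in S_j+1$ (otherwise $x-1\in S_j$ would be $<m$), we get $x\in R_i$, hence $x\in\{0,-1\}$, and therefore $m\ge x\ge-1$. This contradicts $m\le L\le-2$, so no such $\bS$ exists.

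Once the combinatorial setup is in place this is a short extremal argument; the only step that needs care is the interface with Lemma~\ref{le:highweight} — whether one phrases it via an injective weakly decreasing map $S_i\to T_i$ evaluated at $\min S_i$, or directly via the inequality $|\{\ell\in S_i:\ell\le k\}|\le|\{\ell\in T_i:\ell\le k\}|$ extracted from its proof — together with a careful check that $\bR$ really is an integral set of parameters and that all second indices occurring in $R_i$, $S_i$, and $S_j+1$ (for $j\con i$) share the parity of $i$, so that the arithmetic above is consistent.
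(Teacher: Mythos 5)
Your proof is correct, and it takes a genuinely different route from the paper's. The paper proves Proposition \ref{pr:MaxSing} in Section \ref{se:earlier} geometrically: for this choice of $\bR$ the cocharacters $\rho_i'$ of Section \ref{se:WeightOne} are trivial, so the relevant $\C^\times$-action on $\fM(\bm,W)$ is the standard contracting one, its attracting set is the core, and Nakajima's identification then gives $\pi_0(\fM(W,\bR))\cong\B(\lambda)$ directly. You instead run a combinatorial extremal argument: using Lemma \ref{Lemma: eq:contain} to pin down $L=\max\bigcup_i S_i$ and the highest-weight criterion of Lemma \ref{le:highweight} to pin down $m=\min\bigcup_i S_i$, you derive $-1\le m\le L\le -2$ and conclude that $y_\bR$ is the unique highest-weight element, whence $\B(\lambda,\bR)=\B(\lambda)$ by normality. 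Both steps check out (including the multiset and parity bookkeeping, and the observation that a normal crystal with a unique highest-weight element is connected). The one caveat worth stating explicitly is that your argument is not purely combinatorial either: it leans on Theorem \ref{th:subcrystal} for normality of $\B(\lambda,\bR)$, and the paper only knows how to prove that theorem via the quiver-variety link, so geometry enters through the back door. What your approach buys is a concrete, hands-on explanation of \emph{why} no other highest-weight monomials can occur for this $\bR$ (the supports of the $S_i$ would have to live simultaneously above $-1$ and below $-2$), and it localizes the geometric input to the single statement of normality rather than to the attracting-set/core analysis.
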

It would be interesting to determine in general precise characterizations of generic and/or maximally singular sets of parameters.

\subsection{Example of $ N \varpi_1$}
Here is an instructive example.  Suppose that $ \g = \pgl_n $ and that $ \lambda = N \varpi_1 $.  Then a set of parameters $ \bR $ is just a single multiset of size $ N $.

An $n$-step flag in $ \bR $ is a sequence $V_{\bullet}=( \emptyset = V_0 \subseteq V_1 \subseteq \cdots \subseteq V_n = \bR )$ of multisubsets.  Here containment of multisubsets is defined in the obvious way.  Let $ F_n(\bR) $ denote the set all $n$-step flags in $ \bR $.

Set $\bR = \{c_1^{t_1}, \dots, c_q^{t_q}\}$, i.e. $c_i$ occurs $t_i$ times in $\bR$.  The set $ F_n(\bR) $ carries a $ \pgl_n $ crystal structure.  To describe it, for $V_{\bullet} \in F_n(\bR)$ let $V_i(c)$ be the multiplicity of $c$ in $V_i$.

To define $\tilde{e}_i(V_{\bullet})$ consider the sequence
$$
(-\cdots-+\cdots+,...,-\cdots-+\cdots+),
$$
where in the $j$-th entry there is a block of  $V_{i+1}(c_j)-V_i(c_j)$ minus signs followed by $V_i(c_j)-V_{i-1}(c_j)$ plus signs.  This sequence consists of $q$ such blocks.  Cancel all the pairs of $+-$ that occur, and call the resulting sequence the $i$-signature of $V_{\bullet}$.  Find the rightmost minus sign in the $i$-signature (if none exist then $\tilde{e}_i(V_{\bullet})=0$), and suppose it occurs in the $j^{th}$ entry.  Then $\tilde{e}_i(V_{\bullet})$ differs from $V_{\bullet}$ only in the $i^{th}$ multisubset, where $V_i$ becomes $V_i \cup \{c_j\}$.

The operator $\tilde{f}_i$ on $F_n(\bR)$ is defined similarly.  It is an easy exercise to show that
$$
F_n(\bR) \cong \B(t_1 \varpi_1) \otimes \cdots \otimes \B(t_q \varpi_1).
$$

\begin{Proposition}
\label{Prop:crys-isom}
There is an isomorphism of $ \mathfrak{sl}_n $-crystals $ F_n(\bR) \cong \B(N \varpi_1, \bR) $.
\end{Proposition}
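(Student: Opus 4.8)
The plan is to build an explicit bijection $\Phi\colon F_n(\bR)\to\B(N\varpi_1,\bR)$ and verify that it intertwines the crystal operators. Throughout, since $\lambda=N\varpi_1$ the datum $\bR$ is a single multiset $R_1$ of integers, all of the same parity as the vertex $1$.

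\emph{Step 1 (the fundamental crystal).} For $\g=\pgl_n$ a direct computation of the Kashiwara operators from Section~\ref{subsection:Definitions} (or the identification $\B(\varpi_1,c)\cong\B(\varpi_1)$ recorded above) shows that $\B(\varpi_1,c)$ consists of the $n$ monomials
$$Y_{1,c}=y_{1,c},\qquad Y_{j,c}=\frac{y_{j,\,c-j+1}}{y_{j-1,\,c-j}}\quad(2\le j\le n),$$
with the convention $y_{n,\bullet}:=1$, and that $\tilde f_jY_{j,c}=Y_{j+1,c}$ for $1\le j\le n-1$ while $\tilde f_iY_{j,c}=0$ for $i\ne j$; moreover $wt(Y_{j,c})=\varpi_j-\varpi_{j-1}$.

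\emph{Step 2 (the bijection).} Given $V_\bullet\in F_n(\bR)$, let $\ell(c)\in\{1,\dots,n\}$ be the \emph{entry level} of a copy of $c$ in $\bR$, the unique $j$ with that copy in $V_j\smallsetminus V_{j-1}$; the number of copies of $c$ with entry level $j$ is $V_j(c)-V_{j-1}(c)$. Put
$$\Phi(V_\bullet)=\prod_{c\in\bR}Y_{\ell(c),\,c}=\prod_{c\in\bR}\ \prod_{j=1}^nY_{j,c}^{\,V_j(c)-V_{j-1}(c)}.$$
By construction this is a product of one monomial from each $\B(\varpi_1,c)$, so $\Phi(V_\bullet)\in\B(N\varpi_1,\bR)$, and conversely every element of $\B(N\varpi_1,\bR)$ arises this way, so $\Phi$ is onto. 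For injectivity one computes, in the notation of \eqref{eq:CSmonomial}, that $\Phi(V_\bullet)=y_\bR z_\bS^{-1}$ with
$$S_l=\{\,c-l-1\ :\ c\in\bR,\ \ell(c)\ge l+1\,\}\qquad(1\le l\le n-1);$$
by Remark~\ref{Remark: uniqueness of monomial factorization} the monomial $\Phi(V_\bullet)$ determines $\bS$, and from $\bS$ one recovers the nested multisets $\{c:\ell(c)\ge l\}=S_{l-1}+l$, hence the flag $V_\bullet$. So $\Phi$ is a bijection.

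\emph{Step 3 (compatibility with crystal operators).} Weights match by Step~1. For the Kashiwara operators the key point is that in $\Phi(V_\bullet)$ the variable $y_{i,m}$ occurs with exponent
$$\bigl(V_i-V_{i-1}\bigr)(m+i-1)\ -\ \bigl(V_{i+1}-V_i\bigr)(m+i+1),$$
since $y_{i,m}$ appears with exponent $+1$ only in the factors $Y_{i,c}$ with $c=m+i-1$ and with exponent $-1$ only in the factors $Y_{i+1,c}$ with $c=m+i+1$, while the intermediate value $m+i$ has the wrong parity to occur in $\bR$, so there is no further cancellation. Substituting this into the definitions of $\varphi_i^k$ and $\varepsilon_i^k$ identifies them, after the index shift $k\mapsto k+i-1$, with the partial sums of the $i$-signature of $V_\bullet$ described above (for each value $c$, a block of $V_{i+1}(c)-V_i(c)$ minus signs followed by $V_i(c)-V_{i-1}(c)$ plus signs, the blocks ordered by increasing $c$ — note that consecutive admissible indices of $y_{i,\bullet}$ are $m$ and $m+2$ and that they see precisely the values $m+i-1$ and $m+i+1$, so ordering variables by $m$ is ordering blocks by $c$). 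Hence $\tilde f_i$ annihilates $\Phi(V_\bullet)$ exactly when it annihilates $V_\bullet$; and when it does not, the rule $\tilde f_i(p)=z_{i,k-2}^{-1}p$ multiplies $\Phi(V_\bullet)$ by $z_{i,k-2}^{-1}$ for the $k$ picked out by ``largest $k$ with $\varphi_i^k=\varphi_i$'', which by Step~1 turns exactly one factor $Y_{i,c}$ into $Y_{i+1,c}$, i.e.\ lowers the entry level of a single copy of the value $c=k+i-1$ from $i$ to $i+1$ — exactly the effect of $\tilde f_i$ on $F_n(\bR)$, with the same value $c$ and same copy singled out by the matching tie‑breaking. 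A symmetric argument handles $\tilde e_i$, so $\Phi$ is an isomorphism of crystals.

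\emph{Main obstacle.} Everything outside Step~3 is routine bookkeeping; the crux is the signature comparison in Step~3 — reconciling the combinatorics of $\varphi_i^k,\varepsilon_i^k$ on the monomial side (including the ``largest/smallest $k$'' tie‑breaking) with the signature rule and its ``rightmost minus/plus'' tie‑breaking on the flag side. What makes this clean is the exponent formula above: because all contributions to $y_{i,\bullet}$ come from values of a single parity, after the shift $k\mapsto k+i-1$ the two reading words literally coincide.
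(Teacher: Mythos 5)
Your proposal is correct and constructs essentially the same bijection as the paper: your $Y_{j,c}$ is the paper's $f(c,j)$, your ``entry level'' $\ell(c)$ is the paper's $i_c$, and your recovery of the flag from $\bS$ matches the paper's formula $V_i=\bR\setminus(S_i+i+1)$. The only difference is that you additionally carry out the signature comparison showing the map intertwines $\tilde e_i,\tilde f_i$, a verification the paper's proof leaves implicit; your exponent formula and the observation that the two reading words coincide after the shift $k\mapsto k+i-1$ are correct.
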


\begin{proof}
Let $f(c,i)=\frac{y_{i,c-i+1}}{y_{i-1,c-i}}$ (with the convention that $ y_0 = y_n = 1$).  Note that as a set $\B(\varpi_1, c)=\{f(c,i):1\leq i \leq n \}$.  To the element $f(c,i)$ associate the flag $V_{\bullet}(c,i)\in F_n(\{c\})$, where the first occurrence of $c$ is in the $i^{th}$ step.
Then to define the crystal isomorphism, to an element $\prod_{c\in\bR}f(c,i_c)\in \B(N \varpi_1, \bR)$ we associate the flag $\bigcup_{c\in\bR}V_{\bullet}(c,i_c)\in F_n(\bR)$, where here union of $n$-step flags is defined componentwise in the obvious way.

Alternatively,  note that $f(c,i)=y_{1,c}z_{\bS(c,i)}^{-1}$ where
$$
\bS(c,i)=\left(\{c-2\} \{c-3\}, \dots, \{c-i\}\right).
$$
Therefore a monomial in $\B(\la,\bR)$ is equal to $y_\bR z_{\bS}^{-1}$ where $\bS=\bigcup_{c\in \bR} \bS(c,i_c)$ and $0\leq i_c \leq n-1$, and hence $\bS=(S_i)$ with
$$
S_i=\bigcup_{i_c\geq i}\{c-i-1\}.
$$
Note that $\bR\supset S_1+2 \supset S_2+3 \supset \cdots$.  Then to $y_{\bR}z_{\bS}^{-1}$ associate the flag $V_{\bullet}=(V_i)$, where
\begin{equation*}
V_i=\bR\setminus(S_i+i+1). \qedhere
\end{equation*}

\end{proof}

In particular, we see that $ \bR$ is generic iff all $ t_i $ equal 1 (i.e. if $ \bR $ is a set) and is maximally singular if $ q = 1 $ (i.e. if $ \bR $ is just a single number with multiplicity).

\subsection{Explicit description in the minuscule case}
Our goal now is to give an explicit description of the monomials occurring in $ \B(\varpi_i, c) $ whenever $ \varpi_i $ is a minuscule coweight.  This will give an explicit description of $ \B(\lambda, \bR) $ whenever $ \lambda $ is a sum of minuscule coweights.

For this section, assume that $ \varpi_i $ is minuscule.  Recall that since $ \varpi_i $ is minuscule, we have $ \langle \gamma, \alpha_j \rangle
\in \{ 1, 0, -1 \} $ for all $ \gamma \in W \varpi_i $ and $j \in I $.  Also recall that the set of weights of $ \B(\varpi_i) $ is $ W \varpi_i $.

Because $ \B(\varpi_i, c) $ is isomorphic to $ \B(\varpi_i) $, we see that the only weights occurring in $ \B(\varpi_i, c) $ are $ W \varpi_i $ and there is a unique monomial with each of these weights.  For $ \gamma \in W \varpi_i$, let $ y_{\gamma, c} $ denote the weight $ \gamma $ monomial in $ \B(\varpi_i, c) $.

If $ \beta $ is a positive root, then we write $ h(\beta) = \langle \rho^\vee, \beta \rangle $ and if $ \beta $ is any root, we write $ h(\pm \beta) $ to denote $ h(\beta) $ if $\beta $ is a positive root and $ h(-\beta) $ if $ \beta $ is a negative root.

\begin{Proposition} \label{pr:explicit}
We have
$$
y_{\gamma, c} = \prod_j y^{\langle \gamma, \alpha_j \rangle}_{j, c - h(\pm w^{-1} \alpha_j) + \langle \gamma, \alpha_j \rangle}
$$
where $ w $ is chosen to be minimal such that $ w \varpi_i = \gamma $.
\end{Proposition}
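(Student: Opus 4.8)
The plan is to prove the identity by induction on the depth $d(\gamma) := \langle \rho^\vee, \varpi_i - \gamma \rangle$ of $\gamma$ in the crystal $\B(\varpi_i, c)$, which is connected since $\varpi_i$ is minuscule. It suffices to treat $c$ an integer of the same parity as $i$, since the general case then follows by the translation $y_{j,k} \mapsto y_{j, k + c - c_0}$ used to define $\B(\varpi_i, c)$ for arbitrary $c \in \C$. In the base case $\gamma = \varpi_i$ we take $w = e$: as $\langle \varpi_i, \alpha_j \rangle = \delta_{ij}$ and $h(\pm \alpha_i) = h(\alpha_i) = 1$, the right-hand side collapses to $y_{i,c}$, which is indeed the highest-weight monomial of $\B(\varpi_i,c)$.

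For the inductive step, suppose the formula holds for $\gamma$ and that $\gamma' := \tilde{f}_j(\gamma) \neq 0$; since $\varpi_i$ is minuscule this forces $\langle \gamma, \alpha_j \rangle = 1$, so $\gamma' = \gamma - \alpha_j$ and $d(\gamma') = d(\gamma) + 1$. The weights of $\B(\varpi_i)$ are the single orbit $W\varpi_i$, and the minimal $w$ with $w\varpi_i = \gamma$ has length $d(\gamma)$; from this and $\langle\gamma,\alpha_j\rangle = 1$ (which makes the $\alpha_i$-coefficient of $w^{-1}\alpha_j$ equal to $1$, hence $w^{-1}\alpha_j \in \Phi^+$) one deduces that $w' := s_j w$ is the minimal element with $w'\varpi_i = \gamma'$. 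By the inductive hypothesis the $y_j$-content of $y_{\gamma,c}$ is the single variable $y_{j,k}$, $k := c - h(\pm w^{-1}\alpha_j) + 1$, so $\varphi_j(y_{\gamma,c}) = 1$ and $k$ is the largest integer with $\varphi_j^k(y_{\gamma,c}) = \varphi_j(y_{\gamma,c})$; hence $y_{\gamma',c} = z_{j,k-2}^{-1}\, y_{\gamma,c}$. Expanding $z_{j,k-2}^{-1} = \frac{\prod_{j' \con j} y_{j',k-1}}{y_{j,k-2}\, y_{j,k}}$, I would compare factor by factor with the formula claimed for $\gamma'$: for $m \neq j$ not adjacent to $j$ there is no $y_m$ in $z_{j,k-2}^{-1}$ and, since $s_j$ fixes $\alpha_m$, neither $\langle\gamma,\alpha_m\rangle$ nor the subscript changes; the $y_j$-factor matches because $w'^{-1}\alpha_j = -w^{-1}\alpha_j$ (so $h(\pm w'^{-1}\alpha_j) = h(\pm w^{-1}\alpha_j)$) and $\langle\gamma',\alpha_j\rangle = -1$; and for a neighbour $j' \con j$, where $\langle\gamma,\alpha_{j'}\rangle \in \{-1,0\}$, the match reduces in each case to one identity among $h(\pm w^{-1}\alpha_j)$, $h(\pm w^{-1}\alpha_{j'})$ and $h(\pm w'^{-1}\alpha_{j'})$.

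The crux, and the step I expect to cause the most trouble, is establishing those two height identities, and both reduce to the following lemma: if $w$ is the minimal-length representative of its coset modulo the stabilizer $W_{\varpi_i}$ and $\langle w\varpi_i, \alpha_k\rangle = 0$, then $w^{-1}\alpha_k$ is a simple root. To prove the lemma: $\langle w\varpi_i,\alpha_k\rangle = 0$ makes the $\alpha_i$-coefficient of $w^{-1}\alpha_k$ vanish, so $w^{-1}\alpha_k$ lies in the root subsystem $\Phi_i$ generated by $\{\alpha_m : m \neq i\}$; it is positive, for otherwise $s_k w$ would lie in the same coset as $w$ (as $s_k$ fixes $w\varpi_i$) with strictly smaller length; and a minimal coset representative maps $\Phi_i^+$ into $\Phi^+$, hence restricts to an injection $\Phi_i^+ \hookrightarrow \Phi^+$ under which a non-simple positive root would have image a sum of two positive roots — impossible, since $\alpha_k$ is simple. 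Applying the lemma to $\gamma$ in the case $\langle\gamma,\alpha_{j'}\rangle = 0$ shows $w^{-1}\alpha_{j'}$ is simple, so $w'^{-1}\alpha_{j'} = w^{-1}(\alpha_{j'}+\alpha_j)$ is a positive root of height $h(\pm w^{-1}\alpha_j) + 1$; applying it to $\gamma'$ in the case $\langle\gamma,\alpha_{j'}\rangle = -1$ (so $\langle\gamma',\alpha_{j'}\rangle = 0$, whence $w'^{-1}\alpha_{j'} = w^{-1}(\alpha_{j'}+\alpha_j)$ is simple) gives $h(\pm w^{-1}\alpha_{j'}) = h(\pm w^{-1}\alpha_j) - 1$. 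Feeding these into the factor-by-factor comparison closes the induction; the translation argument then delivers the statement for all $c \in \C$.
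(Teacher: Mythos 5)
Your proof is correct and follows essentially the same route as the paper's: induction along the crystal via $\tilde f_j$, the same case analysis on how an index relates to $j$ (equal, non-adjacent, adjacent with pairing $0$ or $-1$), and the same key fact that $w^{-1}\alpha_k$ is simple when $\langle w\varpi_i,\alpha_k\rangle=0$ and $w$ is a minimal coset representative (Lemma \ref{le:simple}). The only differences are cosmetic: you prove that lemma by noting minimal representatives send positive roots of the sub-root-system avoiding $\alpha_i$ to positive roots, rather than via the paper's $\rho^\vee$-pairing argument, and you make explicit the minimality of $s_jw$ and the reduction to integral $c$, which the paper leaves implicit.
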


\begin{proof}
We proceed by induction on $ \gamma $ using the crystal structure.  As the base case, we take the highest weight element $ \gamma = \varpi_i $.  In this case, the formula clearly holds.

Assume that the formula holds for $ \gamma $.  Choose $ p $ such that $ \langle \gamma, \alpha_p \rangle = 1 $.  Note that because $ y_{\gamma, c} $ is the unique element of weight $ \gamma $ in the crystal, we must have $ \tilde{f}_p y_{\gamma, c} = y_{s_p \gamma, c} $.  By the definition of the crystal structure
\begin{equation} \label{eq:Yspgam}
\tilde{f}_p y_{\gamma, c} = z_{p, k-2} y_{\gamma, c} = \frac{\prod_{q \con p} y_{q,k-1}}{y_{p,k-2} y_{p, k}} \prod_j y^{\langle \gamma, \alpha_j \rangle}_{j, c - h(\pm w^{-1} \alpha_j) + \langle \gamma, \alpha_j \rangle}
\end{equation}
where $ k = c - h(w^{-1} \alpha_p) + 1$

We will now check that $ y_{s_p \gamma, c} $ is given by the formula in the statement of the Proposition.  Choose $ j \in I $. Let us consider the occurrences of $ y_{j, b} $ in $ y_{s_p \gamma, c} $.

\textbf{Case 1, $j = p $:}  In this case, we see by (\ref{eq:Yspgam}) that the factor of the form $ y_{p,b} $ occurring in $y_{s_p \gamma, c} $ is $y_{p,k-2}^{-1} $.  Since
$$
k-2 = c - h(w^{-1} \alpha_p) - 1 = c - h(-w^{-1} s_p \alpha_p) + \langle s_p \gamma, \alpha_p \rangle
$$
so the formula is correct for factors of the form $ y_{p,b} $.

\textbf{Case 2, $a_{pj} = 0 $:}  In this case, the factors of the form $ y_{j, b} $ in $ y_{\gamma,c} $ are the same as those in $ y_{s_p \gamma, c} $ and it is easy to see that this is true in the right hand side of the formula.  Thus the formula is correct for factors of the form $ y_{j,b} $.

\textbf{Case 3, $ j \con p $:} In this case, we see that since
$$ \langle s_p \gamma, \alpha_j \rangle = \langle \gamma - \alpha_p, \alpha_j \rangle  = \langle \gamma, \alpha_j \rangle + 1$$
we see that $ \langle \gamma, \alpha_j \rangle $ is either $ 0 $ or $ -1 $.  Let us split into two cases.

\textbf{Case 3a, $ \langle \gamma, \alpha_j \rangle = 0 $:} In this case, we see that by (\ref{eq:Yspgam}), the factor of the form $ y_{j, b} $ occurring in $ y_{s_p\gamma, c} $ is $ y_{j, k-1} $.  Now, by Lemma \ref{le:simple}, we see that $ w^{-1} \alpha_j $ is a simple root, and thus
$$
h(w^{-1} s_p \alpha_j) = h(w^{-1} \alpha_p + w^{-1} \alpha_j) = h(w^{-1} \alpha_p) - 1
$$
and thus
$$
k-1 = c - h(w^{-1} s_p \alpha_j) + 1
$$
which shows that the formula is correct for factors of the form $y_{j,b} $.

\textbf{Case 3b, $\langle \gamma, \alpha_j \rangle = -1 $:}  In this case, the right hand side of the formula shows that there should be no factor of $ y_{j,b} $ in $ y_{s_p \gamma, c} $.  This is also what we compute using (\ref{eq:Yspgam}), since the factor $ y_{j,k-1} $ cancels the factor $  y_{j, c - h(-w^{-1} \alpha_j) -1}$.  To see this, we need to show that $ k -1 = c - h(-w^{-1} \alpha_j)-1$ which is equivalent to showing that
$$
h(-w^{-1} \alpha_j) = h(w^{-1} \alpha_p) - 1
$$
By Lemma \ref{le:simple}, $w^{-1} s_p \alpha_j $ is a simple root and thus
$$
w^{-1} s_p \alpha_j = w^{-1} \alpha_j + w^{-1} \alpha_p
$$
implies the previous equation.
\end{proof}

\begin{Lemma} \label{le:simple}
Fix $ w \in W, i, k \in I $ and let $ \beta = w \alpha_k $.  Assume $  s_j w > w $ for all $ j \ne i $ and that $ \langle \varpi_i, \beta \rangle = 0 $. Then $ \beta $ is a simple root.
\end{Lemma}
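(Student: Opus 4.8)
The plan is to reduce the statement to standard facts about minimal-length coset representatives for the parabolic subgroup $ W_{\hat i} := \langle s_j : j \neq i \rangle \subseteq W $. First I would unpack the two hypotheses. By the usual length criterion, $ s_j w > w $ is equivalent to $ w^{-1}\alpha_j $ being a positive root, so the assumption ``$ s_j w > w $ for all $ j \neq i $'' says precisely that $ w^{-1}\alpha_j > 0 $ for every $ j \neq i $; equivalently, $ w $ is the minimal-length element of its left coset $ W_{\hat i} w $. For the second hypothesis, recall that $ s_\beta $ acts on coweights by $ s_\beta(\varpi_i) = \varpi_i - \langle \varpi_i, \beta \rangle \beta^\vee $, so $ \langle \varpi_i, \beta \rangle = 0 $ is equivalent to $ s_\beta $ fixing $ \varpi_i $; since the stabiliser of the fundamental coweight $ \varpi_i $ in $ W $ is exactly $ W_{\hat i} $, we conclude $ s_\beta \in W_{\hat i} $.

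The crux is then a one-line length count. Since $ \beta = w\alpha_k $ we have $ s_\beta = s_{w\alpha_k} = w s_k w^{-1} $, hence $ s_\beta w = w s_k $. On one hand, $ w s_k $ has length $ \ell(w) \pm 1 $, being $ w $ times a simple reflection. On the other hand, $ s_\beta \in W_{\hat i} $ while $ w $ is the minimal representative of $ W_{\hat i} w $, so by the standard additivity of length along parabolic cosets $ \ell(s_\beta w) = \ell(s_\beta) + \ell(w) $. Comparing the two expressions gives $ \ell(s_\beta) \in \{1,-1\} $, and since lengths are non-negative (and $ s_\beta \neq e $) this forces $ \ell(s_\beta) = 1 $. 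Thus $ s_\beta $ is a simple reflection and $ \beta = \pm\alpha_m $ for some $ m \in I $. Finally, $ \beta = \pm\alpha_i $ is impossible because $ s_\beta $ fixes $ \varpi_i $, so $ m \neq i $; and $ \beta = -\alpha_m $ would give $ w^{-1}\alpha_m = w^{-1}(-\beta) = -\alpha_k < 0 $, contradicting the first paragraph. Hence $ \beta = \alpha_m $ is a simple root.

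I do not expect a serious obstacle: the single idea is to pass from $ \beta $ to the reflection $ s_\beta $ and exploit that it lies in the parabolic $ W_{\hat i} $ for which $ w $ is a minimal coset representative, after which the length bookkeeping is forced. A more direct attack — bounding the height of $ \beta $, or inducting on $ \ell(w) $ — seems distinctly more awkward. All the auxiliary facts invoked (the criterion $ \ell(s_j w) > \ell(w) \Leftrightarrow w^{-1}\alpha_j > 0 $, the characterisation and length-additivity of minimal left-coset representatives, and $ \operatorname{Stab}_W(\varpi_i) = W_{\hat i} $) are standard, so the write-up should be very short.
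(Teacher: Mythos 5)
Your proof is correct, and it takes a genuinely different route from the paper's. Both arguments begin the same way, translating $s_jw>w$ for $j\ne i$ into $w^{-1}\alpha_j\in\Delta_+$, but they diverge from there. The paper works directly with the root $\beta$: since $\langle\varpi_i,\beta\rangle=0$ its $\alpha_i$-coefficient vanishes, so $\beta=\sum_{j\ne i}n_j\alpha_j$ with all $n_j$ of one sign; applying $w^{-1}$ and then pairing with $\rho^\vee$ gives $1=\sum_{j\ne i}n_j\langle\rho^\vee,w^{-1}\alpha_j\rangle$ with every $\langle\rho^\vee,w^{-1}\alpha_j\rangle$ a positive integer, which forces a single $n_r=1$ and hence $\beta=\alpha_r$. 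You instead pass to the reflection $s_\beta=ws_kw^{-1}$, observe that $\langle\varpi_i,\beta\rangle=0$ puts $s_\beta$ in the stabiliser $W_{\hat i}$ of $\varpi_i$, and use length-additivity over the coset $W_{\hat i}w$ (for which $w$ is the minimal representative) together with $s_\beta w=ws_k$ to conclude $\ell(s_\beta)=1$; the final sign and $m\ne i$ checks are handled correctly. The paper's computation is self-contained apart from elementary root combinatorics, while yours leans on two standard Coxeter facts (length-additivity for minimal coset representatives and $\operatorname{Stab}_W(\varpi_i)=W_{\hat i}$) but is arguably more conceptual and transfers verbatim to any Coxeter/root-system setting. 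One cosmetic remark: $W_{\hat i}w$ is a right coset of $W_{\hat i}$, not a left coset, though the fact you invoke is of course the correct one.
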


\begin{proof}
For any $ j \ne i $, $ s_j w > w $ and thus  $ \alpha_j \in w \Delta_+$.  Hence $ w^{-1} \alpha_j $ is a positive root.

Now, since $ \langle \varpi_i, \beta \rangle = 0 $, we can write $ \beta = \sum_{j \ne i} n_j \alpha_j $ with either all $ n_j \ge 0 $ or all $n_j \le 0$.  Applying $ w^{-1} $, we see that
$$
\alpha_k = w^{-1} \beta = \sum_{j \ne i} n_j w^{-1} \alpha_j.
$$
Applying $ \rho^\vee $ to both sides, we see that $ 1 = \sum_{j \ne i} n_j \langle \rho^\vee, w^{-1} \alpha_j \rangle $.  Since all $ w^{-1} \alpha_j $ are positive roots, $ \langle \rho^\vee, w^{-1} \alpha_j \rangle > 0 $ for all $ j$.  Thus, there exists $ r $, such that $ n_r = 1 $ and $ n_j = 0 $ for $ j \ne r $.  Thus we conclude that $ \beta = \alpha_r $ is a simple root.
\end{proof}

\section{Monomials as highest weights}

\subsection{The Yangian and the shifted Yangian}

\begin{Definition}
\label{Definition: Yangian}
The Yangian $Y$ is the $\C$-algebra with generators $E_i^{(r)}, F_i^{(r)}, H_i^{(r)}$ for $i\in I$, $r\in \Z_{>0}$, and relations
\begin{align*}
[H_i^{(s)}, H_j^{(s)}] &= 0,  \\
[E_i^{(r)}, F_j^{(s)}] &=  \delta_{ij} H_i^{(r+s-1)}, \\
[H_i^{(1)}, E_j^{(s)}] &=  a_{ij} E_j^{(s)}, \\
[H_i^{(r+1)},E_j^{(s)}] - [H_i^{(r)}, E_j^{(s+1)}] &= \frac{a_{ij}}{2} (H_i^{(r)} E_j^{(s)} + E_j^{(s)} H_i^{(r)}) , \\
[H_i^{(1)}, F_j^{(s)}] &= -a_{ij} F_j^{(s)}, \\
[H_i^{(r+1)},F_j^{(s)}] - [H_i^{(r)}, F_j^{(s+1)}] &= -\frac{a_{ij}}{2} (H_i^{(r)} F_j^{(s)} + F_j^{(s)} H_i^{(r)}) , \\
[E_i^{(r+1)}, E_j^{(s)}] - [E_i^{(r)}, E_j^{(s+1)}] &= \frac{a_{ij}}{2} (E_i^{(r)} E_j^{(s)} + E_j^{(s)} E_i^{(r)}), \\
[F_i^{(r+1)}, F_j^{(s)}] - [F_i^{(r)}, F_j^{(s+1)}] &= -\frac{a_{ij}}{2} (F_i^{(r)} F_j^{(s)} + F_j^{(s)} F_i^{(r)}),\\
i \neq j, N = 1 - a_{ij} \Rightarrow
\operatorname{sym} &[E_i^{(r_1)}, [E_i^{(r_2)}, \cdots [E_i^{(r_N)}, E_j^{(s)}]\cdots]] = 0 \\
i \neq j, N = 1 - a_{ij} \Rightarrow
\operatorname{sym} &[F_i^{(r_1)}, [F_i^{(r_2)}, \cdots [F_i^{(r_N)}, F_j^{(s)}]\cdots]] = 0
\end{align*}
where $\operatorname{sym}$ denotes symmetrization over the indices $r_1,\ldots, r_N$.
\end{Definition}

There is an embedding $U\g \hookrightarrow Y$, as the subalgebra generated by the modes $X^{(1)}$.  We will use the notation $Y^>, Y^0, Y^<$ to denote the (unital) subalgebras of $Y$ generated by the $E_i^{(r)}$, the $H_i^{(r)}$, and the $F_i^{(r)}$, respectively.  Denote $Y^\geq = Y^> Y^0$ and $Y^\leq = Y^< Y^0$.

Define the \textbf{shifted Yangian} $ Y_\mu $ to be the subalgebra of $ Y $ generated by all $ E_i^{(s)} $, all $ H_i^{(s)} $ and $ F_i^{(s)} $ for $ s > \mu_i $.

Following \cite{KWWY}, we define a PBW basis for $Y$ as follows. Fix any order on the Dynkin diagram.  Then, for each positive root $\alpha$ we define $\check{\alpha}$ to be the smallest simple root, such that $\hat{\alpha} = \alpha - \check{\alpha}$ is a postive root.  Inductively, we define
$$ E_\alpha^{(r)} = [E_{\hat{\alpha}}^{(r)}, E_{\check{\alpha}}^{(1)}], \ \ F_\alpha^{(r)} = [F_{\hat{\alpha}}^{(r)}, F_{\check{\alpha}}^{(1)}] $$
This can be made compatible with $Y_\mu \subset Y$: for $s\leq \langle \mu, \alpha\rangle$ we define $F_\alpha^{(s)}$ as above, while for $s>\langle \mu, \alpha\rangle$ we define
$$ F_\alpha^{(s)} = [F_{\hat{\alpha}}^{(s-\langle \mu, \check{\alpha}\rangle)}, F_{\check{\alpha}}^{(\langle \mu, \check{\alpha}\rangle +1)} ] $$

\begin{Remark}
$Y$ and $Y_\mu$ are the filtered versions of the Yangians appearing in  \cite{KWWY}; the filtration on $Y$ is defined by $\deg X^{(r)} = r$, for $X = E_\alpha, F_\alpha, H_i$.  The filtration on $Y_\mu \subset Y$ is defined by restriction.
\end{Remark}

\begin{Proposition}[\mbox{\cite[Proposition 3.11]{KWWY}}] \label{Prop: PBW}\mbox{}
\begin{enumerate}
\item Ordered monomials in $E_\alpha^{(r)}, H_i^{(r)}, F_\alpha^{(r)}$, for $r > 0$, form a basis for $Y$,
\item Ordered monomials in $E_\alpha^{(r)}, H_i^{(r)}, F_\alpha^{(s)}$, for $r > 0, s>\langle \mu, \alpha\rangle$, form a basis for $Y_\mu$.
\end{enumerate}
\end{Proposition}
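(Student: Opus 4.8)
The plan is to prove this as a Poincar\'e--Birkhoff--Witt theorem, in the two classical halves --- spanning and linear independence --- establishing part (1) first and then deducing part (2). For both halves I would work with the auxiliary \emph{loop filtration} on $Y$ defined by $\deg E_i^{(r)} = \deg F_i^{(r)} = \deg H_i^{(r)} = r-1$ (distinct from the filtration of the Remark, but a genuine algebra filtration: each defining relation is either loop-homogeneous or lowers loop-degree). For the spanning half of (1), the idea is to rewrite an arbitrary word in the generators into ordered form. The ``difference'' relations such as $[E_i^{(r+1)},E_j^{(s)}] - [E_i^{(r)},E_j^{(s+1)}] = \tfrac{a_{ij}}{2}(E_i^{(r)}E_j^{(s)} + E_j^{(s)}E_i^{(r)})$, together with the Serre relations (which for $i\not\con j$, $i\neq j$ already force $E_i$ and $E_j$ to commute) and the inductive definition of the root vectors, let one express any product $E_i^{(r)}E_j^{(s)}$ as a linear combination of \emph{ordered} monomials in the $E_\alpha^{(r)}$ of the same or smaller loop-degree: this is the statement that $Y^{>}$ is spanned by ordered monomials in root vectors, proved by induction on height exactly as the PBW spanning result for $U(\mathfrak{n}_+)$ built from simple-root generators, and symmetrically for $Y^{<}$. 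The relation $[E_i^{(r)},F_j^{(s)}] = \delta_{ij}H_i^{(r+s-1)}$ and the $[H,E]$, $[H,F]$, $[H,H]$ relations then let me move every $F$-factor to the right, every $E$-factor to the left, and sort the $H$-factors, each time modulo terms of strictly smaller loop-degree; an induction on loop-degree shows ordered monomials in $E_\alpha^{(r)}, H_i^{(r)}, F_\alpha^{(r)}$ span $Y$.

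For linear independence in (1), I would pass to $\gr Y$ and take leading loop-degree parts of all the defining relations. Now the degrees match exactly --- e.g.\ $[\bar E_i^{(r)},\bar F_j^{(s)}] = \delta_{ij}\bar H_i^{(r+s-1)}$ since $(r-1)+(s-1) = (r+s-1)-1$, the difference relations degenerate to the loop-homogeneity identity $[\bar E_i^{(r+1)},\bar E_j^{(s)}] = [\bar E_i^{(r)},\bar E_j^{(s+1)}]$, the relation $[\bar H_i^{(1)},\bar E_j^{(s)}] = a_{ij}\bar E_j^{(s)}$ recovers the Cartan action, the $\bar H$'s commute, and the Serre relations persist. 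Hence $E_i^{(r)}\mapsto e_i\otimes t^{r-1}$, and similarly for $F$ and $H$, extends to a surjective algebra homomorphism $U(\g\otimes\C[t]) \twoheadrightarrow \gr Y$, under which $\bar E_\alpha^{(r)},\bar H_i^{(r)},\bar F_\alpha^{(r)}$ correspond to $e_\alpha\otimes t^{r-1}, h_i\otimes t^{r-1}, f_\alpha\otimes t^{r-1}$ and the ordered-monomial spanning set above corresponds to the standard PBW basis of $U(\g\otimes\C[t])$. It then remains to show this surjection is an isomorphism, i.e.\ to bound $\dim \gr Y$ from below. This is the crux of the argument and the only genuinely non-formal input; I would supply it either by citing Levendorskii's theorem that $\gr Y\cong U(\g\otimes\C[t])$ for the Drinfeld new presentation, or directly by producing enough $Y$-modules --- for instance arbitrary tensor products of evaluation modules pulled back along $U\g\hookrightarrow Y$ with generic spectral parameters --- on which the ordered monomials of each fixed loop-degree act by linearly independent operators. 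Granting this, ordered monomials in $Y$ have linearly independent symbols, hence are independent, which together with spanning proves (1).

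For (2), I would first check by induction on the height of $\alpha$ that the modified root vectors $F_\alpha^{(s)}$ with $s>\langle\mu,\alpha\rangle$ genuinely lie in $Y_\mu$: writing $\check\alpha=\alpha_i$, one has $F_{\check\alpha}^{(\langle\mu,\check\alpha\rangle+1)} = F_i^{(\mu_i+1)}\in Y_\mu$, while $F_{\hat\alpha}^{(s-\langle\mu,\check\alpha\rangle)}\in Y_\mu$ since $s-\langle\mu,\check\alpha\rangle > \langle\mu,\hat\alpha\rangle$. A short computation gives that $F_\alpha^{(s)} = [F_{\hat\alpha}^{(s-\langle\mu,\check\alpha\rangle)}, F_{\check\alpha}^{(\langle\mu,\check\alpha\rangle+1)}]$ has loop-degree $s-1$, with leading symbol $\pm f_\alpha\otimes t^{s-1}$, exactly as in the unshifted case. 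Consequently the symbols of the generators of $Y_\mu$ listed in (2) span the subspace $\mathfrak p = (\mathfrak{n}_+\otimes\C[t]) \oplus (\h\otimes\C[t]) \oplus \bigoplus_{\alpha>0}\bigoplus_{k\ge\langle\mu,\alpha\rangle}\C\, f_\alpha t^k$ of $\g\otimes\C[t]$, which is a Lie subalgebra (the one nontrivial check is that $[e_\alpha t^j, f_\beta t^k]$ with $\gamma=\beta-\alpha>0$ lies in $\C f_\gamma t^{j+k}$ and $j+k\ge k\ge\langle\mu,\beta\rangle\ge\langle\mu,\gamma\rangle$). Then the ordered monomials of (2) have symbols forming the PBW basis of $U(\mathfrak p)\subseteq U(\g\otimes\C[t])$, so they are linearly independent, and they span $Y_\mu$ by rerunning the spanning argument of the first paragraph inside $Y_\mu$, keeping track that only $F_i^{(s)}$ with $s>\mu_i$ --- hence only $F_\alpha^{(s)}$ with $s>\langle\mu,\alpha\rangle$ --- ever occur. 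Throughout, I expect the main obstacle to be the lower bound on $\dim\gr Y$ in the second paragraph; everything else is careful bookkeeping with the relations.
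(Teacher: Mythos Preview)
The paper does not give a proof of this proposition at all: it is stated with a citation to \cite[Proposition 3.11]{KWWY} and used as a black box. So there is no ``paper's own proof'' to compare against here.

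Your sketch is a reasonable outline of how such a PBW theorem is proved in the cited source (and in the Yangian literature generally): pass to the associated graded for the loop filtration, identify $\gr Y$ with $U(\g\otimes\C[t])$, and read off independence from classical PBW; then handle $Y_\mu$ by checking that the symbols of its generators span a genuine Lie subalgebra $\mathfrak p\subset \g\otimes\C[t]$. A few remarks. First, you correctly flag the only real content, namely the lower bound on $\gr Y$ (equivalently, that the surjection $U(\g\otimes\C[t])\twoheadrightarrow\gr Y$ is injective); in \cite{KWWY} this is obtained by comparing with $\O(G_1[[t^{-1}]])$ rather than via modules or Levendorskii, which is the route most consonant with the present paper (cf.\ Theorem \ref{Thm:KWWY lifts}). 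Second, be a little careful in part (2): the elements $F_\alpha^{(s)}$ for $s>\langle\mu,\alpha\rangle$ used in the statement are the \emph{modified} root vectors, which need not coincide with the unshifted $F_\alpha^{(s)}$ from part (1) as elements of $Y$ --- only their leading symbols agree. Your argument survives this because you only use the symbols, but the spanning step for $Y_\mu$ then needs to be phrased as ``the symbols of ordered monomials in the modified generators give the PBW basis of $U(\mathfrak p)$, hence these monomials are independent and, by a dimension count in each filtered piece against the already-known basis of $Y$, they span $Y_\mu$'' rather than by rerunning the straightening argument verbatim.
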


$Y$ is a Hopf algebra, see for example Chapter 12 in \cite{CP2}. The coproduct on $Y$ is defined by
\begin{align*}
\Delta(X^{(1)}) & = X^{(1)}\otimes 1 + 1\otimes X^{(1)}, \\
\Delta(H_i^{(2)}) &= H_i^{(2)}\otimes 1 + H_i^{(1)}\otimes H_i^{(1)} + 1\otimes H_i^{(2)} + \sum_{\beta >0} c_\beta F_\beta^{(1)} \otimes E_\beta^{(1)}
\end{align*}
for some constants $c_\beta$.  Moreover $Y_\mu$ is a left coideal in $Y$, by Lemma \ref{Lemma: coideal} below.

\subsection{The truncated shifted Yangians}
\label{subsection: The truncated shifted Yangians}

Let $ \mu $ be a dominant coweight such that $ \mu \le \lambda $.  Thus we can write $ \lambda - \mu = \sum m_i \alpha_i^\vee $ for $ m_i \in \N $.

Given an integral set of parameters $ \bR = \{ R_i \}_{i \in I} $ of weight $\lambda$, we define power series $ r_i(u) $ by the formula
\begin{equation} \label{eq:rfromc}
r_i(u) = u^{-\lambda_i} \prod_{c \in R_i}\left( u - \tfrac{1}{2}c\right) \frac{ \prod_{j \con i} (1- \frac{1}{2} u^{-1})^{m_j}}{(1- u^{-1})^{m_i}}
\end{equation}

Then we use these to define elements $ A_i^{(s)} \in Y_\mu $ for $ i \in I, s \ge 1 $ by the formula
\begin{equation} \label{eq: def of A}
H_i(u) = r_i(u) \frac{ \prod_{j \con i} A_j(u - \frac{1}{2})}{A_i(u) A_i(u - 1) }
\end{equation}
where
$$ H_i(u) = 1+ \sum_{s>0} H_i^{(s)} u^{-s}, \quad A_i(u) = 1 + \sum_{s>0} A_i^{(s)} u^{-s}$$
These elements $ A_i^{(s)} $ are uniquely defined, by Lemma 2.1 in \cite{GKLO}.

Finally, we define $ I^\lambda_\mu $ for the 2-sided ideal of $ Y_\mu $ generated by $ A_i^{(s)} $ for $ i \in I, s > m_i $ and we define
$ Y^\lambda_\mu(\bR) = Y_\mu / I^\lambda_\mu $.  Note that the elements $ A_i^{(s)} $ depends on $ \bR $, as does the ideal $ I^\lambda_\mu $.

\begin{Remark}
Throughout this paper, we will make frequent use of formal series: many calculations involving Yangians are simplified by working with series in $Y((u^{-1}))$.
\end{Remark}

\subsection{Verma modules for shifted Yangians}
The algebra $ Y_\mu $ has a well-known highest weight theory.  Let $ J = \{ J_i(u) \}_{i\in I} $ be a collection of power series $ J_i(u) \in 1 + u^{-1}\mathbb C[[u^{-1}]] $.
Let $M_\mu(J) $ be the universal $ Y_\mu $-module generated by a vector $ \one $ such that
\begin{equation}\label{Verma-equation}
 H_i(u) \one = J_i(u) \one , \quad E_i^{(s)} \one = 0.
\end{equation}
As usual, these modules satisfy a universal property: they represent the functor sending a $ Y_\mu $-module $M$ to the vector subspace $$\left\{m\in M : H_i(u) m = J_i(u) m, E_i^{(s)} m = 0\right\}$$
of highest weight vectors of weight $J$.

\subsection{Finite-dimensional quotients of Vermas for shifted Yangians}
The following theorem generalizes Corollary 7.10 from \cite{BK}.
\begin{Theorem} \label{th:finitedimquotient}
$M_\mu(J) $ has a finite-dimensional quotient if and only if there exist monic polynomials $ P_i, Q_i \in \mathbb C[u] $ such that the degree of $ Q_i$ is $ \mu_i $ and
$$
J_i(u) = \frac{P_i(u+1)}{P_i(u)}  \frac{Q_i(u)}{u^{\mu_i}}
$$
\end{Theorem}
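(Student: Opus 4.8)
The plan is to reduce the statement to rank one, $ \g = \mathfrak{sl}_2 $, where it is the shifted analogue of Drinfeld's finite-dimensionality criterion (and is essentially the $ A_1 $ case of \cite[Corollary 7.10]{BK}), and then to bootstrap back to general $ \g $. For each $ i\in I $ the modes $ E_i^{(r)}, H_i^{(r)} $ (for $ r\ge 1 $) and $ F_i^{(s)} $ (for $ s>\mu_i $) generate a subalgebra $ Y^{(i)}\subseteq Y_\mu $; since $ \g $ is simply-laced ($ a_{ii}=2 $), the relations of Definition~\ref{Definition: Yangian} involving only node $ i $ are exactly the defining relations of the shifted $ \mathfrak{sl}_2 $-Yangian $ Y_{\mu_i} $, and the PBW theorem (Proposition~\ref{Prop: PBW}) identifies $ Y^{(i)} $ with $ Y_{\mu_i}(\mathfrak{sl}_2) $, the series $ H_i(u) $ corresponding to its Cartan series.

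For the ``only if'' direction, suppose $ M_\mu(J) $ has a nonzero finite-dimensional quotient $ V $, and let $ v $ be the image of $ \one $. For each $ i $ the submodule $ Y^{(i)}\cdot v\subseteq V $ is a finite-dimensional highest weight module for $ Y^{(i)}\cong Y_{\mu_i}(\mathfrak{sl}_2) $ of highest weight $ J_i(u) $, so $ M_{\mu_i}(J_i) $ has a finite-dimensional quotient. The rank one case then provides monic $ P_i, Q_i\in\C[u] $ with $ \deg Q_i=\mu_i $ and $ J_i(u)=\tfrac{P_i(u+1)}{P_i(u)}\tfrac{Q_i(u)}{u^{\mu_i}} $ (that $ \deg Q_i=\mu_i $ also follows directly by comparing leading terms, since both $ \tfrac{P_i(u+1)}{P_i(u)} $ and $ J_i(u) $ lie in $ 1+u^{-1}\C[[u^{-1}]] $). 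This is the desired form.

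For the ``if'' direction, write $ J_i(u)=J_i'(u)J_i''(u) $ with $ J_i'(u)=\tfrac{P_i(u+1)}{P_i(u)} $ and $ J_i''(u)=\tfrac{Q_i(u)}{u^{\mu_i}} $. By Drinfeld's classification of finite-dimensional representations of the Yangian $ Y $, there is a finite-dimensional $ Y $-module $ V $ with highest weight vector $ v $ satisfying $ H_i(u)v=J_i'(u)v $ for all $ i $ (the irreducible attached to the Drinfeld polynomials $ (P_i) $). There is also a one-dimensional $ Y_\mu $-module $ \C\bb $ on which every $ E_i^{(s)} $ and $ F_i^{(s)} $ acts by $ 0 $ and $ H_i(u) $ acts by $ J_i''(u) $: all defining relations hold trivially (both sides act by $ 0 $ or by scalars) except that $ [E_i^{(r)},F_j^{(s)}]=\delta_{ij}H_i^{(r+s-1)} $ requires $ H_i^{(t)}=0 $ for $ t>\mu_i $ (as $ r\ge 1 $, $ s>\mu_i $), which is ensured since $ Q_i(u)/u^{\mu_i} $ is a polynomial in $ u^{-1} $ of degree $ \mu_i $ with constant term $ 1 $. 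As $ Y_\mu $ is a left coideal in $ Y $, the space $ V\otimes\C\bb $ is a finite-dimensional $ Y_\mu $-module. By the standard triangular form of the Yangian coproduct, the correction terms in $ \Delta(E_i^{(s)}) $ and $ \Delta(H_i(u)) $ beyond the diagonal parts $ E_i^{(s)}\otimes 1+1\otimes E_i^{(s)} $, respectively $ H_i(u)\otimes H_i(u) $, all carry a factor of positive $ E $-degree in the second tensor leg (compare the formula for $ \Delta(H_i^{(2)}) $ recorded above); since $ \bb $ is annihilated by every $ E_i^{(s)} $, these corrections kill $ v\otimes\bb $, whence $ E_i^{(s)}(v\otimes\bb)=(E_i^{(s)}v)\otimes\bb=0 $ and $ H_i(u)(v\otimes\bb)=(H_i(u)v)\otimes(H_i(u)\bb)=J_i'(u)J_i''(u)(v\otimes\bb)=J_i(u)(v\otimes\bb) $. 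Therefore $ Y_\mu\cdot(v\otimes\bb) $ is a nonzero finite-dimensional quotient of $ M_\mu(J) $.

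The step demanding the most care is this last coproduct computation: justifying the triangular form of $ \Delta $ on all higher modes (not merely the $ H_i^{(1)},H_i^{(2)} $ formulas in the text) is cleanest via the RTT / quantum-loop presentation of $ Y $ and its Hopf structure, or else one simply invokes the standard fact (see e.g.\ \cite{CP2}) that a tensor product of highest weight modules carries a highest weight vector of the product weight, transported to the coideal setting. The other genuinely nontrivial ingredient, unless \cite[Corollary 7.10]{BK} is quoted outright, is the rank one case itself --- that finite-dimensionality of a highest weight $ Y_{\mu_i}(\mathfrak{sl}_2) $-module forces the $ \mathfrak{sl}_2 $-string structure on $ H_i(u) $, and hence its rationality with the prescribed $ u^{\mu_i} $ in the denominator --- which is the Chari--Pressley argument adjusted for the shift.
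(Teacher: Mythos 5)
Your proposal is correct and follows essentially the same route as the paper's proof: both directions reduce to the same two ingredients. For "only if," you pass to the $\mathfrak{sl}_2$-subalgebra at node $i$ (the paper's $Y_{\mu,i} \cong Y_{\mu_i}(\mathfrak{sl}_2)$) and invoke the Brundan--Kleshchev rank-one criterion; for "if," you tensor the finite-dimensional $Y$-module attached to the Drinfeld polynomials $(P_i)$ with a one-dimensional $Y_\mu$-module carrying the weight $Q_i(u)/u^{\mu_i}$ and use the coideal property of $Y_\mu \subset Y$ plus the triangular form of $\Delta$ (the content of Lemma~\ref{Lemma: coideal}) to see that $v\otimes \bb$ is highest weight of weight $J$. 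The only cosmetic differences are that the paper works with the Verma $M_P$ and its finite-dimensional quotient rather than the irreducible $V$ directly, and obtains the one-dimensional module as a quotient of a $Y$-Verma $M_Q$ rather than constructing it from scratch as you do; neither changes the substance of the argument.
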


\begin{Remark} \label{rmk:finitedimquotient}
The polynomials $P_i, Q_i$ are unique if we impose that $\operatorname{gcd}(P_i, Q_i) = 1$.
\end{Remark}

\begin{Lemma} \label{Lemma: coideal}
The comultiplication $\Delta:Y\rightarrow Y\otimes Y$ restricts to a map
$$ \Delta: Y_\mu \rightarrow Y \otimes Y_\mu$$
\end{Lemma}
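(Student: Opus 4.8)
The plan is to reduce the claim to a statement about generators. Since $Y_\mu$ is generated (as a unital subalgebra) by all $E_i^{(s)}$, all $H_i^{(s)}$, and by $F_i^{(s)}$ for $s > \mu_i$, and since $\Delta$ is an algebra homomorphism, it suffices to check that $\Delta$ sends each of these generators into $Y \otimes Y_\mu$. For the $E_i^{(s)}$ and $H_i^{(s)}$ this is automatic once we know the general shape of the coproduct: a standard fact about the Yangian coproduct (see \cite{CP2}, Chapter 12, or the explicit formulas in \cite{KWWY}) is that $\Delta(E_i^{(s)}) \in E_i^{(s)} \otimes 1 + Y \otimes Y^\geq$ and $\Delta(H_i^{(s)}) \in Y \otimes Y^0$, and $Y^\geq, Y^0 \subseteq Y_\mu$ always (these subalgebras only involve $E$'s and $H$'s, all of which lie in $Y_\mu$). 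So the content of the lemma is entirely in the $F$-generators.

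The main step is therefore to show $\Delta(F_i^{(s)}) \in Y \otimes Y_\mu$ for $s > \mu_i$. First I would establish the base case $s = \mu_i + 1$ by an explicit analysis. The cleanest route is to use the generating-series form of the coproduct: if one writes $F_i(u) = \sum_{s>0} F_i^{(s)} u^{-s}$ and uses the known formula for $\Delta(F_i(u))$ in terms of the series $F_j(u), H_j(u), E_j(u)$ (for instance via the Gauss decomposition / the $RTT$ presentation, or the formulas recorded in \cite{CP2}), one sees that $\Delta(F_i(u))$ has leading $F$-term $1 \otimes F_i(u)$ plus corrections whose right-hand tensor factors are built from $F_j$'s of equal or higher mode together with $H$'s and $E$'s; tracking the mode bookkeeping shows that the minimal right-tensor-factor mode of $F_j$ that appears alongside the relevant left factors is $\geq \mu_j + 1$ whenever we extract the coefficient of $u^{-s}$ with $s > \mu_i$. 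Then I would pass from $F_i^{(s)}$ to all PBW generators $F_\alpha^{(s)}$ for $s > \langle \mu, \alpha \rangle$, using the inductive definition $F_\alpha^{(s)} = [F_{\hat\alpha}^{(s - \langle \mu, \check\alpha\rangle)}, F_{\check\alpha}^{(\langle \mu, \check\alpha\rangle + 1)}]$: since $\Delta$ is a homomorphism, $\Delta(F_\alpha^{(s)}) = [\Delta(F_{\hat\alpha}^{(\cdot)}), \Delta(F_{\check\alpha}^{(\cdot)})]$, and $Y \otimes Y_\mu$ is closed under the commutator bracket (it is a subalgebra of $Y \otimes Y$), so the claim propagates. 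Finally, invoking the PBW basis of Proposition \ref{Prop: PBW}(2), every element of $Y_\mu$ is a sum of ordered monomials in $E_\alpha^{(r)}, H_i^{(r)}, F_\alpha^{(s)}$ with $s > \langle \mu, \alpha\rangle$, each of which $\Delta$ sends into $Y \otimes Y_\mu$, completing the proof.

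I expect the main obstacle to be the mode bookkeeping in the base case: verifying that every correction term in $\Delta(F_i^{(s)})$ really has its $F$-part in modes $> \mu_i$ on the right-hand factor requires either a careful induction on $s$ using the defining relations of $Y$ (peeling off $\Delta(F_i^{(s+1)}) - \Delta(F_i^{(s)})$ via the relation $[H_i^{(r+1)}, F_j^{(s)}] - [H_i^{(r)}, F_j^{(s+1)}] = -\tfrac{a_{ij}}{2}(\cdots)$, which is awkward), or a clean derivation from the coproduct on the full series $F_i(u)$. The series approach is preferable: one uses that the shifted Yangian $Y_\mu$ can be characterized inside $Y$ in terms of which coefficients of the series $F_i(u)$ (equivalently, of the lower-triangular Gauss factor) are retained, and that the coproduct formula expresses $\Delta(F_i(u))$ through a product/sum of Gauss factors in which the lower-triangular part on the second tensor slot only ever contributes $F$-modes $> \mu_i$. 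Making this precise — i.e. pinning down exactly which series identity to cite or prove — is where the real work lies; everything after the base case is formal.
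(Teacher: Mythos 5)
Your reduction to generators is correct, and you rightly isolate the $F_i^{(s)}$ with $s>\mu_i$ as the only nontrivial case. (Two small inaccuracies along the way: your claimed containment $\Delta(H_i^{(s)})\in Y\otimes Y^{0}$ is false --- already $\Delta(H_i^{(2)})$ contains $\sum_{\beta>0}c_\beta F_\beta^{(1)}\otimes E_\beta^{(1)}$; the correct statement is $\Delta(H_i^{(r)})\in Y^{\leq}\otimes Y^{\geq}$, which still suffices since $Y^{\geq}\subset Y_\mu$. Also the PBW step at the end is redundant: $Y_\mu$ is by definition generated by the listed elements, so checking generators already finishes the proof.)

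The genuine gap is the step you yourself flag as ``the real work,'' and your framing of it is misdirected. You set out to bound the modes of the $F_j$'s occurring in the right tensor factors of the correction terms of $\Delta(F_i^{(s)})$; in fact no $F$'s occur there at all. The correct, and much stronger, statement is
$$\Delta(F_i^{(r)})\in 1\otimes F_i^{(r)}+Y^{<}\otimes Y^{\geq},$$
so the corrections land in $Y\otimes Y^{\geq}\subset Y\otimes Y_\mu$ for trivial reasons and no mode bookkeeping is needed. The paper proves this by induction on $r$ using the element $S_i=H_i^{(2)}-\tfrac{1}{2}(H_i^{(1)})^2$, which satisfies $[S_i,F_i^{(r)}]=-2F_i^{(r+1)}$ (and $[S_i,E_i^{(r)}]=2E_i^{(r+1)}$) and has the explicit coproduct $\Delta(S_i)=S_i\otimes 1+1\otimes S_i+\sum_{\beta>0}c_\beta F_\beta^{(1)}\otimes E_\beta^{(1)}$. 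Bracketing $\Delta(S_i)$ against $1\otimes F_i^{(r)}+Y^{<}\otimes Y^{\geq}$ yields $1\otimes F_i^{(r+1)}$ plus terms whose right factors involve only $[E_\beta^{(1)},F_i^{(r)}]\in Y^{\geq}$ and products inside $Y^{\geq}$, so the containment propagates; the analogous induction gives $\Delta(E_i^{(r)})\in E_i^{(r)}\otimes 1+Y^{\leq}\otimes Y^{>}$, and the $H$-case follows from $\Delta(H_i^{(r)})=[\Delta(E_i^{(r)}),\Delta(F_i^{(1)})]$. Your two suggested routes --- peeling off modes via the defining relations, or a Gauss-decomposition series identity --- are respectively an awkward version of this same induction and an uncarried-out alternative; as written, without the observation that the right-hand factors of the corrections contain no $F$'s, the proof is incomplete.
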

This lemma is generalized extensively in \cite[Section 4]{FKPRW}.
\begin{proof}
It suffices to show that the generators of $Y_\mu$ have coproducts in $Y \otimes Y_\mu$.

As in \cite{CP}, the element $S_i = H_i^{(2)} - \tfrac{1}{2}(H_i^{(1)})^2$ satisfies $[S_i, E_i^{(r)}] = 2 E_i^{(r+1)}$ and
$$ \Delta(S_i) = S_i\otimes 1 + 1\otimes S_i + \sum_{\beta>0} c_\beta F_\beta^{(1)}\otimes E_\beta^{(1)} $$
for some constants $c_\beta$.  Using this a simple induction using the fact that $\Delta$ is an algebra homomorphism shows that
$$\Delta(E_i^{(r)}) \in  E_i^{(r)} \otimes 1 + Y^\leq\otimes Y^>$$
Similarly $\Delta(F_i^{(r)}) \in 1\otimes F_i^{(r)}+Y^< \otimes Y^\geq$.  Note that $[F_i^{(s)}, Y^>] \subset Y^\geq$ for any $s$, and so $\Delta(H_i^{(r)}) = [\Delta(E_i^{(r)}), \Delta(F_i^{(1)})] \in Y^\leq \otimes Y^\geq$.
\end{proof}

Let $P_i, Q_i$ be as in the statement of the theorem.  Consider the Verma module $M_Q$ for $Y_\mu$, with generator $\mathbf{1}_Q$, such that
$$ H_i(u)\mathbf{1}_Q = \frac{Q_i(u)}{u^{\mu_i}} \mathbf{1}_Q = \left(1+Q_i^{(1)}u^{-1}+\ldots+Q_i^{(\mu_i)} u^{-\mu_i}\right) \mathbf{1}_Q $$
Consider also the Verma module $M_P$ for the full Yangian $Y$, corresponding to the Drinfeld polynomials $P_i(u)$.  By \cite{D} we know that $M_P$ has a finite dimensional quotient as a $Y$--module, and hence also as a module for $Y_\mu\subset Y$.  By the previous Lemma, we may form the $Y_\mu$--module
$$ M_{P,Q} = M_P \otimes M_Q $$

\begin{Proposition}\label{drinfeld poly prod}
As modules for $Y_\mu$,
\begin{enumerate}
\item $M_Q$ has a 1-dimensional quotient $\C \mathbf{1}_Q$
\item $M_{P,Q}$ has $M_P\otimes \C \mathbf{1}_Q $ as a subquotient
\item $M_{P,Q}$ has a finite dimensional quotient
\end{enumerate}
\end{Proposition}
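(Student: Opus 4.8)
The plan is to prove the three parts of Proposition \ref{drinfeld poly prod} in order, since each part builds on the previous one together with the coideal structure established in Lemma \ref{Lemma: coideal}.

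\textbf{Part (1).} First I would show that $M_Q$ has a one-dimensional quotient $\C\mathbf{1}_Q$ as a $Y_\mu$-module. Since $M_Q$ is a Verma module for the full Yangian $Y$ with $H_i(u)\mathbf{1}_Q = \frac{Q_i(u)}{u^{\mu_i}}\mathbf{1}_Q$ and $E_i^{(s)}\mathbf{1}_Q = 0$, the line $\C\mathbf{1}_Q$ is a $Y^{\ge}$-submodule. As a $Y_\mu$-module, $M_Q$ is spanned by $F_\alpha^{(s)}$-monomials applied to $\mathbf{1}_Q$ with $s > \langle\mu,\alpha\rangle$, by the PBW basis of Proposition \ref{Prop: PBW}(2). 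The key point is that the only $H$-modes appearing with nontrivial action on $\mathbf{1}_Q$ are $H_i^{(1)},\dots,H_i^{(\mu_i)}$, which are all in $U\g$ (more precisely in the shifted Yangian as well); and $F_i^{(s)}$ for $s>\mu_i$ acts as zero on $\mathbf{1}_Q$ because of the form of $H_i(u)$: using the relation $[E_i^{(1)},F_i^{(s)}] = H_i^{(s)}$ (with appropriate shift) together with $H_i^{(s)}\mathbf{1}_Q = 0$ for $s > \mu_i$ and $E_i^{(1)}\mathbf{1}_Q = 0$, one checks that $F_i^{(s)}\mathbf{1}_Q$ generates a submodule on which $H$ still acts by the same series — so the submodule generated by all $F_i^{(s)}\mathbf{1}_Q$, $s > \mu_i$, is a proper submodule with quotient $\C\mathbf{1}_Q$. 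I would phrase this as: the map $M_\mu(J_Q) \twoheadrightarrow \C\mathbf{1}_Q$ factors through the universal property, where $J_Q$ is the corresponding highest weight.

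\textbf{Parts (2) and (3).} For (2): by Lemma \ref{Lemma: coideal}, $\Delta$ restricts to $Y_\mu \to Y\otimes Y_\mu$, so $M_{P,Q} = M_P\otimes M_Q$ is a $Y_\mu$-module via this coideal structure, with $Y$ acting on the left tensor factor and $Y_\mu$ on the right. Applying Part (1) to the second factor, $\C\mathbf{1}_Q$ is a $Y_\mu$-quotient of $M_Q$; but to get a \emph{sub}quotient $M_P \otimes \C\mathbf{1}_Q$ I would instead observe that $M_P \otimes \mathbf{1}_Q \subseteq M_P \otimes M_Q$ is a $Y^{\ge}$-stable subspace (since $E_i^{(s)}$ and $H_i^{(s)}$ preserve it through the coideal coproduct formulas $\Delta(E_i^{(r)}) \in E_i^{(r)}\otimes 1 + Y^{\le}\otimes Y^{>}$, etc.), and that the $F_\alpha^{(s)}$-action generates from it a $Y_\mu$-submodule, the quotient of which by the submodule generated by the image of $M_P\otimes(\text{submodule of }M_Q)$ is $M_P\otimes\C\mathbf{1}_Q$ with the diagonal-type $Y_\mu$-action. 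Then (3) follows: $M_P$ has a finite-dimensional quotient as a $Y$-module by Drinfeld \cite{D}, hence as a $Y_\mu$-module; tensoring with the one-dimensional $\C\mathbf{1}_Q$ keeps it finite-dimensional; and since by (2) this is a subquotient of $M_{P,Q}$, and $M_{P,Q}$ is a highest weight module for $Y_\mu$ with highest weight $J_i(u) = \frac{P_i(u+1)}{P_i(u)}\frac{Q_i(u)}{u^{\mu_i}}$ (computed from the coproduct on $H_i(u)$), the image of $M_{P,Q}\twoheadrightarrow M_\mu(J)$-type quotient gives the finite-dimensional quotient of $M_\mu(J)$ — which is exactly the content needed for Theorem \ref{th:finitedimquotient}.

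\textbf{Main obstacle.} The hard part will be Part (2): identifying precisely which subquotient of $M_{P,Q}$ equals $M_P \otimes \C\mathbf{1}_Q$ with the correct $Y_\mu$-action, and verifying the $Y_\mu$-module structure is well-defined on this subquotient. This requires careful bookkeeping with the coproduct formulas — in particular tracking that $\Delta(F_\alpha^{(s)})$ for $s > \langle\mu,\alpha\rangle$ lands in $Y^{<}\otimes Y_\mu$-type terms so that the relevant subspaces are genuinely $Y_\mu$-stable — and checking compatibility with the PBW filtration. A secondary subtlety is computing the highest weight $J$ of $M_{P,Q}$ explicitly from $\Delta(H_i(u))$; the leading terms $\Delta(H_i^{(1)})$ and $\Delta(H_i^{(2)})$ are given, but one needs the full series identity $\Delta(H_i(u))$ acting on $\mathbf{1}_P\otimes\mathbf{1}_Q$ to produce the product $\frac{P_i(u+1)}{P_i(u)}\cdot\frac{Q_i(u)}{u^{\mu_i}}$, which I would extract from the known formula for the coproduct on the Cartan series in $Y$ (the $F_\beta^{(1)}\otimes E_\beta^{(1)}$ correction terms kill $\mathbf{1}_P\otimes\mathbf{1}_Q$ since $E_\beta^{(1)}\mathbf{1}_Q = 0$, so only the group-like part survives).
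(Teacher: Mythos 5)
Your proposal follows essentially the same route as the paper: part (1) rests on the identity $E_i^{(r)}F_j^{(s)}\mathbf{1}_Q = \delta_{ij}H_i^{(r+s-1)}\mathbf{1}_Q = 0$ for $r\geq 1$, $s>\mu_i$, and parts (2)--(3) follow from the coideal property $\Delta(Y_\mu)\subseteq Y\otimes Y_\mu$ together with Drinfeld's finite-dimensionality result for $M_P$ and the observation that the $F_\beta^{(1)}\otimes E_\beta^{(1)}$ correction terms in the coproduct annihilate $\mathbf{1}_P\otimes\mathbf{1}_Q$, exactly as in the paper. One slip to fix: $F_i^{(s)}\mathbf{1}_Q$ is \emph{not} zero in the Verma module $M_Q$ (which is free over $Y^{<}$); rather, these vectors generate the proper $Y_\mu$-submodule you quotient by, and it is only in the resulting one-dimensional quotient that they act by zero.
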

\begin{proof}
For (1) note that
$$ E_i^{(r)} F_j^{(s)} \mathbf{1}_Q =  \delta_{ij} H_i^{(s+r-1)} \mathbf{1}_Q =0$$
for $r\geq 1$ and $s>\mu_i$, so the $Y_\mu$--submodule generated by $\textrm{span}_{\C}\{F_i^{(s)} \mathbf{1}_Q : i\in I, s>\mu_i \}$ is proper, and the corresponding quotient is $\C \mathbf{1}_Q$.  (2) and (3) follow from (1) and the previous lemma.
\end{proof}

\begin{proof}[Proof of Theorem \ref{th:finitedimquotient}]
If $P_i, Q_i$ exist as in the statement of the Theorem, then $M_\mu(J)$ has a finite dimensional quotient $M_{P,Q}$ by the previous Proposition.

Conversely if $N$ is a finite dimensional quotient of $M_\mu(J)$, then for any fixed $i\in I$ consider the cyclic module $N_i = Y_{\mu, i} \cdot \mathbf{1}\subset N$, where
$$Y_{\mu,i} = \langle H_i^{(r)}, E_i^{(r)}, F_i^{(s)}: r>0, s>\mu_i \rangle $$
is the $i$-th ``root shifted Yangian''. Note that $Y_{\mu,i} \cong Y_{\mu_i}(sl_2)$.  Since $N_i$ is finite dimensional, Remark 7.11 from \cite{BK} applies and we obtain the existence of $P_i,Q_i$ as claimed.
\end{proof}

\subsection{Verma modules for truncated shifted Yangians} \label{Verma modules for truncated shifted Yangians}

Given $J $ as above, we can form $M^\lambda_\mu(J, \bR) $ the corresponding Verma module for $ Y^\lambda_\mu(\bR) $.  More precisely, we define
$$
M^\lambda_\mu(J, \bR) := Y^\lambda_\mu(\bR) \otimes_{Y_\mu} M_\mu(J)
$$

By Frobenius reciprocity, these modules have the same universal property in the category of $Y^\lambda_\mu(\bR)$-modules that $M_\mu(J)$ did in the category of all $Y_\mu$-modules: there is a unique map $M_\mu^\lambda(J) \to M$ sending $\one \mapsto m\in M$ if and only if $H_i(u) m = J_i(u) m$ and $E_i^{(s)} m = 0$.

The situation for $ Y^\lambda_\mu(\bR) $ is quite different in a crucial way, though.   For most choices of $J $, we will get $ M^\lambda_\mu(J,\bR) = 0 $.  In other words, for most $J$, there are no non-zero highest weight vectors of weight $J $ in any $ Y^\lambda_\mu(\bR)$-module.  The basic question that we will attempt to answer is the following.
\begin{Question}
For which $ J $ is $ M^\lambda_\mu(J,\bR) $ non-zero?
\end{Question}

If $ M^\lambda_\mu(J,\bR) $ is non-zero, then we say that $ J $ is a highest weight for $ Y^\lambda_\mu(\bR) $.  We write $ H_\mu^\lambda(\bR) $ for the set of highest weights of $ Y^\lambda_\mu(\bR) $.

Let us be more precise. Denote the polynomial ring
\begin{equation*}
 \Cartan = \C[H_i^{(s)}: i\in I, s\geq 1]
\end{equation*}  
We can identify $\Cartan \subset Y_\mu$ as the subalgebra generated by the elements of the same name.  With this in mind, we remark that we also have $\Cartan = \C[A_i^{(s)}: i\in I, s\geq 1]$.  We define a projection $\Pi\colon Y_\mu\to  \Cartan $ induced by the PBW basis ordered so that $F$'s come before $H$'s come before $E$'s.

Note that $ J $ as above can be thought of as a map from $ \Cartan$ to $ \C $, taking $ H_i^{(s)} $ to the coefficient of $ u^{-s} $ in $ J_i $.
\begin{Proposition}
\label{Prop: B algebra and Vermas}
  The Verma module $M^\la_\mu(J)$ is non-zero if and only if the weight $J$ kills $\Pi(I^\la_\mu)$.
\end{Proposition}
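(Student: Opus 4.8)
The plan is to make $M^\lambda_\mu(J,\bR)$ completely explicit and then to read off whether its generating vector survives. Since $Y^\lambda_\mu(\bR) = Y_\mu/I^\lambda_\mu$, base change along the quotient map gives $M^\lambda_\mu(J,\bR) = Y^\lambda_\mu(\bR)\otimes_{Y_\mu}M_\mu(J) = M_\mu(J)/I^\lambda_\mu M_\mu(J)$. Because $M_\mu(J) = Y_\mu\one$ is cyclic and $I^\lambda_\mu$ is a two-sided ideal, $I^\lambda_\mu M_\mu(J) = I^\lambda_\mu\one$, a linear subspace of $M_\mu(J)$; and the quotient is cyclic, generated by the image of $\one$. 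So the first reduction is: $M^\lambda_\mu(J,\bR)\neq 0$ if and only if $\one\notin I^\lambda_\mu\one$.

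Next I would bring in the root-lattice grading on $Y_\mu$ (with $E_i^{(r)}$ of weight $\alpha_i$, $F_i^{(r)}$ of weight $-\alpha_i$, $H_i^{(r)}$ of weight $0$), under which all the defining relations, and hence the PBW basis of Proposition \ref{Prop: PBW}, are homogeneous. Declaring $\one$ to have weight $0$, the map $x\mapsto x\one$ is weight-preserving, $M_\mu(J)$ is free over $Y^<_\mu$ on $\one$, and the ordered monomial $F_{\beta_1}^{(s_1)}\cdots F_{\beta_k}^{(s_k)}\one$ has weight $-\sum_t\beta_t$; in particular the weight-zero piece of $M_\mu(J)$ is exactly $\C\one$. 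Since $I^\lambda_\mu$ is generated by the $A_i^{(s)}$, which lie in $\Cartan = Y^0_\mu$ and are thus homogeneous of weight zero, $I^\lambda_\mu$ is a homogeneous ideal and $I^\lambda_\mu\one$ is a graded subspace. Hence $\one\in I^\lambda_\mu\one$ iff the weight-zero part of $I^\lambda_\mu\one$ is nonzero, and that part is precisely the set of weight-zero components of the vectors $x\one$, $x\in I^\lambda_\mu$.

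The crux is then the identity: the weight-zero component of $x\one$ is $J(\Pi(x))\,\one$. Expand $x$ in the PBW basis with $F$'s to the left of $H$'s to the left of $E$'s, $x = \sum_m c_m\,F_m H_m E_m$. Each $E_\alpha^{(r)}$ is an iterated commutator of the $E_i^{(s)}$, and $E_i^{(s)}\one = 0$, so $F_m H_m E_m\one = 0$ unless $E_m$ is empty; when $E_m$ is empty, the commuting $H_i^{(r)}$ act on $\one$ by the scalars encoded in $J$, so $F_m H_m\one = J(H_m)\,F_m\one$, whose weight-zero component is nonzero only when $F_m$ is also empty. Summing, the weight-zero component of $x\one$ is $J\bigl(\sum_{m:\,E_m,F_m\text{ empty}}c_m H_m\bigr)\one = J(\Pi(x))\,\one$, by the definition of $\Pi$. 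Therefore $\one\in I^\lambda_\mu\one$ iff $J(\Pi(x))\neq 0$ for some $x\in I^\lambda_\mu$, i.e. iff $J$ does not vanish on $\Pi(I^\lambda_\mu)$, which is exactly the claimed equivalence.

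The only delicate point is the bookkeeping with the weight grading: one must check that the PBW basis of $Y_\mu$ is compatible with it (immediate from the inductive definitions of $E_\alpha^{(r)}$ and $F_\alpha^{(r)}$), that $I^\lambda_\mu$ is homogeneous (because its generators $A_i^{(s)}$ are weight-zero, using $\Cartan = \C[A_i^{(s)}]$), and that $\Pi$ is precisely the projection recording the $H$-part of a PBW monomial whose $E$- and $F$-parts are trivial, so that the scalar by which a product of $H_i^{(r)}$ acts on $\one$ is literally the value of $J$ on its image under $\Pi$. With these in place the argument is essentially formal, so I do not anticipate a real obstacle.
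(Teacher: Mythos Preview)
Your proof is correct and follows essentially the same route as the paper's: reduce to $\one\notin I^\lambda_\mu\one$, use a weight grading on $M_\mu(J)$ to isolate the $\C\one$ component, and verify via the PBW basis that the projection of $x\one$ onto $\C\one$ is $J(\Pi(x))\one$. The only cosmetic difference is that you use the full root-lattice grading while the paper uses the coarser $\rho^\vee$-grading; either suffices since in both cases the degree-zero part of $M_\mu(J)$ is exactly $\C\one$.
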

\begin{proof}
First note that
$$
M^\lambda_\mu(J,\bR) \ne 0 \ \text{ iff } \ I^\lambda_\mu M_\mu(J) \ne M_\mu(J) \ \text{ iff } \ \one \notin I^\lambda_\mu M_\mu(J)
$$
Now, we have a decomposition $ M_\mu(J) = \bigoplus_{k \in \N} M_\mu(J)_{-k} $ according to the eigenvalues for the action of $ \rho^\vee $ and we have that $ M_\mu(J)_0 = \C \one $.  Let $ \psi : M_\mu(J) \rightarrow \C \one $ be the projection induced by this direct sum decomposition.   Thus, $ M^\lambda_\mu(J,\bR) \ne 0 $ if and only if  $ \psi(I^\lambda_\mu M_\mu(J)) = 0 $.  Finally, note that since $ I^\lambda_\mu $ is a 2-sided ideal and $ M_\mu(J) $ is generated by $ \one$, we have $ I^\lambda_\mu M_\mu(J) = I^\lambda_\mu \one$.

Now, for any element $ a \in Y_\mu $, we have $ \psi(a \one) = J(\Pi(a)) \one $.  Thus, we see that $ \psi(I^\lambda_\mu \one) = 0 $ if and only if $J $ kills $\Pi(I^\lambda_\mu) $.  Combining with the above paragraph yields the desired result.
\end{proof}

Thus, an object of key interest is the quotient
$$B=\Cartan/\Pi(I^\la_\mu),$$ 
since the maximal ideals of this
algebra are in bijection with $J$ such that $M^\la_\mu(J)\neq 0$.

Let $ A = \oplus_{k \in \Z} A_k $ be a $ \Z$-graded algebra.  In
    \cite[\S 5.1]{BLPWgco}, the {\bf $B$-algebra} of $ A $ is defined to be
$$ B(A) = A_0 / \sum_{k>0} A_{-k} A_k $$
In the case of $ Y_\mu^\la(\bR)$, we will define a $ \Z$-grading by the action of $\rho^\vee$ (so all $ E_i^{(s)} $ have degree 1 and all $ F_i^{(s)} $ have degree $ -1$).
\begin{Proposition} \label{pr:2Balgebras}
  The quotient $B$ is canonically isomorphic to the $B$-algebra of $Y^\lambda_\mu$.
\end{Proposition}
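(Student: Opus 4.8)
The plan is to push everything down to the PBW picture for $Y_\mu$. First observe that $I^\lambda_\mu$ is homogeneous for the $\rho^\vee$-grading: it is generated by the elements $A_i^{(s)}$, which lie in $\Cartan$ and hence have degree $0$. So $Y^\lambda_\mu(\bR)=Y_\mu/I^\lambda_\mu$ is graded with $\big(Y^\lambda_\mu(\bR)\big)_k=(Y_\mu)_k/(I^\lambda_\mu)_k$. Since $A_{-k}A_k$ in $Y^\lambda_\mu(\bR)$ is the image of $(Y_\mu)_{-k}(Y_\mu)_k$ under the quotient map, unravelling the definition of the $B$-algebra gives
$$B\big(Y^\lambda_\mu(\bR)\big)=(Y_\mu)_0\big/\big((I^\lambda_\mu)_0+\mathcal{N}\big),\qquad \mathcal{N}:=\textstyle\sum_{k>0}(Y_\mu)_{-k}(Y_\mu)_k.$$
I will show that the composite $\Cartan\hookrightarrow(Y_\mu)_0\twoheadrightarrow B\big(Y^\lambda_\mu(\bR)\big)$ is surjective with kernel exactly $\Pi(I^\lambda_\mu)$; this yields the (manifestly canonical) isomorphism $B=\Cartan/\Pi(I^\lambda_\mu)\xrightarrow{\sim}B\big(Y^\lambda_\mu(\bR)\big)$.

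The crux is that $\Pi$ vanishes on $\mathcal{N}$. Let $Y_\mu^<\subset Y_\mu$ be the subalgebra generated by the $F_\alpha^{(s)}$; by Proposition \ref{Prop: PBW} it has basis the ordered monomials in the $F_\alpha^{(s)}$, and its $\rho^\vee$-degrees are $\le 0$ with degree-$0$ part equal to $\C$. Thus the span $Y_\mu^{<,+}$ of the nonempty ordered $F$-monomials is a two-sided ideal of $Y_\mu^<$. I claim $\Pi\big(Y_\mu^{<,+}\cdot Y_\mu\big)=0$: for a nonempty ordered $F$-monomial $f$ and a PBW monomial $\phi\eta\epsilon$ (ordered monomials in the $F$'s, $H$'s, $E$'s respectively), associativity gives $f\cdot(\phi\eta\epsilon)=(f\phi)\eta\epsilon$, and $f\phi\in Y_\mu^{<,+}$ expands as a sum of nonempty ordered $F$-monomials; right-multiplying by $\eta\epsilon$ then writes $f\cdot(\phi\eta\epsilon)$ as a sum of PBW monomials each with nonempty $F$-part, so $\Pi$ annihilates it. Since a PBW monomial of strictly negative degree necessarily has a nonempty $F$-part, we get $(Y_\mu)_{<0}\subseteq Y_\mu^{<,+}\cdot Y_\mu$, hence $\mathcal{N}\subseteq Y_\mu^{<,+}\cdot Y_\mu$ and $\Pi(\mathcal{N})=0$. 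The same ``factor off the $F$-part'' idea shows more: any degree-$0$ PBW monomial that is not a pure-$H$ monomial must have both its $F$-part $\phi$ and its $E$-part $\epsilon$ nonempty, with $\rho^\vee$-degrees $-k$ and $k$ respectively for some $k>0$, so it equals $\phi\cdot(\eta\epsilon)\in(Y_\mu)_{-k}(Y_\mu)_k\subseteq\mathcal{N}$. Therefore $(Y_\mu)_0=\Cartan\oplus\ker\big(\Pi|_{(Y_\mu)_0}\big)$ with $\ker\big(\Pi|_{(Y_\mu)_0}\big)\subseteq\mathcal{N}$.

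Surjectivity is now immediate: $(Y_\mu)_0=\Cartan+\ker\big(\Pi|_{(Y_\mu)_0}\big)\subseteq\Cartan+\mathcal{N}$. For the kernel, note first that $\Pi(I^\lambda_\mu)=\Pi\big((I^\lambda_\mu)_0\big)$, because $\Pi$ kills each graded piece $(Y_\mu)_k$ with $k\neq 0$ (which contains no pure-$H$ monomials). If $c\in\Cartan\cap\big((I^\lambda_\mu)_0+\mathcal{N}\big)$, write $c=y+n$ with $y\in(I^\lambda_\mu)_0$ and $n\in\mathcal{N}$; then $c=\Pi(c)=\Pi(y)+\Pi(n)=\Pi(y)\in\Pi(I^\lambda_\mu)$. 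Conversely, for $y\in(I^\lambda_\mu)_0$ we have $\Pi(y)=y-\big(y-\Pi(y)\big)$ with $y-\Pi(y)\in\ker\big(\Pi|_{(Y_\mu)_0}\big)\subseteq\mathcal{N}$, so $\Pi(y)\in\Cartan\cap\big((I^\lambda_\mu)_0+\mathcal{N}\big)$. Hence the kernel equals $\Cartan\cap\big((I^\lambda_\mu)_0+\mathcal{N}\big)=\Pi(I^\lambda_\mu)$, and the proposition follows.

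The only genuinely non-formal input is the vanishing $\Pi\big(Y_\mu^{<,+}\cdot Y_\mu\big)=0$, which uses the PBW theorem for $Y_\mu$ together with the fact that $Y_\mu^{<,+}$ is a two-sided ideal of $Y_\mu^<$ --- so that left multiplication by it cannot cancel the $F$-part of a PBW monomial. Everything else is bookkeeping with the grading. The one point to watch is that $\mathcal{N}$, which by the definition of the $B$-algebra has the lower-degree factor on the \emph{left}, lands in $Y_\mu^{<,+}\cdot Y_\mu$ (and not merely in $Y_\mu\cdot Y_\mu^{>,+}$): this works precisely because a negative-degree PBW monomial begins with an $F$. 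One could instead try to deduce the statement from Proposition \ref{Prop: B algebra and Vermas} together with the representation-theoretic characterization of the $B$-algebra in \cite{BLPWgco}, but that only matches the two algebras at the level of maximal spectra, not as (possibly non-reduced) algebras, so the direct computation above is the cleaner route.
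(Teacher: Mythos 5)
Your proof is correct and follows essentially the same route as the paper: realize $B(Y^\lambda_\mu(\bR))$ as the quotient of $(Y_\mu)_0$ by $(I^\lambda_\mu)_0+\sum_{k>0}(Y_\mu)_{-k}(Y_\mu)_k$, and identify the commutator part with $\ker(\Pi|_{(Y_\mu)_0})$ via the PBW basis. The paper's proof is just a terser version of this; your verification that $\Pi$ kills $\mathcal N$ (via the ideal $Y_\mu^{<,+}$) and that every non-pure-$H$ degree-zero PBW monomial lies in $\mathcal N$ supplies exactly the details the paper leaves implicit.
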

\begin{proof}
Let $Y_{\mu,k}$ be the space of elements of weight $k$ under the action of $\rho^\vee$.
  We can write the $B$-algebra as the quotient of  $Y_{\mu,0}$ by  $\sum_{k>0} Y_{\mu,-k} Y_{\mu,k} \cap I^\la_\mu$.  If we quotient by the former first,
  we precisely kill the kernel of $\Pi$ and we just get
  $\Cartan$, with the image of $I^\la_\mu$ being
  $\Pi(I^\la_\mu)$.  If we quotient by the latter first, we get the
  $B$-algebra as defined above.
\end{proof}
The same argument shows that the modules $M^\la_\mu(J, \bR)$ are the
{\bf standard modules} $\Delta_{J}$ as defined in \cite[\S
5.2]{BLPWgco}.  That is, $M^\la_\mu(J,\bR)$ is the parabolic induction
$$Y_\mu^\la(\bR)\otimes_{Y_\mu^\la(\bR)^\ge}\C_J,$$ where $\C_J$ is
the corresponding 1-dimensional representation of $Y_\mu^\la(\bR)^\ge$ factoring through $B$.

By Theorem 4.8 in \cite{KWWY} (cf. Theorem \ref{th:quantize} below), the algebra $Y^\lambda_\mu(\bR) $ is the quantization of a scheme supported on the affine Grassmannian slice $\Grlmbar$.  Thus the argument in the proof of Proposition 5.1 of \cite{BLPWgco} applies to deduce the following result.

\begin{Proposition}
The algebra $ B $ is finite-dimensional.
\end{Proposition}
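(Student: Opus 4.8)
The plan is to follow the outline sketched just before the statement: realize $B$ as the associated graded of a filtered algebra whose associated graded is, topologically, $\Grlmbar$, and then reduce finite-dimensionality to finiteness of a $\C^\times$-fixed point scheme. First I would use Proposition \ref{pr:2Balgebras} to replace $B$ by the $B$-algebra of $A := Y^\lambda_\mu(\bR)$ for the $\rho^\vee$-grading. The Yangian filtration ($\deg X^{(r)} = r$) is compatible with this grading, so it induces an exhaustive filtration $F_0 B \subseteq F_1 B \subseteq \cdots$ with $\one \in F_0 B$, and it suffices to show that $\gr B$ is finite-dimensional: then the filtration stabilizes and $\dim_\C B = \dim_\C \gr B < \infty$. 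Recall also that $\gr A$ is commutative, so everything below takes place in commutative algebra.

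Next I would compare $\gr B$ with a classical $B$-algebra. Passing to associated graded sends the ideal $\sum_{k>0} A_{-k}A_k$ to an ideal of $(\gr A)_0$ containing $\sum_{k>0}(\gr A)_{-k}(\gr A)_k$, which yields a surjection $B(\gr A) \twoheadrightarrow \gr B$; hence it is enough to prove $B(\gr A)$ is finite-dimensional. For this I would invoke the elementary identity that, for any commutative graded $\C$-algebra $R = \bigoplus_k R_k$ with associated $\C^\times$-action on $X = \Spec R$, one has $B(R) = R_0/\sum_{k>0} R_{-k}R_k = \C[X^{\C^\times}]$, the coordinate ring of the scheme-theoretic fixed locus (this is the left-hand side of the Hikita-type isomorphism of Section \ref{se:Hikita}). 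Applying this to $R = \gr A$ with $\C^\times$ a generic cocharacter of $T$ identifies $B(\gr A)$ with $\C[X^{\C^\times}]$.

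Finally I would check finiteness of $\C[X^{\C^\times}]$. The algebra $\gr A$ is finitely generated, so $X = \Spec \gr A$ is affine of finite type; by Theorem \ref{th:quantize} (Theorem 4.8 of \cite{KWWY}) it is supported on $\Grlmbar$, so $X$ and $\Grlmbar$ share the same underlying topological space, and $X^{\C^\times}$ is a finite-type closed subscheme of $X$ supported on $(\Grlmbar)^{\C^\times} = (\Grlmbar)^T$. Since $\Grlmbar$ lies inside an affine Schubert variety, which has only finitely many $T$-fixed points, $X^{\C^\times}$ is zero-dimensional and of finite type over $\C$, hence its coordinate ring is Artinian, hence finite-dimensional. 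Together with the previous two paragraphs this gives $\dim_\C B < \infty$. This is exactly the mechanism of the proof of Proposition 5.1 of \cite{BLPWgco}, transported to our possibly non-reduced setting.

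The only substantive inputs are that $\Spec \gr A$ is supported on $\Grlmbar$ — supplied by Theorem \ref{th:quantize} — and that $(\Grlmbar)^T$ is a finite set; everything else is bookkeeping with filtrations and the identity $B(R) = \C[X^{\C^\times}]$. The point where I would be slightly careful is that $\Spec \gr A$ need not be reduced, so I must use that a \emph{finite-type zero-dimensional} scheme has finite-dimensional coordinate ring, rather than just citing finiteness of a set of closed points; this is precisely what allows the argument to go through without any separate control over the nilpotents in $\gr A$.
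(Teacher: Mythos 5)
Your proof is the argument the paper intends: the paper's own proof is a one-line appeal to Theorem \ref{th:quantize} plus ``the argument in the proof of Proposition 5.1 of \cite{BLPWgco}'', and you have unwound exactly that mechanism (the surjection $B(\gr A)\twoheadrightarrow \gr B$, the identification $B(\gr A)\cong\C[(\Spec \gr A)^{\C^\times}]$, and the set-theoretic finiteness of the fixed locus inside $\overline{\Gr^\lambda}$). The one step I would not grant you is the assertion that $\gr A=\gr Y^\la_\mu(\bR)$ is finitely generated: Theorem \ref{th:quantize} only provides a surjection onto $\C[\Grlmbar]$ with nilpotent kernel, and finite generation of the possibly non-reduced ring $\gr A$ is not established in the paper (it is bound up with the reducedness conjecture of \cite{KWWY}); without it, a zero-dimensional affine scheme can indeed have infinite-dimensional coordinate ring, which is exactly the failure mode you were trying to rule out. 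The repair is cheap and comes from the other presentation of $B$: since $A_i^{(s)}\in I^\la_\mu$ for $s>m_i$ and $\Pi$ fixes these elements, $B=\Cartan/\Pi(I^\la_\mu)$ is a quotient of the finitely generated polynomial ring $\C[A_i^{(s)}: i\in I,\ 1\le s\le m_i]$, so $\gr B$ is a finitely generated commutative algebra; your surjection $B(\gr A)\twoheadrightarrow\gr B$ together with the finiteness of $(\Grlmbar)^{\C^\times}$ as a set shows that $\Spec\gr B$ is finite, and a finitely generated algebra with finite spectrum is Artinian. Hence $\dim_\C B\le\dim_\C\gr B<\infty$, and the rest of your argument stands unchanged.
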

Thus, we conclude the following basic result.
\begin{Corollary}
 For $\bR$ fixed,  there are only finitely many $J$ such that $M^\la_\mu(J,\bR)\neq 0$.
\end{Corollary}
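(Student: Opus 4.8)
The final statement to prove is the Corollary: for fixed $\bR$, there are only finitely many $J$ with $M^\lambda_\mu(J,\bR) \neq 0$. The plan is to deduce this immediately from the two preceding results. By Proposition~\ref{Prop: B algebra and Vermas}, $M^\lambda_\mu(J,\bR) \neq 0$ precisely when $J$, viewed as a character of $\Cartan$, kills $\Pi(I^\lambda_\mu)$; equivalently, $J$ factors through the quotient algebra $B = \Cartan/\Pi(I^\lambda_\mu)$. Thus the set of such $J$ is exactly the set of $\C$-algebra homomorphisms $B \to \C$, i.e. $\MaxSpec(B)$. By the Proposition just stated, $B$ is finite-dimensional over $\C$, and a finite-dimensional commutative $\C$-algebra has only finitely many maximal ideals (its $\MaxSpec$ is finite). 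Hence there are only finitely many admissible $J$, which is the claim.

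Since the Corollary is truly a one-line consequence, the only thing to spell out is the translation between "$J$" and "character of $B$." First I would recall that $J$ is precisely the data of a $\C$-algebra map $\Cartan \to \C$ sending $H_i^{(s)}$ to the $u^{-s}$-coefficient of $J_i(u)$ (this identification is set up in the excerpt just before Proposition~\ref{Prop: B algebra and Vermas}, using that $J_i \in 1 + u^{-1}\C[[u^{-1}]]$, so the constant term is automatically $1$ and imposes no constraint). Then I would invoke Proposition~\ref{Prop: B algebra and Vermas} to say $M^\lambda_\mu(J,\bR)\neq 0 \iff J(\Pi(I^\lambda_\mu)) = 0 \iff J$ descends to $B$. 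Finally, finite-dimensionality of $B$ forces $\MaxSpec(B)$ to be finite.

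There is essentially no obstacle here: the real content has already been extracted into Proposition~\ref{Prop: B algebra and Vermas} and the finite-dimensionality Proposition (whose proof in turn rests on Theorem~4.8 of \cite{KWWY} and the argument of \cite[Prop.~5.1]{BLPWgco}). If one wanted to be completely self-contained, the one routine point worth a sentence is why a finite-dimensional commutative $\C$-algebra $B$ has finite $\MaxSpec$: distinct maximal ideals $\mathfrak{m}_1,\dots,\mathfrak{m}_k$ are comaximal, so by CRT $B \twoheadrightarrow \prod B/\mathfrak{m}_i \cong \C^k$, forcing $k \le \dim_\C B$. The hard part of the whole circle of ideas — establishing that $B$ is finite-dimensional — is not re-proved here, since it is cited from the earlier results in the excerpt.
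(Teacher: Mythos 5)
Your proposal is correct and is exactly the argument the paper intends: the corollary is stated as an immediate consequence of Proposition~\ref{Prop: B algebra and Vermas} (identifying nonvanishing of $M^\la_\mu(J,\bR)$ with $J$ factoring through $B$) together with the finite-dimensionality of $B$, which forces $\MaxSpec(B)$ to be finite. Your extra remarks on the CRT argument and the translation between $J$ and characters of $\Cartan$ are routine and consistent with the paper.
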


\subsection{From highest weights to monomial crystals}
\label{subsec:highest}
Let $ J \in H_\mu^\lambda(\bR) $.  Note that each $ J_i $ is a rational function of $ u $ because of the formula relating $ H_j(u)$ and $ A_i(u) $ and because $A_i^{(s)} \one = 0$ for $s>m_i$.  Thus each $ J_i $ can be written as a product of linear factors and their inverses.  That is, there is a collection of multiplicities $a_{i,k}\in \Z$ for each $k\in \C$ such that
 $$
J_i(u) = u^{-\mu_i} \prod_k (u - \tfrac{1}{2} k)^{a_{i,k}}.
$$
We will prove later, in Proposition \ref{prop:integrality and parity}, that if $a_{i,k} \neq 0$ then we must have $k\in \Z$ and $i\in I_{\overline{k}}$ (see Section \ref{subsection:Notation}). Assuming this result, we can define $ y(J) = \prod_{i,k} y_{i,k}^{a_{i,k}} \in \B$ to be the Nakajima monomial obtained by converting any factor $ u - \tfrac{1}{2}b $ occurring in $ u^{\mu_i} J_i(u) $ into $ y_{i,b} $.
This leads us to the main conjecture of the paper, which is proved in type A as Corollary \ref{Cor:typeA}. (This is a reformulation of Conjecture \ref{co:intro} from the introduction.)

\begin{Conjecture} \label{co:main}
The map $ J \mapsto y(J) $ is a bijection between $ H_\mu^\lambda(\bR) $ and $ \B(\lambda, \bR)_\mu$, the $\mu$-weight set in $\B(\lambda,\bR)$.
\end{Conjecture}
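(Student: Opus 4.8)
The plan is to run the same two-step strategy as in type~A (Corollary~\ref{Cor:typeA}): translate the statement into a comparison of two ideals in $\Cartan$, and then handle the two inclusions separately. By Proposition~\ref{Prop: B algebra and Vermas} and the subsequent discussion of $B$-algebras, $H^\lambda_\mu(\bR) = \MaxSpec(B)$ with $B = \Cartan/\Pi(I^\lambda_\mu)$, and a point of $\MaxSpec B$ is the same thing as a tuple of monic polynomials $\mathsf A_i(u) = u^{m_i} + \mathsf A_i^{(1)} u^{m_i-1} + \cdots$ recording the eigenvalues of $A_i(u)$. First I would make the map $J \mapsto y(J)$ completely explicit. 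On $M_\mu(J)$ the weight-zero vector $\one$ is a simultaneous eigenvector for all $A_i^{(s)}$, so $A_i(u)\one = u^{-m_i}\mathsf A_i(u)\one$; substituting this into \eqref{eq: def of A} and using \eqref{eq:rfromc} gives
\[
J_i(u) \;=\; u^{-\mu_i}\,\frac{\prod_{c\in R_i}\bigl(u - \tfrac{1}{2} c\bigr)\ \prod_{j\con i}\prod_{c\in S_j}\bigl(u - \tfrac{1}{2}(c+1)\bigr)}{\prod_{c\in S_i}\bigl(u-\tfrac{1}{2} c\bigr)\bigl(u-\tfrac{1}{2}(c+2)\bigr)},
\]
where $S_i$ is the root multiset of $\mathsf A_i(u)$; the power of $u$ comes out to exactly $-\mu_i$ because $\mu = \lambda - \sum_j m_j\alpha_j^\vee$. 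Comparing the exponent of $(u-\tfrac{1}{2}k)$ with the exponent of $y_{i,k}$ identifies $y(J) = y_\bR z_\bS^{-1}$; in particular, granting Proposition~\ref{prop:integrality and parity}, $y(J)$ really lies in $\B$, has weight $\mu$, and in fact $|S_i| = m_i$ for every $i$. Injectivity of $J \mapsto y(J)$ is then automatic, since $J$ and $y(J)$ determine one another once $\lambda,\mu,\bR$ are fixed. So the whole content of the conjecture is the equality of subsets $V(\Pi(I^\lambda_\mu)) = V(\mathcal R)$ of $\MaxSpec\Cartan$, where $\mathcal R \subseteq \Cartan$ is the ideal defining the locus of tuples $\mathsf A_i(u)$ whose root multisets $\bS$ satisfy the combinatorial (inequality) characterization of $\B(\lambda,\bR)$ established in Section~\ref{se:ProofConjecture}.

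For the inclusion $H^\lambda_\mu(\bR) \subseteq \B(\lambda,\bR)_\mu$ I would use the partial description of $\Pi(I^\lambda_\mu)$ that is available for arbitrary $\g$ (Section~\ref{se:Balgebra}): one exhibits inside $\Pi(I^\lambda_\mu)$ the ``$qq$-system'' relations among the $\mathsf A_i(u)$ and checks that, read off on root multisets, they are exactly the containments $S_i + 2 \subseteq R_i \cup \bigcup_{j\con i}(S_j+1)$ of Lemma~\ref{Lemma: eq:contain} together with the remaining closure inequalities of Section~\ref{se:ProofConjecture}. The first family is already forced by pole-cancellation in the displayed formula for $J_i(u)$ --- equivalently by the requirement that $A_i^{(s)}$ annihilate $\one$ for $s > m_i$ --- while the rest should fall out of the further relations, just as in type~A.

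The reverse inclusion $\B(\lambda,\bR)_\mu \subseteq H^\lambda_\mu(\bR)$ --- surjectivity of $J \mapsto y(J)$, equivalently $\Pi(I^\lambda_\mu) \subseteq \sqrt{\mathcal R}$, i.e.\ $B$ is no smaller than $\Cartan/\mathcal R$ --- is where I expect the real difficulty to lie. In type~A this is precisely the step that exploits the isomorphism of $Y^\lambda_\mu(\bR)$ with a central quotient of a parabolic W-algebra \cite{WWY} and the resulting explicit presentation, and for general $\g$ there is no evident substitute. I see three plausible routes. (a) Produce a presentation of $Y^\lambda_\mu(\bR)$, or at least of $B$, that is uniform in simply-laced type, and compute $\Pi(I^\lambda_\mu)$ from it directly. (b) Construct, for every $p = y_\bR z_\bS^{-1} \in \B(\lambda,\bR)_\mu$, a nonzero $Y^\lambda_\mu(\bR)$-module of highest weight the associated $J$: for well-spaced (hence generic) $\bR$ one can try to realise $M^\lambda_\mu(J,\bR)$ as a tensor product of Verma modules for smaller truncated shifted Yangians along the coideal $\Delta\colon Y_\mu \to Y\otimes Y_\mu$ of Lemma~\ref{Lemma: coideal}, in the spirit of Proposition~\ref{drinfeld poly prod}, and then degenerate in $\bR$, the delicate point being to obtain exactly the monomials of $\B(\lambda,\bR)$ and nothing more. (c) Pass to the symplectic-dual side: prove the $B$-algebra form of Hikita's conjecture, $B \cong H^*_{\C^\times}(\fM(\bm,W))$ (Section~\ref{sec:conjecture-nakajima}), which via the Coulomb-branch description of $\Grlmbar$ from \cite{BFN} becomes a purely geometric statement, and then identify $\MaxSpec H^*_{\C^\times}(\fM(\bm,W))$ with the $\C^\times$-fixed loci of the quiver variety, which are indexed by $\B(\lambda,\bR)_\mu$. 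In all three approaches the crux is the same --- bounding $B$ from above, i.e.\ accounting for every element of $H^\lambda_\mu(\bR)$ --- and since the key input of the type-A argument does not generalise, I would expect route (c), or a hybrid of (b) and (c), to be the realistic path to the conjecture in general.
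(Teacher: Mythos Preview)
This statement is a \emph{conjecture} in the paper; the paper proves it only for $\g = \mathfrak{sl}_n$ (Corollary~\ref{Cor:typeA}), and your proposal is, correctly and honestly, a strategy rather than a proof. Your overall framing --- reduce to comparing $\MaxSpec B$ with the set of $\bS$ satisfying the regularity inequalities of Section~\ref{se:ProofConjecture}, handle the two inclusions separately --- matches the paper's architecture exactly, and your explicit computation of $y(J) = y_\bR z_\bS^{-1}$ and the injectivity argument are fine.

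There is, however, a substantive mischaracterization of how the type~A argument actually runs. You write that the reverse inclusion ``is precisely the step that exploits the isomorphism of $Y^\lambda_\mu(\bR)$ with a central quotient of a parabolic W-algebra \cite{WWY}''. It does not. The paper's type~A proof never invokes \cite{WWY}; the key type~A input is the RTT/quantum-determinant presentation of $Y(\mathfrak{gl}_n)$ (Section~5.6) and Molev's explicit commutator formula (Lemma~\ref{Lemma: Molev formula}), which together verify Conjecture~\ref{Conj: L is A-invariant} and hence give the full generating set $L^\lambda_\mu = Y S^\lambda_\mu$ (Corollary~\ref{CorollaryL=YS}). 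From there Proposition~\ref{multiset conditions} identifies $\Pi(\underline{G_\gamma(u)T_{\gamma,\gamma}(u)})$ with the series $H_q(u)$, and both inclusions then drop out simultaneously from the single combinatorial Theorem~\ref{Theorem: regular = product monomial crystal}. So in type~A there is no asymmetric ``hard direction'' resolved by an external structural theorem; the whole thing rests on having enough lifted minors, which is what fails outside type~A.

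Relatedly, your treatment of the forward inclusion in general type is too optimistic. The ``partial description of $\Pi(I^\lambda_\mu)$ available for arbitrary $\g$'' in Section~\ref{se:Balgebra} covers only minuscule $\varpi_i$, and the paper is explicit (paragraph after Corollary~\ref{Cor:typeA}) that for general $\g$ one would still need to \emph{produce} series $T_{\gamma,\gamma}(u)$ with the right projections $H_\gamma(u)$; Proposition~\ref{prop:integrality and parity} only yields the single containment $S_i+2 \subset R_i \cup \bigcup_{j\con i}(S_j+1)$, far short of all the regularity inequalities $E_q(p)\ge 0$. So ``the rest should fall out of the further relations, just as in type~A'' is not currently justified --- this direction is also open outside type~A. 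Of your three suggested routes, (c) is the one the paper itself highlights (Section~\ref{sec:conjecture-nakajima}, Conjecture~\ref{co:Nakajima2}), and the paper shows it is equivalent at the level of $\MaxSpec$ to the conjecture you are trying to prove.
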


We can reformulate the map $ J \mapsto y(J) $ as follows.  Suppose we have a Verma module $ M^\lambda_\mu(J,\bR) $ for $ Y^\lambda_\mu(\bR) $.  Let us consider the action of $ A_i^{(s)} $ on $ \one$.  Since $ A_i^{(s)} = 0 $ for $ s > m_i $, we can write
$$
A_i(u) \one = \prod_{k \in S_i} (1 - \tfrac{1}{2} k u^{-1} )  \one
$$
for some multiset $S_i $.
Thus we get a collection of multisets $ \bS = ( S_i )_{i \in I} $ which determine $J $ by
$$
J_i(u) = u^{-\mu_i}\prod_{c \in R_i} (u - \tfrac{1}{2}c) \frac{ \prod_{j \con i} \prod_{k \in S_j}  (u - \frac{1}{2}k - \frac{1}{2})}{\prod_{k \in S_i} (u - \frac{1}{2}k)(u - \frac{1}{2}k - 1)}
$$
From this, it is easy to see that $ y(J) = y_\bR z_\bS^{-1}$.

\begin{Remark}
 Throughout this paper, we assume that $ \bR $ is integral.  This is done in order to get a natural crystal structure on $ \B(\lambda, \bR) $ and in order to make contact with quiver varieties.

 However, the crystal $ \B(\varpi_i, c) $ is well-defined for any $ c \in \C $.  Thus $ \B(\lambda, \bR) $ can still be defined as a set (with weight function) for any complex parameters $ \bR $.  It's most natural to think of this set as a crystal for a sum $\mathfrak{g}\oplus\cdots \oplus \mathfrak{g}$ with one factor for each coset of $\C/2\Z$ which contains one of the parameters.  Conjecture \ref{co:main} thus makes perfect sense for complex parameters as well.  In fact our proof in type A (Corollary \ref{Cor:typeA}) applies equally well to prove the conjecture in this generality.
\end{Remark}

\subsection{Finite-dimensional simples} \label{se:finitedim}
Let $ J \in H^\lambda_\mu(\bR) $, so that $ M^\lambda_\mu(J,\bR) $ is a non-zero Verma module for $Y^\lambda_\mu(\bR) $.
As in Section \ref{Tsection}, let us write $ y(J) = y_\bT y_\bS^{-1} $ for two sequences of multisets $ \bS, \bT $.

\begin{Proposition}\label{fd <-> weakly decreasing}
$M^\lambda_\mu(J,\bR) $ has a finite-dimensional simple quotient if and only if there exists a weakly decreasing injective map $ S_i \rightarrow T_i $ for all $i $.
\end{Proposition}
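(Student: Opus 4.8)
The plan is to combine Theorem \ref{th:finitedimquotient} (the finite-dimensionality criterion for Verma modules over the shifted Yangian $Y_\mu$) with Lemma \ref{le:highweight} (the combinatorial description of highest-weight monomials in terms of injective weakly decreasing maps $S_i \to T_i$), reducing the problem to a matching of two criteria that are both phrased in terms of the same data $\bS,\bT$. The starting observation is that $M^\lambda_\mu(J,\bR)$ has a finite-dimensional simple quotient if and only if $M_\mu(J)$ does: indeed $M^\lambda_\mu(J,\bR)$ is a nonzero quotient of $M_\mu(J)$ (by Proposition \ref{Prop: B algebra and Vermas}), and since $M_\mu(J)$ has a unique maximal submodule, its unique simple quotient $L_\mu(J)$ is then also the unique simple quotient of $M^\lambda_\mu(J,\bR)$; so the two Verma modules have finite-dimensional simple quotients under exactly the same conditions. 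Thus we may work entirely with $M_\mu(J)$ and apply Theorem \ref{th:finitedimquotient}.

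Next I would translate the criterion of Theorem \ref{th:finitedimquotient} into the language of $\bS,\bT$. That theorem says $M_\mu(J)$ has a finite-dimensional quotient iff there exist monic polynomials $P_i,Q_i$ with $\deg Q_i = \mu_i$ and $J_i(u) = \tfrac{P_i(u+1)}{P_i(u)}\,\tfrac{Q_i(u)}{u^{\mu_i}}$. On the other hand, from Section \ref{subsec:highest} we have the explicit factorization $J_i(u) = u^{-\mu_i}\, y_\bT\!\leftrightarrow\! y_\bS$ rendered as
$$
J_i(u) = u^{-\mu_i} \prod_{b \in T_i}\left(u - \tfrac12 b\right) \prod_{b \in S_i}\left(u - \tfrac12 b\right)^{-1},
$$
using that $y(J) = y_\bT y_\bS^{-1}$. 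So I must show: such a factorization arises from monic $P_i$ (with $Q_i(u)/u^{\mu_i}$ the monic degree-$\mu_i$ part) precisely when there is an injective weakly decreasing map $S_i \to T_i$. The point is that $\tfrac{P_i(u+1)}{P_i(u)}$, when $P_i(u) = \prod(u - \tfrac12 a_r)$, equals $\prod_r \tfrac{u - \tfrac12 a_r + 1}{u - \tfrac12 a_r} = \prod_r \tfrac{u - \tfrac12(a_r - 2)}{u - \tfrac12 a_r}$; that is, each root of $P_i$ contributes a paired factor $(u - \tfrac12(a-2))/(u - \tfrac12 a)$, a numerator shifted by $-2$ (i.e. ``below'') its denominator. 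Matching this against $\prod_{b\in T_i}(u-\tfrac12 b)/\prod_{b\in S_i}(u-\tfrac12 b)$, after cancelling common factors, shows that the surviving denominator multiset (a sub-multiset of $S_i$) must inject into the surviving numerator multiset (a sub-multiset of $T_i$) by the map $a \mapsto a - 2$, and iterating these shifts of size $2$ produces exactly a weakly decreasing injection. The remaining factors of $T_i$ then form the roots of $Q_i$, which has the correct degree $\mu_i$ because $\deg y_\bT - \deg y_\bS = $ (weight of $J$ at $i$) forces $|T_i| - |S_i| = \mu_i$. Conversely, given a weakly decreasing injection $S_i \to T_i$, I build $P_i$ by taking, for each chain it determines, the product of linear factors interpolating between the image and the source in steps of $2$, and let $Q_i$ absorb the unmatched part of $T_i$.

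The main obstacle I anticipate is the bookkeeping in the two-way correspondence between ``a weakly decreasing injective map $S_i \to T_i$'' and ``a decomposition into the shift-by-$2$ telescoping chains coming from the roots of $P_i$'' — in particular verifying that an arbitrary weakly decreasing injection can be refined/rearranged into disjoint arithmetic-progression chains with common difference $2$ (using integrality and fixed parity of all the relevant indices, which is exactly what Proposition \ref{prop:integrality and parity} guarantees), and conversely that the chains produce a genuinely \emph{weakly decreasing} — not just injective — map. A secondary subtlety is the $\gcd(P_i,Q_i)=1$ normalization of Remark \ref{rmk:finitedimquotient}: one should check that cancellations between the $P_i$-telescope and $Q_i$ do not change whether such a matching exists, which follows since any common factor can be stripped off both without affecting the combinatorial condition. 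Once these are in hand, the equivalence ``finite-dimensional simple quotient $\iff$ weakly decreasing injective $S_i\to T_i$ for all $i$'' follows by applying the argument index by index.
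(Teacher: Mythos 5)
Your proposal is correct and follows essentially the same route as the paper: reduce to Theorem \ref{th:finitedimquotient} via the observation that $M^\lambda_\mu(J,\bR)$ and $M_\mu(J)$ have finite-dimensional (simple) quotients under the same conditions, write $J_i(u)=\prod_{k\in T_i}(u-\tfrac12 k)\big/\bigl(u^{\mu_i}\prod_{k\in S_i}(u-\tfrac12 k)\bigr)$, let $Q_i$ absorb the $\mu_i$ unmatched elements of $T_i$, and match $P_i(u+1)/P_i(u)$ with the remaining factors via telescoping arithmetic chains. The chain bookkeeping you flag as the main obstacle is exactly what the paper isolates as Lemma \ref{chain decomp} (a weakly decreasing bijection exists iff the ratio has the form $P(u+1)/P(u)$), proved by the same maximal-string/induction argument you sketch.
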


Thus, if we assume Conjecture \ref{co:main}, we can combine the previous Proposition and Lemma \ref{le:highweight} to obtain the following result.

\begin{Corollary}
There is a bijection between finite-dimensional simple $ Y^\lambda_\mu(\bR) $ modules and highest weight elements of $ \B(\lambda, \bR)_\mu $.
\end{Corollary}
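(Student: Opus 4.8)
The plan is to chain together the three ingredients cited just before the statement, after recording the routine passage from finite-dimensional simples to Verma quotients. First, recall from Proposition \ref{pr:2Balgebras} and the discussion after it that $ M^\lambda_\mu(J,\bR) $ is the standard module $ \Delta_J $ of \cite[\S 5.2]{BLPWgco}: it is generated by the $ \rho^\vee $-degree-$ 0 $ vector $ \one $ and, being a quotient of $ M_\mu(J) = \bigoplus_{k\ge 0} M_\mu(J)_{-k} $, is concentrated in $ \rho^\vee $-degrees $ \le 0 $ with degree-$0$ part equal to $ \C\one $ when $ J \in H^\lambda_\mu(\bR) $. Hence every proper submodule sits in degrees $ \le -1 $, so $ M^\lambda_\mu(J,\bR) $ has a unique simple quotient $ L(J) $, whose degree-$0$ line is the image of $ \one $ and on which $ H_i(u) $ acts by $ J_i(u) $; in particular $ J $ is recovered from $ L(J) $.

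For the converse, let $ N $ be a finite-dimensional simple $ Y^\lambda_\mu(\bR) $-module. The element $ h = \sum_i c_i H_i^{(1)} \in Y^0 $ with $ (c_i) = A^{-1}\mathbf 1 $ (where $ A $ is the Cartan matrix, symmetric since $ \g $ is simply-laced) satisfies $ [h, E_j^{(s)}] = E_j^{(s)} $ and $ [h, F_j^{(s)}] = -F_j^{(s)} $, so it implements the $ \rho^\vee $-grading by $ \operatorname{ad} $; acting on $ N $, its top generalized eigenspace is killed by every $ E_i^{(s)} $ and, being preserved by the commuting $ H_i^{(s)} $, contains a common $ H $-eigenvector $ v $, say of weight $ J $. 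By the universal property of Section \ref{Verma modules for truncated shifted Yangians}, $ N $ is a quotient of $ M^\lambda_\mu(J,\bR) $; thus $ J \in H^\lambda_\mu(\bR) $ and, by simplicity, $ N \cong L(J) $. Therefore $ J \mapsto L(J) $ is a bijection from $ \{ J \in H^\lambda_\mu(\bR) : \dim L(J) < \infty \} $ onto the set of isomorphism classes of finite-dimensional simple $ Y^\lambda_\mu(\bR) $-modules, and $ \dim L(J) < \infty $ exactly when $ M^\lambda_\mu(J,\bR) $ has a finite-dimensional simple quotient.

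Now translate the index set to the crystal side. For $ J \in H^\lambda_\mu(\bR) $ write $ y(J) = y_\bT y_\bS^{-1} $ as in Section \ref{se:finitedim}. Proposition \ref{fd <-> weakly decreasing} identifies $ \{ J : M^\lambda_\mu(J,\bR) \text{ has a finite-dimensional simple quotient} \} $ with $ \{ J : \text{there is a weakly decreasing injection } S_i \to T_i \text{ for all } i \} $, which Lemma \ref{le:highweight} identifies with $ \{ J : y(J) \text{ is a highest-weight element of } \B(\lambda,\bR) \} $. Assuming Conjecture \ref{co:main}, $ J \mapsto y(J) $ is a bijection between $ H^\lambda_\mu(\bR) $ and $ \B(\lambda,\bR)_\mu $, so by the two equivalences it restricts to a bijection between $ \{ J : \dim L(J) < \infty \} $ and the highest-weight elements of $ \B(\lambda,\bR)_\mu $. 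Composing with $ J \mapsto L(J) $ gives the asserted bijection.

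I do not expect a real obstacle: the only non-elementary input is Conjecture \ref{co:main}, which is hypothesized (and is a theorem in type A by Corollary \ref{Cor:typeA}); the rest is Proposition \ref{fd <-> weakly decreasing}, Lemma \ref{le:highweight}, and the highest-weight bookkeeping above. The single place that repays attention is the converse in the second paragraph --- that \emph{every} finite-dimensional simple, a priori not known to lie in category $ \O $, is some $ L(J) $ --- which is exactly what the argument with the grading element $ h $ supplies.
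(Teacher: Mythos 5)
Your proof is correct and follows the same route as the paper: the paper's argument is precisely to combine Conjecture \ref{co:main}, Proposition \ref{fd <-> weakly decreasing}, and Lemma \ref{le:highweight}, taking for granted the standard highest-weight bookkeeping (unique simple quotients of Vermas, and that every finite-dimensional simple arises this way) that you spell out with the grading element $h$. No gaps.
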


To prove Proposition \ref{fd <-> weakly decreasing} we need the following lemma.

\begin{Lemma}\label{chain decomp}
Let $a_1, \dots, a_m, b_1, \dots, b_m $ be complex numbers which all lie in the same $ \Z $ coset.  Then there exists a polynomial $P\in \C[u]$ such that
$$ \frac{(u-b_1)\cdots (u-b_m)}{(u-a_1)\cdots (u-a_m)} = \frac{P(u+1)}{P(u)}$$
if and only if there exists a weakly decreasing bijection $\{a_1,\ldots,a_m\} \rightarrow \{b_1,\ldots,b_m\}$.
\end{Lemma}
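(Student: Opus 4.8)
The plan is to prove Lemma \ref{chain decomp} by induction on $m$, reducing in each step to a smaller instance by cancelling one matched pair of linear factors. First I would handle the easy direction: if a weakly decreasing bijection $\sigma\colon\{a_1,\dots,a_m\}\to\{b_1,\dots,b_m\}$ exists, then after reindexing we may assume $a_1\ge a_2\ge\cdots\ge a_m$ and $b_j=\sigma(a_j)$, so $b_1\ge b_2\ge\cdots\ge b_m$ and $a_j\ge b_j$ for all $j$. Since all the numbers lie in one $\Z$-coset, each difference $a_j-b_j$ is a nonnegative integer, say $a_j-b_j=d_j$, and then $\frac{u-b_j}{u-a_j}=\frac{(u-b_j)(u-b_j-1)\cdots(u-b_j-d_j+1)}{(u-a_j)(u-a_j+1)\cdots} $ telescopes; more cleanly, $\frac{u-b_j}{u-a_j} = \frac{P_j(u+1)}{P_j(u)}$ with $P_j(u)=(u-a_j)(u-a_j-1)\cdots(u-b_j-1)=\prod_{t=0}^{d_j-1}(u-b_j-1-t)$ (and $P_j=1$ if $d_j=0$). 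Taking $P=\prod_j P_j$ then gives $\frac{\prod(u-b_j)}{\prod(u-a_j)} = \frac{P(u+1)}{P(u)}$.

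For the converse, suppose $\frac{(u-b_1)\cdots(u-b_m)}{(u-a_1)\cdots(u-a_m)}=\frac{P(u+1)}{P(u)}$ for some $P\in\C[u]$. First I would reduce to the case where $P$ is monic and where no $a_i$ equals any $b_j$ (cancelling common factors changes neither side's shape and only deletes a fixed point of any candidate bijection). Let $a=\max_i a_i$; I claim $u-a$ divides $P(u)$. Indeed $P(u)\bigl(\prod(u-b_j)\bigr) = P(u+1)\bigl(\prod(u-a_i)\bigr)$, so $(u-a)\mid P(u)\prod(u-b_j)$; since $a\notin\{b_j\}$ we get $(u-a)\mid P(u)$. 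Write the multiplicity of $(u-a)$ in $P$ as $e\ge 1$. Comparing the multiplicity of the root $u=a$ on both sides of the functional equation $P(u)\prod(u-b_j)=P(u+1)\prod(u-a_i)$: the left side has multiplicity $e + \#\{j: b_j=a\} = e$ (since $a\notin\{b_j\}$), and the right side has $\mathrm{ord}_{u=a}P(u+1) + \#\{i: a_i=a\} = \mathrm{ord}_{u=a-1}P(u) + \#\{i:a_i=a\}$. Hence $\mathrm{ord}_{u=a-1}P(u) = e - \#\{i:a_i=a\} < e$, which in particular forces $\#\{i:a_i=a\}\ge 1$ — consistent, since $a$ is by definition achieved — and more importantly, pushing this argument down the chain $a, a-1, a-2,\dots$ shows that $P$ has a root at $a$, at $a-1$, \dots, down to some $b=a-d$, and the first place the chain can stop is a value $b$ with $\#\{j: b_j=b\}\ge 1$. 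This produces a maximal factor $a_{i_0}=a$ and a matched $b_{j_0}=b\le a$ in the same coset with $a-b=d\ge 0$; remove $(u-a)$ from the denominator list, $(u-b)$ from the numerator list, and replace $P$ by $P(u)/\bigl((u-a)(u-a-1)\cdots(u-b-1)\bigr)$, which is again a polynomial realizing the reduced ratio. This is the inductive step, and iterating builds the weakly decreasing bijection one pair at a time (peeling off the largest $a$ each round guarantees the resulting matching is weakly decreasing).

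The main obstacle I anticipate is making the "chain of roots of $P$" argument airtight — i.e. showing that starting from the largest denominator root $a$, the polynomial $P$ necessarily contains the consecutive factors $(u-a)(u-a-1)\cdots(u-b-1)$ down to some numerator root $b$, and that the pair $(a,b)$ can be removed leaving a genuine polynomial solution of the smaller problem. The bookkeeping with multiplicities (when some $a_i$ are equal, or when $P$ has higher-order roots) needs care; the clean way is to phrase everything in terms of the divisor of zeros/poles of the rational function $\frac{P(u+1)}{P(u)}$ and the observation that in $\frac{P(u+1)}{P(u)}$ a root $r$ of $P$ of multiplicity $e$ contributes $-e$ to the order at $r$ and $+e$ to the order at $r-1$, so consecutive chains of roots of $P$ are exactly what produce isolated zeros and poles. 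I would also remark that this lemma, together with Theorem \ref{th:finitedimquotient} and the explicit formula for $J_i(u)$ in terms of $\bR,\bS$ from Section \ref{subsec:highest}, is precisely the input needed for Proposition \ref{fd <-> weakly decreasing}: the condition "$J_i(u)=\frac{P_i(u+1)}{P_i(u)}\frac{Q_i(u)}{u^{\mu_i}}$ with $\deg Q_i=\mu_i$" unwinds, after extracting the $\frac{Q_i(u)}{u^{\mu_i}}$ part coming from the truncation, to the existence of $P_i$ realizing a ratio whose roots and poles are governed by $T_i$ and $S_i$.
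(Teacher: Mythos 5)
Your overall strategy coincides with the paper's: the forward direction via the telescoping factorization $\tfrac{u-b}{u-a}=\tfrac{P_0(u+1)}{P_0(u)}$ with $P_0(u)=(u-b-1)(u-b-2)\cdots(u-a)$, and the converse by locating a maximal chain of consecutive integer roots of $P$ and pairing its two ends. The paper picks any maximal such chain $r+1,\dots,r+n$ of roots of $P$ and observes directly that $r$ must equal some $b_i$ and $r+n$ some $a_j$; you instead enter the chain at $a=\max_i a_i$ and walk downward. These are the same argument up to bookkeeping.

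However, there is a sign error in the shift in your converse that makes the multiplicity calculation point the wrong way. You assert $\mathrm{ord}_{u=a}P(u+1)=\mathrm{ord}_{u=a-1}P(u)$, but substituting $u\mapsto u+1$ moves roots of $P$ \emph{down} by one: $\mathrm{ord}_{u=a}P(u+1)=\mathrm{ord}_{u=a+1}P(u)$. So comparing orders of vanishing at $u=a$ in $P(u)\prod_j(u-b_j)=P(u+1)\prod_i(u-a_i)$ actually yields $\mathrm{ord}_P(a+1)=e-\#\{i:a_i=a\}$, which concerns the root at $a+1$ (in fact $\mathrm{ord}_P(s)=0$ for all $s>a$, as one sees by descending from large $s$ using that $a$ is maximal among the $a_i$) and tells you nothing about $a-1$. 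To walk down the chain you must compare orders at $u=a-1$ and below: the identity gives, for any $s$ in the coset,
$$\mathrm{ord}_P(s)+\#\{j:b_j=s\}=\mathrm{ord}_P(s+1)+\#\{i:a_i=s\},$$
hence $\mathrm{ord}_P(a-1)=\mathrm{ord}_P(a)+\#\{i:a_i=a-1\}-\#\{j:b_j=a-1\}$. Iterating downward shows $\mathrm{ord}_P(s)\geq 1$ for $s=a,a-1,\dots$ so long as no $b_j$ equals $s$; since $P$ has finitely many roots, some $b_j$ must eventually be hit, producing the matched pair and the sub-chain of $P$ to divide out, and your induction then runs as intended. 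One further remark: ``weakly decreasing bijection'' in this lemma means $\sigma(a)\leq a$ pointwise (cf.\ the paper's forward direction, where $a_k=b_k+n_k$ with $n_k\geq 0$), which is satisfied by each peeled pair individually --- so there is no need to peel the largest $a$ at each step to guarantee admissibility.
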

\begin{proof}
Suppose there exists a weakly decreasing bijection, so $a_k = b_k+n_k$ for some $n_k\in \mathbb{Z}_{\geq 0}$. Then
$$ \frac{u-b_k}{u-b_k-n_k} = \frac{(u-b_k)(u-b_k-1)\cdots(u-b-n_k+1)}{(u-b_k-1)\cdots(u-b_k-n_k+1)(u-b-n_k)} = \frac{P_k(u+1)}{P_k(u)} $$
for $P_k(u) = (u-b_k-1)\cdots(u-b_k-n_k)$.  Take $P(u) = P_1(u)\cdots P_m(u)$.

Conversely, given $P(u)$ we have
$$ (u-b_1)\ldots (u-b_m) P(u) = P(u+1) (u-a_1)\cdots (u-a_m)$$
Choose a maximal length string of roots $r+1,\ldots,r+n$ of $P(u)$.  Then we must have $r= b_i$ and $r+n = a_j$ for some $i$ and $j$.  Dividing $(u-r)\cdots(u-r-n)$  from both sides, we proceed inductively until we exhaust all factors.
\end{proof}
\begin{proof}[Proof of Proposition \ref{fd <-> weakly decreasing}]
We may write $J_i(u) = \frac{\prod_{k\in T_i}(u-\tfrac{1}{2}k)}{u^{\mu_i} \prod_{k\in S_i}(u-\tfrac{1}{2}k)}$.  Note that $M_\mu^\lambda(J)$ has a finite-dimensional quotient if and only if $M_\mu(J)$ does, so we may apply Theorem \ref{th:finitedimquotient}.

If there is a weakly decreasing injection $\phi_i: S_i\rightarrow T_i$, we define $$Q_i(u) = \prod_{k \in T_i \setminus \phi_i(S_i)} (u-\tfrac{1}{2}k)$$ and $P_i(u)$ via the lemma (note that the sizes are correct, i.e. $| T_i \setminus \phi_i(S_i) | = \mu_i$).  Conversely if we are given the $P_i$ and $Q_i$, we may assume $(P_i, Q_i)=1$ by Remark \ref{rmk:finitedimquotient}.  Then the roots of $Q_i$ must lie in $\tfrac{1}{2}T_i$; dividing by $Q_i$ we may again appeal to the lemma.
\end{proof}

\section{Relationship to the geometry of affine Grassmannian slices} \label{se:resolutions}
For the purposes of this section, we assume that $ \lambda $ is a sum of \emph{minuscule} coweights.  Our main goal in this section is to show that the representation theory of $ Y^\lambda_\mu(\bR) $ behaves as if it were the global section of a sheaf of algebras quantizing a symplectic resolution.

\subsection{The affine Grassmannian slices}
Let $ G $ be the adjoint group with Lie algebra $ \g $.  Let $ \lambda, \mu $ be dominant coweights as before.  They give rise to elements $ t^\lambda, t^\mu \in G[t,t^{-1}] $.

Let $ \Gr = G((t))/G[[t]] $ be the affine Grassmannian of $ G $.  Let $ \Gr^\lambda = G[[t]]t^\lambda $ and $ \Gr_\mu = G_1[t^{-1}] t^{w_0 \mu} $, where $ G_1[t^{-1}] $ is the kernel of the evaluation map $ G[t^{-1}] \rightarrow G $.

Let $ \Grlmbar = \overline{\Gr^\lambda} \cap \Gr_\mu $ be the affine Grassamannian slice.  This is a transverse slice to $ \Gr^\mu $ inside of $ \overline{\Gr^\lambda} $.

In \cite{KWWY}, we explained that $ \Grlmbar $ carries a natural Poisson structure.  Moreover, we proved the following result (Theorem 4.8 of \cite{KWWY}).

\begin{Theorem} \label{th:quantize}
There is a map of graded Poisson algebras $\gr(Y^\lambda_\mu(\bR)) \rightarrow \C[\Grlmbar] $ which is an isomorphism modulo the nilradical of the left hand side.
\end{Theorem}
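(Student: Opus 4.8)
Since the statement is Theorem~4.8 of \cite{KWWY}, I only sketch the strategy. The plan is to build the map in two stages: first for the thick slice $\Gr_\mu = G_1[t^{-1}]t^{w_0\mu}$, and then pass to the truncation. The degree filtration ($\deg X^{(r)}=r$) makes $\gr Y_\mu$ commutative, since every relation in Definition~\ref{Definition: Yangian}, together with the shift, is a commutator that visibly lowers degree; thus $\gr Y_\mu$ is a graded Poisson algebra. Using the GKLO-style realization of $Y_\mu$ from \cite{GKLO}, in which the generating series $A_i(u), E_i(u), F_i(u)$ are written through an explicit action on a polynomial ring, one identifies the symbols of $A_i^{(s)}, E_i^{(s)}, F_i^{(s)}$ with the natural coordinates on $G_1[t^{-1}]$ coming from its triangular (Gauss) decomposition, i.e. with restrictions of matrix coefficients in fundamental representations. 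This produces a surjection of graded Poisson algebras $\gr Y_\mu \twoheadrightarrow \C[\Gr_\mu]$; comparing the PBW basis of Proposition~\ref{Prop: PBW} against the structure of $\C[\Gr_\mu]$ shows it is in fact an isomorphism, but only surjectivity is needed below.

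For the truncation, recall that $I^\lambda_\mu$ is generated by the $A_i^{(s)}$ with $s > m_i$. The crux is to show that the symbols $\bar A_i^{(s)}$ of these generators, for $s > m_i$, vanish on $\Grlmbar = \overline{\Gr^\lambda}\cap\Gr_\mu$ and moreover cut it out set-theoretically inside $\Gr_\mu$. Geometrically, $\overline{\Gr^\lambda}$ is carved out of the affine Grassmannian by pole-order bounds on matrix coefficients in fundamental representations (the Schubert/Plücker description), and under the Gauss decomposition these bounds translate precisely into the statement that each $\bar A_i(u) = 1 + \sum_{s>0}\bar A_i^{(s)}u^{-s}$ is a polynomial in $u^{-1}$ of degree $\le m_i$; the normalization $r_i(u)$ in \eqref{eq:rfromc} is exactly what makes this matching come out right. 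Granting this, the composite $\gr Y_\mu \twoheadrightarrow \C[\Gr_\mu] \twoheadrightarrow \C[\Grlmbar]$ annihilates the ideal generated by $\{\bar A_i^{(s)} : s > m_i\}$; since $\gr(Y_\mu/I^\lambda_\mu) = \gr Y_\mu/\gr I^\lambda_\mu$ and $\gr I^\lambda_\mu$ contains that ideal, the composite descends to a surjective graded Poisson map $\gr Y^\lambda_\mu(\bR) \twoheadrightarrow \C[\Grlmbar]$.

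Finally, to see that the kernel is nilpotent: every prime of $\gr Y^\lambda_\mu(\bR) = \gr Y_\mu/\gr I^\lambda_\mu$ contains $\gr I^\lambda_\mu$ and hence the $\bar A_i^{(s)}$ ($s > m_i$), so under the identification $\gr Y_\mu \cong \C[\Gr_\mu]$ it pulls back from a prime of $\C[\Gr_\mu]$ supported on $\Grlmbar$, i.e. from a prime of $\C[\Grlmbar]$; combined with surjectivity onto $\C[\Grlmbar]$ this forces the kernel into every prime, hence into the nilradical, and since $\Grlmbar$ is reduced we get $\gr Y^\lambda_\mu(\bR)/\sqrt{0} \cong \C[\Grlmbar]$. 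The main obstacle is the middle paragraph: identifying the symbols $\bar A_i^{(s)}$ and proving that ``$\bar A_i(u)$ has degree $\le m_i$'' is exactly the defining condition of $\overline{\Gr^\lambda}$ inside the thick slice. This is where one must genuinely reconcile the Yangian combinatorics (GKLO coordinates, the role of $\mu$, the normalization $r_i(u)$) with the geometry of spherical Schubert varieties, and it is precisely the possible non-reducedness of $\Spec\gr Y^\lambda_\mu(\bR)$ that accounts for the ``modulo nilradical'' hedge in the statement.
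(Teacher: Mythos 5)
First, note that the paper does not prove this statement here: it is quoted verbatim from \cite{KWWY} (Theorem 4.8 there), with only a Remark explaining why the proof carries over from the GKLO-image version $\hat{Y}^\lambda_\mu(\bR)$ to the quotient $Y_\mu/I^\lambda_\mu$ used in this paper. Your overall strategy --- identify $\gr Y_\mu$ with $\C[\Gr_\mu]$ via symbols of minors, show the symbols $\bar A_i^{(s)}$, $s>m_i$, cut out $\Grlmbar$ set-theoretically, and conclude modulo nilpotents --- is indeed the strategy of \cite{KWWY}, and you correctly locate the main geometric input (pole-order bounds defining $\overline{\Gr^\lambda}$ versus $\deg_{u^{-1}} \bar A_i(u)\le m_i$) and the reason for the ``modulo nilradical'' hedge (the scheme-theoretic version of the cutting-out statement is only conjectural, Conjecture 2.20 of \cite{KWWY}).

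There is, however, a genuine gap in your descent step. You argue that the composite $\gr Y_\mu\twoheadrightarrow\C[\Gr_\mu]\twoheadrightarrow\C[\Grlmbar]$ descends to $\gr Y^\lambda_\mu(\bR)=\gr Y_\mu/\gr I^\lambda_\mu$ because it kills the ideal $\langle \bar A_i^{(s)}: s>m_i\rangle$ and because $\gr I^\lambda_\mu$ \emph{contains} that ideal. This is backwards: to factor through the quotient by $\gr I^\lambda_\mu$ you must kill all of $\gr I^\lambda_\mu$, and the containment $\langle \bar A_i^{(s)}\rangle\subseteq \gr I^\lambda_\mu$ can be strict --- a general element of $I^\lambda_\mu$ is a sum $\sum x_j A_{i_j}^{(s_j)}y_j$ whose top-degree terms may cancel, so its symbol need not lie in the ideal generated by the symbols of the generators. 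Establishing $\gr I^\lambda_\mu\subseteq I(\Grlmbar)$ is precisely the hard half of the theorem, and in \cite{KWWY} it is done by a route you mention only in passing: the GKLO homomorphism $\Phi$ kills $I^\lambda_\mu$, and the associated graded of its image is identified with functions on a dense open subset of $\Grlmbar$, so the symbol of any element of $\ker\Phi\supseteq I^\lambda_\mu$ vanishes on that dense subset and hence on all of $\Grlmbar$. Without some such argument (or a Poisson-ideal/symplectic-leaf substitute), your map out of $\gr Y^\lambda_\mu(\bR)$ is not defined, and the final paragraph --- which is otherwise fine --- has nothing to apply to. A smaller quibble: you say only surjectivity of $\gr Y_\mu\to\C[\Gr_\mu]$ is needed, but your last paragraph explicitly invokes the identification $\gr Y_\mu\cong\C[\Gr_\mu]$ to transport primes, so the injectivity (Theorem 3.12 of \cite{KWWY}) is also used.
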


In other words, we proved that $ Y^\lambda_\mu(\bR) $ is a filtered deformation of the coordinate ring of a scheme supported on $ \Grlmbar$.  In \cite{KWWY}, we conjectured that $ \gr(Y^\lambda_\mu(\bR)) $ is in fact reduced and thus it gives a quantization of $ \Grlmbar $.

\begin{Remark}
In fact, in \cite{KWWY}, we worked with a slightly different definition of $ Y^\lambda_\mu(\bR) $, defined as the image of $Y_\mu $ under a map defined by Gerasimov-Kharchev-Lebedev-Oblezin \cite{GKLO}.  Let us temporarily denote that algebra by $ \hat{Y}^\lambda_\mu(\bR) $.  Examining the proof of Theorem 4.8 of \cite{KWWY}, we can see that Theorem \ref{th:quantize} does hold for our $ Y^\lambda_\mu(\bR) $.  Moreover, Theorem 4.10.2 of \cite{KWWY} shows that if Conjecture 2.20 of \cite{KWWY} holds, then the natural map $ Y^\lambda_\mu(\bR) \rightarrow \hat{Y}^\lambda_\mu(\bR)$ is an isomorphism.
\end{Remark}

Fix a sequence $ (i_1, \dots, i_N) $ so that $ \lambda = \sum_{p=1}^N \varpi_{i_p} $.  Following \cite[\S 2.3]{KWWY}, this sequence defines a symplectic resolution $ \widetilde{\Gr^\lambda_\mu} \rightarrow \Grlmbar $, where we define
$$
\widetilde{\Gr^\lambda} = \Gr^{\varpi_{i_1}} \tilde{\times} \cdots \tilde{\times} \Gr^{\varpi_{i_N}}  \text{ and } \widetilde{\Gr^\lambda_\mu} = m^{-1}(\Gr_\mu)
$$
Here $$  \Gr^{\varpi_{i_1}} \tilde{\times} \cdots \tilde{\times} \Gr^{\varpi_{i_N}} := \{ ([g_1], \dots, [g_N]) \in \Gr^N : g_{k-1}^{-1}g_k \in \Gr^{\varpi_{i_k}} \text{ for $ k = 1, \dots, N$} \} $$ 
and $m : \widetilde{\Gr^\lambda} \rightarrow \overline{\Gr^\lambda} $ is the convolution morphism which maps $([g_1], \dots, [g_N]) $ to $ [g_N]$.

An integral set of parameters $ \bR $ defines a $T$-equivariant line bundle $ \O(\bR) $ on $ \widetilde{\Gr^\lambda_\mu}$.

\subsection{Torus fixed points}
The action of the torus $ T \subset G $ on $ \Grlmbar $ extends to the symplectic resolution.

Suppose we have a sequence $ \bgam := (\gamma_1, \dots, \gamma_N) $, such that $ \gamma_p \in W \varpi_{i_p} $ for all $ p $ and such that $ \gamma_1 + \dots + \gamma_N = \mu $. Then we can define a point
$$
t^\bgam = (t^{\gamma_1}, t^{\gamma_1 + \gamma_2}, \dots, t^{\gamma_1 + \dots + \gamma_N}) \in \widetilde{\Gr^\lambda_\mu}
$$

For $ \gamma = w \varpi_i \in W\varpi_i $, let us define $ \Delta(\gamma) = w \Delta^i_+ \cap \Delta_- $, where $ \Delta^i_+ $ denotes those positive roots with a non-zero coefficient of $ \alpha_i $ ($\Delta(\gamma)$ does not depend on $ w $) and let $\chi(\gamma) = \sum_{\beta \in \Delta(\gamma)} \beta $.

We begin with a combinatorial fact about this definition.
\begin{Lemma} \label{le:CombinatorialFact}
 $\chi(\gamma) = w\rho - \rho $ where $ w $ is the minimal element such that $ \gamma = w \varpi_i $.
\end{Lemma}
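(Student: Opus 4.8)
The plan is to show both quantities, $\chi(\gamma)$ and $w\rho - \rho$, agree by computing them as sums over the inversion set of $w^{-1}$ restricted to roots ``using'' $\alpha_i$. First recall the standard fact that if $w$ is minimal with $w\varpi_i = \gamma$ — equivalently, $w$ is the minimal coset representative in $W/W_i$, where $W_i = \langle s_j : j \neq i\rangle$ is the stabilizer of $\varpi_i$ — then the inversion set of $w$, namely $\Delta_+ \cap w^{-1}\Delta_-$, is precisely $\Delta^i_+$ in the notation of the lemma, i.e. the positive roots whose $\alpha_i$-coefficient is nonzero. (One direction: any positive root $\beta$ with zero $\alpha_i$-coefficient lies in the root system of $W_i$, and since $w$ is shortest in its coset, $w$ sends all such positive roots to positive roots, so $\beta$ is not an inversion of $w^{-1}$ — rather of $w$; one must be a little careful matching up $w$ versus $w^{-1}$ conventions. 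The other direction: if $\beta \in \Delta^i_+$ then $w^{-1}\beta$ involves $\varpi_i$ nontrivially under pairing, and minimality forces it negative.)

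Next I would rewrite $\chi(\gamma)$. By definition $\Delta(\gamma) = w\Delta^i_+ \cap \Delta_-$; since $w$ is minimal, $w$ maps $\Delta^i_+$ entirely into $\Delta_-$ (by the inversion-set description above, every root of $\Delta^i_+$ is an inversion), so in fact $\Delta(\gamma) = w \Delta^i_+$, and therefore $\chi(\gamma) = \sum_{\beta \in \Delta^i_+} w\beta = w\bigl(\sum_{\beta \in \Delta^i_+}\beta\bigr)$. Now use the classical identity $\sum_{\beta \in \Delta^i_+}\beta = \rho - \rho_i$, where $\rho_i$ is the half-sum of positive roots of the Levi $W_i$: indeed $\rho = \tfrac12\sum_{\beta>0}\beta$ splits as the sum over roots with zero $\alpha_i$-coefficient (which is $\rho_i$, up to the factor $\tfrac12$ on a $W_i$-invariant set — here one checks $\Delta^i_+$ is closed under negation composed with... actually $\Delta^i_+$ is not symmetric, so more simply: $\rho - \rho_i = \tfrac12\sum_{\beta \in \Delta^i_+}\beta + \tfrac12\sum_{\beta\in\Delta^i_+}\beta$ needs $\Delta_+ = \Delta_+^i \sqcup \Delta_{+,i}$, giving $\rho = \tfrac12\sum_{\Delta^i_+}\beta + \rho_i$, so $\sum_{\Delta^i_+}\beta = 2(\rho - \rho_i)$). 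I will double-check this factor of $2$ against a rank-one example before committing; the cleanest route may be to instead directly compute $w\rho - \rho = \sum_{\beta \in \mathrm{Inv}(w^{-1})}(-\beta)$ or similar, and match termwise.

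So the final step is: $w\rho - \rho = w\rho - w\rho_i + w\rho_i - \rho$; since $w\varpi_i = \gamma$ is fixed and $w$ is the minimal representative, $w$ restricted to the Levi acts trivially in the relevant sense... rather, the honest computation is $w\rho - \rho = \sum_{\beta \in \Delta_+,\, w^{-1}\beta < 0} \beta \cdot(-1)$-type formula: precisely $\rho - w^{-1}\rho = \sum_{\beta \in \Delta_+ \cap w\Delta_-}\beta$, equivalently $w\rho - \rho = -\sum_{\beta \in \Delta_+ \cap w^{-1}\Delta_-} w\beta$... I will fix signs carefully, but the upshot is that $w\rho - \rho$ equals $\sum$ over $w(\text{inversions})$ with appropriate sign, and the inversion set is exactly $\Delta^i_+$, matching $\chi(\gamma)$ on the nose. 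The main obstacle I anticipate is purely bookkeeping: getting the conventions consistent among (a) $w$ versus $w^{-1}$, (b) the direction of inequalities defining inversions, and (c) the factor of $\tfrac12$ in $\rho$. I would resolve this by first verifying the whole statement in type $A_1$ and type $A_2$ with $i$ a spray of choices, then running the general argument with those sign conventions locked down. There is no serious mathematical content beyond the standard minimal-coset-representative / inversion-set dictionary.
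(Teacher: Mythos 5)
Your central claim --- that for the minimal representative $w$ the inversion set $\{\beta\in\Delta_+ : w\beta\in\Delta_-\}$ equals $\Delta^i_+$, equivalently that $w$ maps all of $\Delta^i_+$ into $\Delta_-$ so that $\Delta(\gamma)=w\Delta^i_+$ --- is false, and your argument does not survive without it. The correct statement is the \emph{reverse} containment: minimality of $w$ gives $w\alpha_j\in\Delta_+$ for all $j\neq i$, hence $w\beta\in\Delta_+$ for every $\beta\in\Delta_+\smallsetminus\Delta^i_+$, so the inversion set of $w$ is \emph{contained in} $\Delta^i_+$; equality holds only for the lowest weight $\gamma=w_0\varpi_i$. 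Concretely, take $\g=\mathfrak{sl}_3$, $i=1$, $\gamma=s_1\varpi_1$, $w=s_1$: then $\Delta^1_+=\{\alpha_1,\alpha_1+\alpha_2\}$ but $s_1(\alpha_1+\alpha_2)=\alpha_2\in\Delta_+$, so $w\Delta^1_+=\{-\alpha_1,\alpha_2\}\not\subset\Delta_-$ and $\Delta(\gamma)=\{-\alpha_1\}$. Your formula $\chi(\gamma)=w\bigl(\sum_{\beta\in\Delta^i_+}\beta\bigr)$ would give $-\alpha_1+\alpha_2$, whereas the true value is $\chi(\gamma)=w\rho-\rho=-\alpha_1$. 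The "other direction" of your claimed standard fact ("$\beta\in\Delta^i_+$ forces $w\beta<0$ by minimality") is exactly where the error enters: minimality constrains $w$ on the Levi roots $\Delta_+\smallsetminus\Delta^i_+$, not on $\Delta^i_+$.

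The repair runs the containment the right way and is the paper's proof: start from your "upshot" identity $w\rho-\rho=\sum_{\beta\in\Delta_+,\;w\beta\in\Delta_-}w\beta$ (which is correct), note that every $\beta$ in that index set lies in $\Delta^i_+$ by the containment above, and conclude that the sum is exactly $\sum_{\beta'\in w\Delta^i_+\cap\Delta_-}\beta'=\chi(\gamma)$. The intersection with $\Delta_-$ in the definition of $\Delta(\gamma)$ is doing real work and cannot be discarded. The identity $\sum_{\beta\in\Delta^i_+}\beta=2(\rho-\rho_i)$ whose factor of $2$ you were debugging is true but irrelevant once the argument is set up this way; your proposed $A_2$ sanity check would have caught the problem, so do run it.
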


\begin{proof}
 First note that
$$ w\rho - \rho = \frac{1}{2}( \sum_{\beta \in \Delta_+} w \beta - \sum_{\beta \in \Delta_+} \beta) = \sum_{\beta \in \Delta_+, w \beta \in \Delta_-} w \beta $$
Next, note that for $ j \ne i $, the minimality of $ w $ means that $ w \alpha_j \in \Delta_+ $.  Thus for all $ \beta \in \Delta_+ \smallsetminus \Delta_+^i $, we have $ w \beta \in \Delta_+$.  Combining these observations, the result follows.
\end{proof}

The following geometric fact is our motivation for introducing the set $ \Delta(\gamma) $.  (Recall that  the elements of $\bR$ are $c_1\leq \cdots \leq c_N$.)
\begin{Lemma} \label{le:fixedpoint}
Let $ \bgam $ be as above.
\begin{enumerate}
\item $t^\bgam $ is a $T$-fixed point in $ \widetilde{\Gr^\lambda_\mu} $ and all $T$-fixed points are of this form.
\item The $ T $ action on the line $ \O(\bR)_{t^\bgam} $ is by the weight $ \sum_{p=1}^N (c_p -1) \gamma_p $.
\item The multiset of negative $ T $ weights on $ T_{t^\bgam} \widetilde{\Gr^\lambda_\mu} $ is $ \cup_{p=1}^N \Delta(\gamma_p) $.
\end{enumerate}
\end{Lemma}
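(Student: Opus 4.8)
The plan is to establish the three statements by reducing everything to the convolution-product structure of $\widetilde{\Gr^\lambda}$ and an explicit computation in $\mathfrak{sl}_2$-strings / Bruhat cells. First I would recall the standard description of $T$-fixed points on the affine Grassmannian: the fixed points of $\Gr$ are the lattices $t^\gamma G[[t]]$ for $\gamma \in \Lambda$, and on a convolution product $\Gr^{\varpi_{i_1}}\tilde\times\cdots\tilde\times\Gr^{\varpi_{i_N}}$ the fixed points are exactly the sequences $(t^{\eta_1},\ldots,t^{\eta_N})$ with $\eta_p - \eta_{p-1}\in W\varpi_{i_p}$ (since each $\overline{\Gr^{\varpi_{i_p}}}$ is a partial flag variety whose fixed points are $t^{\gamma}G[[t]]$ for $\gamma\in W\varpi_{i_p}$, as $\varpi_{i_p}$ is minuscule). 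Writing $\gamma_p = \eta_p - \eta_{p-1}$, a fixed point lies in $\widetilde{\Gr^\lambda_\mu} = m^{-1}(\Gr_\mu)$ precisely when $\eta_N = \gamma_1+\cdots+\gamma_N = \mu$, and one checks $t^\bgam \in \Gr_\mu$ since $\Gr_\mu \cap \Gr^\nu$ is a point whenever $\nu\in W\mu$. This gives part (1).

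For part (3), I would use that the tangent space decomposes along the convolution tower: $T_{t^\bgam}\widetilde{\Gr^\lambda_\mu}$ is built from the tangent spaces $T_{t^{\gamma_p}}\Gr^{\varpi_{i_p}}$ of the minuscule flag-variety factors, transported by the $T$-action twisting by the earlier $\eta_{p-1}$. Since $\eta_{p-1}$ is a coweight, conjugation by $t^{\eta_{p-1}}$ does not change the $T$-weights (it only shifts the loop grading). So the multiset of $T$-weights on the full tangent space is $\bigcup_p \{\text{$T$-weights on } T_{t^{\gamma_p}}\Gr^{\varpi_{i_p}}\}$. For a minuscule flag variety $\Gr^{\varpi_i}\cong G/P_i$, the $T$-weights at the fixed point $\gamma = w\varpi_i$ are $\{-\beta : \beta \in w\Delta^i_+\}$, i.e. the tangent weights of $G/P_i$ at $wP_i$; the \emph{negative} ones among these (the ones contributing to $\widetilde{\Gr^\lambda_\mu}$, equivalently the ones giving the symplectic form pairing) are exactly $\Delta(\gamma_p) = w\Delta^i_+ \cap \Delta_-$. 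Combined with the fact that $m$ is semismall and the symplectic form pairs the attracting and repelling directions, restricting to the fiber $m^{-1}(\Gr_\mu)$ keeps precisely the negative half, giving part (3).

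For part (2), the line bundle $\O(\bR)$ is the convolution of the line bundles $\O(\varpi_{i_p}, c_p)$ on the individual factors; on $\Gr^{\varpi_{i_p}}\cong G/P_{i_p}$, the relevant bundle is $\O(-\varpi_{i_p})$ shifted by the parameter $c_p$, whose $T$-weight at the fixed point $t^{\gamma_p}$ is $c_p \gamma_p$ up to the universal correction coming from the relation between $\O(\bR)$ and the canonical/anticanonical twist built into the definition in \cite[\S 2.3]{KWWY}. The shift $-1$ in $(c_p-1)\gamma_p$ is precisely that half-canonical twist (so that $\O(\bR)$ quantizes correctly), and transporting by $t^{\eta_{p-1}}$ again contributes nothing to the $T$-weight. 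Summing over $p$ yields weight $\sum_p (c_p-1)\gamma_p$.

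The main obstacle I expect is part (2): pinning down the exact normalization of $\O(\bR)$ from the construction in \cite{KWWY}, in particular verifying the $(c_p - 1)$ shift rather than $c_p$ or $(c_p - h(\varpi_{i_p}))$, which requires carefully matching the line-bundle conventions of \cite[\S 2.3]{KWWY} against the parameter conventions in \eqref{eq:rfromc}. Parts (1) and (3) are comparatively routine, relying only on standard affine-Grassmannian combinatorics and the minuscule hypothesis; the key compatibility to check there is that passing to the convolution tower and the $\Gr_\mu$-slice genuinely selects the negative weights $\Delta(\gamma_p)$ and nothing more, which follows from semismallness of $m$ together with Lemma \ref{le:CombinatorialFact} identifying $\sum_{\beta\in\Delta(\gamma_p)}\beta$ with $w_p\rho - \rho$.
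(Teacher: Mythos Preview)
Your treatment of parts (1) and (2) matches the paper's (terse) argument: (1) is indeed standard convolution combinatorics, and (2) is declared to ``follow from the definition of the line bundle''. Your honest flagging of the normalization issue in (2) is appropriate.

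For part (3), however, there is a genuine gap. You correctly identify the $T$-weights on $T_{t^{\bgam}}\widetilde{\Gr^\lambda}$ as coming from the factors, and that the negative ones among these form $\bigcup_p \Delta(\gamma_p)$. But you then assert that ``restricting to the fiber $m^{-1}(\Gr_\mu)$ keeps precisely the negative half'', citing semismallness of $m$ and Lemma~\ref{le:CombinatorialFact}. Neither of these does the job. Semismallness tells you about fiber dimensions of $m$, not about which tangent directions survive in the slice; and Lemma~\ref{le:CombinatorialFact} computes the \emph{sum} $\chi(\gamma) = w\rho - \rho$, which is a weight, not a cardinality, and plays no role in this step (it is used later in Theorem~\ref{th:fixed-bijection}, not here). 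What you have actually shown is only the containment: the negative weights of $T_{t^{\bgam}}\widetilde{\Gr^\lambda_\mu}$ are a subset of $\bigcup_p \Delta(\gamma_p)$, since $\widetilde{\Gr^\lambda_\mu}\subset\widetilde{\Gr^\lambda}$.

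The paper closes this gap with a dimension count. Since $T$ acts Hamiltonianly on $\widetilde{\Gr^\lambda_\mu}$ with isolated fixed points, the negative weight space at any fixed point has dimension exactly $\tfrac{1}{2}\dim\widetilde{\Gr^\lambda_\mu}$. One then checks
\[
\dim \widetilde{\Gr^\lambda_\mu} \;=\; 2\,\rho^\vee(\lambda-\mu) \;=\; 2\sum_p \rho^\vee(\varpi_{i_p}-\gamma_p) \;=\; 2\sum_p |\Delta(\gamma_p)|,
\]
using that $|\Delta(\gamma_p)| = \rho^\vee(\varpi_{i_p} - \gamma_p)$ for minuscule $\varpi_{i_p}$. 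Hence the subset $\bigcup_p \Delta(\gamma_p)$ already has the right size, forcing equality. Your symplectic-pairing remark is the right intuition for why the negative part is half-dimensional, but you still need the numerical identity $\tfrac{1}{2}\dim\widetilde{\Gr^\lambda_\mu} = \sum_p |\Delta(\gamma_p)|$ to conclude.
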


\begin{proof}
(1) is standard and (2) follows from the definition of the line bundle.

For (3), we observe that for $ \varpi_i $ minuscule, $ Gr^{\varpi_i} $ is a partial flag variety and that the $ T$-weights on $ T_\gamma Gr^{\varpi_i} $ are $ w \Delta^i_+ $ (for $ \gamma \in W \varpi_i $).  Thus the $ T $-weights on $ T_{t^\bgam} \widetilde{\Gr^\lambda} $ are
$$
\bigcup_{p = 1}^N w_p \Delta_+^{i_p}
$$
where $ \gamma_p = w_p \varpi_{i_p} $.

Thus the negative $ T$-weights on $ T_{t^\bgam} \widetilde{\Gr^\lambda_\mu} $ are a subset of $\cup_{p=1}^N \Delta(\gamma_p) $.

On the other hand
$$
\dim \Gr^\lambda_\mu = 2 \rho^\vee(\lambda - \mu) = 2 \sum_p \rho^\vee(\varpi_{i_p} - \gamma_p) = 2 \sum_p | \Delta(\gamma_p)|
$$
Since $ T $ acts Hamiltonianly on $ \widetilde{\Gr^\lambda_\mu} $ with isolated fixed points, the dimension of the negative $T$-weight space on $ T_{t^\bgam} \Gr^\lambda_\mu $ is half the dimension of $ \widetilde{\Gr^\lambda_\mu}$.  Thus the result follows.
\end{proof}

\subsection{From Verma modules to torus fixed points}
Now assume that $ \bR $ is a generic integral set of parameters.  In this case every element of $ \B(\lambda, \bR)_\mu $ can be written uniquely as
$$
y_{\bgam, \bR} := \prod_{p=1}^N y_{\gamma_p, c_p}
$$
where $ \bgam $ is a sequence as in the previous section and $ y_{\gamma, c} $ denotes the monomial given by Proposition \ref{pr:explicit}.

Consider a Verma module $ M^\lambda_\mu(J,\bR) $, such that $ y(J) = y_\bR z_\bS^{-1} $.  It is generated by a highest weight vector $ \one $.  The vector $ \one $ is a weight vector for the abelian Lie algebra $ \mathfrak t $ spanned by the $H_i^{(1)} $.  We identify $ \mathfrak t $ with the Lie algebra of $ T $ by identifying $ H_i^{(1)} $ with the usual element $ H_i $.  Thus we can regard $ \one $ as an weight vector for $ T $.  It is easy to see that it has weight $ \tau(J) := \sum_{i, k \in S_i} \frac{1}{2}k \alpha_i $ (since $ A_i^{(1)} $ represents a negative fundamental coweight under the above identification).

Assuming Conjecture \ref{co:main}, we have the following result.
\begin{Theorem}\label{th:fixed-bijection}
The bijection $ H^\lambda_\mu(\bR) \rightarrow \B(\lambda, \bR)_\mu $ gives us a bijection between Verma modules for $ Y^\lambda_\mu(\bR) $ and torus fixed points in $ \widetilde{\Gr^\lambda_\mu} $ such that if $ M^\lambda_\mu(J,\bR) $ corresponds to $ t^\bgam$, then $\tau(J) $ is the weight of $ T $ acting on $( \det (T_{\bgam} \widetilde{\Gr^\lambda_\mu})_- \otimes \O(\bR)_{t^\bgam}^* \otimes \O(\bR)_{t^\blam} )^{\otimes \frac{1}{2}}$.
\end{Theorem}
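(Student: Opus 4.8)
The plan is to deduce the bijection formally from Conjecture~\ref{co:main} together with the fixed-point description of the previous subsection, and then to verify the weight statement by a direct computation on each side, the only non-routine ingredient being a single induction along the crystal $\B(\varpi_i)$.

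\emph{The bijection and the reduction.} Assuming Conjecture~\ref{co:main}, $J\mapsto y(J)$ is a bijection $H^\lambda_\mu(\bR)\xrightarrow{\sim}\B(\lambda,\bR)_\mu$. Since $\bR$ is generic, every element of $\B(\lambda,\bR)_\mu$ is uniquely of the form $y_{\bgam,\bR}=\prod_{p=1}^N y_{\gamma_p,c_p}$ for a sequence $\bgam$ with $\gamma_p\in W\varpi_{i_p}$ and $\sum_p\gamma_p=\mu$, and by Lemma~\ref{le:fixedpoint}(1) these sequences are precisely the $T$-fixed points $t^\bgam$ of $\widetilde{\Gr^\lambda_\mu}$; composing the three bijections gives the asserted one between Verma modules and fixed points. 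It then remains, for a matched pair $J\leftrightarrow\bgam$, to check that $\tau(J)$ is the $T$-weight of $L_\bgam:=\big(\det(T_\bgam\widetilde{\Gr^\lambda_\mu})_-\otimes\O(\bR)^{*}_{t^\bgam}\otimes\O(\bR)_{t^\blam}\big)^{\otimes\frac12}$, where $t^\blam$ is the $T$-fixed point of $\widetilde{\Gr^\lambda}$ attached to the constant highest-weight sequence $(\varpi_{i_1},\dots,\varpi_{i_N})$. By Lemma~\ref{le:fixedpoint}(3) and the definition of $\chi$, $\det(T_\bgam\widetilde{\Gr^\lambda_\mu})_-$ has $T$-weight $\sum_p\chi(\gamma_p)$; by Lemma~\ref{le:fixedpoint}(2), $\O(\bR)_{t^\bgam}$ has $T$-weight $\sum_p(c_p-1)\gamma_p$, and the same formula (with each $\gamma_p$ replaced by $\varpi_{i_p}$) gives $\sum_p(c_p-1)\varpi_{i_p}$ for $\O(\bR)_{t^\blam}$. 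Hence the $T$-weight of $L_\bgam$ is $\tfrac12\sum_{p=1}^N\big(\chi(\gamma_p)+(c_p-1)(\varpi_{i_p}-\gamma_p)\big)$, and the theorem reduces to the identity $\tau(J)=\tfrac12\sum_p\big(\chi(\gamma_p)+(c_p-1)(\varpi_{i_p}-\gamma_p)\big)$.

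\emph{Computing $\tau(J)$.} By genericity $y(J)=y_\bR z_\bS^{-1}=\prod_p y_{\gamma_p,c_p}$; writing each factor as $y_{\gamma_p,c_p}=y_{i_p,c_p}z^{-1}_{\bS(\gamma_p,c_p)}$ as in~\eqref{eq:CSmonomial}, Remark~\ref{Remark: uniqueness of monomial factorization} forces $\bS=\bigsqcup_p\bS(\gamma_p,c_p)$, and since $\tau(J)=\tfrac12\sum_{i,\,k\in S_i}k\,\alpha_i$ we get $\tau(J)=\tfrac12\sum_p\delta(\gamma_p,c_p)$, where $\delta(\gamma,c):=\sum_j\big(\sum_{k\in\bS(\gamma,c)_j}k\big)\alpha_j$. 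So it suffices to show $\delta(\gamma,c)=\chi(\gamma)+(c-1)(\varpi_i-\gamma)$ for all $\gamma\in W\varpi_i$. I would prove this by induction along $\B(\varpi_i)$; the base case $\gamma=\varpi_i$ is trivial. For the step, choose $p$ with $\langle\gamma,\alpha_p\rangle=1$ and let $w$ be minimal with $w\varpi_i=\gamma$. The computation in the proof of Proposition~\ref{pr:explicit} gives $\tilde f_p y_{\gamma,c}=z_{p,k-2}^{-1}y_{\gamma,c}=y_{s_p\gamma,c}$ with $k=c-h(w^{-1}\alpha_p)+1$, so $\bS(s_p\gamma,c)=\bS(\gamma,c)\sqcup\{(p,\,c-1-h(w^{-1}\alpha_p))\}$ and hence $\delta(s_p\gamma,c)-\delta(\gamma,c)=(c-1-h(w^{-1}\alpha_p))\alpha_p$. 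On the other side: minimality of $w$ together with minusculity of $\varpi_i$ forces $w^{-1}\alpha_p>0$, equivalently that $s_pw$ is the minimal representative of $s_p\gamma$ (passing to the lower weight $s_p\gamma=\gamma-\alpha_p$ raises the length of the minimal representative by one); then Lemma~\ref{le:CombinatorialFact} gives $\chi(s_p\gamma)-\chi(\gamma)=s_pw\rho-w\rho=-h(w^{-1}\alpha_p)\alpha_p$ (a direct reflection computation, using that $\g$ is simply-laced), while $(\varpi_i-s_p\gamma)-(\varpi_i-\gamma)=\alpha_p$, so the right-hand side increments by $(c-1-h(w^{-1}\alpha_p))\alpha_p$ as well. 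Thus the induction closes, and summing over $p$ yields $\tau(J)=\tfrac12\sum_p\big(\chi(\gamma_p)+(c_p-1)(\varpi_{i_p}-\gamma_p)\big)$, matching the geometric weight above.

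\emph{Where the difficulty lies.} There is no serious obstacle; the content is entirely (a) assembling the three bijections and (b) weight bookkeeping. The points that require care are keeping the signs straight in the recursion borrowed from the proof of Proposition~\ref{pr:explicit} (it is $z_{p,k-2}^{-1}$, not $z_{p,k-2}$) and verifying $w^{-1}\alpha_p>0$ so that Lemma~\ref{le:CombinatorialFact} applies to $s_p\gamma$ with representative $s_pw$. It is also worth noting that the twist by $\O(\bR)_{t^\blam}$ is exactly what makes $L_\bgam$ a genuine $T$-equivariant line (its weight lands in $\Lambda$); this is automatic a posteriori, since $\tau(J)$ is the $T$-weight of the honest highest-weight vector $\one\in M^\lambda_\mu(J,\bR)$.
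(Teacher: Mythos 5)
Your proposal is correct and follows essentially the same route as the paper's proof: the same reduction via Lemma \ref{le:fixedpoint} and additivity over $p$ to the single-factor identity $2\tau(\gamma)=(c-1)(\varpi_i-\gamma)+\chi(\gamma)$, and the same induction along $\B(\varpi_i)$ using the recursion $y_{s_p\gamma,c}=z_{p,c-h(w^{-1}\alpha_p)-1}\,y_{\gamma,c}$ from the proof of Proposition \ref{pr:explicit} together with Lemma \ref{le:CombinatorialFact}. The only cosmetic difference is that you apply Lemma \ref{le:CombinatorialFact} directly to compute $\chi(s_p\gamma)-\chi(\gamma)=s_pw\rho-w\rho$, whereas the paper first uses $\Delta(s_p\gamma)=s_p\Delta(\gamma)\cup\{-\alpha_p\}$ and then invokes the lemma to identify $h(w^{-1}\alpha_p)$ with $\langle\chi(\gamma),\alpha_p\rangle+1$; the two computations are identical in content.
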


Here in the statement of the theorem $ \blam = (\varpi_{i_1}, \varpi_{i_1} + \varpi_{i_2}, \dots, \lambda) $.

\begin{proof}
Let $ i \in I $ be minuscule, let $ c \in \Z $ of the same parity as $ i $, and let $ \gamma \in W \varpi_i $.  Let us write $ y_{\gamma, c} = y_{i,c} z_{\bS(\gamma)}^{-1} $ for some $ \bS(\gamma) $.  Define $ \tau(\gamma) = \sum_{j, k \in S_j(\gamma)} \frac{k}{2} \alpha_j $.

From Lemma \ref{le:fixedpoint}, we see that by additivity of $T$-weights, in order to prove the Theorem it suffices to show that
\begin{equation} \label{eq:taugamma}
2\tau(\gamma)  = (c-1)(\varpi_i- \gamma) + \chi(\gamma).
\end{equation}

In order to prove (\ref{eq:taugamma}), we proceed by induction on $\gamma$ as in the proof of Proposition \ref{pr:explicit}.  First note that when $ \gamma = \varpi_i $, then both sides of (\ref{eq:taugamma}) are 0.  So now fix some $ \gamma $ for which (\ref{eq:taugamma}) holds and choose $ p $ such that $ \langle \gamma, \alpha_p \rangle = 1$.  We will now prove (\ref{eq:taugamma}) for $ s_p \gamma $.

From the proof of Proposition \ref{pr:explicit}, we see that $ y_{s_p \gamma, c} = z_{p,c - h(w^{-1} \alpha_p) -1} y_{\gamma,c}$ and thus
$$
2\tau(s_p \gamma) = 2\tau(\gamma) + (c - h(w^{-1} \alpha_p) - 1) \alpha_p
$$

On the other hand, $ \langle \gamma, \alpha_p \rangle = 1$ implies that $ w^{-1} \alpha_p \in \Delta_+^i $ and thus we see that $ \Delta(s_p \gamma)  = s_p \Delta(\gamma) \cup \{ -\alpha_p \} $.  Moreover $ s_p \gamma = \gamma - \alpha_p $.  So we have
$$
(c-1)(\varpi_i- s_p \gamma) + \chi(s_p\gamma) = (c-1)(\varpi_i - \gamma) + (c-1)\alpha_p + \chi(\gamma) - \langle \chi(\gamma), \alpha_p) \rangle \alpha_p - \alpha_p
$$

Since (\ref{eq:taugamma}) holds for $\gamma$, we see that (\ref{eq:taugamma}) holds for $s_p \gamma $ if
$$
h(w^{-1} \alpha_p) = \langle \chi(\gamma), \alpha_p \rangle + 1
$$

Finally, note that this last equation is equivalent to
$$
\langle w\rho - \chi(\gamma), \alpha_p \rangle = 1
$$
which follows from Lemma \ref{le:CombinatorialFact}.
\end{proof}

\subsection{Relation to geometric category $ \O $}

This identification between Verma modules and torus fixed points is quite natural from the perspective of the work
\cite{BLPWgco} of the third author and his collaborators.

Recall that by the work of Bezrukavnikov-Kaledin \cite{BeKa}, a smooth symplectic variety equipped with a conic $\C^*$-action, such as $\widetilde{\Gr^\lambda_\mu} $, has an associated universal family of quantizations.  In \cite[Conjecture 4.11]{KWWY}, we gave a precise version of the following conjecture.
\begin{Conjecture}
  $Y^\la_\mu(\bR)$ is the universal family of quantizations.
\end{Conjecture}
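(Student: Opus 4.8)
The plan is to work within the Bezrukavnikov--Kaledin framework \cite{BeKa}. When $\Grlmbar$ admits the symplectic resolution $\widetilde{\Gr^\lambda_\mu} \to \Grlmbar$, this resolution carries a canonical universal family of filtered quantizations over an affine space modelled on $H^2(\widetilde{\Gr^\lambda_\mu},\C)$; for general $\lambda,\mu$ one works instead with a notion of universal Poisson/quantum deformation of the affine Poisson variety $\Grlmbar$ itself, which is the content of the precise form of the conjecture stated as \cite[Conjecture 4.11]{KWWY}. The goal is to exhibit the collection $\{ Y^\lambda_\mu(\bR) \}_{\bR \in \C^\lambda}$ as a flat family over $\C^\lambda$, and then to identify the resulting classifying map $\C^\lambda \to H^2(\widetilde{\Gr^\lambda_\mu},\C)$ (after the appropriate affine shift by the canonical class) with an isomorphism onto an open subset.

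First I would strengthen Theorem \ref{th:quantize}: one needs that $\gr Y^\lambda_\mu(\bR)$ is \emph{reduced}, i.e.\ that the map of that theorem is an honest isomorphism onto $\C[\Grlmbar]$. This is Conjecture 2.20 of \cite{KWWY} and is a genuine prerequisite, since otherwise $Y^\lambda_\mu(\bR)$ only quantizes a thickening of $\Grlmbar$. In type A one can hope to extract it from the parabolic W-algebra presentation of \cite{WWY}; in general one would argue via a PBW/dimension count on the associated graded.

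Next I would organize the algebras into a single $\C[\C^\lambda]$-algebra $\mathcal Y$ by letting $\bR$ vary, that is, by treating the elementary symmetric functions of each multiset $R_i$ (equivalently the coefficients of the series $r_i(u)$ of \eqref{eq:rfromc}) as central parameters. Flatness over $\C^\lambda$ is automatic from the PBW basis of Proposition \ref{Prop: PBW}, which does not depend on $\bR$. One then computes the classifying map: on the level of dimensions this is the equality $\dim \C^\lambda = \sum_i \lambda_i = \#\{\text{components of the exceptional divisor}\} = \dim H^2(\widetilde{\Gr^\lambda_\mu},\C)$, and at the level of the period one must match the parameter $\bR$ with the equivariant first Chern class of the line bundle $\O(\bR)$ from Section \ref{se:resolutions}; the fixed-point computation of Lemma \ref{le:fixedpoint}(2) is precisely the local input one needs for this identification.

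The main obstacle, I expect, is this period computation: proving that the classifying map of $\mathcal Y \to \C^\lambda$ is the expected affine-linear isomorphism rather than some a priori unknown map. This requires either an explicit evaluation of the noncommutative period in the sense of Bezrukavnikov--Kaledin, or a reduction to an already-understood universal family --- in type A, $Y^\lambda_\mu(\bR)$ is a central quotient of a parabolic W-algebra by \cite{WWY}, whose quantization parameters are known, so the type A case should be accessible along these lines. A secondary difficulty is the non-resolution case: one must either restrict to $\lambda$ a sum of minuscule coweights (as in Section \ref{se:resolutions}), where a symplectic resolution exists, or develop the universal-deformation statement directly for the singular affine variety $\Grlmbar$, which is why the conjecture is phrased in the form it takes in \cite{KWWY}.
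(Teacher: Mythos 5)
The statement you are addressing is not proved in the paper: it is stated as a \emph{conjecture} (the precise form being Conjecture 4.11 of \cite{KWWY}), and the paper only ever \emph{assumes} it in order to invoke results of \cite{BLPWgco}. So there is no proof of record to compare against, and your write-up should be judged as a proof attempt of an open problem. As such, it has a genuine gap: it is a strategy outline in which every substantive step is deferred. You correctly identify the two essential inputs --- (i) reducedness of $\gr Y^\lambda_\mu(\bR)$, i.e.\ Conjecture 2.20 of \cite{KWWY}, without which the family only quantizes a thickening of $\Grlmbar$; and (ii) the computation of the Bezrukavnikov--Kaledin period map, identifying the parameter $\bR$ with a point of $H^2(\widetilde{\Gr^\lambda_\mu},\C)$ --- but you supply an argument for neither. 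Saying that (i) might follow from \cite{WWY} in type A and that (ii) "requires either an explicit evaluation of the noncommutative period... or a reduction to an already-understood universal family" names the difficulty rather than resolving it. Flatness over $\C^\lambda$ via the PBW basis and the dimension count $\dim\C^\lambda = \sum_i\lambda_i = \dim H^2$ are correct but easy; they do not by themselves show that the classifying map is an isomorphism (a map between affine spaces of equal dimension need not be one), which is exactly the period computation you have postponed.

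A further point: even granting (i) and (ii) for the resolved case, the conjecture as stated in the paper is not restricted to $\lambda$ a sum of minuscule coweights, so the singular case is not optional. You acknowledge this, but "develop the universal-deformation statement directly for the singular affine variety" is again a placeholder for the missing theory (universal filtered quantizations of conical symplectic \emph{singularities}, in the sense later developed by Losev and others), not a step in a proof. In short: your plan is the standard and reasonable one, but as submitted it proves nothing beyond what the paper already asserts as conjectural.
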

Assuming this conjecture, the paper \cite{BLPWgco} gives the following result, which is proven by geometric methods.
\begin{Theorem}
  For a Zariski dense set of $\bR$, the category $\O$ over $Y^\lambda_\mu(\bR)$ is a highest
  weight category whose simple objects are in canonical bijection with
  the torus fixed points on $\widetilde{\Gr^\lambda_\mu} $.
\end{Theorem}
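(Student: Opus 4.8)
The plan is to obtain the statement as a direct application of the theory of category $\O$ for quantizations of conical symplectic resolutions developed in \cite{BLPWgco}; the substantive work is to supply the geometric inputs and to check that the category $\O$ used there agrees with the one defined here algebraically from the $\rho^\vee$-grading on $Y^\lambda_\mu(\bR)$.

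First I would assemble the geometry. As recalled just above (see \cite[\S 2.3]{KWWY}), since $\lambda$ is a sum of minuscule coweights the convolution map $m\colon \widetilde{\Gr^\lambda_\mu}\to\Grlmbar$ is a conical symplectic resolution, with $\Grlmbar$ an affine conical symplectic singularity and $\widetilde{\Gr^\lambda}$ an iterated bundle of minuscule partial flag varieties. The torus $T\subset G$ acts Hamiltonianly on $\widetilde{\Gr^\lambda_\mu}$, commuting with the conical $\C^\times$, and by Lemma \ref{le:fixedpoint}(1) its fixed points are isolated and are exactly the points $t^\bgam$. A generic cocharacter $\C^\times\to T$, namely the one used to define the positive part $Y^\lambda_\mu(\bR)^\ge$, provides the flow needed to define category $\O$ in \cite{BLPWgco}, and the induced grading on the quantization is precisely the $\rho^\vee$-grading underlying our definition of category $\O$ and of the $B$-algebra (cf. Proposition \ref{pr:2Balgebras}); hence the two categories $\O$ agree.

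Next I would invoke the structure theory. Granting \cite[Conjecture 4.11]{KWWY}, that $\{Y^\lambda_\mu(\bR)\}$ is the Bezrukavnikov--Kaledin \cite{BeKa} universal family of quantizations of $\widetilde{\Gr^\lambda_\mu}$, the main theorems of \cite{BLPWgco} apply: for a quantization whose period lies in a suitable subset of the parameter space (a subset containing a translate of the ample cone of $m$, and in particular Zariski dense), category $\O$ is a highest weight category, abelian localization holds, and the simple objects are in canonical bijection with the $T$-fixed points of $\widetilde{\Gr^\lambda_\mu}$. Since $\bR\mapsto\text{period}$ is an affine-linear surjection of $\C^\lambda$ onto the relevant space of periods (up to a fixed shift coming from a half-canonical twist), the preimage of the good locus is Zariski dense in $\C^\lambda$, which yields the asserted Zariski dense set of $\bR$. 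One then reads off the bijection with fixed points either directly from abelian localization, or equivalently from the fact that the standard objects of the resulting highest weight category are the Verma modules $M^\lambda_\mu(J,\bR)$, which are in bijection with fixed points by Theorem \ref{th:fixed-bijection}; note that this last step needs neither Conjecture \ref{co:main} nor any genericity of $\bR$ beyond what makes the localization work.

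The step I expect to be the genuine difficulty is not a computation but the bookkeeping around the parameter: pinning down the map from $\bR$ to the quantization period with the correct normalization, confirming that the locus of periods for which \cite{BLPWgco} produces a highest weight structure together with abelian localization is honestly Zariski dense and not merely non-empty, and matching the $T$-equivariant geometry of $\widetilde{\Gr^\lambda_\mu}$ with the algebraically defined category $\O$ of $Y^\lambda_\mu(\bR)$. Each of these is routine given the dictionaries of \cite{KWWY} and \cite{BLPWgco}, but the argument as a whole rests on the still-conjectural identification of $Y^\lambda_\mu(\bR)$ with the universal family, which is the real gap.
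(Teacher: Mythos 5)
Your proposal takes essentially the same approach as the paper: the paper states this theorem with no argument beyond the sentence that, assuming Conjecture 4.11 of \cite{KWWY} (that $Y^\lambda_\mu(\bR)$ is the universal family of quantizations), the result follows from \cite{BLPWgco} by geometric methods. Your writeup is a correct expansion of that citation, and you rightly flag the dependence on the conjectural universal-family identification as the genuine gap — exactly as the paper does; the only small caveat is that Theorem \ref{th:fixed-bijection} is itself conditional on Conjecture \ref{co:main}, so the cleanest route to the fixed-point bijection is the abelian localization one you mention first, not that theorem.
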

In the present paper, we are working with algebraic/combinatorial methods.  Conjecture \ref{co:main} and Theorem \ref{th:fixed-bijection} gives such a bijection between simple objects and torus fixed points.

\section{Description of the $B$-algebra} \label{se:Balgebra}

\subsection{Outline of strategy}

Our goal in this section will be to describe generators for the ideal of the $ B$-algebra for $Y_\mu^\lambda(\bR)$.   By Proposition \ref{pr:2Balgebras}, the $B$--algebra is canonically isomorphic to 
$$  B = \Cartan/ \Pi(I_\mu^\lambda), $$
where $ I_\mu^\lambda \subset Y_\mu $ is the ideal of $ Y_\mu^\lambda(\bR) $.

To compute the $B$-algebra, we can allow ourselves more flexibility by working with the larger left ideal $L_\mu^\lambda = Y \cdot I_\mu^\lambda $ in $Y$. This is justified by the following result:

\begin{Lemma}  \label{lem:Compute with L}
The $B$-algebra for $Y_\mu^\lambda(\bR)$ can be computed using $L_\mu^\lambda$:
$$ B = \Cartan / \Pi(L_\mu^\lambda) $$
\end{Lemma}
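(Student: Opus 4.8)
The plan is to compare the two candidate quotients $\Cartan/\Pi(I^\lambda_\mu)$ and $\Cartan/\Pi(L^\lambda_\mu)$ directly, using the triangular decomposition of $Y$. First I would observe that $\Pi$ extends to a projection $Y \to \Cartan$ defined by the PBW basis (with $F$'s before $H$'s before $E$'s), and that on $Y_\mu$ this restricts to the map already in play. Since $I^\lambda_\mu \subseteq L^\lambda_\mu$, there is an obvious surjection $\Cartan/\Pi(I^\lambda_\mu) \twoheadrightarrow \Cartan/\Pi(L^\lambda_\mu)$, so the content is showing $\Pi(L^\lambda_\mu) \subseteq \Pi(I^\lambda_\mu)$, i.e. that enlarging to the left ideal $Y\cdot I^\lambda_\mu$ does not enlarge the image under $\Pi$.

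The key step is the following. Write $L^\lambda_\mu = Y \cdot I^\lambda_\mu = Y^{\le} Y^{>} \cdot I^\lambda_\mu$ using $Y = Y^{\le} Y^{>}$ (PBW). The $E$-modes act on the highest weight vector by raising the $\rho^\vee$-degree, so for the purposes of computing $\Pi$ (the weight-zero component under the $\rho^\vee$-grading, as in the proof of Proposition \ref{Prop: B algebra and Vermas}) it suffices to understand $Y^{>} \cdot I^\lambda_\mu$. The crucial point is that $I^\lambda_\mu$ is a \emph{two-sided} ideal of $Y_\mu$, and $Y_\mu$ contains all the $E_i^{(s)}$; I would use this to push the extra $Y^{>}$-factors from the left through into $I^\lambda_\mu$ on the right, modulo terms of strictly positive $\rho^\vee$-degree (which $\Pi$ kills). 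Concretely, for $x \in Y^{>}$ of positive degree and $a \in I^\lambda_\mu$, one writes $xa = ax + [x,a]$; since $a \in I^\lambda_\mu$ and $I^\lambda_\mu$ is two-sided in $Y_\mu$, while $x$ need not lie in $Y_\mu$, I would instead argue that any element of $L^\lambda_\mu$ of $\rho^\vee$-degree $0$ lies in $(Y^{\le}_{\text{deg} < 0}\, Y^{>}_{\text{deg} > 0} + Y^0)\cdot I^\lambda_\mu$, and that the degree-zero part of this is already captured by $I^\lambda_\mu$ itself together with $\ker\Pi$. This is essentially the same bookkeeping as in Proposition \ref{pr:2Balgebras}: quotienting $Y_{\mu,0}$ by $\sum_{k>0} Y_{\mu,-k}Y_{\mu,k}$ before imposing $I^\lambda_\mu$ already yields $\Cartan$ with $I^\lambda_\mu$ mapping to $\Pi(I^\lambda_\mu)$, and the analogous computation with $Y$ in place of $Y_\mu$ gives the same answer because $Y_{-k} Y_{k} \cap Y_0$ and $Y_{\mu,-k}Y_{\mu,k}\cap Y_{\mu,0}$ have the same image under $\Pi$.

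The main obstacle I anticipate is precisely the asymmetry that $Y^{>} \not\subseteq Y_\mu$-factors: the shifted Yangian is missing the low $F$-modes, so one cannot freely commute. The way around this is to only ever move $E$-modes (which \emph{are} all in $Y_\mu$) and $H$-modes past elements of $I^\lambda_\mu$, and to absorb the leftover $F$-modes into the statement that they raise/lower $\rho^\vee$-degree and hence contribute nothing to $\Pi$. I would organize this as: (i) reduce to degree-zero elements of $L^\lambda_\mu$; (ii) use $Y = Y^< Y^0 Y^>$ to write such an element as a sum of terms $f h e \cdot a$ with $f \in Y^<$, $h \in Y^0$, $e \in Y^>$, $a \in I^\lambda_\mu$; (iii) note $e a \in I^\lambda_\mu$ since $e \in Y_\mu$ and the ideal is two-sided, and $h e a \in I^\lambda_\mu$ likewise; (iv) conclude the term equals $f \cdot a'$ with $a' \in I^\lambda_\mu$, and its $\rho^\vee$-degree-zero, $\Pi$-nonzero part forces $f \in Y^0$, so it lies in $\Pi(I^\lambda_\mu)$. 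This gives $\Pi(L^\lambda_\mu) = \Pi(I^\lambda_\mu)$ and hence the claimed isomorphism.
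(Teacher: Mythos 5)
Your proposal is correct and follows essentially the same route as the paper: both arguments rest on a PBW/triangular decomposition of $Y$ relative to $Y_\mu$, absorb the factors lying in $Y_\mu$ (your $Y^0Y^>$, the paper's $x_a$) into the two-sided ideal $I^\lambda_\mu$, and observe that $\Pi$ kills any term with a nontrivial $F$-prefix; the paper just packages this as the statement that $Y=\bigoplus_a F_a Y_\mu$ over monomials $F_a$ in the low $F$-modes. One small slip in your middle paragraph: an element $x\in Y^{>}$ \emph{does} lie in $Y_\mu$ (only low $F$-modes are missing from the shifted Yangian), which is exactly what makes step (iii) of your final argument work.
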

\begin{proof}
We need to show that $ \Pi( I_\mu^\lambda ) \supset \Pi( L_\mu^\lambda)$.  By Proposition \ref{Prop: PBW}, the PBW bases for $Y$ and $Y_\mu$ are compatible: any $x\in Y$ can be written uniquely as a sum $x = \sum_{a} F_a x_a$, where each $x_a \in Y_\mu$ and $F_a$ is a monomial in the generators $F_\alpha^{(r)}$ with $r\leq \langle \mu, \alpha\rangle$.

For $x\in L_\mu^\lambda$, we have all $x_a\in I_\mu^\lambda$.  But under $\Pi$, any summand where $F_a$ is non-trivial is sent to zero. Hence $\Pi(x) \in \Pi(I_\mu^\lambda)$.
\end{proof}

Here is an outline of this section.
\begin{enumerate}
\item Define elements $ T_{\beta, \gamma}(u) $ of the Yangian $ Y $ which lift generalized minors $ \Delta_{\beta, \gamma}(u) $.
\item Give a presentation for $L_\mu^\lambda$ as a left $Y$-ideal
$$ L_\mu^\lambda = Y S_\mu^\lambda $$
where $S_\mu^\lambda$ is an explicit set of elements related to the lifted minors $T_{\beta,\gamma}(u)$.  In this step, we have partial results for general type, but complete results only in type A.
\item In type A, use the generators of $ L_\mu^\lambda $ to give generators of the ideal defining the $ B$-algebra.
\end{enumerate}

\subsection{Generalized minors}

We recall the Poisson structure on generalized minors as in \cite{KWWY}.  For a dominant weight $\tau$ and $\beta, \gamma\in V(\tau)$, consider the function
$$ \Delta_{\beta, \gamma} : G \rightarrow \C, \quad g \mapsto \langle \beta, g\gamma\rangle $$
where $\langle \cdot, \cdot\rangle$ denotes the Shapovalov form on $V(\tau)$, defined with respect to a highest weight vector $v_\tau$.  Recall its definition: letting $\omega$ denote the Chevalley involution of $\g$ defined by $e_i \mapsto -f_i$, $f_i\mapsto - e_i$, and $h \mapsto -h$  for $i\in I$ and $h\in\h$, the Shapovalov form on $V(\tau)$ is uniquely determined by requiring
$$
\langle v_\tau, v_\tau \rangle = 1, \quad \langle x \beta, \gamma \rangle  = - \langle \beta, \omega(x) \gamma\rangle,
$$
for all $x\in \g, \beta$ and $\gamma \in V(\tau)$.

Using $\Delta_{\beta,\gamma}$ we define the function $\Delta_{\beta,\gamma}^{(r)}$ on $G_1[[t^{-1}]]$, for $r\in \Z_{\geq 0}$, whose value on $g$ is the coefficient of $t^{-r}$ in $\langle \beta, g\gamma\rangle$.  Since $G_1[[t^{-1}]]$ is a pro-unipotent group, these functions generate $\O(G_1[[t^{-1}]])$.  We form the formal series
$$ \Delta_{\beta, \gamma}(u) = \sum_{r\geq 0} \Delta_{\beta, \gamma}^{(r)} u^{-r} $$

There is a Poisson structure on $G_1[[t^{-1}]]$ and on $\Gr$, coming from the Manin triple $(\g((t^{-1})),\g[t],t^{-1}\g[[t^{-1}]])$ induced by the non-degenerate bilinear form $(f(t), g(t)) = - \text{res}_0 (f(t),g(t))_\g$.  Here $(\cdot,\cdot)_\g$ is a non-degenerate symmetric bilinear form on $\g$, extended by $\C((t^{-1}))$-linearity. Choose dual bases $\{J_a\}, \{J^a\}$ for $\g$ with respect to $(\cdot,\cdot)_\g$.

\begin{Proposition}[Proposition 2.13, \cite{KWWY}] \label{Poisson bracket def}
The Poisson bracket on $\O(G_1[[t^{-1}]])$ is given by
$$ (u-v)\left\{ \Delta_{\beta_1,\gamma_1}(u), \Delta_{\beta_2,\gamma_2}(v)\right\} = \sum_a \left( \Delta_{J_a \beta_1, \gamma_1}(u) \Delta_{J^a \beta_2, \gamma_2}(v) - \Delta_{ \beta_1,J_a\gamma_1}(u) \Delta_{ \beta_2,J^a \gamma_2}(v) \right) $$
\end{Proposition}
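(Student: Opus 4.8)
The plan is to read the asserted formula as the matrix-coefficient shadow of the standard quadratic $r$-matrix Poisson bracket, and then reduce to a short computation in $\g\otimes\g$. Fix a dominant weight $\tau$ and let $g$ be the tautological $\C[[t^{-1}]]$-point of $G_1[[t^{-1}]]$, so that $L(u) := \rho_{V(\tau)}(g)$, regarded as a series in $u^{-1}$ with coefficients in $\operatorname{End}(V(\tau))\otimes\O(G_1[[t^{-1}]])$, has $(\beta,\gamma)$ matrix coefficient $\langle\beta, L(u)\gamma\rangle = \Delta_{\beta,\gamma}(u)$. The core claim to establish is that the Poisson structure coming from the Manin triple $(\g((t^{-1})),\g[t],t^{-1}\g[[t^{-1}]])$ with the residue form is governed by the Faddeev--Reshetikhin--Takhtajan relation
$$ (u-v)\,\{L_1(u),\,L_2(v)\} \;=\; \bigl[\,\Omega\,,\, L_1(u)L_2(v)\,\bigr], $$
where $\Omega = \sum_a J_a\otimes J^a$ is the image of the Casimir of $(\cdot,\cdot)_\g$ in $\operatorname{End}(V(\tau))^{\otimes 2}$, and $L_1 = L\otimes 1$, $L_2 = 1\otimes L$ (with the obvious change if the two minors come from different modules). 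This is classical: the Poisson--Lie structure on the negative factor of this Manin triple is exactly the Sklyanin bracket for the rational $r$-matrix $r(u-v)=\Omega/(u-v)$, and multiplying through by $(u-v)$ clears the pole. I would invoke this from the literature on Poisson--Lie and current groups, or derive it from the coboundary description of the Lie bialgebra $t^{-1}\g[[t^{-1}]]$ dual to $\g[t]$.

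Granting the displayed identity, the rest is bookkeeping. Pairing both sides with $\langle\beta_1\otimes\beta_2\mid\,\cdot\,\mid\gamma_1\otimes\gamma_2\rangle$ turns the left side into $(u-v)\{\Delta_{\beta_1,\gamma_1}(u),\Delta_{\beta_2,\gamma_2}(v)\}$. On the right, in $\Omega\,L_1(u)L_2(v)$ the factor $\Omega$ acts on the bra, i.e. by the adjoint $\sum_a\sigma(J_a)\otimes\sigma(J^a)$ with respect to the Shapovalov form, $\sigma$ being the Chevalley anti-involution; since $(\sigma\otimes\sigma)\Omega = \Omega$ (the pair $\{\sigma(J_a)\},\{\sigma(J^a)\}$ is again dual because the invariant form is $\sigma$-stable), this contributes $\sum_a\Delta_{J_a\beta_1,\gamma_1}(u)\Delta_{J^a\beta_2,\gamma_2}(v)$. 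The term $L_1(u)L_2(v)\,\Omega$ contributes $\sum_a\Delta_{\beta_1,J_a\gamma_1}(u)\Delta_{\beta_2,J^a\gamma_2}(v)$ directly, and subtracting gives exactly the right-hand side of the Proposition. Since the $\Delta^{(r)}_{\beta,\gamma}$ generate $\O(G_1[[t^{-1}]])$ as $\tau,\beta,\gamma$ vary and the Poisson bracket is a biderivation, verifying the identity for all such pairs suffices.

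The \textbf{main obstacle} is the first step: making the Poisson--Lie structure on the pro-unipotent group $G_1[[t^{-1}]]$ precise and pinning down all normalizations. One must (i) interpret the infinite-dimensional Manin triple and its $r$-matrix as genuinely defining a bivector field on the pro-algebraic group, working level by level modulo $t^{-N}$; (ii) fix the expansion convention for $1/(u-v)$ and hence whether $\Omega$ sits to the left or the right in the commutator, so that the two signs in the Proposition come out correctly; and (iii) confirm that $\sigma$ is indeed the transpose for the Shapovalov form in the chosen convention and that $(\sigma\otimes\sigma)\Omega=\Omega$. Once these conventions are fixed the matrix-coefficient computation of the second paragraph is immediate; all of the genuine content lies in the passage from the $r$-matrix to the bracket on functions.
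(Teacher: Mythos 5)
Your proposal is correct in outline and the matrix-coefficient bookkeeping (including the $\sigma$-invariance of $\Omega$ needed to move the Casimir onto the bra side) checks out; note, however, that the present paper gives no proof of this statement at all, simply importing it as Proposition 2.13 of \cite{KWWY}, where the argument is exactly the standard Manin-triple/$r$-matrix computation you describe: one writes the Poisson bivector of the dual Poisson--Lie group $G_1[[t^{-1}]]$ in terms of the element $\sum_{a,n\ge 0} J_a t^n \otimes J^a t^{-n-1}$ and sums the geometric series to produce the $1/(u-v)$ kernel, which is equivalent to your Sklyanin-bracket formulation. So you have reproduced the intended proof; the one step you defer to the literature (that the Manin triple with the residue pairing yields the rational $r$-matrix bracket with your chosen expansion and sign) is precisely the content that \cite{KWWY} carries out explicitly, and it is where all the normalization care must be spent.
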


There are two subtleties in this result: First, the right-hand side has the opposite sign compared to Proposition 2.13 of \cite{KWWY}, corresponding to the opposite sign $r$--matrix.  Denoting the Casimir $2$-tensor by $C = \sum_a J_a \otimes J^a$, the choice $r(u,v) = C / (u-v)$ from \cite{KWWY} corresponds to the canonical Lie bialgebra inclusion $\g[t]\hookrightarrow \g((t^{-1}))$, which is quantized by the Drinfeld Yangian.  However, it induces the {\em opposite} Lie bialgebra structure on the dual space $\g[t]^\ast = t^{-1}\g[[t^{-1}]]$ by Proposition 1.4.2 in \cite{CP}.  Since Drinfeld-Gavarini duality puts the correct (non-opposite) Lie bialgebra structure on the dual, the sign in \cite{KWWY} was wrong: we should have used the negative $-r(u,v)$, which is the Poisson structure quantized by $Y$. We have taken the corrected sign here.

Second, in \cite{KWWY} the generalized minors were defined with $\beta\in V(\tau)^\ast$, whereas here we have used the Shapovalov form to identify $\beta \in V(\tau)^\ast \cong V(\tau)$.  Thus in our current conventions where $\beta_i, \gamma_i \in V(\tau_i)$, translating from \cite{KWWY} we more naturally see the sum
$$
\sum_a \Delta_{ \omega(J_a) \beta_1, \gamma_1}(u) \Delta_{\omega(J^a) \beta_2,\gamma_2}(v)
$$
But this agrees with Proposition \ref{Poisson bracket def}, since in fact $\sum_a \omega(J_a)\otimes \omega(J^a) = \sum_a J_a \otimes J^a$: both are $\g$--invariant elements of $\g\otimes \g$ so must be proportional, and comparing coefficients of e.g.~$e_i\otimes f_i$ we see they are equal.

\begin{Lemma}
There is an action of $\g$ on $\O(G_1[[t^{-1}]])$ defined by
$$ e_i  \mapsto \left\{ \Delta_{f_i v_{\varpi_i}, v_{\varpi_i}}^{(1)}, -\right\}, \quad
f_i \mapsto \left\{ \Delta_{v_{\varpi_i},f_i v_{\varpi_i}}^{(1)}, -\right\} $$
In particular, the Cartan weight of $\Delta_{\beta,\gamma}^{(r)}\in \O(G_1[[t^{-1}]])$ with respect to this action is $\operatorname{wt}(\gamma-\beta)$.
\end{Lemma}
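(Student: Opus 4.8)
The plan is to verify that the two assignments $e_i \mapsto \{\Delta^{(1)}_{f_i v_{\varpi_i}, v_{\varpi_i}}, -\}$ and $f_i \mapsto \{\Delta^{(1)}_{v_{\varpi_i}, f_i v_{\varpi_i}}, -\}$, together with the induced brackets for the Cartan, satisfy the Serre relations of $\g$, so that they define a Lie algebra action; and then to read off the Cartan weight of a general $\Delta^{(r)}_{\beta,\gamma}$ from the $[h_i, -]$ part. Throughout I would work directly with the explicit Poisson bracket formula in Proposition \ref{Poisson bracket def}, specializing the general vectors $\beta_j, \gamma_j$ to the distinguished vectors $v_{\varpi_i}$ and $f_i v_{\varpi_i}$. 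The key mechanism is that $J_a \beta$ and $J^a \gamma$, appearing on the right-hand side of that formula, encode exactly the infinitesimal action of $\g$ on the representation $V(\tau)$, so the Poisson bracket of a minor with any other minor literally differentiates the entries of the second minor along a one-parameter subgroup of $G$.

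The main steps I would carry out, in order: (i) First I would record that for any $X \in \g$, the Hamiltonian vector field $\{\Delta^{(1)}_{X' v_{\varpi_i}, v_{\varpi_i}}, -\}$ (or the analogous one with $v_{\varpi_i}$ in the second slot) acts on $\Delta_{\beta,\gamma}(v)$ by $\Delta_{X\beta, \gamma}(v)$ or $-\Delta_{\beta, X\gamma}(v)$, up to the $(u-v)^{-1}$ prefactor, using the fact that the minors at $u = \infty$, i.e. the $r=0$ terms, reconstruct the $G$-action on the leading term. Concretely one sets $u = \infty$ in Proposition \ref{Poisson bracket def} (so only the $r_1 = 0$ term of the first factor survives, with $\Delta^{(0)}_{\beta_1,\gamma_1} = \langle \beta_1, \gamma_1\rangle$ a constant) and checks that $\sum_a \langle J_a f_i v_{\varpi_i}, v_{\varpi_i}\rangle J^a = f_i$ and $\sum_a \langle J_a v_{\varpi_i}, f_i v_{\varpi_i}\rangle J^a$ acts as $e_i$ on each $V(\tau)$ — this is where the normalization of the Shapovalov form and the choice of dual bases $\{J_a\}, \{J^a\}$ enter. (ii) Then $[e_i, f_j] = h_i \delta_{ij}$, $[h_i, e_j] = a_{ij} e_j$, etc., follow formally: the Jacobi identity for the Poisson bracket translates a commutator of two Hamiltonian vector fields into the Hamiltonian vector field of the Poisson bracket of the two Hamiltonians, and by step (i) that Poisson bracket is again a linear combination of the distinguished $r=1$ minors realizing the corresponding element of $\g$. (iii) The Serre relations then hold because they hold in $\g$ acting on each $V(\tau)$ and the minors $\Delta^{(r)}_{\beta,\gamma}$ separate points of $G_1[[t^{-1}]]$ (they generate $\O(G_1[[t^{-1}]])$, as noted). (iv) Finally, applying the $h_i = [e_i, f_i]$ vector field to $\Delta^{(r)}_{\beta,\gamma}$ gives $(\operatorname{wt}(\gamma) - \operatorname{wt}(\beta))(h_i)\, \Delta^{(r)}_{\beta,\gamma}$, since $h_i$ acts on $V(\tau)$ by the weight and the Hamiltonian flow acts diagonally; summing over $i$ yields the weight statement.

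The step I expect to be the main obstacle is step (i): pinning down that $\sum_a \langle J_a f_i v_{\varpi_i}, v_{\varpi_i}\rangle J^a$ really equals (a scalar multiple of) $f_i$ acting on every representation, and similarly for $e_i$. This is a compatibility between the invariant form $(\cdot,\cdot)_\g$ used to build the Manin triple, the dual bases $\{J_a\},\{J^a\}$, and the Shapovalov form on $V(\varpi_i)$ — essentially the statement that the Casimir-type element $\sum_a \langle J_a v, w\rangle J^a$ recovers the projection onto the span of a root vector when $v, w$ are chosen as above. One must also be slightly careful that clearing the $(u-v)$ denominator in Proposition \ref{Poisson bracket def} is harmless for the $r = 1$ Hamiltonians (the relevant coefficients are extracted after multiplying through). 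Once this identification is fixed, the remaining verifications — the defining relations of $\g$ and the weight formula — are formal consequences of the Jacobi identity and of the corresponding facts inside each $V(\tau)$, so I would not belabor them.
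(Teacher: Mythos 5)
Your proof is correct, but it takes a genuinely different route from the paper's. The paper disposes of this lemma in one line: it is a corollary of Theorem 3.9 of \cite{KWWY} (Theorem \ref{Thm:KWWY lifts} here), the Poisson isomorphism $\gr Y \cong \O(G_1[[t^{-1}]])$ under which $\Delta_{f_i v_{\varpi_i}, v_{\varpi_i}}^{(1)}$ and $\Delta_{v_{\varpi_i}, f_i v_{\varpi_i}}^{(1)}$ are the symbols of $E_i^{(1)} = B_i^{(1)}$ and $F_i^{(1)} = C_i^{(1)}$; since these generate the copy of $U\g$ inside $Y$ and the Poisson bracket on the associated graded is induced by the commutator, the adjoint action of $U\g$ on $Y$ transports to the claimed $\g$-action, and the weight statement follows from the $\h$-weights in $Y$. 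You instead verify everything directly from the explicit bracket formula of Proposition \ref{Poisson bracket def}: extracting the coefficient of $u^{-1}$ reduces the bracket to the $r_1=0$ term, and the Casimir-type contraction $\sum_a \langle J_a f_i v_{\varpi_i}, v_{\varpi_i}\rangle J^a = f_i$ (and its mirror giving $e_i$) is exactly the compatibility you flag; it does hold, by weight considerations on the Shapovalov form together with $\langle e_i f_i v_{\varpi_i}, v_{\varpi_i}\rangle = \varpi_i(\alpha_i^\vee) = 1$ and the duality $(e_i)^\vee = f_i$. This recovers precisely the two displayed identities that the paper states without proof immediately after the lemma, and from there your steps (ii)--(iv) (Jacobi identity, relations checked on generators, weight computation) are routine. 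What your approach buys is a self-contained argument that never invokes the quantization $Y$; what the paper's approach buys is brevity and the fact that the same isomorphism is reused immediately afterward to define the lifted minors $T_{\beta,\gamma}(u)$, so the identification of the $\g$-action with the adjoint action of $U\g \subset Y$ is the form of the statement actually needed later.
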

\begin{proof}
There is an embedding $\g \subset Y$, and so a Hamiltonian action of $\g$ on the classical limit $\O(G_1[[t^{-1}]])$ of $Y$.  The explicit images follow from Theorem 3.9 in \cite{KWWY}. 
\end{proof}

Explicitly, using Proposition \ref{Poisson bracket def} we have
\begin{align*}
\left\{ \Delta_{f_i v_{\varpi_i},v_{\varpi_i}}^{(1)}, \Delta_{\beta,\gamma}(u)\right\} &= \Delta_{f_i \beta, \gamma}(u) - \Delta_{\beta,e_i \gamma}(u) \\
\left\{ \Delta_{v_{\varpi_i},f_i v_{\varpi_i}}^{(1)}, \Delta_{\beta,\gamma}(u)\right\} &= \Delta_{e_i \beta, \gamma}(u) - \Delta_{ \beta, f_i\gamma}(u)
\end{align*}

In addition to the Chevalley involution $\omega$, consider the involution $\iota$ of $\g$ defined by $e_i\mapsto - e_i$, $f_i\mapsto -f_i$, and $h\mapsto h$. From the above explicit formulas for the $\g$ action on minors,
\begin{Corollary} \label{Cor:cyclic}
Consider the cyclic representation $V_i$ of $U\g$ generated by the vector $\Delta_{\varpi_i,\varpi_i}(u) \in \O(G_1[[t^{-1}]])[[u^{-1}]]$.  Then there is an isomorphism
\begin{align*}
V_i & \stackrel{\sim}{\longrightarrow}  (\iota\circ \varpi)^\ast\Big( V(\varpi_i) \Big) \otimes \iota^\ast \Big(V(\varpi_i)\Big)  \\
\Delta_{\beta,\gamma}(u) & \longmapsto \beta \otimes \gamma
\end{align*}
In particular, $V_i \cong V(\varpi_i^\ast)\otimes V(\varpi_i)$.
\end{Corollary}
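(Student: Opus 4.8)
The plan is to realize the claimed isomorphism as the inverse of the natural $ \g $-equivariant map
$$ \tilde\pi \colon (\iota\circ\varpi)^\ast\big(V(\varpi_i)\big) \otimes \iota^\ast\big(V(\varpi_i)\big) \longrightarrow \O(G_1[[t^{-1}]])[[u^{-1}]], \qquad \beta\otimes\gamma \longmapsto \Delta_{\beta,\gamma}(u), $$
which is well defined and linear because $ \Delta_{\beta,\gamma}(u) $ is bilinear in $ \beta,\gamma $.  I would establish three things: (a) $ \tilde\pi $ is a map of $ \g $-modules; (b) the image of $ \tilde\pi $ is exactly $ V_i $; (c) $ \tilde\pi $ is injective.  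Granting these, $ \tilde\pi $ is an isomorphism onto $ V_i $ and its inverse is the map of the statement.  The last assertion then follows by identifying the twists: $ \iota^\ast(V(\varpi_i)) $ is irreducible with highest weight $ \varpi_i $ (the vector $ v_{\varpi_i} $ is killed by $ \iota(e_j)=-e_j $ and has weight $ \varpi_i $), hence $ \cong V(\varpi_i) $; while $ \iota\circ\varpi $ sends $ e_j\mapsto f_j,\ f_j\mapsto e_j,\ h\mapsto -h $, so in $ (\iota\circ\varpi)^\ast(V(\varpi_i)) $ the original lowest weight vector of $ V(\varpi_i) $ is a highest weight vector, of weight $ -w_0\varpi_i=\varpi_i^\ast $, giving $ (\iota\circ\varpi)^\ast(V(\varpi_i))\cong V(\varpi_i^\ast) $.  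In particular $ V_i\cong V(\varpi_i^\ast)\otimes V(\varpi_i) $, and under these identifications the distinguished vector $ v_{\varpi_i}\otimes v_{\varpi_i} $ (which $ \tilde\pi $ sends to $ \Delta_{\varpi_i,\varpi_i}(u) $) becomes the tensor of the lowest weight vector of $ V(\varpi_i^\ast) $ with the highest weight vector of $ V(\varpi_i) $.

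For (a) it suffices to check compatibility with the Chevalley generators $ e_j,f_j $, since they generate $ \g $.  Using the standard coproduct together with the definitions of the two twists, $ e_j $ acts on the source by $ \beta\otimes\gamma \mapsto (f_j\beta)\otimes\gamma - \beta\otimes(e_j\gamma) $ and $ f_j $ by $ \beta\otimes\gamma\mapsto (e_j\beta)\otimes\gamma - \beta\otimes(f_j\gamma) $; applying $ \tilde\pi $ gives $ \Delta_{f_j\beta,\gamma}(u) - \Delta_{\beta,e_j\gamma}(u) $ and $ \Delta_{e_j\beta,\gamma}(u) - \Delta_{\beta,f_j\gamma}(u) $, which are precisely the values of $ e_j $ and $ f_j $ on $ \Delta_{\beta,\gamma}(u) $ by the two bracket identities displayed just before the Corollary.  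As a consistency check, the Cartan weights match: $ \Delta_{\beta,\gamma}^{(r)} $ has weight $ \operatorname{wt}(\gamma-\beta)=-\operatorname{wt}(\beta)+\operatorname{wt}(\gamma) $, the weight of $ \beta\otimes\gamma $ in the twisted tensor product.

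For (b) the essential point is a short algebraic lemma: in $ V(\varpi_i^\ast)\otimes V(\varpi_i) $ the vector $ v_{\mathrm{lo}}\otimes v_{\mathrm{hi}} $ generates the whole module over $ U\g $.  Since $ \mathfrak n^+ $ annihilates $ v_{\mathrm{hi}} $, one has $ U\mathfrak n^+\cdot(v_{\mathrm{lo}}\otimes v_{\mathrm{hi}}) = (U\mathfrak n^+ v_{\mathrm{lo}})\otimes v_{\mathrm{hi}} = V(\varpi_i^\ast)\otimes v_{\mathrm{hi}} $, because a finite dimensional irreducible is generated by its lowest weight vector under $ U\mathfrak n^+ $.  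Then, starting from $ V(\varpi_i^\ast)\otimes v_{\mathrm{hi}} $ and applying the $ f_j $, the identity $ f_j(\beta\otimes w) = (f_j\beta)\otimes w + \beta\otimes(f_j w) $ shows inductively that the submodule generated contains $ V(\varpi_i^\ast)\otimes w $ for every $ w\in U\mathfrak n^- v_{\mathrm{hi}} = V(\varpi_i) $, hence is everything.  Consequently the source of $ \tilde\pi $ is the cyclic $ U\g $-module on $ v_{\varpi_i}\otimes v_{\varpi_i} $, so its image is $ U\g\cdot\Delta_{\varpi_i,\varpi_i}(u) = V_i $; in particular every $ \Delta_{\beta,\gamma}(u) $ lies in $ V_i $, so the map of the statement is meaningful.

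For (c), a linear dependence $ \sum a_{\beta\gamma}\Delta_{\beta,\gamma}(u) = 0 $ in $ \O(G_1[[t^{-1}]])[[u^{-1}]] $ translates, via the Shapovalov form, into an operator $ A\in\operatorname{End}(V(\varpi_i)) $ with $ \operatorname{tr}\big(A\cdot g|_{V(\varpi_i)}\big) = 0 $ for all $ g\in G_1[[t^{-1}]] $.  Writing $ g = \exp(\xi) $ with $ \xi\in t^{-1}\g[[t^{-1}]] $ — legitimate since $ G_1[[t^{-1}]] $ is pro-unipotent — and expanding, the $ t^{-1} $-adic coefficients of $ g|_{V(\varpi_i)} $ span the image of $ U\g $ in $ \operatorname{End}(V(\varpi_i)) $, which is all of $ \operatorname{End}(V(\varpi_i)) $ by Burnside's theorem since $ V(\varpi_i) $ is a finite dimensional irreducible $ \g $-module; hence $ \operatorname{tr}(AB)=0 $ for all $ B $, so $ A=0 $ and every $ a_{\beta\gamma} $ vanishes.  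I expect step (b) — isolating the cyclicity lemma and its proof — to be the only part requiring real thought; (a) is a mechanical comparison of the bracket formulas, and (c) is the standard density argument.
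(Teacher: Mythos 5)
The paper records this Corollary without a written proof, treating it as an immediate consequence of the bracket formulas in the Lemma just before it; your proposal supplies the missing argument and it is correct.

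Your step (a) is exactly the equivariance implicit in the paper: the twisted tensor-product action of $e_j$ and $f_j$ on $\beta\otimes\gamma$ reproduces the displayed Poisson-bracket formulas, so $\tilde\pi$ is a $\g$-map, and the weight check is consistent with the Lemma's statement about Cartan weights. Your step (b), that $v_{\mathrm{lo}}\otimes v_{\mathrm{hi}}$ generates $V(\varpi_i^\ast)\otimes V(\varpi_i)$, is the standard low$\otimes$high cyclicity argument and is correct; note that later the paper proves the Yangian analogue (Proposition on the cyclic module generated by $A_i(u)$) via a different device, namely a BGG-resolution presentation of $V(\varpi_i^\ast)\otimes V(\varpi_i)$ as $N/\langle e_i^2 v_0,\,e_j v_0,\,f_i^2 v_0,\,f_j v_0\rangle$, which gives a surjection onto the cyclic module and then a dimension comparison with the classical case. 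That dimension comparison in the paper actually presupposes the present Corollary, so the paper's own logic requires a classical argument that it does not spell out — your proposal fills precisely that gap. Your step (c), injectivity via the Shapovalov form, the identification of a hypothetical relation with a trace functional $\operatorname{tr}(A\,g|_{V(\varpi_i)})$, pro-unipotence of $G_1[[t^{-1}]]$, and Burnside's theorem for the irreducible $V(\varpi_i)$, is the genuinely nontrivial content and is correct; it is a cleaner and more self-contained route than adapting the paper's BGG/dimension-count strategy to the commutative setting. The twist bookkeeping, identifying $(\iota\circ\varpi)^\ast(V(\varpi_i))$ with $V(\varpi_i^\ast)$ and $\iota^\ast(V(\varpi_i))$ with $V(\varpi_i)$, is also right. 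Overall the proposal is correct, and where the paper is silent, you have chosen a sensible and efficient argument.
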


\subsection{Lifting generalized minors}
As in Section \ref{subsection: The truncated shifted Yangians}, given a set of parameters $\bR = \{R_i\}_{i\in I}$ we define elements $A_i^{(s)}$ in $Y_\mu$.    Since $Y_\mu \subset Y$, let us momentarily think of the $A_i^{(s)} \in Y$.  Then following \cite{GKLO}, we may consider also elements of $Y$ defined by
\[B_i(u) = A_i(u) E_i(u)\quad C_i(u) = F_i(u) A_i(u)\]\[ D_i(u) = H_i(u) A_i(u)  + F_i(u) A_i(u) E_i(u),\]

\begin{Proposition}[Proposition 2.1, \cite{GKLO}]
\label{Prop: GKLO relations}
For any $i\neq j$ we have the relations
\begin{align*}
[A_i(u), A_j(v)] &= 0, \\
[A_i(u), B_j(v)] &= [A_i(u), C_j(v)] = 0, \\
[B_i(u), B_i(v)] &= [C_i(u), C_i(v)] = 0,\\
[B_i(u), C_j(v)] &= 0,\\
(u-v) [A_i(u), B_i(v)] &= B_i(u) A_i(v) - B_i(v) A_i(u), \\
(u-v) [A_i(u), C_i(v)] &= A_i(u) C_i(v) - A_i(v) C_i(v), \\
(u-v)[B_i(u), C_i(v)] &= A_i(u) D_i(v) - A_i(v) D_i(u), \\
(u-v)[B_i(u), D_i(v)] &= B_i(u) D_i(v) - B_i(v) D_i(u), \\
(u-v)[C_i(u), D_i(v)] &= B_i(u) C_i(v) - B_i(v) C_i(u), \\
(u-v)[A_i(u), D_i(v)] &= B_i(u) C_i(v) - B_i(v) C_i(u)
\end{align*}
\end{Proposition}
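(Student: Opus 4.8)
This is Proposition~2.1 of \cite{GKLO}, and the plan is to deduce it from the defining relations of $Y$ (Definition~\ref{Definition: Yangian}) together with the characterizing formula \eqref{eq: def of A} for the series $A_i(u)$. First I would split the relations into the \emph{off-diagonal} family, those involving two distinct indices $i\neq j$ (all of which assert commutativity), and the \emph{diagonal} family, which involves only a single index $i$.

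For the off-diagonal family I would start from the fact that the Cartan subalgebra $Y^0$ is commutative (this is among the defining relations), so that $[A_i(u),A_j(v)]=0$ for all $i,j$; this is the first relation. Since $B_j(v)=A_j(v)E_j(v)$, $C_j(v)=F_j(v)A_j(v)$, and $A_i(u)$ commutes with $A_j(v)$, the relations $[A_i(u),B_j(v)]=[A_i(u),C_j(v)]=0$ and $[B_i(u),C_j(v)]=0$ all reduce to the single claim that, for $i\neq j$, the series $A_i(u)$ conjugates $E_j(v)$ and $F_j(v)$ trivially inside $Y((u^{-1}))((v^{-1}))$. To prove that claim I would use that each $H_k(u)$ conjugates $E_j(v)$ by the scalar rational function $\tfrac{u-v+a_{kj}/2}{u-v-a_{kj}/2}$ (the Drinfeld form of the $[H,E]$-relations), rewrite $H_i(u)$ through the $A$'s via \eqref{eq: def of A}, and compare conjugation actions: writing $h_{ij}(u-v)$ for the conjugation factor of $A_i(u)$ on $E_j(v)$, formula \eqref{eq: def of A} yields the functional equation
\begin{equation*}
\frac{u-v+\tfrac12 a_{ij}}{u-v-\tfrac12 a_{ij}}\;=\;\frac{\prod_{k\con i}h_{kj}\!\left(u-\tfrac12-v\right)}{h_{ij}(u-v)\,h_{ij}(u-1-v)} .
\end{equation*}
Since the left-hand side is built precisely to encode the Cartan matrix, the unique solution with each $h_{ij}=1+O(u^{-1})$ has $h_{ij}\equiv 1$ for $i\neq j$ (and $h_{ii}(w)=\tfrac{w}{w+1}$); the computation for $F_j(v)$ is identical using the $[H,F]$-relations.

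For the diagonal family I would fix $i$. The relations in which $A_i$ appears --- the three relations $(u-v)[A_i(u),B_i(v)]=\cdots$, $(u-v)[A_i(u),C_i(v)]=\cdots$, $(u-v)[A_i(u),D_i(v)]=\cdots$ --- follow from $[A_i(u),A_i(v)]=0$ together with the $i=j$ case of the conjugation computation above. The remaining relations $[B_i(u),B_i(v)]=[C_i(u),C_i(v)]=0$ and the brackets among $B_i,C_i,D_i$ involve only the root-$i$ generators together with $A_i(u)$; after the substitutions $B_i=A_iE_i$, $C_i=F_iA_i$, $D_i=H_iA_i+F_iA_iE_i$ they become identities that I would verify from the Drinfeld relations among $E_i^{(r)},F_i^{(s)},H_i^{(r)}$, using in addition that $A_i(u)$ commutes with $Y^0$ and acts on $E_i(v)$ and $F_i(v)$ as already computed. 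A slicker route, when it is available, is to recognize $(A_i(u),B_i(u),C_i(u),D_i(u))$ as the matrix entries $(t_{11},t_{12},t_{21},t_{22})$ of the Gauss decomposition of the RTT generating matrix of a $Y(\mathfrak{gl}_2)$, for which these are exactly the $R$-matrix relations (cf.\ \cite{BK} and standard references on Yangians), but one must watch that for general simply-laced $\g$ the series $A_i(u)$ is not built from the root-$i$ subalgebra alone.

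The step I expect to be the main obstacle is the off-diagonal family: inverting the nonlinear, shift-laden recursion \eqref{eq: def of A} to express the conjugation action of $A_i(u)$ in terms of the $H_k(u)$, while simultaneously tracking the half-integer shifts $u\mapsto u-\tfrac12$ against the Drinfeld $[H,E]$-relations, is the delicate point, and the vanishing of the commutators for $i\neq j$ is a genuine structural consequence of the way \eqref{eq: def of A} encodes the Cartan matrix rather than a formal manipulation. Packaging the whole off-diagonal family into the single statement that $A_i(u)$ conjugates $E_j(v)$ and $F_j(v)$ trivially for $i\neq j$ is what makes the bookkeeping tractable.
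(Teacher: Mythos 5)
Your proposal sets out to reprove the GKLO relations from scratch out of the Drinfeld presentation; the paper does something much shorter. Its entire proof is the observation that the series $A_i, B_i, C_i, D_i$ defined here differ from those of \cite{GKLO} only by multiplication by \emph{scalar} series: by Lemma 2.1 of \cite{GKLO} there are unique $s_i(u)\in 1+u^{-1}\C[[u^{-1}]]$ with $r_i(u)=s_i(u)s_i(u-1)/\prod_{j\con i}s_j(u-\tfrac12)$, and then $s_i(u)^{-1}X_i(u)$ are exactly the GKLO series. Since every relation in the statement is bilinear (each term is a product of exactly two of the series, one in $u$ and one in $v$), rescaling $X_i(u)\mapsto s_i(u)X_i(u)$ preserves them all, and the proposition follows by citing \cite[Prop.\ 2.1]{GKLO}. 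If your intent is to avoid the citation and give a self-contained verification, that is a legitimate but genuinely different (and much longer) route — you are in effect re-deriving the quoted result of \cite{GKLO}.

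Within that alternative route there is a concrete gap in the off-diagonal step. The premise that $H_k(u)$ conjugates $E_j(v)$ by the pure scalar $\tfrac{u-v+a_{kj}/2}{u-v-a_{kj}/2}$ is false: the generating-function form of the $[H,E]$ relations carries correction terms supported "at $u=v$" (this is visible in Corollary \ref{Cor: GKLO relations} itself, where $(u-v)[A_i(u),E_i(v)]=A_i(u)(E_i(u)-E_i(v))$ rather than a scalar multiple of $A_i(u)E_i(v)$). Consequently the functional equation you write for the putative conjugation factors $h_{ij}$ does not literally hold, and "solving" it does not constitute a proof that $[A_i(u),E_j(v)]=0$ for $i\neq j$. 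The honest argument — as in \cite{GKLO} — is an induction on the modes $A_i^{(r)}$ using the recursion \eqref{eq: def of A}, tracking those correction terms and showing they cancel for $i\neq j$. Your heuristic correctly predicts the answer, but as written it would not survive as a proof of the key vanishing on which the rest of your argument (including the reduction of $[A_i(u),B_j(v)]=[A_i(u),C_j(v)]=[B_i(u),C_j(v)]=0$ to it) depends.
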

\begin{proof}
We note that the series considered here differ from those in \cite{GKLO} by multiplication by constant series.  Indeed, there exist unique $s_i(u) \in 1+ u^{-1}\C[[u^{-1}]]$ such that
$$ r_i(u) = \frac{s_i(u) s_i(u-1)}{\prod_{j \con i} s_j(u-\tfrac{1}{2})}$$
with $r_i(u)$ as defined in (\ref{eq:rfromc}).  Then the series $s_i(u)^{-1} X_i(u)$ for $X = A, B, C, D$ are exactly those defined in \cite{GKLO}.  With this observation, the claim is immediate from Proposition 2.1 in \cite{GKLO}.
\end{proof}

\begin{Corollary}
\label{Cor: GKLO relations}
For any $i\in I$ we have
\begin{align*}
(u-v)[A_i(u), E_i(v)] &= A_i(u) \left( E_i(u) - E_i(v)\right), \\
(u-v)[A_i(u), F_i(v)] &= \left(F_i(v) - F_i(u)\right) A_i(u)
\end{align*}
\end{Corollary}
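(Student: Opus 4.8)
The plan is to obtain both identities as purely formal consequences of Proposition~\ref{Prop: GKLO relations}, by substituting the definitions $B_i(u) = A_i(u)E_i(u)$ and $C_i(u) = F_i(u)A_i(u)$ into the relations for $[A_i(u),B_i(v)]$ and $[A_i(u),C_i(v)]$ and then isolating the commutators $[A_i(u),E_i(v)]$ and $[A_i(u),F_i(v)]$. I will carry out the argument for $E$; the $F$-case is identical with the roles of left and right multiplication exchanged. Throughout I use that the $A_i^{(s)}$ all lie in the commutative subalgebra $Y^0$, so that $[A_i(u),A_i(v)] = 0$; this immediately gives $[A_i(u),B_i(v)] = A_i(v)[A_i(u),E_i(v)]$ and, symmetrically, $[A_i(v),B_i(u)] = A_i(u)[A_i(v),E_i(u)]$.

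First I would record the symmetry $[A_i(u),B_i(v)] = [A_i(v),B_i(u)]$: writing the relation $(u-v)[A_i(u),B_i(v)] = B_i(u)A_i(v) - B_i(v)A_i(u)$ of Proposition~\ref{Prop: GKLO relations} with $u$ and $v$ interchanged yields $(u-v)[A_i(v),B_i(u)] = B_i(u)A_i(v) - B_i(v)A_i(u)$ as well, so the two commutators agree after cancelling the factor $u-v$ (which is not a zero divisor in the relevant completion of $Y_\mu$). Next I would substitute $B_i(u) = A_i(u)E_i(u)$ and $B_i(v) = A_i(v)E_i(v)$ into the right-hand side of that relation, getting $A_i(u)E_i(u)A_i(v) - A_i(v)E_i(v)A_i(u)$, and compare it with $A_i(v)A_i(u)\bigl(E_i(u) - E_i(v)\bigr)$. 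Using $[A_i(u),A_i(v)] = 0$, the difference of these two expressions works out to be exactly $A_i(v)[A_i(u),E_i(v)] - A_i(u)[A_i(v),E_i(u)]$, which equals $[A_i(u),B_i(v)] - [A_i(v),B_i(u)] = 0$ by the symmetry just established.

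Consequently $(u-v)[A_i(u),B_i(v)] = A_i(v)A_i(u)\bigl(E_i(u) - E_i(v)\bigr)$; rewriting the left-hand side as $(u-v)A_i(v)[A_i(u),E_i(v)]$ and multiplying on the left by the invertible series $A_i(v)^{-1} = 1 - A_i^{(1)}v^{-1} + \cdots$ gives $(u-v)[A_i(u),E_i(v)] = A_i(u)\bigl(E_i(u) - E_i(v)\bigr)$, which is the first claimed identity. The $F$-identity comes out in the same way starting from the relation $(u-v)[A_i(u),C_i(v)] = A_i(u)C_i(v) - A_i(v)C_i(u)$, now writing $[A_i(u),C_i(v)] = [A_i(u),F_i(v)]A_i(v)$ and multiplying by $A_i(v)^{-1}$ on the right. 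I do not expect any genuine obstacle here: the computation is entirely formal, and the only points that need a word of justification are the commutativity of the $A_i(u)$ among themselves and the harmlessness of cancelling the factors $u-v$ and $A_i(v)$ in the appropriate formal-series completion of $Y_\mu$.
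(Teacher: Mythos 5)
Your derivation is correct and is exactly the (unwritten) argument the paper intends: substitute $B_i=A_iE_i$, $C_i=F_iA_i$ into Proposition~\ref{Prop: GKLO relations}, use $[A_i(u),A_i(v)]=0$ and the $u\leftrightarrow v$ symmetry of the right-hand sides, then cancel $u-v$ and the invertible series $A_i(v)$. Note that you have tacitly corrected a typo in the paper's statement of the $C$-relation, whose last term should read $A_i(v)C_i(u)$ rather than $A_i(v)C_i(v)$ — your version is the one consistent with the corollary.
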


In \cite{KWWY}, we related these series to functions on the group $G_1[[t^{-1}]]$:
\begin{Theorem}[Theorem 3.9, \cite{KWWY}] \label{Thm:KWWY lifts}
There is an isomorphism of Poisson algebras $\gr Y \rightarrow \O(G_1[[t^{-1}]])$, which sends
\begin{align*}
A_i(u) &\mapsto \Delta_{v_{\varpi_i},v_{\varpi_i}}(u) \\
B_i(u) &\mapsto \Delta_{f_i v_{\varpi_i}, v_{\varpi_i}}(u) \\
C_i(u) &\mapsto \Delta_{v_{\varpi_i}, f_i v_{\varpi_i}}(u)
\end{align*}
\end{Theorem}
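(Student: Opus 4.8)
This is Theorem~3.9 of \cite{KWWY}; I sketch the line of argument. The starting point is that $\gr Y$ is a \emph{commutative} Poisson algebra: inspecting the bracket relations in Definition~\ref{Definition: Yangian} shows that $[X^{(r)},Y^{(s)}]$ always lands in filtration degree $\le r+s-1$, so by the Leibniz rule the commutator on $Y$ lowers degree throughout, and $\gr Y$ inherits a Poisson bracket of degree $-1$. By the PBW theorem (Proposition~\ref{Prop: PBW}(1)), $\gr Y$ is a polynomial ring; moreover the symbols of the coefficients of the series $A_i(u),B_i(u),C_i(u)$, together with their $\g$-translates, form a polynomial generating set, since these are obtained from $H_i(u),E_i(u),F_i(u)$ by an invertible triangular change of variables. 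Dually, $G_1[[t^{-1}]]$ is pro-unipotent, so $\O(G_1[[t^{-1}]])$ is a polynomial ring, and because $\bigoplus_i V(\varpi_i)$ is a faithful $\g$-module its matrix coefficients --- the Taylor coefficients $\Delta_{\beta,\gamma}^{(r)}$ with $\beta,\gamma$ in weight bases of the fundamental representations --- generate $\O(G_1[[t^{-1}]])$.

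To \emph{construct} the map, I would use Corollary~\ref{Cor:cyclic}: the cyclic $U\g$-module in $\O(G_1[[t^{-1}]])[[u^{-1}]]$ generated by $\Delta_{\varpi_i,\varpi_i}(u)$ is a copy of $V(\varpi_i^\ast)\otimes V(\varpi_i)$, with $\Delta_{v_{\varpi_i},v_{\varpi_i}}(u)$, $\Delta_{f_iv_{\varpi_i},v_{\varpi_i}}(u)$, $\Delta_{v_{\varpi_i},f_iv_{\varpi_i}}(u)$ occupying distinguished positions. In $\gr Y$ the corresponding cyclic $U\g$-module generated by $\overline{A_i(u)}$ contains $\overline{A_i(u)},\overline{B_i(u)},\overline{C_i(u)}$ in the matching positions, so matching highest weight vectors forces the assignment of the statement; one extends it $\g$-equivariantly and multiplicatively to all of $\gr Y$ by the previous paragraph.

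The substance of the proof is checking that this algebra map respects Poisson brackets, which I would verify on the generating series. On the Yangian side the leading symbols of $[A_i(u),A_j(v)]$, $[A_i(u),B_j(v)]$, $[B_i(u),C_j(v)]$ and the rest are read off from Proposition~\ref{Prop: GKLO relations} and Corollary~\ref{Cor: GKLO relations} (bearing in mind the normalizing series $s_i(u)$ relating our $A,B,C,D$ to those of GKLO); on the group side the corresponding Poisson brackets of minors come from Proposition~\ref{Poisson bracket def}, reorganized via the $\g$-action on $\O(G_1[[t^{-1}]])$. The comparison is a finite check controlled by the $\mathfrak{sl}_2$-triple at each node and the adjacency $i\con j$. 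For bijectivity, surjectivity is then immediate (every generator $\Delta_{\beta,\gamma}^{(r)}$ of the target lies in the image by $\g$-equivariance), and injectivity follows from a graded-dimension count: the PBW basis gives $\gr Y$ the same Hilbert series, in the bigrading by $\rho^\vee$-weight and by degree, as the PBW-type basis of $\O(G_1[[t^{-1}]])$ coming from the triangular decomposition of $t^{-1}\g[[t^{-1}]]$, so a degreewise surjection must be an isomorphism.

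The hard part is the Poisson-compatibility check: lining up every sign and normalization --- the $\iota\circ\varpi$ twist in Corollary~\ref{Cor:cyclic}, the factors $s_i(u)$, and the conventions for the Shapovalov form and for the dual bases $\{J_a\},\{J^a\}$ in Proposition~\ref{Poisson bracket def}. The combinatorial skeleton (which root vectors appear where) is routine once one proceeds node by node, but the bookkeeping of constants is where an argument of this type actually succeeds or fails.
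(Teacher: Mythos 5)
This statement is imported verbatim from \cite{KWWY} (their Theorem 3.9) and the present paper gives no proof of it, so there is no in-paper argument to compare against; what you have written is a reconstruction of the external proof. As such a reconstruction it is a reasonable sketch and follows the same strategy as \cite{KWWY}: identify both sides as polynomial algebras (PBW on the Yangian side, pro-unipotence on the group side), pin down the map by matching the cyclic $U\g$-modules generated by $A_i(u)$ and by $\Delta_{v_{\varpi_i},v_{\varpi_i}}(u)$, verify Poisson-compatibility by comparing the symbols of the GKLO relations with the bracket of minors from the Manin triple, and conclude bijectivity from surjectivity plus a graded dimension count.

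One step in your construction is stated too loosely: you propose to ``extend the assignment multiplicatively'' from the minors $\Delta_{\beta,\gamma}^{(r)}$ and their Yangian lifts, but these elements are \emph{not} algebraically independent (they satisfy Pl\"ucker-type relations), so a multiplicative extension is not automatically well defined. The clean way to run the argument is to define the algebra map on a genuinely free generating set --- the symbols of $H_i^{(r)}, E_\alpha^{(r)}, F_\alpha^{(r)}$ from Proposition \ref{Prop: PBW}(1), sent to the corresponding coordinates on $G_1[[t^{-1}]]$ coming from the triangular decomposition of $t^{-1}\g[[t^{-1}]]$ --- and only afterwards check, using the $\g$-equivariance and the characterization of $A_i, B_i, C_i$ in terms of $H_i, E_i, F_i$, that this map sends $A_i(u)\mapsto \Delta_{v_{\varpi_i},v_{\varpi_i}}(u)$ and so on. With that reordering, and granting the (admittedly laborious) verification of the bracket identities that you correctly identify as the substance of the proof, the sketch is sound.
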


Our goal for this section is to extend the lifts provided by this theorem to all generalized minors $\Delta_{\beta,\gamma}(u)$, via an analog of Corollary \ref{Cor:cyclic}. We recall that there is an embedding $U\g \hookrightarrow Y$ defined by
$$e_i \mapsto E_i^{(1)} = B_i^{(1)}, \quad f_i \mapsto F_i^{(1)} = C_i^{(1)} $$

\begin{Proposition} \label{cyclic}
The cyclic $U\g$-module generated by the vector $A_i(u) \in Y[[u^{-1}]]$ under the adjoint action is isomorphic to $V(\varpi_i^\ast) \otimes V(\varpi_i)$, via the map extending $A_i(u) \mapsto v_{-\varpi_i} \otimes v_{\varpi_i}$.
\end{Proposition}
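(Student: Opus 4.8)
The plan is to prove this as the Yangian counterpart of Corollary~\ref{Cor:cyclic}, deducing it from that classical statement by passing to the associated graded, after first checking directly in~$Y$ that $A_i(u)$ satisfies the correct relations. First I would record these relations. Since $A_i(u)$ lies in the commutative subalgebra $Y^0$, it commutes with every $H_j^{(1)}$ and hence has adjoint Cartan weight~$0$. By Proposition~\ref{Prop: GKLO relations}, $[A_i(u),B_j(v)]=[A_i(u),C_j(v)]=0$ for $j\neq i$; since $E_j^{(1)}=B_j^{(1)}$ and $F_j^{(1)}=C_j^{(1)}$, this says $\operatorname{ad}(E_j^{(1)})A_i(u)=\operatorname{ad}(F_j^{(1)})A_i(u)=0$ for $j\neq i$. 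Extracting the coefficient of $v^{-1}$ in the two identities of Corollary~\ref{Cor: GKLO relations} gives $\operatorname{ad}(E_i^{(1)})A_i(u)=B_i(u)$ and $\operatorname{ad}(F_i^{(1)})A_i(u)=-C_i(u)$, and then $[B_i(u),B_i(v)]=[C_i(u),C_i(v)]=0$ (Proposition~\ref{Prop: GKLO relations}) yields $\operatorname{ad}(E_i^{(1)})^2A_i(u)=0$ and $\operatorname{ad}(F_i^{(1)})^2A_i(u)=0$.

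These are precisely the relations satisfied by the vector $v_{-\varpi_i}\otimes v_{\varpi_i}\in V(\varpi_i^\ast)\otimes V(\varpi_i)$: Cartan weight $0$, killed by $\operatorname{ad}(e_j)$ and $\operatorname{ad}(f_j)$ for $j\neq i$, and killed by $\operatorname{ad}(e_i)^2$ and $\operatorname{ad}(f_i)^2$ — these reflect $\langle\varpi_i,\alpha_i^\vee\rangle=1$ and $\langle\varpi_i,\alpha_j^\vee\rangle=0$ for $j\neq i$. Using the (elementary, but not quite immediate) fact that $V(\varpi_i^\ast)\otimes V(\varpi_i)$ is generated by $v_{-\varpi_i}\otimes v_{\varpi_i}$ with annihilator in $U\g$ the left ideal generated by $\h$, by $e_j$ and $f_j$ for $j\neq i$, and by $e_i^2$ and $f_i^2$, I would obtain a $U\g$-equivariant surjection $\Phi\colon V(\varpi_i^\ast)\otimes V(\varpi_i)\twoheadrightarrow V_i$ onto the cyclic submodule $V_i\subseteq Y[[u^{-1}]]$ generated by $A_i(u)$, with $\Phi(v_{-\varpi_i}\otimes v_{\varpi_i})=A_i(u)$; in particular $V_i$ is finite dimensional.

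It then remains to show $\Phi$ is injective, and here I would use the associated graded. The adjoint action of $U\g\subseteq Y$ preserves the loop filtration (the generators $E_i^{(1)},F_i^{(1)},H_i^{(1)}$ have degree $1$, and $[F^{\le1}Y,F^{\le n}Y]\subseteq F^{\le n}Y$), and the induced action on $\gr Y\cong\O(G_1[[t^{-1}]])$ (Theorem~\ref{Thm:KWWY lifts}) is the Hamiltonian $\g$-action given by Poisson bracket with $\Delta^{(1)}_{f_iv_{\varpi_i},v_{\varpi_i}}$, $\Delta^{(1)}_{v_{\varpi_i},f_iv_{\varpi_i}}$, and the analogous Cartan elements (the action introduced just before Corollary~\ref{Cor:cyclic}); this compatibility is checked on generators using $E_i^{(1)}=B_i^{(1)}$, $F_i^{(1)}=C_i^{(1)}$ and Theorem~\ref{Thm:KWWY lifts}. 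Give $Y[[u^{-1}]]$ the filtration in which $u^{-1}$ has degree $-1$, so that $A_i(u)$ has degree $0$ with leading symbol $\Delta_{v_{\varpi_i},v_{\varpi_i}}(u)$. The induced filtration on the finite-dimensional module $V_i$ is $U\g$-stable, so $\gr V_i$ is a $U\g$-submodule of $\gr(Y[[u^{-1}]])$ containing $\Delta_{v_{\varpi_i},v_{\varpi_i}}(u)$, hence containing the classical cyclic submodule $U\g\cdot\Delta_{v_{\varpi_i},v_{\varpi_i}}(u)$, which is $\cong V(\varpi_i^\ast)\otimes V(\varpi_i)$ by Corollary~\ref{Cor:cyclic}. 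Thus $\dim V_i=\dim\gr V_i\ge\dim\bigl(V(\varpi_i^\ast)\otimes V(\varpi_i)\bigr)$, so the surjection $\Phi$ must be an isomorphism, which is the claim.

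I expect the main obstacle to be the representation-theoretic input used to build $\Phi$: that the relations above actually present $V(\varpi_i^\ast)\otimes V(\varpi_i)$ on $v_{-\varpi_i}\otimes v_{\varpi_i}$, equivalently that they generate its annihilator in $U\g$. This should be established separately — for example by comparing the formal character of the cyclic module cut out by those relations with that of $V(\varpi_i^\ast)\otimes V(\varpi_i)$, or by a direct PBW computation exploiting that both tensor factors are fundamental and integrable. A secondary, bookkeeping point is making precise the finite-dimensionality of $V_i$ and the identification of $\gr V_i$ with the classical cyclic module; one could also bypass the presentation lemma altogether and prove the reverse bound $\dim V_i\le\dim\bigl(V(\varpi_i^\ast)\otimes V(\varpi_i)\bigr)$ directly, by replaying the proof of Corollary~\ref{Cor:cyclic} inside $Y$ with the relations of Proposition~\ref{Prop: GKLO relations} in place of the Poisson relations of Proposition~\ref{Poisson bracket def}.
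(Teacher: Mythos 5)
Your plan reproduces the paper's proof almost exactly: you verify the same GKLO relations for $\operatorname{ad}(\g)$ on $A_i(u)$, invoke a presentation of $V(\varpi_i^\ast)\otimes V(\varpi_i)$ to get a surjection $\Phi$, and show $\Phi$ is injective by a dimension count through the associated graded and Corollary~\ref{Cor:cyclic}. The one step you leave open --- that the relations you list actually present $V(\varpi_i^\ast)\otimes V(\varpi_i)$ on $v_{-\varpi_i}\otimes v_{\varpi_i}$, equivalently generate its annihilator in $U\g$ --- is exactly the content of the preliminary Lemma preceding Proposition~\ref{cyclic}: the paper proves it by tensoring the truncated BGG resolutions $\bigoplus_j M(s_j\cdot\varpi_i)\to M(\varpi_i)\to V(\varpi_i)$ and the lowest-weight mirror for $V(\varpi_i^\ast)$, and identifying $M_{\mathrm{low}}(-\varpi_i)\otimes M(\varpi_i)\cong\mathrm{Ind}_{U(\h)}^{U(\g)}\C_{\mathrm{triv}}$. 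That is more structural than the character comparison or PBW computation you suggest, and you should be aware that a naive PBW or character argument is not trivial here precisely because the tensor product is reducible. Your packaging of the lower bound (filtering $Y[[u^{-1}]]$ so that $u^{-1}$ has degree $-1$, $A_i(u)$ sits in degree $0$, and its symbol is $\Delta_{v_{\varpi_i},v_{\varpi_i}}(u)$) is a harmless repackaging of the paper's version, which works one mode at a time: since the $Y_{\le s}$ are $U\g$-stable, projection to $\gr_s Y$ gives $U\g$-module surjections $U\g\cdot A_i^{(s)}\twoheadrightarrow U\g\cdot\Delta_{v_{\varpi_i},v_{\varpi_i}}^{(s)}$, and combining these over $s$ yields the same inequality of dimensions.
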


\begin{Corollary}  \label{unique lifts}
There are unique lifts $T_{\beta, \gamma}(u) \in Y[[u^{-1}]]$ corresponding to the minors $\Delta_{\beta, \gamma}(u)$, for $\beta,\gamma \in V(\varpi_i)$, satisfying $T_{v_{\varpi_i},v_{\varpi_i}}(u) = A_i(u)$ and
\begin{align*}
[B_j^{(1)}, T_{\beta, \gamma}(u) ] = T_{f_j \beta,\gamma}(u) - T_{\beta, e_j\gamma}(u) \\
[C_j^{(1)}, T_{\beta, \gamma}(u)] = T_{e_j \beta, \gamma}(u) - T_{\beta, f_j\gamma}(u)
\end{align*}
for all $i, j\in I$.
\end{Corollary}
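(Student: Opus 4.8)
The plan is to obtain Corollary~\ref{unique lifts} as a direct consequence of Proposition~\ref{cyclic}. Once we know that the adjoint $U\g$-submodule $V_i\subseteq Y[[u^{-1}]]$ generated by $A_i(u)$ is abstractly isomorphic to $V(\varpi_i^\ast)\otimes V(\varpi_i)$ via a map carrying $A_i(u)$ to $v_{-\varpi_i}\otimes v_{\varpi_i}$, we can simply \emph{define} $T_{\beta,\gamma}(u)$ to be the image of the vector $\beta\otimes\gamma$, and the two asserted commutation relations become nothing but a restatement of the $U\g$-module structure on this tensor product, written in terms of $V(\varpi_i)$ via the appropriate twist. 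The substantive input (the degeneration $\gr Y\cong\O(G_1[[t^{-1}]])$ together with Corollary~\ref{Cor:cyclic}) has already been packaged into Proposition~\ref{cyclic}, so what remains is formal.

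Concretely, first I would fix the identification $V(\varpi_i^\ast)\cong \sigma^\ast V(\varpi_i)$, where $\sigma=\iota\circ\varpi$ is the automorphism with $\sigma(e_j)=f_j$, $\sigma(f_j)=e_j$, $\sigma(h)=-h$, exactly as in the passage leading to Corollary~\ref{Cor:cyclic}. Under this identification the lowest weight vector $v_{-\varpi_i}$ of $V(\varpi_i^\ast)$ becomes the highest weight vector $v_{\varpi_i}$ of $V(\varpi_i)$, so every vector of $V(\varpi_i^\ast)\otimes V(\varpi_i)$ is written bilinearly as $\beta\otimes\gamma$ with $\beta,\gamma\in V(\varpi_i)$. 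Letting $\Theta\colon V(\varpi_i^\ast)\otimes V(\varpi_i)\xrightarrow{\ \sim\ } V_i$ be the isomorphism of Proposition~\ref{cyclic}, normalized so that $\Theta(v_{-\varpi_i}\otimes v_{\varpi_i})=A_i(u)$, I set $T_{\beta,\gamma}(u):=\Theta(\beta\otimes\gamma)$; the normalization immediately gives $T_{v_{\varpi_i},v_{\varpi_i}}(u)=A_i(u)$. Next I would check the commutation relations: under $U\g\hookrightarrow Y$ the modes $B_j^{(1)}=E_j^{(1)}$ and $C_j^{(1)}=F_j^{(1)}$ act on $V_i$ by the adjoint action of $e_j$ and $f_j$, which $\Theta$ intertwines with the $\g$-action on $\sigma^\ast V(\varpi_i)\otimes\iota^\ast V(\varpi_i)$ (the precise combination of twists being the one dictated by Corollary~\ref{Cor:cyclic}). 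Expanding the coproduct and using $\sigma(e_j)=f_j$, $\iota(e_j)=-e_j$ gives $e_j\cdot(\beta\otimes\gamma)=(f_j\beta)\otimes\gamma-\beta\otimes(e_j\gamma)$ and, symmetrically, $f_j\cdot(\beta\otimes\gamma)=(e_j\beta)\otimes\gamma-\beta\otimes(f_j\gamma)$; applying $\Theta$ yields the two displayed identities. As a sanity check the same computation at the level of symbols reproduces the classical Poisson formulas for $\{\Delta^{(1)}_{f_j v_{\varpi_j},v_{\varpi_j}},\Delta_{\beta,\gamma}(u)\}$ and $\{\Delta^{(1)}_{v_{\varpi_j},f_j v_{\varpi_j}},\Delta_{\beta,\gamma}(u)\}$ recorded earlier.

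For uniqueness I would use that $v_{-\varpi_i}\otimes v_{\varpi_i}$ is a cyclic generator of $V(\varpi_i^\ast)\otimes V(\varpi_i)$ (part of Proposition~\ref{cyclic}): writing an arbitrary $\beta\otimes\gamma$ as $X\cdot(v_{-\varpi_i}\otimes v_{\varpi_i})$ for some $X\in U\g$, the commutation relations force $T_{\beta,\gamma}(u)$ to be the corresponding iterated adjoint action of the $B_j^{(1)},C_j^{(1)}$ applied to $A_i(u)=T_{v_{\varpi_i},v_{\varpi_i}}(u)$; this is independent of the chosen $X$ because the annihilator of $v_{-\varpi_i}\otimes v_{\varpi_i}$ in $U\g$ equals the kernel of $X\mapsto\operatorname{ad}(X)\,A_i(u)$, again by Proposition~\ref{cyclic}. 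The only genuinely delicate point, and the one I would be most careful about, is the bookkeeping of the two twists $\varpi$ and $\iota$ with their signs --- i.e.\ making sure that reading off the tensor-factor structure really produces $f_j\beta$ (not $e_j\beta$) in the first slot with the correct relative sign between the two terms, consistently with Corollary~\ref{Cor:cyclic}; everything else is bookkeeping-free.
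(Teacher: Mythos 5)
Your proposal is correct and is essentially the paper's own (implicit) argument: the corollary is deduced directly from Proposition \ref{cyclic} by defining $T_{\beta,\gamma}(u)$ as the image of $\beta\otimes\gamma$ under the isomorphism with the twisted tensor product $(\iota\circ\varpi)^\ast V(\varpi_i)\otimes\iota^\ast V(\varpi_i)$ of Corollary \ref{Cor:cyclic}, reading off the commutation relations from the coproduct, and getting uniqueness from cyclicity of $v_{-\varpi_i}\otimes v_{\varpi_i}$. Your sign bookkeeping with $\sigma(e_j)=f_j$, $\iota(e_j)=-e_j$ matches the stated relations and the classical Poisson formulas, so nothing is missing.
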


We call these elements \textbf{lifted minors}. Note that $ T_{f_i v_{\varpi_i}, v_{\varpi_i}}(u) = B_i(u) $, $ T_{v_{\varpi_i}, f_i v_{\varpi_i}}(u) = C_i(u) $, and $T_{f_i v_{\varpi_i}, f_i v_{\varpi_i}}(u) = D_i(u)$.  Additional examples appear in Section \ref{Section: lifted minor examples}.

We will denote $T_{\beta,\gamma}(u) = \sum_{r\geq 0} T_{\beta,\gamma}^{(r)} u^{-r}$.  If $ \beta $ and $ \gamma $ are both weight vectors, then the weight of $T_{\beta,\gamma}^{(r)}$ with the respect to the action of $\h\subset \g$ is $\operatorname{wt}(\gamma-\beta)$.

\begin{Remark}
In Section \ref{section: type A, quantum determinants}, we will show for $\g = \mathfrak{sl}_n$ that these lifted minors are close to quantum minors from the RTT presentation of $Y(\mathfrak{gl}_n)$.  For general $\g$, we expect that our lifted minors are related to generators in the RTT presentation of $Y$ given by Wendlandt \cite{Wend}.  More precisely, there is an embedding of $\g$--representations $V(\varpi_i) \subset V_{i,a}$, where $V_{i,a}$ is the $i$th fundamental representation of $Y$ with parameter $a$ (i.e.~root of Drinfeld polynomial). Corresponding to $V_{i,a}$ there is an RTT presentation of $Y$, and we expect our lifted minors are related to those RTT generators where both components are from the subspace $V(\varpi_i)$.
\end{Remark}

To prove the Proposition, we will use an explicit presentation of $V(\varpi_i^\ast)\otimes V(\varpi_i)$.  Consider the $U(\g)$ module
$$ N = \mathrm{Ind}_{U(\h)}^{U(\g)} \C_{\mathrm{triv}} $$
where $\C_{\mathrm{triv}} = \C v_0$ is the trivial $\h$-module.  The following result seems to be known to experts.
\begin{Lemma} There is an isomorphism of $\g$-modules
$$ V(\varpi_i^\ast)\otimes V(\varpi_i) \cong N / \left\langle e_i^2 v_0, e_j v_0, f_i^2 v_0, f_j v_0 \Big| j\neq i \right\rangle $$
extending $ v_{-\varpi_i}\otimes v_{\varpi_i} \mapsto v_0 $.
\end{Lemma}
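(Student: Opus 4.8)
The plan is to present both sides as explicit tensor products of Verma-type modules and compare characters. Write $\lambda_j = \langle \varpi_i, \alpha_j^\vee\rangle = \delta_{ij}$, so the relations in the statement are exactly $e_j^{\lambda_j+1}v_0 = f_j^{\lambda_j+1}v_0 = 0$ for all $j\in I$; let $K\subseteq N$ be the submodule they generate. (Everything below is uniform in the weight, so replacing $\varpi_i$ by an arbitrary dominant $\lambda$ yields a presentation of $V(\lambda^\ast)\otimes V(\lambda)$.) Since $v_{-\varpi_i}\otimes v_{\varpi_i}$ is $\h$-invariant of weight $0$, Frobenius reciprocity produces a unique $\g$-module map $\phi\colon N \to V(\varpi_i^\ast)\otimes V(\varpi_i)$ with $v_0\mapsto v_{-\varpi_i}\otimes v_{\varpi_i}$; a short Leibniz computation, using that $v_{\varpi_i}$ is a highest weight vector and $v_{-\varpi_i}$ a lowest weight vector, gives $e_j^{\lambda_j+1}(v_{-\varpi_i}\otimes v_{\varpi_i}) = (e_j^{\lambda_j+1}v_{-\varpi_i})\otimes v_{\varpi_i} = 0$ and $f_j^{\lambda_j+1}(v_{-\varpi_i}\otimes v_{\varpi_i}) = v_{-\varpi_i}\otimes(f_j^{\lambda_j+1}v_{\varpi_i}) = 0$, so $\phi$ descends to $\bar\phi\colon N/K \to V(\varpi_i^\ast)\otimes V(\varpi_i)$.

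The structural heart of the argument is the identification $N \cong M^-(-\varpi_i)\otimes M(\varpi_i)$, where $M(\varpi_i)$ is the usual Verma module and $M^-(-\varpi_i) = U(\g)\otimes_{U(\mathfrak b^-)}\C_{-\varpi_i}$ is the lowest weight Verma. Since $v_{-\varpi_i}$ is $\mathfrak n^-$-annihilated in $M^-(-\varpi_i)$ and $v_{\varpi_i}$ is $\mathfrak n^+$-annihilated in $M(\varpi_i)$, a short induction on weights shows that $v_{-\varpi_i}\otimes v_{\varpi_i}$ generates $M^-(-\varpi_i)\otimes M(\varpi_i)$: first sweep out $v_{-\varpi_i}\otimes M(\varpi_i)$ using $U(\mathfrak n^-)$, then propagate upward using $U(\mathfrak n^+)$. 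So Frobenius reciprocity gives a surjection $N\twoheadrightarrow M^-(-\varpi_i)\otimes M(\varpi_i)$ carrying $v_0$ to $v_{-\varpi_i}\otimes v_{\varpi_i}$, and since both modules have the same character $\operatorname{ch} U(\mathfrak n^-)\cdot\operatorname{ch} U(\mathfrak n^+)$ with finite-dimensional weight spaces, this surjection is an isomorphism. Under it, $\phi$ becomes the tensor product of the canonical surjections $M^-(-\varpi_i)\twoheadrightarrow V(\varpi_i^\ast)$ and $M(\varpi_i)\twoheadrightarrow V(\varpi_i)$, whose kernel is the standard $A\otimes M(\varpi_i) + M^-(-\varpi_i)\otimes B$, where $A = \ker(M^-(-\varpi_i)\to V(\varpi_i^\ast))$ and $B = \ker(M(\varpi_i)\to V(\varpi_i))$.

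It then remains to show that the submodule of $M^-(-\varpi_i)\otimes M(\varpi_i)$ corresponding to $K$ equals this kernel; one inclusion is the first paragraph's computation, and I expect the reverse inclusion to be the only real obstacle. On the $A$ side: its generators $e_j^{\lambda_j+1}v_{-\varpi_i}$ (namely $e_j v_{-\varpi_i}$ for $j\neq i$ and $e_i^2 v_{-\varpi_i}$) are themselves annihilated by $\mathfrak n^-$ — via $[f_k,e_j]=0$ for $k\neq j$, and via the $\mathfrak{sl}_2$ identity $f\cdot e^{n+1}v=0$ in a lowest weight Verma for $k=j$ — so $A = \sum_j U(\mathfrak n^+)\cdot e_j^{\lambda_j+1}v_{-\varpi_i}$; because $v_{\varpi_i}$ is highest weight, applying a product of raising operators to $(e_j^{\lambda_j+1}v_{-\varpi_i})\otimes v_{\varpi_i}$ never yields a term acting nontrivially on the second factor, so $A\otimes v_{\varpi_i}\subseteq K$, and a downward induction on weights using $M(\varpi_i) = U(\mathfrak n^-)v_{\varpi_i}$ upgrades this to $A\otimes M(\varpi_i)\subseteq K$; the $M^-(-\varpi_i)\otimes B$ side is symmetric. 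Hence $\bar\phi$ is an isomorphism, which, read back through $N\cong M^-(-\varpi_i)\otimes M(\varpi_i)$, is the claim. The character comparison and Frobenius reciprocity are routine; the delicate point is purely the weight-bookkeeping in this last step.
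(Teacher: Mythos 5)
Your overall route is the same as the paper's: both arguments rest on the identification $N \cong M_{\mathrm{low}}(-\varpi_i)\otimes M(\varpi_i)$, on presenting $V(\varpi_i)$ and $V(\varpi_i^\ast)$ as quotients of the highest (resp.\ lowest) weight Verma modules by the submodules generated by $f_j^{\delta_{ij}+1}v_{\varpi_i}$ and $e_j^{\delta_{ij}+1}v_{-\varpi_i}$ (the paper extracts these presentations from the first step of the BGG resolution; you cite them directly), and on the fact that the kernel of a tensor product of two surjections is $A\otimes M(\varpi_i)+M_{\mathrm{low}}(-\varpi_i)\otimes B$. Your last paragraph, identifying this kernel with the submodule $K$ generated by the stated relations, is correct and in fact supplies weight bookkeeping that the paper's proof leaves implicit: the observation that the generators of $A$ and $B$ are singular for $\mathfrak n^-$ (resp.\ $\mathfrak n^+$), together with the two inductions along $U(\mathfrak n^-)v_{\varpi_i}$ and $U(\mathfrak n^+)v_{-\varpi_i}$, does the job.

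The one step that fails as written is your justification of the isomorphism $N\cong M_{\mathrm{low}}(-\varpi_i)\otimes M(\varpi_i)$. You argue that your surjection $N\twoheadrightarrow M_{\mathrm{low}}(-\varpi_i)\otimes M(\varpi_i)$ is injective because both sides have character $\operatorname{ch}U(\mathfrak n^-)\cdot\operatorname{ch}U(\mathfrak n^+)$ ``with finite-dimensional weight spaces''. But the weight spaces here are infinite-dimensional: already for $\g=\mathfrak{sl}_2$ the zero-weight space of $N$ contains the linearly independent vectors $f^ke^k\otimes 1$ for all $k\geq 0$, and the coefficients of the product $\operatorname{ch}U(\mathfrak n^-)\cdot\operatorname{ch}U(\mathfrak n^+)$ are all infinite. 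So ``same character'' carries no information and surjectivity does not give injectivity. (The paper simply asserts this isomorphism as known, so you are attempting to prove more than it does, but the proof offered is not valid.) The step can be repaired, for instance via the tensor identity: $M_{\mathrm{low}}(-\varpi_i)\otimes\bigl(U(\g)\otimes_{U(\mathfrak b)}\C_{\varpi_i}\bigr)\cong U(\g)\otimes_{U(\mathfrak b)}\bigl(M_{\mathrm{low}}(-\varpi_i)\big|_{\mathfrak b}\otimes\C_{\varpi_i}\bigr)$, and the $\mathfrak b$-module $M_{\mathrm{low}}(-\varpi_i)\big|_{\mathfrak b}\otimes\C_{\varpi_i}$ is free of rank one over $U(\mathfrak n^+)$ on a vector of weight $0$, i.e.\ it is $U(\mathfrak b)\otimes_{U(\h)}\C_{\mathrm{triv}}$, so induction in stages returns $N$. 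Alternatively, injectivity follows from PBW by a triangularity argument on the vectors $FE(v_{-\varpi_i}\otimes v_{\varpi_i})$.
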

\begin{proof}
Using the BGG resolution, we have two short exact sequences:
$$ 0 \rightarrow \text{Im} \Big( \bigoplus_{j\in I} M(s_j\cdot \varpi_i) \Big) \rightarrow M(\varpi_i) \rightarrow V(\varpi_i) \rightarrow 0 $$
$$ 0 \rightarrow \text{Im} \Big( \bigoplus_{j\in I} M_\textrm{low} (- s_j\cdot \varpi_i) \Big) \rightarrow M_\textrm{low}(-\varpi_i) \rightarrow V(\varpi_i^\ast)\rightarrow 0 $$
where $M_\textrm{low}(\lambda)$ denotes the lowest weight Verma for $U(\g)$ of weight $\lambda\in \h^\ast$.  Hence
$$ 0 \rightarrow \begin{matrix} \text{Im}\left(\bigoplus M_\textrm{low}(-s_j \cdot \varpi_i)\right) \otimes M(\varpi_i) \\ + \\ M_\textrm{low}(-\varpi_i)\otimes \text{Im}\left(\bigoplus M(s_j\cdot \varpi_i)\right) \end{matrix} \rightarrow M_\textrm{low}(-\varpi_i)\otimes M(\varpi_i) \rightarrow V(\varpi_i^\ast)\otimes V(\varpi_i) \rightarrow 0 $$
The claim follows by observing that, for any $\lambda\in\h^\ast$,
$$M_\textrm{low}(-\lambda)\otimes M(\lambda) \cong N, \ \ v_{-\lambda}\otimes v_\lambda \mapsto v_0  $$
\end{proof}
\begin{proof}[Proof of Proposition \ref{cyclic}]
By Proposition \ref{Prop: GKLO relations}, $[\h, A_i(u)] = 0$, $[B_j^{(1)}, A_i(u)] = [C_j^{(1)}, A_i(u)] = 0$ for $j \neq i$, and
$$ [B_i^{(1)}, [B_i^{(1)}, A_i(u)]] = [B_i^{(1)}, B_i(u)] = 0, \quad [C_i^{(1)}, [C_i^{(1)}, A_i(u)]] = - [C_i^{(1)}, C_i(u)] = 0 $$
Therefore by the previous Lemma, the cyclic module $U(\g)\cdot A_i(u)$ admits a surjection from $V(\varpi_i^\ast)\otimes V(\varpi_i)$.  But the dimension of $U(\g)\cdot A_i(u)$ is at least that of $U(\g)\cdot \Delta_{v_{\varpi_i},v_{\varpi_i}}(u)$.  Indeed, for any fixed $s$, the module $U(\g)\cdot A_i^{(s)}$ is contained in the filtered piece $Y_{\leq s}$ of $Y$.  The filtered pieces are $U\g$-invariant, so there is a surjective morphism $U(\g)\cdot A_i^{(s)} \rightarrow U(\g)\cdot \Delta_{v_{\varpi_i},v_{\varpi_i}}^{(s)}$ of $U(\g)$-modules defined by $x\cdot A_i^{(s)} \mapsto x\cdot \Delta_{v_{\varpi_i},v_{\varpi_i}}^{(s)}$.
\end{proof}

\subsection{Interaction with higher generators}

Recall that, starting from the elements $A_i^{(s)} \in Y_\mu$, we defined lifted minors $T_{\beta,\gamma}^{(s)} \in Y$, for any $i\in I$ and $\beta,\gamma \in V(\varpi_i)$.  These elements are defined using the action of $U(\g)$, and it is not clear what relations they will have with the higher level generators $E_i^{(s)}, F_i^{(s)}, H_i^{(s)} \in Y$.  Most important to us will be the interaction with the elements $F_i^{(s)}$ for $s>\mu_i$, since in studying $Y_\mu$ we are limited to these modes.

First some notation.  For a pair $(\bi,\bs)$ consisting of tuples $\bi = (i_1,\ldots, i_d) \in I^d$ and $\bs = (s_1,\ldots, s_d)\in \Z_{>0}^d$ of any length $d\geq 0$, we will denote
$$ F_\bi^{(\bs)} = F_{i_1}^{(s_1)} \cdots F_{i_d}^{(s_d)} \in Y $$
Given a second pair $(\bi', \bs')$ with $\bi' = (i'_1,\ldots, i'_{d'})$ and  $\bs' = (s'_1,\ldots, s'_{d'})$, we will write $(\bi',\bs') < (\bi,\bs)$ if $d' < d$ and  there exists an increasing sequence $1\leq a_1 < \ldots < a_{d'} \leq d$, such that
$$ i'_\ell = i_{a_\ell}, \quad s'_\ell \geq s_{a_\ell}, \qquad \forall 1\leq \ell \leq d' $$
In other words, $F_{\bi'}^{(\bs')}$ is essentially a subword of $F_\bi^{(\bs)}$, but with some exponents possibly increased.

Let $(\bi, \bs)$ be given.  We will denote
$$f_\bi = f_{i_d} \cdots f_{i_1} \in U(\g)$$
Note that the order is reversed.  We will also denote the analogous composition of Kashiwara operators $\tilde{f}_\bi = \tilde{f}_{i_d}\cdots \tilde{f}_{i_1}$. For any $i\in I$ consider the monomial crystal $\B(\varpi_i,0)$, see Section \ref{Tsection}.  If $\tilde{f}_\bi(y_{i,0}) \neq 0$, define $ k_1, \dots, k_d $ by
$$ y_{i,0} \stackrel{\tilde{f}_{i_1}}{\longrightarrow} z_{i_1, k_1-2}^{-1} y_{i,0} \stackrel{\tilde{f}_{i_2}}{\longrightarrow} z_{i_2, k_2-2}^{-1} z_{i_1, k_1-2}^{-1} y_{i,0} \stackrel{\tilde{f}_{i_3}}{\longrightarrow} \cdots \stackrel{\tilde{f}_{i_d}}{\longrightarrow} z_{i_d, k_d-2}^{-1}\cdots y_{i_1,k_1-2}^{-1} y_{i,0} $$

\begin{Definition}
\label{def: G poly}
Let $G_{i, \bi}^\bs(u) \in \C[u]$ be the polynomial
$$ G_{i, \bi}^\bs(u) = \left\{\begin{matrix} u^{m_i} (u+\tfrac{1}{2} k_1)^{s_1-1}\cdots (u+\tfrac{1}{2} k_d)^{s_d -1}, & \text{ if } \tilde{f}_{\bi}(y_{i,0}) \neq 0 , \\  0, & \text{ if } \tilde{f}_{\bi}(y_{i,0}) = 0. \end{matrix} \right.$$
for $k_1,\ldots, k_d$ as above.
\end{Definition}

For any formal series $X(u) = \sum_{s\in\Z}X^{(s)} u^{-s} \in Y[[u,u^{-1}]]$, let us denote the principal part by $\underline{X(u)} = \sum_{s>0} X^{(s)} u^{-s}$.  The following basic fact will be useful later.

\begin{Lemma} \label{Lemma: properties of principal part}
Suppose $X(u) \in Y[[u, u^{-1}]]$.  For any polynomial $f(u)\in \C[u]$, the coefficients of $\underline{ f(u) X(u)}$ are linear combinations of the coefficients of $\underline{X(u)}$.
\end{Lemma}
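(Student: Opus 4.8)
The statement asks that, for $X(u) = \sum_{s \in \Z} X^{(s)} u^{-s}$ and a fixed polynomial $f(u) \in \C[u]$, the coefficients of $\underline{f(u)X(u)}$ lie in the span of the coefficients of $\underline{X(u)} = \sum_{s>0} X^{(s)} u^{-s}$. The plan is first to reduce to the case $f(u) = u$ by linearity and an induction on $\deg f$: since the operation $X \mapsto \underline{f X}$ is $\C$-linear in $f$, and since $\underline{X} = \underline{1 \cdot X}$ trivially has this property, it suffices to show that if $\underline{f(u)X(u)}$ has coefficients in the span of those of $\underline{X(u)}$ then so does $\underline{u f(u) X(u)}$. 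So the whole lemma comes down to the single claim: the coefficients of $\underline{u X(u)}$ are linear combinations of the coefficients of $\underline{X(u)}$.

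The key computation is then elementary: multiplying $X(u) = \sum_{s \in \Z} X^{(s)} u^{-s}$ by $u$ shifts indices, so $u X(u) = \sum_{s \in \Z} X^{(s+1)} u^{-s}$, and hence the coefficient of $u^{-s}$ in $\underline{u X(u)}$ is $X^{(s+1)}$ for $s > 0$, i.e. it is $X^{(t)}$ for $t \ge 2$. Each such $X^{(t)}$ with $t \ge 2$ is already the coefficient of $u^{-t}$ in $\underline{X(u)}$ (since $t \ge 2 > 0$), so it is certainly a linear combination — indeed just a single one — of coefficients of $\underline{X(u)}$. This establishes the base step of the induction, and the inductive step follows by applying this base case to the series $f(u)X(u)$ in place of $X(u)$: by the inductive hypothesis the coefficients of $\underline{f(u)X(u)}$ are linear combinations of those of $\underline{X(u)}$, and by the base case the coefficients of $\underline{u \cdot f(u)X(u)}$ are linear combinations of those of $\underline{f(u)X(u)}$, hence (composing) of those of $\underline{X(u)}$.

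There is essentially no obstacle here; the only point requiring the tiniest bit of care is that multiplication by $u$ can turn the coefficient of $u^{-1}$ (the "lowest" term of $\underline{X(u)}$) into a constant term, which is then discarded when one applies $\underline{\phantom{X}}$ again — but this only \emph{removes} a term, it never introduces a coefficient outside the span, so monotonicity of the argument is preserved. In other words, passing to the principal part after multiplying by $u$ keeps us among $\{X^{(t)} : t \ge 2\} \subset \{X^{(t)} : t \ge 1\}$, and the inclusion is all we need. This confirms the lemma.
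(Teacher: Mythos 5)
Your proof is correct, and the paper in fact states this lemma without proof (as ``The following basic fact will be useful later''), so there is nothing to compare it against. Your argument is the natural one: reduce by linearity and induction to $f(u)=u$, observe that $\underline{uX(u)}$ has coefficients $X^{(t)}$ for $t\ge 2$, which form a subset of the coefficients $\{X^{(t)}: t\ge 1\}$ of $\underline{X(u)}$, and then iterate. The one remark I'd make is that the whole thing can be phrased in a single line without induction: if $f(u)=\sum_{n\ge 0} c_n u^n$, then the coefficient of $u^{-s}$ (for $s>0$) in $f(u)X(u)$ is $\sum_n c_n X^{(s+n)}$, a finite linear combination of $X^{(t)}$ with $t\ge s+0>0$, i.e.\ of coefficients of $\underline{X(u)}$. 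Your induction is a fine way to organize the same content, and your remark about the dropped constant term correctly addresses the only point of potential confusion.
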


We will also denote
$$ Y\cdot X(u) = \operatorname{span}_\C \left\{ y X^{(s)} : y\in Y, s\in \Z \right\}[[u^{-1}]]$$
In other words, $Y\cdot X(u) = L[[u^{-1}]] $, where $L\subset Y$ is the left ideal generated by the coefficients of $X(u)$.

Recall from Corollary \ref{unique lifts} that we have 
$$ [\cdots [ [A_i(u), F_{i_1}^{(1)}], F_{i_2}^{(1)}],\cdots, F_{i_d}^{(1)}] = T_{v_{\varpi_i}, f_{i_d}\cdots f_{i_1}v_{\varpi_i}}(u) = T_{v_{\varpi_i}, f_{\bi} v_{\varpi_i}}(u) $$
This leads to the formula
$$
A_i(u) F_\bi^{(1, \dots, 1)} = T_{v_{\varpi_i}, f_\bi v_{\varpi_i}}(u) + \sum_{(\bi',\bs') < (\bi, \bs)} Y\cdot  T_{v_{\varpi_i}, f_{\bi'} v_{\varpi_i}}(u)
$$
We will need to generalize this formula, where $ (1, \dots, 1) $ is replaced by an arbitrary sequence $ \bs $:

\begin{Proposition}
\label{Prop: higher F relations}
Suppose that $\varpi_i$ is a minuscule coweight.  Then for any $(\bi, \bs)$, we have the equality
\begin{equation}
\label{Equation: higher F relations}
\underline{u^{m_i} A_i(u)} F_\bi^{(\bs)} = \underline{G_{i,\bi}^\bs(u) T_{v_{\varpi_i}, f_\bi v_{\varpi_i}}(u) } + \sum_{(\bi',\bs') < (\bi, \bs)} Y\cdot \underline{G_{i,\bi'}^{\bs'}(u) T_{v_{\varpi_i}, f_{\bi'} v_{\varpi_i}}(u)},
\end{equation}
as cosets in $Y[[u^{-1}]]$.
\end{Proposition}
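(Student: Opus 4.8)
The plan is to prove, by induction on the length $d$ of $\bi$, a statement slightly stronger than Proposition~\ref{Prop: higher F relations}: for a minuscule $\varpi_i$, any polynomial $g(u)\in\C[u]$, and any weight vector $\gamma$ of $V(\varpi_i)$ whose weight lies in $W\varpi_i$,
$$\underline{g(u)\,T_{v_{\varpi_i},\gamma}(u)}\;F_\bi^{(\bs)}\;=\;\underline{g(u)\,\Gamma^{\bs}_{\gamma,\bi}(u)\,T_{v_{\varpi_i},\,f_\bi\gamma}(u)}\;+\;\sum Y\cdot\underline{g(u)\,\Gamma^{\bs'}_{\gamma',\bi'}(u)\,T_{v_{\varpi_i},\,f_{\bi'}\gamma'}(u)},$$
where $\Gamma^{\bs}_{\gamma,\bi}(u)=\prod_l\bigl(u+\tfrac12 k_l\bigr)^{s_l-1}$ is read off the crystal path of $y_{\gamma,0}$ along $\tilde f_{i_1},\dots,\tilde f_{i_d}$ inside $\B(\varpi_i,0)$ (and equals $0$ if that path dies, with the convention $T_{v_{\varpi_i},0}(u):=0$), and the sum runs over pairs consisting of a proper subword $(\bi',\bs')<(\bi,\bs)$ and a weight vector $\gamma'$ obtained from $\gamma$ by raising operators. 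Taking $\gamma=v_{\varpi_i}$ (the top of the crystal, so no such $\gamma'\ne\gamma$) and $g(u)=u^{m_i}$ recovers the Proposition, since $g\cdot\Gamma$ is then $G^{\bs}_{i,\bi}$ of Definition~\ref{def: G poly} and $f_\bi v_{\varpi_i}$ is nonzero exactly when $\tilde f_\bi(y_{i,0})\ne0$. The case $d=0$ is immediate, and the case $\bs=(1,\dots,1)$ is the identity $A_i(u)F_\bi^{(1,\dots,1)}=T_{v_{\varpi_i},f_\bi v_{\varpi_i}}(u)+\sum_{(\bi',\bs')<(\bi,\bs)}Y\cdot T_{v_{\varpi_i},f_{\bi'}v_{\varpi_i}}(u)$ stated before the Proposition, obtained by expanding the iterated commutator of Corollary~\ref{unique lifts}, all $\Gamma$'s being $1$.

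\textbf{Inductive step.} Factor $F_\bi^{(\bs)}=F_{i_1}^{(s_1)}F_{\bi'}^{(\bs')}$ with $\bi'=(i_2,\dots,i_d)$ and commute $T_{v_{\varpi_i},\gamma}(u)$ past $F_{i_1}^{(s_1)}$ via the key identity
$$T_{v_{\varpi_i},\gamma}(u)\,F_{i_1}^{(s_1)}\;=\;\bigl(u+\tfrac12 k_1\bigr)^{s_1-1}\,T_{v_{\varpi_i},\,f_{i_1}\gamma}(u)\;+\;\sum_{t=1}^{s_1}c_t(u)\,F_{i_1}^{(t)}\,T_{v_{\varpi_i},\gamma}(u)\;+\;R,$$
with $c_t(u)\in\C[u]$ and $R$ a sum of terms $Y\cdot(\text{polynomial})\cdot T_{v_{\varpi_i},\gamma''}(u)$ with $\gamma''$ above $\gamma$; the leading term is present exactly when $\langle\operatorname{wt}\gamma,\alpha_{i_1}\rangle=1$, in which case $\tilde f_{i_1}(y_{\gamma,0})=z_{i_1,k_1-2}^{-1}y_{\gamma,0}$ defines $k_1$. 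Now multiply on the left by $g(u)$, on the right by $F_{\bi'}^{(\bs')}$, and apply $\underline{\cdot}$, which commutes with left and right multiplication by $u$-independent elements of $Y$. The leading term produces $\underline{g(u)(u+\tfrac12 k_1)^{s_1-1}T_{v_{\varpi_i},f_{i_1}\gamma}(u)}\,F_{\bi'}^{(\bs')}$, to which the inductive hypothesis for the length-$(d-1)$ word $\bi'$, base point $f_{i_1}\gamma$, and polynomial $g(u)(u+\tfrac12 k_1)^{s_1-1}$ applies; since the crystal path of $y_{f_{i_1}\gamma,0}$ along $\bi'$ is the tail of that of $y_{\gamma,0}$ along $\bi$, we get $g(u)(u+\tfrac12 k_1)^{s_1-1}\Gamma^{\bs'}_{f_{i_1}\gamma,\bi'}(u)=g(u)\Gamma^{\bs}_{\gamma,\bi}(u)$ and $f_{\bi'}(f_{i_1}\gamma)=f_\bi\gamma$, reproducing the required leading term together with correction terms indexed by subwords of $\bi$ containing $i_1$. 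The $c_t$-terms (base point unchanged) and the $R$-terms (base point some $\gamma''$ above $\gamma$) are handled by the inductive hypothesis for $\bi'$ together with Lemma~\ref{Lemma: properties of principal part}, which lets us absorb the polynomial factors $g(u)c_t(u)$ and those in $R$ — all built from nonnegative powers of $u$ — into the spans $Y\cdot\underline{\cdots}$; a routine check on words confirms that every correction so produced is of the form allowed on the right-hand side for $(\bi,\bs)$ (for $\gamma=v_{\varpi_i}$, a proper subword of $\bi$).

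\textbf{The main obstacle.} The heart of the argument is the key identity, and in particular the precise shift $\tfrac12 k_1$. For $\gamma=v_{\varpi_i}$ (so $i_1=i$, $k_1=0$, $R=0$) it is just the coefficient of $v^{-s_1}$ in the relation $(u-v)[A_i(u),F_i(v)]=(F_i(v)-F_i(u))A_i(u)$ of Corollary~\ref{Cor: GKLO relations}, rearranged using $C_i(u)=T_{v_{\varpi_i},f_iv_{\varpi_i}}(u)$. For general $\gamma$ one writes $T_{v_{\varpi_i},\gamma}(u)$ as an iterated commutator of $A_i(u)$ with degree-one lowering operators $F_j^{(1)}$ (Corollary~\ref{unique lifts}) and pushes $F_{i_1}(v)$ inward by the Jacobi identity: the innermost bracket is again governed by Corollary~\ref{Cor: GKLO relations} (or vanishes when $i_1\ne i$), while the brackets $[F_j^{(1)},F_{i_1}(v)]$ that arise lie in $Y^<$ and feed the remainder $R$; extracting the coefficient of $v^{-s_1}$ gives the stated shape, and the spectral shift emerging from this computation must be identified with $\tfrac12 k_1$. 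That identification is the delicate point, and is where the explicit minuscule monomial formula of Proposition~\ref{pr:explicit} enters — there $k_1=1-h(w^{-1}\alpha_{i_1})$ for $\gamma=w\varpi_i$ with $w$ minimal, which matches the Yangian computation. Minusculeness is used throughout: it forces $\langle\operatorname{wt}\gamma,\alpha_j\rangle\in\{-1,0,1\}$, so only lifted minors indexed by the single orbit $W\varpi_i$ occur and the recursion closes, it makes each Kashiwara operator act thinly (so the shift contributes one linear factor raised to $s_1-1$, not a higher-degree polynomial), and it makes the path data $k_l$ well defined.
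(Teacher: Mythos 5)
Your route differs structurally from the paper's: they induct on the length of $\bi$ by appending a generator $F_j^{(r)}$ at the \emph{end} (with an inner induction on $r$, a case split on whether $f_{\mathbf j}v_{\varpi_i}=0$, and a degree-raising argument using the element $S_j$ from Lemma~\ref{Lemma: coideal}), whereas you peel $F_{i_1}^{(s_1)}$ off the \emph{front} and track the base vector $\gamma$ as it descends through $V(\varpi_i)$, strengthening the statement to allow an arbitrary polynomial prefactor $g(u)$.  Your structure would indeed be tidier: it localizes the entire Yangian computation into a single commutation lemma, after which the induction is bookkeeping.  I verified your key identity in one nontrivial case ($\g=\mathfrak{sl}_3$, $\gamma=f_1v_{\varpi_1}$, $i_1=2$, $s_1=2$): commuting $F_1(u)$ past $F_2^{(2)}$ using the Serre-free Yangian relation and collecting terms produces $C_1(u)F_2^{(2)} = (u-\tfrac12)\,T_{v_{\varpi_1},f_2f_1v_{\varpi_1}}(u) + F_2^{(2)}C_1(u) - F_2^{(1)}C_1(u) - [F_1^{(1)},F_2^{(1)}]A_1(u)$, consistent with $k_1=-1$.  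So the identity appears to be true, and your reduction to it is sound.

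However, the key identity is precisely where the mathematical content lives, and you do not prove it.  You explicitly flag it as ``the delicate point'' and offer only a sketch, and the sketch as written is internally inconsistent: you say that when pushing $F_{i_1}(v)$ through the iterated commutator, the innermost bracket $[A_i(u),F_{i_1}(v)]$ ``vanishes when $i_1\neq i$'' while the intermediate brackets $[F_j^{(1)},F_{i_1}(v)]$ ``feed the remainder $R$.''  But if that were the complete accounting, the leading term $(u+\tfrac12 k_1)^{s_1-1}T_{v_{\varpi_i},f_{i_1}\gamma}(u)$ would be absent for every $i_1\neq i$ — which already fails at $s_1=1$ and is contradicted by the $\mathfrak{sl}_3$ computation above.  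When $i_1\neq i$, the leading term arises entirely from those very brackets $[F_j^{(1)},F_{i_1}(v)]$, recombined inside the remaining layers of the commutator into lifted minors; the $-\tfrac{a_{ij}}{2}$ anticommutator terms of the Yangian relations are what generate the shift $\tfrac12 k_1$ step by step.  Extracting that shift, showing it telescopes to a single linear factor raised to the power $s_1-1$, and matching it against Proposition~\ref{pr:explicit} is a real induction of its own (this is precisely what the paper's Case~(b) does, via the relation $[F_{i_p}^{(s+1)},F_j^{(r+1)}]=[F_{i_p}^{(r+s+1)},F_j^{(1)}]-\tfrac12\sum_n(\cdots)$ and the identity $(v-c)^s = v^s - c\sum_{n=1}^s v^{s-n}(v-c)^{n-1}$).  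Asserting that the Yangian shift ``matches'' $1-h(w^{-1}\alpha_{i_1})$ without the computation leaves the hardest step of the proposition unproved, so the argument as written is incomplete.
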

(In this proposition, we include the case where $ f_\bi v_{\varpi_i} = 0 $.  In this case, $ f_\bi v_{\varpi_i} = 0 $, so even though $ G_{i,\bi}^\bs(u) $ is not defined, by convention we have $ G_{i,\bi}^\bs(u) T_{v_{\varpi_i}, f_\bi v_{\varpi_i}}(u) = 0 $.)

Note that Proposition \ref{Prop: higher F relations} is equivalent to a similar statement where equation (\ref{Equation: higher F relations}) is replaced by
\begin{equation}
\label{Equation: higher F relations 2}
\underline{G_{i,\bi}^\bs(u) T_{v_{\varpi_i}, f_\bi v_{\varpi_i}}(u)}= \underline{u^{m_i} A_i(u)} F_\bi^{(\bs)} + \sum_{(\bi',\bs') < (\bi,\bs)} Y\cdot \underline{u^{m_i} A_i(u)} F_{\bi'}^{(\bs')}
\end{equation}
This follows from upper-triangularity and induction on the length $d$ of $(\bi, \bs)$.

\begin{proof}[Proof of the proposition]
The proof will proceed by induction on $d$, which is the length of $\bi$ and $\bs$.

\underline{(i) The case $d=1$}: Using Corollary \ref{Cor: GKLO relations}, for any $s>0$ we have the relation
$$ [A_i(u), F_i^{(s+1)}] = \Big( \sum_{r>0} F_i^{(s+r)} u^{-r}\Big) A_i(u) $$
Now, by Corollary \ref{unique lifts}, $T_{v_{\varpi_i},v_{\varpi_i}}(u) = A_i(u)$ and $T_{v_{\varpi_i}, f_iv_{\varpi_i}}(u) = [A_i(u), F_i^{(1)}] = F_i(u) A_i(u)$.  Using this, rewrite the above relation:
$$ [A_i(u), F_i^{(s+1)}] = u^s  T_{v_{\varpi_i}, f_i v_{\varpi_i}}(u) - (F_i^{(1)} u^{s-1} + F_i^{(2)} u^{s-2} + \ldots + F_i^{(s)}) T_{v_{\varpi_i},v_{\varpi_i}}(u) $$
Rearranging terms, multiplying by $u^{m_i}$, and taking principal parts, it follows that
\begin{align*}
 \underline{ u^{m_i} A_i(u)} F_i^{(s+1)} &= \underline{u^{s+m_i} T_{v_{\varpi_i}, f_i v_{\varpi_i}}(u) } -  \underline{( F_i^{(1)} u^{s-1} + \ldots + F_i^{(s)} -  F_i^{(s+1)}) u^{m_i} T_{v_{\varpi_i}, v_{\varpi_i}}(u) } = \\
& = \underline{u^{s+m_i} T_{v_{\varpi_i}, f_iv_{\varpi_i}}(u)} + Y\cdot \underline{u^{m_i} T_{v_{\varpi_i},v_{\varpi_i}}(u)}
\end{align*}
Since $\tilde{f}_i(y_{i,0}) = z_{i,-2}^{-1} y_{i,0}$ in $\B(\varpi_i,0)$, so for $\bi=(i)$ and $\bs = (s+1)$ we have $G_{i,\bi}^{\bs}(u) = u^{m_i+s}$.  Hence the claim holds in the case $\bi = (i)$.  If $\bi = (j)$ where $j\neq i$, then $[A_i(u), F_j^{(s+1)}] = 0$ and $f_j v_{\varpi_i} = 0$, so the claim also holds.

\underline{(ii) The inductive step in $d$}: Assume the claim holds for all sequences $(\bi', \bs')$ of length $ \leq d$.  We will prove the claim for $(\mathbf{j}, \mathbf{r})$ where $\mathbf{j} = (i_1,\ldots, i_d, j)$, $\mathbf{r} = (s_1,\ldots, s_d, r)$, by induction on $r$. We continue to denote $\bi = (i_1,\ldots,i_d)$ and $\bs = (s_1,\ldots, s_d)$.

When $r=1$, we multiply equation (\ref{Equation: higher F relations}) on the right by $ F_j^{(1)} $ to give
$$
\underline{u^{m_i} A_i(u)} F_\mathbf{j}^{(\mathbf{r})} = \underline{G_{i,\bi}^\bs(u) T_{v_{\varpi_i}, f_\bi v_{\varpi_i}}(u)F_j^{(1)} } + \sum_{(\bi',\bs') < (\bi, \bs)} Y\cdot \underline{G_{i,\bi'}^{\bs'}(u) T_{v_{\varpi_i}, f_{\bi'} v_{\varpi_i}}(u) F_j^{(1)}}
$$
By Corollary \ref{unique lifts}, $$T_{v_{\varpi_i}, f_\bi v_{\varpi_i}}(u) F_j^{(1)} = T_{v_{\varpi_i}, f_{\mathbf{j}} v_{\varpi_i} }(u) + F_j^{(1)} T_{v_{\varpi_i}, f_\bi v_{\varpi_i} }(u)$$  
Also, $G_{i, \mathbf j}^{\mathbf{r}} (u) = G_{i, \bi}^{\bs}(u)$ since $r=1$.  Similar formulas hold for all summands $(\bi', \bs')$, proving the claim in the $r=1$ case.

For the inductive step in $r$, we consider two cases.

\underline{(a) The case $f_{\mathbf{j}} v_{\varpi_i} = 0$}:  In this case, we prove a stronger version of equation (\ref{Equation: higher F relations}):
\begin{equation} \label{eq: higher F relations3}
\underline{ u^{m_i} A_i(u)} F_\bi^{(\bs)} F_j^{(r)} = \sum_{(\bi',\bs') < (\bi,\bs)} Y\cdot \underline{G_{i,\bi'}^{\bs'}(u) T_{v_{\varpi_i}, f_{\bi'}v_{\varpi_i}}(u)} 
\end{equation}
Again the case $r=1$ holds using Corollary \ref{unique lifts}.  To prove the inductive step, recall the element $S_i$ from Lemma \ref{Lemma: coideal}, which satisfies $[S_i, F_j^{(r)}] = -a_{ij} F_j^{(r+1)}$.  Assuming the above formula holds up to $r$, we wish to conclude the case $r+1$.  Bracketing both sides of the formula for $r$ by $S_j$, we get
$$
\underline{u^{m_i} A_i(u)} [S_j, F_\bi^{(\bs)}] F_j^{(r)} - 2 \underline{u^{m_i} A_i(u)} F_\bi^{(\bs)} F_j^{(r+1)} =  \sum_{(\bi',\bs') < (\bi,\bs)} Y\cdot [S_j, \underline{ G_{i,\bi'}^{\bs'}(u) T_{v_{\varpi_i}, f_{\bi'}v_{\varpi_i}}(u)}]
$$
since $[S_j, A_i(u)] = 0$.  The second term on the left is the one we want to express.   Since $[S_j, F_\bi^{(\bs)}]$ is a linear combination of terms $F_{\bi}^{(\bs')}$ where $s'_\ell = s_\ell +1$ for some $\ell$, the inductive assumption applies to the first term on the left hand side.  For the right hand side we claim that
$$
[S_j, \underline{ G_{i,\bi'}^{\bs'}(u) T_{v_{\varpi_i}, f_{\bi'}v_{\varpi_i}}(u)}] = \sum_{(\bi'',\bs'')<(\bi',\bs')} Y\cdot \underline{ G_{i,\bi''}^{\bs''}(u) T_{v_{\varpi_i}, f_{\bi''} v_{\varpi_i} }(u)}
$$
Indeed, this follows by the inductive assumption on $d$ and equation (\ref{Equation: higher F relations 2}), since the action of $[S_j, \cdot]$ on products $F_{\bi''}^{(\bs'')}$ shifts exponents.

\underline{(b) The case $f_{\mathbf{j}} v_{\varpi_i} \ne 0$}:
Choose $p$ maximal such that $i_p = j$ or $i_p \con j$.  Thus
$$ F_{\mathbf j}^{(\mathbf r)} = F_\bi^{(\bs)} F_j^{(r)} = F_{i_1}^{(s_1)} \cdots F_{i_p}^{(s_p)} F_j^{(r)} F_{i_{p+1}}^{(s_{p+1})}\cdots F_{i_d}^{(s_d)}$$

Suppose that $ i_p = j $.  Since $\varpi_i$ is miniscule and  $f_{i_p}\cdots f_{i_1} v_{\varpi_i}\neq 0$, then $f_j f_{i_p}\cdots f_{i_1}v_{\varpi_i} = 0$, putting us back in case (a). Thus we may suppose that $ i_p \con j $.

We will use the following identity, which follows from Definition \ref{Definition: Yangian}:
\begin{equation}\label{eq: ipconj}
[F_{i_p}^{(s)}, F_j^{(r)}] = [F_{i_p}^{(s+r-1)}, F_j^{(1)}] - \tfrac{1}{2} \sum_{n=1}^{r-1} (F_{i_p}^{(s+r-n-1)} F_j^{(n)} + F_j^{(n)} F_{i_p}^{(s+r-n-1)} )
\end{equation}
Now, by the inductive assumption on $d$,
$$ \underline{ u^{m_i} A_i(u)} F_{i_1}^{(s_1)}\cdots F_{i_{p-1}}^{(s_{p-1})} = \underline{ G_{i, (i_1,\ldots,i_{p-1})}^{(s_1,\ldots,s_{p-1})}(u) T_{v_{\varpi_i}, f_{i_{p-1}}\cdots f_{i_1} v_{\varpi_i}}(u) } + \ldots $$
Since $f_j f_{i_p}f_{i_{p-1}}\cdots f_{i_1} v_{\varpi_i} \ne 0$ and $\varpi_i$ is miniscule, we must have
\begin{equation} \label{eq: ipconj 2}
f_{i_p} f_j f_{i_{p-1}}\cdots f_{i_1} v_{\varpi_i} = 0
\end{equation}
Consider equation (\ref{eq: ipconj}) for  $s=s_p$, and left-multiply both sides by $\underline{u^{m_i} A_i(u)} F_{i_1}^{(s_1)} \cdots F_{i_{p-1}}^{(s_{p-1})}$.  Then by (\ref{eq: ipconj 2}), case $(a)$ applies to all summands corresponding to terms $F_j^{(a)} F_{i_p}^{(b)}$ from (\ref{eq: ipconj}); we may replace these summands using the stronger form (\ref{eq: higher F relations3}). Therefore, modulo lower terms $ \underline{ u^{m_i} A_i(u)} F_{\mathbf{j}}^{(\mathbf{r})}$ is equal to
$$  \underline{ u^{m_i} A_i(u)} F_{i_1}^{(s_1)}\cdots F_{i_{p-1}}^{(s_{p-1})}\Big(F_{i_p}^{(s_p+r-1)} F_j^{(1)} -\tfrac{1}{2} \sum_{n=1}^{r-1} F_{i_p}^{(s_p+r-n-1)} F_j^{(n)} \Big) F_{i_{p+1}}^{(s_{p+1})} \cdots F_{i_d}^{(s_d)} $$
By the definition of $p$, we can commute all factors $F_j^{(n)}$ to the far right.  After doing so, we apply the inductive assumption, valid since all exponents $n< r$.  Modulo lower terms, we get the principal part of

$$  \Big( (u+\tfrac{1}{2}k_p)^{s_p+r-2} - \tfrac{1}{2}\sum_{n=1}^{r-1} (u+\tfrac{1}{2}k_p)^{s_p+r-n-2} (u+\tfrac{1}{2}k)^{n-1}\Big) \prod_{q\neq p} (u+\tfrac{1}{2} k_q)^{s_q-1} T_{v_{\varpi_i}, f_{\mathbf{j}}v_{\varpi_i}}(u)  $$
where $k $ is defined by $\tilde{f}_j \tilde{f}_{i_d}\cdots \tilde{f}_{i_1} (y_{i,0}) = z_{j,k-2}^{-1} z_{i_d,k_d-2}^{-1}\cdots z_{i_1, k_1-2}^{-1} y_{i,0}$.  Now, by the definition of $p$ it follows that
$$ k = k_p - 1 $$
By the $v= u +\tfrac{1}{2}k_p$, $c = \tfrac{1}{2}$ case of the polynomial identity
$$ (v-c)^{r-1} = v^{r-1} - c \sum_{n=1}^r v^{r-n-1}(v-c)^{n-1}, $$
the above is equal to the principal part of
$$ (u + \tfrac{1}{2}k_p - \tfrac{1}{2})^{r-1} \prod_q (u+\tfrac{1}{2} k_q)^{s_q -1} T_{v_{\varpi_i}, f_{\mathbf{j}} v_{\varpi_i}}(u),$$
This is precisely $\underline{G_{i, \mathbf j}^{\mathbf s}(u) T_{v_{\varpi_i}, f_{\mathbf j} v_{\varpi_i}} (u) }$, proving the claim.
\end{proof}

\begin{Lemma}
\label{lemma: G poly ind}
Assume that $(\bi, \bs)$ is such that $i_a = i_b$ implies $s_a = s_b$, for any $a,b$.  Then $G_{i,\bi}^\bs(u)$ only depends on $\tilde{f}_\bi(y_{i,0})$, i.e. it is independent of the path taken from $y_{i,0}$ to $\tilde{f}_\bi(y_{i,0})$ in the crystal.
\end{Lemma}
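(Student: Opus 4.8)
The plan is to rewrite $G_{i,\bi}^\bs(u)$ in closed form purely in terms of the monomial $\tilde f_\bi(y_{i,0})$ and the exponents $\bs$, with no reference to the order in which the operators $\tilde f_{i_1},\dots,\tilde f_{i_d}$ are applied. If $\tilde f_\bi(y_{i,0}) = 0$ then $G_{i,\bi}^\bs(u) = 0$ by definition and there is nothing to prove, so assume $\tilde f_\bi(y_{i,0})\neq 0$. Under the hypothesis $s_p$ depends only on $i_p$, so we may write $s_p = \sigma(i_p)$ for a function $\sigma$ defined on the set of vertices occurring in $\bi$; regrouping the product defining $G$ by vertex gives
\[
G_{i,\bi}^\bs(u) = u^{m_i}\prod_{j\in I}\prod_{p\,:\,i_p = j}\bigl(u + \tfrac{1}{2} k_p\bigr)^{\sigma(j)-1}.
\]

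The key step is to identify, for each $j$, the multiset $\{k_p : i_p = j\}$ intrinsically. By construction of the $k_p$, the endpoint equals $\bigl(\prod_p z_{i_p, k_p - 2}^{-1}\bigr) y_{i,0} = z_\bS^{-1} y_{i,0}$, where $S_j = \{k_p - 2 : i_p = j\}$ as a multiset. By Remark \ref{Remark: uniqueness of monomial factorization}, $\bS$ is determined by the monomial $z_\bS^{-1}$, hence by $\tilde f_\bi(y_{i,0})$; in particular the multiset $\{k_p : i_p = j\} = S_j + 2$ does not depend on the path chosen to reach this endpoint. Substituting, and writing $k_p = \ell + 2$ with $\ell$ ranging over $S_j$, we obtain
\[
G_{i,\bi}^\bs(u) = u^{m_i}\prod_{j\in I}\prod_{\ell\in S_j}\bigl(u + \tfrac{1}{2}\ell + 1\bigr)^{\sigma(j)-1}.
\]

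The right-hand side depends only on $m_i$, on the multisets $\bS$ determined by $\tilde f_\bi(y_{i,0})$, and on the function $\sigma$ — none of which sees the order of the $\tilde f_{i_p}$ — so $G_{i,\bi}^\bs(u)$ depends only on $\tilde f_\bi(y_{i,0})$ (together with the vertexwise exponent data carried by $\bs$), which is the assertion. I do not expect a genuine obstacle, since the content is bookkeeping; the one point deserving care is the middle step, namely checking that the multiset of shifts $\{k_p : i_p = j\}$ appearing in the definition of $G$ is an invariant of the endpoint rather than of the path. That is exactly what uniqueness of the factorization $\tilde f_\bi(y_{i,0}) = z_\bS^{-1}y_{i,0}$ (Remark \ref{Remark: uniqueness of monomial factorization}) supplies, after which the closed form above makes path-independence transparent.
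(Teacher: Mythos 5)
Your proof is correct and follows essentially the same route as the paper: both arguments reduce to the observation that the endpoint factors uniquely as $y_{i,0}z_{\mathbf{U}}^{-1}$ (Remark \ref{Remark: uniqueness of monomial factorization}), so the multiset of shifts $\{k_p : i_p = j\}$ is an invariant of the endpoint, after which regrouping the product by vertex gives a path-independent closed form. As a minor point in your favor, your exponent $\sigma(j)-1$ matches Definition \ref{def: G poly}, whereas the paper's displayed formula writes $s_j$ where $s_j-1$ is meant.
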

\begin{proof}
Write $\tilde{f}_\bi(y_{i,0}) = y_{i,0} z_{\mathbf{U}}^{-1}$.  Such a decomposition is unique, c.f. Remark \ref{Remark: uniqueness of monomial factorization}.  From the assumption on $\mathbf{s}$, it is easy to see that $G_{i,\bi}^\bs$ only depends on $\mathbf{U}$:
$$ G_{i, \bi}^\bs(u) = u^{m_i} \prod_{(j,k-2)\in \mathbf{U}} (u+\tfrac{1}{2} k)^{s_j} $$
where $s_j$ is defined to be $s_a$ for any $i_a =j$.
\end{proof}

\begin{Definition}
\label{def: G poly 2}
Let $\varpi_i$ be  miniscule and $\gamma \in V(\varpi_i)$ a nonzero weight vector.  Write $\gamma = c f_\bi v_{\varpi_i}$ for some $\bi = (i_1,\ldots,i_d)$ and $c\in \C^\ast$, and set
$$ \bs = (\mu_{i_1} +1, \mu_{i_2} +1,\ldots, \mu_{i_d}+1 ) $$
We define $ G_\gamma(u) = G_{i, \bi}^\bs(u) $.  This definition is independent of $\bi$, by the previous Lemma.
\end{Definition}

Note that if $ \mu = 0$, then $ G_\gamma(u) = u^{m_i} $.  In general, $ G_\gamma(u) $ is a polynomial of degree $ \langle \lambda, \varpi_i \rangle - \langle \mu, \gamma \rangle $.

\begin{Lemma}
\label{Lemma: G poly divides}
Let $\varpi_i$ be miniscule.  Suppose that $\gamma'$ lies above $\gamma$ in $V(\varpi_i)$, i.e. $\gamma'\neq 0$ is proportional to $e_{j_\ell}\cdots e_{j_1} \gamma$ for some $i_1,\ldots,i_d$.  Then $G_{\gamma'}(u)$ divides $G_\gamma(u)$.
\end{Lemma}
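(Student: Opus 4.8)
The plan is to reduce the divisibility to a multiset inclusion among the cells appearing in the monomial-crystal factorizations of $y_{\gamma,0}$ and $y_{\gamma',0}$, and then to derive that inclusion from the structure of the irreducible crystal $\mathcal{B}(\varpi_i,0)$.

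First I would spell out $G_\gamma(u)$ concretely. By Definition \ref{def: G poly 2} and Lemma \ref{lemma: G poly ind} (whose hypothesis applies here, since the exponents $\mu_{i_a}+1$ depend only on $i_a$), if $y_{\gamma,0} = y_{i,0} z_{\mathbf{U}(\gamma)}^{-1}$ is the factorization of the weight-$\gamma$ monomial in $\mathcal{B}(\varpi_i,0)$ --- unique by Remark \ref{Remark: uniqueness of monomial factorization} --- then $G_\gamma(u)$ is a fixed product of $u^{m_i}$ with one factor $(u+\tfrac12 k)^{\mu_j+1}$ for each cell $(j,k-2)\in\mathbf{U}(\gamma)$, and likewise for $\gamma'$. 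Since the exponent attached to a cell depends only on the cell, it suffices to show the componentwise multiset inclusion $\mathbf{U}(\gamma')\subseteq\mathbf{U}(\gamma)$: this forces the multiplicity of every linear factor of $G_{\gamma'}(u)$, including the common factor $u^{m_i}$, to be at most its multiplicity in $G_\gamma(u)$, hence $G_{\gamma'}(u)\mid G_\gamma(u)$.

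To prove the inclusion I would use minusculeness of $V(\varpi_i)$: its weights form the multiplicity-free set $W\varpi_i$, and every $\mathfrak{sl}_2$-string has length at most two, so for a weight vector $v$ of weight $\nu$ and $j\in I$ one has $e_j v\neq 0$ (resp.\ $f_j v\neq 0$) iff $\langle\nu,\alpha_j^\vee\rangle=-1$ (resp.\ $=1$), which is exactly the condition for $\tilde e_j$ (resp.\ $\tilde f_j$) to be nonzero on the corresponding element of the normal crystal $\mathcal{B}(\varpi_i,0)$, in which case the operator moves to the unique element of weight $\nu\pm\alpha_j$. Running this along $j_1,\dots,j_\ell$, the hypothesis $\gamma'\propto e_{j_\ell}\cdots e_{j_1}\gamma\neq 0$ yields $\tilde e_{j_\ell}\cdots\tilde e_{j_1}(y_{\gamma,0})=y_{\gamma',0}$, hence $y_{\gamma,0}=\tilde f_{j_1}\cdots\tilde f_{j_\ell}(y_{\gamma',0})$ because $\tilde e_j,\tilde f_j$ are partial inverses. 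Since $\mathcal{B}(\varpi_i,0)$ is generated from $y_{i,0}$ by lowering operators, $y_{\gamma',0}=\tilde f_{\mathbf{i}'}(y_{i,0})$ for some sequence $\mathbf{i}'$, and concatenating gives a path of lowering operators from $y_{i,0}$ to $y_{\gamma,0}$ that passes through $y_{\gamma',0}$.

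Finally, by the recipe recalled just before Definition \ref{def: G poly}, each application of a $\tilde f_j$ multiplies the current monomial by a single $z_{j,k-2}^{-1}$, so the multiset of cells of $y_{i,0}z_{\mathbf{U}}^{-1}$ grows by exactly one element at each step of the path; the multiset reached at the intermediate vertex $y_{\gamma',0}$ is therefore a sub-multiset of the one reached at the end, i.e.\ $\mathbf{U}(\gamma')\subseteq\mathbf{U}(\gamma)$, and we are done. The one point that needs care is the passage from the ordinary operators $e_j$ in the hypothesis to the Kashiwara operators $\tilde e_j$; this is exactly where minusculeness is indispensable, as it makes ``can be raised by $e_j$'' and ``can be raised by $\tilde e_j$'' coincide.
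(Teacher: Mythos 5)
Your proposal is correct and follows essentially the same route as the paper: both arguments use minusculeness to convert the raising path $e_{j_\ell}\cdots e_{j_1}$ into a lowering path, so that a lowering word for $\gamma'$ from $v_{\varpi_i}$ extends (as a prefix) to one for $\gamma$, and then conclude by the path-independence of $G_\gamma(u)$ from Lemma \ref{lemma: G poly ind}. Your repackaging via the multiset inclusion $\mathbf{U}(\gamma')\subseteq\mathbf{U}(\gamma)$ is just an explicit form of the paper's subword observation, and your closing remark correctly isolates where minusculeness is used.
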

\begin{proof}
Proportionality does not affect the definition of $G_\gamma(u)$, so we may assume all scalars are 1.  We can write $\gamma' = f_{j_d}\cdots f_{j_{\ell+1}} v_i$ for some $d> \ell$ and indices $j_{\ell+1},\ldots,j_d$, and hence
$$ \gamma =  f_{j_1}\cdots f_{j_\ell} \gamma' = f_{j_1}\cdots f_{j_\ell} f_{j_d}\cdots f_{j_{\ell+1}} v_i $$
Indeed if $e_j v \neq 0$ for a weight vector $v\in V(\varpi_i)$, then $f_j e_j v = v$ since $\varpi_i$ is miniscule.  So $\gamma = f_\bi(v_i)$ for $\bi = (i_1,\ldots,i_d) := (j_{\ell+1},\ldots, j_d, j_\ell,\ldots, j_1) $, and $G_\gamma(u) = G_{i,\bi}^\bs(u)$ with $\bs$ as defined above.  Meanwhile, $G_{\gamma'}(u) = G_{i, \bi'}^{\bs'}(u)$ where $\bi' = (i_1,\ldots, i_{d-\ell})$ and $\bs' = (s_1,\ldots, s_{d-\ell})$, so $G_{\gamma'}(u)$ divides $G_\gamma(u)$.
\end{proof}

\subsection{Partial description of the left ideal}
We now are in a position to give a partial description of $L_\mu^\lambda$.

\begin{Definition}
We define
\begin{enumerate}
\item $ S_\mu^\lambda$ to be the set of all coefficients of all series $\underline{ G_\gamma(u) T_{\beta, \gamma}(u)}$, for $i\in I$ miniscule and $\beta,\gamma\in V(\varpi_i)$ weight vectors,
\item $(S_\mu^\lambda)^\leq$ to be the set of all coefficients of all series $\underline{G_\gamma(u) T_{v_{\varpi_i},\gamma}(u)}$, for $i\in I$ miniscule and $\gamma\in V(\varpi_i)$ a weight vector.
\end{enumerate}
\end{Definition}

From equations (\ref{Equation: higher F relations}) and (\ref{Equation: higher F relations 2}) together with Lemma \ref{Lemma: properties of principal part}, we deduce:

\begin{Corollary}
\label{Cor: right ideal Borel}
There is an equality of $(Y,Y_\mu^\leq)$-bimodules:
$$ Y \{ A_i^{(s)}: \varpi_i \text{ miniscule}, s>m_i\} Y_\mu^{\leq} = Y (S_\mu^\lambda)^\leq $$
\end{Corollary}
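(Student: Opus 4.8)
The plan is to prove the two inclusions separately, using the two equivalent forms (\ref{Equation: higher F relations}) and (\ref{Equation: higher F relations 2}) of Proposition \ref{Prop: higher F relations}.

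For the inclusion $Y(S_\mu^\lambda)^\leq \subseteq Y\{A_i^{(s)}\}Y_\mu^\leq$, I would start from a coefficient of $\underline{G_\gamma(u)T_{v_{\varpi_i},\gamma}(u)}$, with $\varpi_i$ miniscule and $\gamma\in V(\varpi_i)$ a nonzero weight vector, write $\gamma = c\,f_\bi v_{\varpi_i}$, and take $\bs=(\mu_{i_1}+1,\dots,\mu_{i_d}+1)$, so that $G_\gamma(u)=G_{i,\bi}^\bs(u)$ by Definition \ref{def: G poly 2} and $T_{v_{\varpi_i},\gamma}(u)=c\,T_{v_{\varpi_i},f_\bi v_{\varpi_i}}(u)$. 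Equation (\ref{Equation: higher F relations 2}) then expresses any coefficient of this series as a left-$Y$-linear combination of coefficients of the series $\underline{u^{m_i}A_i(u)}F_{\bi'}^{(\bs')}$ for $(\bi',\bs')\le(\bi,\bs)$; each such coefficient has the form $A_i^{(s)}F_{\bi'}^{(\bs')}$ with $s>m_i$, and since every entry of $\bs$ (hence of the subword $\bs'$) exceeds the corresponding $\mu$-value, $F_{\bi'}^{(\bs')}\in Y_\mu^\leq$. Thus every coefficient of $\underline{G_\gamma(u)T_{v_{\varpi_i},\gamma}(u)}$, and hence all of $Y(S_\mu^\lambda)^\leq$, lies in $Y\{A_i^{(s)}\}Y_\mu^\leq$.

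For the reverse inclusion, I would first recall that $Y_\mu^\leq=Y^0 Y_\mu^<$ and that $Y_\mu^<$ is spanned by the words $F_\bi^{(\bs)}$ with $s_a>\mu_{i_a}$ for all $a$ (this follows from Proposition \ref{Prop: PBW}(2) and the inductive definition of the $F_\alpha^{(s)}$). Since the coefficients of $A_i(u)$ commute with $Y^0$, it is enough to show $A_i^{(s)}F_\bi^{(\bs)}\in Y(S_\mu^\lambda)^\leq$ for all $s>m_i$, all miniscule $\varpi_i$, and all such words --- these elements being precisely the coefficients of $\underline{u^{m_i}A_i(u)}F_\bi^{(\bs)}$. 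By (\ref{Equation: higher F relations}) it then suffices to check that the coefficients of each $\underline{G_{i,\bi'}^{\bs'}(u)T_{v_{\varpi_i},\gamma'}(u)}$, with $\gamma'=f_{\bi'}v_{\varpi_i}\neq 0$ and $(\bi',\bs')\le(\bi,\bs)$, lie in $(S_\mu^\lambda)^\leq$. Here is the real point: comparing the product formula of Definition \ref{def: G poly} for $G_{i,\bi'}^{\bs'}(u)$ with that for $G_{\gamma'}(u)=G_{i,\bi'}^{\bs'_0}(u)$, $\bs'_0=(\mu_{i'_1}+1,\dots)$ --- both built from the same points read off the path $\tilde f_{\bi'}(y_{i,0})$ --- and using $s'_j-1\ge\mu_{i'_j}=(\bs'_0)_j-1$, one obtains the divisibility $G_{\gamma'}(u)\mid G_{i,\bi'}^{\bs'}(u)$ in $\C[u]$. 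Lemma \ref{Lemma: properties of principal part} then shows the coefficients of $\underline{G_{i,\bi'}^{\bs'}(u)T_{v_{\varpi_i},\gamma'}(u)}$ are $\C$-linear combinations of coefficients of $\underline{G_{\gamma'}(u)T_{v_{\varpi_i},\gamma'}(u)}$, which belong to $(S_\mu^\lambda)^\leq$ by definition. This yields $A_i^{(s)}Y_\mu^<\subseteq Y(S_\mu^\lambda)^\leq$, whence $Y A_i^{(s)}Y_\mu^\leq = Y\,Y^0 A_i^{(s)}\,Y_\mu^<\subseteq Y(S_\mu^\lambda)^\leq$.

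I expect the main obstacle to be exactly that divisibility $G_{\gamma'}(u)\mid G_{i,\bi'}^{\bs'}(u)$ in the second inclusion: $(S_\mu^\lambda)^\leq$ is defined using only the minimal exponents $\mu_{i_a}+1$, whereas spanning $Y_\mu^<$ forces one to handle arbitrary words $F_\bi^{(\bs)}$ with $s_a>\mu_{i_a}$, and this divisibility (together with Lemma \ref{Lemma: properties of principal part}) is what bridges that gap. Everything else is bookkeeping with principal parts and the triangular decomposition of $Y_\mu$; and the fact that $Y(S_\mu^\lambda)^\leq$ is a $(Y,Y_\mu^\leq)$-bimodule is then automatic from the equality, the other side being manifestly such.
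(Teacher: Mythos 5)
Your argument is correct and is exactly the deduction the paper intends: the corollary is stated there as an immediate consequence of equations (\ref{Equation: higher F relations}) and (\ref{Equation: higher F relations 2}), and you have supplied the routine details, including the one genuinely non-obvious point, namely the divisibility $G_{\gamma'}(u)\mid G_{i,\bi'}^{\bs'}(u)$ (valid since $s'_a-1\ge\mu_{i'_a}$ for subwords of admissible words) combined with Lemma \ref{Lemma: properties of principal part} to land back in $(S_\mu^\lambda)^\leq$.
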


\begin{Conjecture}
\label{Conj: L is A-invariant}
The subspace $Y S_\mu^\lambda$ is invariant under the right action of $\C[A_i^{(r)}]$.
\end{Conjecture}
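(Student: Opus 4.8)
First, since $Y S_\mu^\lambda$ is by construction a left ideal of $Y$, I would reduce the problem to showing that $[x, A_j^{(r)}] \in Y S_\mu^\lambda$ for every $j \in I$, every $r \geq 1$ and every $x \in S_\mu^\lambda$: indeed $y x A_j^{(r)} = (y A_j^{(r)}) x + y [x, A_j^{(r)}]$, and then both summands lie in $Y S_\mu^\lambda$. (In type $A$ this is immediate from Corollary \ref{Cor: right ideal Borel} and the complete description $Y S_\mu^\lambda = L_\mu^\lambda = Y I_\mu^\lambda$, since $I_\mu^\lambda$ is a two-sided ideal of $Y_\mu$ containing all $A_j^{(r)}$; the content of the conjecture is in the general case.) Now fix a minuscule $i$ and weight vectors $\beta, \gamma \in V(\varpi_i)$. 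Because $G_\gamma(u)$ is a scalar polynomial commuting with $A_j^{(r)}$, and $[\,\cdot\,, A_j^{(r)}]$ commutes with extracting principal parts in $u$, it suffices to prove that every coefficient of $\underline{G_\gamma(u)\,[T_{\beta,\gamma}(u), A_j^{(r)}]}$ lies in $Y S_\mu^\lambda$.

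To compute $[A_j^{(r)}, T_{\beta,\gamma}(u)]$, recall from Corollary \ref{unique lifts} that $T_{\beta,\gamma}(u)$ is obtained from $A_i(u) = T_{v_{\varpi_i}, v_{\varpi_i}}(u)$ by applying, up to a scalar, a word in the operators $\operatorname{ad}(E_k^{(1)}) = \operatorname{ad}(B_k^{(1)})$ and $\operatorname{ad}(F_k^{(1)}) = \operatorname{ad}(C_k^{(1)})$. Since $\operatorname{ad}$ is a Lie algebra homomorphism, $[\operatorname{ad}(A_j^{(r)}), \operatorname{ad}(X)] = \operatorname{ad}([A_j^{(r)}, X])$, so moving $\operatorname{ad}(A_j^{(r)})$ past the word produces one term $\operatorname{ad}(\mathrm{word})([A_j^{(r)}, A_i(u)])$, which vanishes because the series $A_i(u)$ pairwise commute (Proposition \ref{Prop: GKLO relations}, together with \cite{GKLO} for the case of equal indices), plus one term for each letter in which that letter is replaced by $\operatorname{ad}([A_j^{(r)}, B_k^{(1)}])$ or $\operatorname{ad}([A_j^{(r)}, C_k^{(1)}])$. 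A short computation from the relations of Proposition \ref{Prop: GKLO relations}, in the spirit of Corollary \ref{Cor: GKLO relations}, gives $[A_j^{(r)}, E_k^{(1)}] = -\delta_{jk} B_j^{(r)}$ and $[A_j^{(r)}, F_k^{(1)}] = \delta_{jk} C_j^{(r)}$. Hence $[A_j^{(r)}, T_{\beta,\gamma}(u)]$ is a finite signed sum of expressions obtained from the word for $T_{\beta,\gamma}(u)$ by replacing a single occurrence of $\operatorname{ad}(E_j^{(1)})$ by $\operatorname{ad}(B_j^{(r)})$, or of $\operatorname{ad}(F_j^{(1)})$ by $\operatorname{ad}(C_j^{(r)})$.

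It remains to show that, for each such expression, $\underline{G_\gamma(u)\cdot(\text{expression})}$ has all coefficients in $Y S_\mu^\lambda$. Peeling the expression from the inside, the inserted $\operatorname{ad}(B_j^{(r)})$ (or $\operatorname{ad}(C_j^{(r)})$) is applied to a $\C$-linear combination of lifted minors $T_{\beta',\gamma'}(u)$ of $V(\varpi_i)$, producing commutators $[B_j^{(r)}, T_{\beta',\gamma'}(u)]$ of lifted minors attached to the two fundamental representations $V(\varpi_j)$ and $V(\varpi_i)$, after which the outer letters $\operatorname{ad}(E_k^{(1)}), \operatorname{ad}(F_k^{(1)})$ act; the latter keep us inside $Y\cdot\{\text{coefficients of lifted minors of }V(\varpi_i)\}$ by the recursion of Corollary \ref{unique lifts}, by Lemma \ref{Lemma: G poly divides} (to absorb the polynomial $G_\gamma(u)/G_{\gamma''}(u)$ whenever a minor $T_{\bullet,\gamma''}(u)$ with $\gamma''$ weakly above $\gamma$ in $V(\varpi_i)$ appears), and by Lemma \ref{Lemma: properties of principal part} (to commute polynomial factors past principal parts). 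The decisive missing input is an explicit commutation formula for $[B_j^{(r)}, T_{\beta',\gamma'}(u)]$ and $[C_j^{(r)}, T_{\beta',\gamma'}(u)]$ — a quantization of the Poisson bracket of Proposition \ref{Poisson bracket def} between generalized minors for $V(\varpi_j)$ and $V(\varpi_i)$ — written as a sum of products of $Y$-elements and lifted minors of $V(\varpi_i)$. I would establish this by a simultaneous induction on $r$ and on the depth of $\gamma'$, raising the mode $r$ via bracketing with the elements $S_j = H_j^{(2)} - \tfrac12 (H_j^{(1)})^2$ (which satisfy $[S_j, E_j^{(s)}] = 2E_j^{(s+1)}$) exactly as in the proof of Proposition \ref{Prop: higher F relations}, the base case $r=1$ being Corollary \ref{unique lifts}; alternatively such a relation should follow from an $RTT$-type presentation relating the matrix series $T^{(i)}(u)$ and $T^{(j)}(v)$ through the $R$-matrix on $V(\varpi_i)\otimes V(\varpi_j)$, whose poles encode precisely the polynomials $G_\bullet$.

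The main obstacle is this commutation formula together with the bookkeeping it forces. Two features make it delicate. First, when $j$ is not a minuscule node $V(\varpi_j)$ is not minuscule, so the minors $B_j^{(r)}, C_j^{(r)}, \dots$ for $V(\varpi_j)$ are not themselves among the generators of $S_\mu^\lambda$; one must instead bound the ``$j$-depth'' of the vectors that occur and argue — via the Serre relations of Definition \ref{Definition: Yangian} and Proposition \ref{Prop: higher F relations} — that their contributions still reduce to the minuscule generators. Second, one must check that every $V(\varpi_i)$-minor produced by the commutator has second index weakly above $\gamma'$ (so that $G_{\gamma''}(u)\mid G_\gamma(u)$ applies), and that the quantum corrections beyond the classical Poisson formula remain harmless after multiplication by $G_\gamma(u)$ and passage to principal parts. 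I expect the non-minuscule nodes to be the genuinely hard point, which is consistent with the complete description of $L_\mu^\lambda$ in Section \ref{se:Balgebra} being available only in type $A$.
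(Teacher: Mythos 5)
The statement you are addressing is stated as a conjecture in the paper and is only established there in type A, so the right benchmark is the paper's type-A argument. Measured against that, your proposal has two problems. First, your parenthetical claim that in type A the conjecture ``is immediate from \dots\ the complete description $Y S_\mu^\lambda = L_\mu^\lambda = Y I_\mu^\lambda$'' is circular: the equality $L_\mu^\lambda = Y S_\mu^\lambda$ (Corollary \ref{CorollaryL=YS}) is deduced from Theorem \ref{Thm: L = YS}, whose hypothesis is precisely Conjecture \ref{Conj: L is A-invariant}. The logical order in the paper is: prove right-invariance of $Y S_\mu^\lambda$ under $Y_\mu^{\geq}$ directly (the Proposition immediately preceding Corollary \ref{CorollaryL=YS}), and only then conclude $L_\mu^\lambda = Y S_\mu^\lambda$. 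Corollary \ref{Cor: right ideal Borel} by itself only controls the lowering part $(S_\mu^\lambda)^{\leq}$ and does not give the invariance you need.

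Second, for the general case your argument is an honest strategy sketch rather than a proof. The reduction to showing $[x, A_j^{(r)}] \in Y S_\mu^\lambda$ for $x \in S_\mu^\lambda$ is fine, the derivation identity for $\operatorname{ad}(A_j^{(r)})$ acting on the word defining $T_{\beta,\gamma}(u)$ is fine, and the formulas $[A_j^{(r)}, E_j^{(1)}] = -B_j^{(r)}$ and $[A_j^{(r)}, F_j^{(1)}] = C_j^{(r)}$ are correct consequences of Corollary \ref{Cor: GKLO relations}; but everything then hinges on the commutators $[B_j^{(r)}, T_{\beta',\gamma'}(u)]$ and $[C_j^{(r)}, T_{\beta',\gamma'}(u)]$, which you explicitly leave open. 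That is exactly the hard content. In type A the paper supplies it via Molev's explicit commutator of quantum determinants (Lemma \ref{Lemma: Molev formula}) transported through the isomorphism $\phi$ of Corollary \ref{Cor: type A quantum determinants}; it then runs an induction on $r$ on the products $\underline{G_\gamma(u) T_{\beta,\gamma}(u)}\, T_{\delta, v_{\varpi_\ell}}^{(r)}$, using that the wrong-order terms $T_{\delta, v_{\varpi_\ell'}}(v)\, T_{\beta,\gamma'}(u)$ only involve $\gamma'$ lying above $\gamma$ (so Lemma \ref{Lemma: G poly divides} absorbs the polynomial factor) and that the right-order terms carry strictly smaller modes $s < r$. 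Your diagnosis of where the difficulty lies --- the higher commutation formula, and the non-minuscule nodes outside type A --- is accurate, but as written the proposal neither proves the conjecture in general nor correctly reproves it in type A.
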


\begin{Theorem}
\label{Thm: L = YS}
If Conjecture \ref{Conj: L is A-invariant} holds, then
$$ Y \{A_i^{(s)}: \varpi_i \text{ miniscule}, s>m_i\} Y_\mu = Y S_\mu^\lambda $$
\end{Theorem}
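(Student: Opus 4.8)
The plan is to prove $Y\{A_i^{(s)}:\varpi_i\text{ miniscule},\ s>m_i\}Y_\mu=Y S_\mu^\lambda$ by two inclusions, after first rewriting the left-hand side. By the compatibility of PBW bases in Proposition \ref{Prop: PBW} we have $Y_\mu=Y_\mu^\leq Y^>$, so Corollary \ref{Cor: right ideal Borel} gives
$$ Y\{A_i^{(s)}:\varpi_i\text{ miniscule},\ s>m_i\}\,Y_\mu \;=\; Y(S_\mu^\lambda)^\leq Y^> .$$
Thus it suffices to prove $Y(S_\mu^\lambda)^\leq Y^>=Y S_\mu^\lambda$, and Conjecture \ref{Conj: L is A-invariant} will be needed only for the inclusion $Y(S_\mu^\lambda)^\leq Y^>\subseteq Y S_\mu^\lambda$.

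For the inclusion $Y S_\mu^\lambda\subseteq Y(S_\mu^\lambda)^\leq Y^>$ I would show, by induction on the height of $\varpi_i-\operatorname{wt}(\beta)$, that every coefficient of $\underline{G_\gamma(u)T_{\beta,\gamma}(u)}$ lies in $Y(S_\mu^\lambda)^\leq Y^>$ for all miniscule $i$ and all weight vectors $\beta,\gamma\in V(\varpi_i)$. The base case $\beta=v_{\varpi_i}$ is exactly the definition of $(S_\mu^\lambda)^\leq$. For the step, write $\beta=f_j\beta'$ with $\beta'$ of smaller height; since $B_j^{(1)}=E_j^{(1)}$, Corollary \ref{unique lifts} yields $T_{\beta,\gamma}(u)=[E_j^{(1)},T_{\beta',\gamma}(u)]+T_{\beta',e_j\gamma}(u)$. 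After multiplying by $G_\gamma(u)$: in the commutator term the factor $E_j^{(1)}\in Y^>$ is absorbed on either side and the inductive hypothesis applies to $T_{\beta',\gamma}(u)$; in the term $G_\gamma(u)T_{\beta',e_j\gamma}(u)$ we use that $e_j\gamma$ lies above $\gamma$, so $G_{e_j\gamma}(u)\mid G_\gamma(u)$ by Lemma \ref{Lemma: G poly divides}, whence by Lemma \ref{Lemma: properties of principal part} the coefficients of $\underline{G_\gamma(u)T_{\beta',e_j\gamma}(u)}$ are linear combinations of those of $\underline{G_{e_j\gamma}(u)T_{\beta',e_j\gamma}(u)}$, covered by the inductive hypothesis (with $\beta'$ of smaller height). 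This gives $S_\mu^\lambda\subseteq Y(S_\mu^\lambda)^\leq Y^>$, hence the inclusion.

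For the reverse inclusion, since $(S_\mu^\lambda)^\leq\subseteq S_\mu^\lambda$ it is enough to show that $Y S_\mu^\lambda$ is stable under right multiplication by every $E_i^{(r)}$. Inverting $B_i(u)=A_i(u)E_i(u)$ expresses each $E_i^{(r)}$ as a sum $\sum_{l=1}^{r}c_{r,l}\,B_i^{(l)}$ with $c_{r,l}$ a polynomial in the $A_i^{(s)}$; by Conjecture \ref{Conj: L is A-invariant} these factors preserve $Y S_\mu^\lambda$ on the right, so it remains to prove right $B_i^{(s)}$-stability, by induction on $s$. For $s=1$ (no conjecture needed): write $cB_i^{(1)}=B_i^{(1)}c+[c,B_i^{(1)}]$; the first term lies in the left ideal $Y S_\mu^\lambda$, and for $c$ a coefficient of $\underline{G_\gamma(u)T_{\beta,\gamma}(u)}$ the bracket is, up to sign, a coefficient of $\underline{G_\gamma(u)\big(T_{f_i\beta,\gamma}(u)-T_{\beta,e_i\gamma}(u)\big)}$ by Corollary \ref{unique lifts}, which lies in $\operatorname{span}_\C(S_\mu^\lambda)$ by the divisibility argument above. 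For $s\geq2$ one uses
$$ B_i^{(s)}=\tfrac12 S_i B_i^{(s-1)}-\tfrac12 B_i^{(s-1)}S_i+A_i^{(s-1)}B_i^{(1)},$$
which follows from $[S_i,A_i(u)]=0$ (as $S_i$ and the coefficients of $A_i(u)$ lie in the commutative algebra $\Cartan$) and $[S_i,E_i^{(r)}]=2E_i^{(r+1)}$; right multiplication by each of the three terms keeps us inside $Y S_\mu^\lambda$, since $S_i,A_i^{(s-1)}\in\Cartan$ (Conjecture \ref{Conj: L is A-invariant}) and $B_i^{(s-1)},B_i^{(1)}$ are handled by the inductive hypothesis and the base case. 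Hence $Y S_\mu^\lambda$ is right $Y^>$-stable, so $Y(S_\mu^\lambda)^\leq Y^>\subseteq Y S_\mu^\lambda$, and combining the two inclusions with the rewriting of the left-hand side finishes the proof.

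I expect the only real difficulty inside this argument (given the conjecture) to be organizational: keeping the induction on the height of $\beta$ well-founded and ensuring the divisibility $G_{e_j\gamma}\mid G_\gamma$ and the miniscule hypothesis are correctly invoked at every stage — the one place where the structure of the lifted minors, rather than pure invariance bookkeeping, enters. The substantive obstacle lies outside this proof, in Conjecture \ref{Conj: L is A-invariant} itself, i.e. in establishing the right $\C[A_i^{(r)}]$-invariance of $Y S_\mu^\lambda$ directly.
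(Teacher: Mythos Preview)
Your proof is correct and shares its core with the paper's: both establish right $E_i^{(1)}$-stability of $YS_\mu^\lambda$ via the bracket relation of Corollary~\ref{unique lifts} together with the divisibility $G_{e_i\gamma}(u)\mid G_\gamma(u)$, and both then upgrade to all $E_i^{(r)}$ using Conjecture~\ref{Conj: L is A-invariant}. The organizational difference is genuine, though. The paper completes the argument by proving that $YS_\mu^\lambda$ is also stable under right multiplication by each $F_k^{(s)}$ with $s>\mu_k$ (an induction on $\operatorname{ht}(\varpi_i-\operatorname{wt}\beta)$ that uses the identity $E_j^{(1)}F_k^{(s)}=F_k^{(s)}E_j^{(1)}+\delta_{jk}H_k^{(s)}$), thereby obtaining full right $Y_\mu$-invariance. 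You instead factor $Y_\mu=Y_\mu^\leq Y^>$ at the outset, rewrite the left-hand side as $Y(S_\mu^\lambda)^\leq Y^>$ via Corollary~\ref{Cor: right ideal Borel}, and then prove the inclusion $S_\mu^\lambda\subseteq Y(S_\mu^\lambda)^\leq Y^>$ by the same kind of height induction on $\beta$. Your route is a bit more economical: it avoids the separate $F$-stability argument and never needs to commute past $F_k^{(s)}$. The paper's route, on the other hand, yields the slightly stronger intermediate statement that $YS_\mu^\lambda$ is a right $Y_\mu$-module, which is of independent interest. Both inductions on $\beta$ rest on the miniscule hypothesis (so that every non-highest weight vector $\beta$ can be written as $f_j\beta'$), exactly as you note.
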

\begin{proof}
By Corollary \ref{unique lifts}, we have
$$ [E_i^{(1)}, \underline{G_\gamma(u) T_{\beta, \gamma}(u)} ] = \underline{G_\gamma(u) T_{f_i \beta,\gamma}(u)} - \underline{G_\gamma(u) T_{\beta,e_i \gamma}(u)} $$
Note that $e_i \gamma$ lies above $\gamma$ in the sense of Lemma \ref{Lemma: G poly divides}, hence $G_{e_i \gamma}(u)$ divides $G_\gamma(u)$.  By Lemma \ref{Lemma: properties of principal part} the coefficients of
$$\underline{G_\gamma(u) T_{\beta,e_i \gamma}(u) }$$
are linear combinations of the coefficients of $\underline{G_{e_i \gamma}(u) T_{\beta, e_i \gamma}(u)}$. It follows that $Y S_\mu^\lambda$ is invariant under right multiplication by the elements $E_i^{(1)}$.  The higher modes $E_i^{(r)}$ are obtained from $E_i^{(1)}$ by multiplying by elements of $\C[A_i^{(s)}]$, so if the conjecture holds we conclude that $Y S_\mu^\lambda$ is right invariant under $Y_\mu^\geq$.

It remains to show that $\underline{G_\gamma(u) T_{\beta,\gamma}(u)} F_k^{(s)} \in Y S_\mu^\lambda$ for all $s>\mu_k$, which we prove by induction on $\operatorname{ht}(\varpi_i - \operatorname{wt} \beta)$.  The base case when $\beta = v_{\varpi_i}$ follows from Corollary \ref{Cor: right ideal Borel}.  For the inductive step, write $\beta = \sum_j f_j \beta_j$.  Then by Corollary \ref{unique lifts},
$$ T_{\beta, \gamma} (u) = \sum_j \left( [E_j^{(1)}, T_{\beta_j,\gamma}(u)] + T_{\beta_j, e_j\gamma}(u) \right) $$
Therefore,
$$ T_{\beta,\gamma}(u) F_k^{(s)} = \sum_j \left( E_j^{(1)} T_{\beta_j,\gamma}(u) + T_{\beta_j,e_j \gamma}(u)\right) F_k^{(s)} + \sum_j T_{\beta_j,\gamma}(u) E_j^{(1)} F_k^{(s)}$$
Multiply both sides by $G_\gamma(u)$, and take principal parts. Recall also that $G_{e_j\gamma}(u)$ divides $G_\gamma(u)$.  For the first sum on the right-hand side, we can apply the inductive assumption since $\operatorname{ht}(\varpi_i-\operatorname{wt} \beta_j) < \operatorname{ht}(\varpi_i -\operatorname{wt}\beta)$.  For the second sum, apply the identity
$$ E_j^{(1)} F_k^{(s)} = F_k^{(s)} E_j^{(1)} + \delta_{jk} H_k^{(s)}$$
We know from the above that $Y_\mu^\geq$ preserves $Y S_\mu^\lambda$, so we are again in the position to apply the inductive assumption.
\end{proof}

If we combine Lemma \ref{lem:Compute with L} with Theorem \ref{Thm: L = YS}, then we see that the ideal of the $B$-algebra of $ Y^\lambda_\mu(\bR)$ contains $ \Pi(YS^\lambda_\mu) $.  We have equality if $ \mathfrak g $ is of type A, as we will prove below in Corollary \ref{CorollaryL=YS}.  

Now we will see that to compute $ \Pi(YS^\lambda_\mu) $ we only need ``principal'' minors.

\begin{Proposition} \label{prop:Generators of B}
  $\Pi(YS^\lambda_\mu)$ is generated as an ideal of $\Cartan$ by the coefficients of
$\Pi (\underline{ G_\gamma(u) T_{\gamma,\gamma}(u)}) $, running over all minuscule $i\in I$  and weight vectors $\gamma\in V(\varpi_i)$.
\end{Proposition}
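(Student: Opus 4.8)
The plan is to compute $\Pi(YS_\mu^\lambda)$ using two elementary features of the projection $\Pi$. First, $\Pi$ annihilates every product $fy$ with $f$ a PBW monomial of positive $F$-degree, every product $ye$ with $e$ of positive $E$-degree (reordering into PBW form preserves the $F$- and $E$-degrees), and every element of nonzero $\rho^\vee$-weight; in particular $\Pi(xE_j^{(r)})=0$ for all $x\in Y$. Second, for $h\in Y^0=\Cartan$ and $x\in Y$ one has $\Pi(hx)=h\,\Pi(x)$ and $\Pi(xh)=\Pi(x)h$, since commuting a Cartan element past the $F$'s and $E$'s of a PBW expression changes neither degree. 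Writing $Y=Y^<Y^0Y^>$ and discarding the part with a nontrivial $F$ on the left, these remarks give
\[ \Pi(YS_\mu^\lambda)=\Pi(Y^0Y^>S_\mu^\lambda)=\Cartan\cdot\Pi(Y^>S_\mu^\lambda). \]
So it suffices to show that $\Cartan\cdot\Pi(Y^>S_\mu^\lambda)$ equals the ideal $\mathcal{I}\subset\Cartan$ generated by the coefficients of $\Pi(\underline{G_\gamma(u)T_{\gamma,\gamma}(u)})$.

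The inclusion $\mathcal{I}\subseteq\Cartan\cdot\Pi(Y^>S_\mu^\lambda)$ is immediate, since each generator of $\mathcal{I}$ lies in $\Pi(S_\mu^\lambda)\subseteq\Pi(Y^>S_\mu^\lambda)$. For the reverse inclusion take $y=E_{j_1}^{(r_1)}\cdots E_{j_d}^{(r_d)}$ and a coefficient $s$ of a series $\sigma=\underline{G_\gamma(u)T_{\beta,\gamma}(u)}\in S_\mu^\lambda$. Commuting the $E_{j_a}^{(r_a)}$ past $\sigma$ one at a time and invoking $\Pi(xE_j^{(r)})=0$ at each stage, one gets $\Pi(y\sigma)=\Pi\big([E_{j_1}^{(r_1)},[E_{j_2}^{(r_2)},[\dots,[E_{j_d}^{(r_d)},\sigma]\dots]]]\big)$. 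Everything thus reduces to the claim that $\operatorname{span}_\C S_\mu^\lambda$ is stable under $\operatorname{ad}(E_j^{(r)})$ for all $j$ and all $r\ge 1$. Granting this, $\Pi(y\sigma)$ is $\Pi$ of an element of $\operatorname{span}_\C S_\mu^\lambda$, and only its weight-zero part survives; since $V(\varpi_i)$ is minuscule its weight spaces are one-dimensional, so a weight-zero coefficient of some $\underline{G_{\gamma'}(u)T_{\beta',\gamma'}(u)}$ forces $\beta'$ proportional to $\gamma'$, whence $G_{\beta'}=G_{\gamma'}$, $T_{\beta',\gamma'}$ is a scalar multiple of $T_{\gamma',\gamma'}$, and $\Pi$ of that coefficient lies in $\mathcal{I}$.

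It remains to establish the stability of $\operatorname{span}_\C S_\mu^\lambda$ under $\operatorname{ad}(E_j^{(r)})$. For $r=1$ this is the computation inside the proof of Theorem \ref{Thm: L = YS}: by Corollary \ref{unique lifts},
\[ [E_j^{(1)},\underline{G_\gamma(u)T_{\beta,\gamma}(u)}]=\underline{G_\gamma(u)T_{f_j\beta,\gamma}(u)}-\underline{G_\gamma(u)T_{\beta,e_j\gamma}(u)}; \]
the first term is already of the required form, and in the second $G_{e_j\gamma}(u)$ divides $G_\gamma(u)$ (Lemma \ref{Lemma: G poly divides}), so by Lemma \ref{Lemma: properties of principal part} its coefficients are $\C$-linear combinations of coefficients of $\underline{G_{e_j\gamma}(u)T_{\beta,e_j\gamma}(u)}\in S_\mu^\lambda$. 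For $r\ge 2$ I would induct on $r$ via $E_j^{(r+1)}=\tfrac{1}{2}[S_j,E_j^{(r)}]$ with $S_j=H_j^{(2)}-\tfrac{1}{2}(H_j^{(1)})^2$ as in Lemma \ref{Lemma: coideal}; by the Jacobi identity this reduces stability under $\operatorname{ad}(E_j^{(r+1)})$ to stability under $\operatorname{ad}(S_j)$. Since $S_j\in\Cartan$ one has $[S_j,A_i(u)]=0$, while $[S_j,E_k^{(1)}]=a_{jk}E_k^{(2)}$ and $[S_j,F_k^{(1)}]=-a_{jk}F_k^{(2)}$; as Corollary \ref{unique lifts} realizes $T_{\beta,\gamma}(u)$ as iterated $\operatorname{ad}(E_k^{(1)})$- and $\operatorname{ad}(F_k^{(1)})$-actions on $A_i(u)$, bracketing with $S_j$ produces a sum of such iterated actions with one generator bumped to mode two. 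That these are again $\C$-combinations of coefficients of series of the shape $\underline{(\text{polynomial})\cdot G_{\gamma'}(u)T_{\beta',\gamma'}(u)}$ — so that, after multiplying by $G_\gamma(u)$ and applying Lemma \ref{Lemma: properties of principal part}, they remain in $\operatorname{span}_\C S_\mu^\lambda$ — is precisely the content of Proposition \ref{Prop: higher F relations} together with its $E$-analogue, obtained from it via the Chevalley-type involutions of Corollary \ref{Cor:cyclic}; the $G$-polynomials are calibrated exactly so as to absorb these mode shifts.

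The main obstacle is this last step: transferring the $G$-polynomial bookkeeping of Proposition \ref{Prop: higher F relations} so as to control the interaction of all the higher modes $E_j^{(r)}$ — equivalently of the elements $S_j$ — with the lifted minors $T_{\beta,\gamma}(u)$, including the reduction of a general weight vector $\beta$ to the highest weight vector $v_{\varpi_i}$. Once that is available, the rest is formal manipulation with the PBW filtration and with the definition of $\Pi$, the decisive input being the one-dimensionality of minuscule weight spaces, which is what collapses the general lifted minors onto the principal ones $T_{\gamma,\gamma}(u)$.
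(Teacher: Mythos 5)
Your reduction is sound up to the last step: the PBW bookkeeping showing $\Pi(YS^\lambda_\mu)=\Cartan\cdot\Pi(Y^>S^\lambda_\mu)$, the conversion of $\Pi(y\sigma)$ into iterated commutators, the $r=1$ case via Corollary \ref{unique lifts} and Lemma \ref{Lemma: G poly divides}, and the observation that minuscule weight spaces are one-dimensional (so weight-zero lifted minors collapse onto the principal ones $T_{\gamma,\gamma}(u)$) all match the paper's argument, which phrases the same reduction in terms of PBW monomials $B_{j_1}^{(s_1)}\cdots B_{j_k}^{(s_k)}$ acting on $\underline{G_\gamma(u)T_{\beta,\gamma}(u)}$. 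But the step you flag as "the main obstacle" is a genuine gap, not a detail: you need $\operatorname{span}_\C S^\lambda_\mu$ (or at least something that $\Pi$ maps into $\mathcal I$) to be stable under $\operatorname{ad}(S_j)$, and this does not follow from Proposition \ref{Prop: higher F relations}. That proposition only controls right multiplication of the \emph{highest-weight} minor $A_i(u)$ by higher $F$-modes, and even there the error terms live in the full left ideal $Y\cdot(\text{lower minors})$, not in the $\C$-span. Indeed, the analogous invariance of $YS^\lambda_\mu$ under the higher modes $E_i^{(r)}$ is exactly what the paper can only obtain conditionally on Conjecture \ref{Conj: L is A-invariant} (see Theorem \ref{Thm: L = YS}), whereas Proposition \ref{prop:Generators of B} is proved unconditionally — so your route, if it worked, would prove too much.

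The paper's mechanism for the higher modes is different and is the idea you are missing. It runs a double induction, on the filtered degree $d=s_1+\cdots+s_k$ and on $k=-\operatorname{ht}\nu$, and handles the inductive step by lifting the Poisson bracket identity of Proposition \ref{Poisson bracket def} for $\{\Delta_{f_jv_{\varpi_j},v_{\varpi_j}}^{(s)},\Delta_{\beta,\gamma}(v)\}$: after multiplying by $u^{-1}/(1-u^{-1}v)$ and extracting coefficients, the commutator $[B_j^{(s)},T_{\beta,\gamma}(v)]$ equals the visible sum of products of lifted minors \emph{modulo lower filtered terms}. The visible terms either die under $\Pi$, have smaller $k$, or have smaller exponent $s$, and the lower filtered terms — which need only lie in $YS^\lambda_\mu$, a far weaker requirement than membership in the $\C$-span — are absorbed by the induction on $d$ after applying $\Pi$. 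If you want to repair your argument, you should replace the $\operatorname{ad}(S_j)$-stability claim by this "identity up to lower filtered error in $YS^\lambda_\mu$" statement, which is what the associated-graded Poisson computation actually provides.
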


\begin{proof}
We will first rule out several cases using a PBW basis in ``FHE'' order.  We then prove the claim by induction.

$\Pi(Y S_\mu^\lambda)$ is spanned by coefficients of series of the form $y =\Pi( x\cdot \underline{G_\gamma(u) T_{\beta, \gamma}(u)} )$ where $x\in Y$ and $\beta,\gamma \in V(\varpi_i)$ are weight vectors.  Note that $\nu = \operatorname{wt}\gamma -\operatorname{wt}\beta$ is the weight of $T_{\beta,\gamma}(u)$ with respect to $\h$, which lies in the root lattice.

If $\operatorname{ht} \nu >0$ then $T_{\beta,\gamma}^{(r)}$ contains factors $E_j^{(s)}$ on the right when written in PBW form, so $y=0$.  Assuming $\operatorname{ht} \nu \leq 0$, it suffices to consider $x = FHE$ a PBW monomial of weight $-\nu$.  If $F\neq 1$, then $HE\cdot T_{\beta,\gamma}^{(r)}$ contains factors $E_j^{(s)}$ on the right when written in PBW form, so $y = 0$.  Since $\Pi$ is equivariant with respect to left multiplication by $H$, we may assume $H = 1$.

Working with generators $B_i^{(s)}$ instead of $E_i^{(s)}$, we are reduced to showing that all coefficients of any series
$$ \Pi \left( B_{j_1}^{(s_1)} \cdots B_{j_k}^{(s_k)} \underline{G_\gamma(u) T_{\beta ,\gamma}(u)} \right) $$
lie in the ideal as claimed, where
$$\nu = \operatorname{wt}\gamma - \operatorname{wt}\beta=-\alpha_{j_1}-\ldots-\alpha_{j_k}$$
We will do this by induction on filtered degree $d =s_1+\ldots+s_k $, with an inner induction on $k= -\operatorname{ht}\nu $.  Note that when all $s_p = 1$, we may push each $B_{i_p}^{(s_p)}$ to the right using Corollary \ref{unique lifts}.  Applying $\Pi$, the result is a sum of terms $\underline{G_\gamma(u) T_{\gamma',\gamma'}(u)}$ with $\gamma'$ above $\gamma$ in $V(\varpi_i)$.  Since $G_{\gamma'}(u)$ divides $G_{\gamma}(u)$, the claim follows.

From Proposition \ref{Poisson bracket def} we have that
\begin{multline*} (u-v) \{ \Delta_{f_j v_{\varpi_j},v_{\varpi_j}}(u), \Delta_{\beta,\gamma}(v)\} = c \Delta_{\beta,\gamma}(v) \Delta_{f_j v_{\varpi_j}, v_{\varpi_j}}(u) + \Delta_{f_j \beta, \gamma}(v) \Delta_{v_{\varpi_j},v_{\varpi_j}}(u) \\
+\sum_{\alpha\in \Delta^+}\Big(\Delta_{e_\alpha \beta,\gamma}(v)\Delta_{f_\alpha f_j v_{\varpi_j}, v_{\varpi_j}}(u) - \Delta_{f_j v_{\varpi_j}, f_\alpha v_{\varpi_j}}(u)\Delta_{\beta, e_\alpha \gamma}(v)\Big)
\end{multline*}
where $ c = (s_j\varpi_j,\operatorname{wt} \beta) -(\varpi_j,\operatorname{wt} \gamma)$.  Multiplying by $u^{-1} / (1- u^{-1} v)$ and taking the coefficient of $u^{-s}$ on both sides expresses  $\{\Delta_{f_j v_{\varpi_j},v_{\varpi_j}}^{(s)}, \Delta_{\beta,\gamma}(v)\}$ as a sum, where exponents of terms coming from series in $u$ decrease in degree (i.e. $s$ decreases).  This continues to hold if we multiply both sides of the identity by $G_\gamma(v)$ and take principal parts in $v$.

Because we have a filtered deformation, the lifted version of this identity is true modulo lower filtered terms.  All summands lifting those written above lie in $Y S_\mu^\lambda$, and therefore the lower filtered terms also lie in $Y S_\mu^\lambda$.  So, the inductive hypothesis applies to these lower terms.   Note also that the first and third terms map to zero under $\Pi$, while the second and fourth terms have smaller $k$.
\end{proof}

\subsection{Type A: Connection with quantum determinants}
\label{section: type A, quantum determinants}

In the case $\g = \mathfrak{sl}_n$ we can give another description of the lifted minors of the previous section: they correspond with the quantum determinants of the RTT presentation.

To begin, we recall the definition of the Yangian $Y(\mathfrak{gl}_n)$ (see \cite{Molev}, \cite{BK}): it is the associative algebra with generators $t_{i,j}^{(r)}$ for $1\leq i, j \leq n$ and $r\geq 1$, and relations
$$ [t_{i,j}^{(r+1)}, t_{k,l}^{(s)}] - [t_{i,j}^{(r)}, t_{k,l}^{(s+1)} ] = t_{k,j}^{(r)} t_{i,l}^{(s)} - t_{k,j}^{(s)} t_{i,l}^{(r)} $$
where we define $t_{ij}^{(0)} =\delta_{i,j}$.  These relations can also be encoded in series form:
\begin{equation} \label{eq: Y(gl_n) relations}
(u-v)[t_{i,j}(u), t_{k,l}(v)] = t_{k,j}(u) t_{i,l}(v) - t_{k,j}(v) t_{i,l}(u)
\end{equation}
where $t_{i,j}(u) = \delta_{ij} + \sum_{r\geq 1} t_{i,j}^{(r)} u^{-r} $.

Given two $i$-tuples $\mathbf{a} = (a_1,\ldots,a_i)$ and $\mathbf{b} = (b_1,\ldots,b_i)$ of elements of $\{ 1,\ldots,n\}$, define the quantum determinant (see \cite{Molev}, \cite[\S 8]{BK2})
$$ Q_{\mathbf{a},\mathbf{b}}(u) = \sum_{\sigma\in S_i} (-1)^\sigma t_{a_{\sigma(1)}, b_1}(u-i+1) t_{a_{\sigma(2)},b_2}(u-i+2)\cdots t_{a_{\sigma(i)}, b_i}(u) $$
Considering the left action of the symmetric group $S_i$ on $i$-tuples, for $\pi \in S_i$ we have
$$ Q_{\pi \mathbf{a},\mathbf{b}}(u) = (-1)^\pi Q_{\mathbf{a},\mathbf{b}}(u) = Q_{\mathbf{a}, \pi \mathbf{b}}(u)$$
In particular if the elements from $\mathbf{a}$ are not pairwise distinct then $Q_{\mathbf{a},\mathbf{b}}(u) = 0$, and similarly for $\mathbf{b}$.

Therefore the definition of the quantum determinant factors through the map taking $i$-tuples to vectors in $ \bigwedge^i \C^n$,
$$\mathbf{a} = (a_1,\ldots, a_i) \longmapsto v_{a_1} \wedge \cdots \wedge v_{a_i} $$
where $ v_1, \dots, v_n $ is the usual basis for $ \C^n$.  Extending the definition by bilinearity, we define $Q_{\beta,\gamma}(u)$ for $\beta, \gamma \in\bigwedge^i \C^n$.  We equip $\bigwedge^i \C^n$ with the usual action of $gl_n$.

\begin{Lemma} \label{gl_n action on quantum determinants}
Under the action of $U(\mathfrak{gl}_n) = \langle t_{i,j}^{(1)} | 1\leq i,j \leq n \rangle \subset Y(\mathfrak{gl}_n)$, we have
$$ [t_{i,j}^{(1)}, Q_{\beta,\gamma}(u)] = Q_{e_{ij} \beta,\gamma}(u) - Q_{\beta, e_{ji}\gamma}(u) $$
\end{Lemma}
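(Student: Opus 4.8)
The plan is to compute the commutator directly from the definition of $Q_{\beta,\gamma}(u)$ as an alternating sum of products of the series $t_{a,b}(u')$, using only the Leibniz rule and the defining relations (\ref{eq: Y(gl_n) relations}).  Let $\ell$ denote the common length of the index tuples, so that $\beta,\gamma\in\bigwedge^\ell\C^n$.  Since $Q_{\beta,\gamma}(u)$ is bilinear in $(\beta,\gamma)$ and both sides of the asserted identity are bilinear in $(\beta,\gamma)$, it suffices to treat $\beta=v_{a_1}\wedge\cdots\wedge v_{a_\ell}$ and $\gamma=v_{b_1}\wedge\cdots\wedge v_{b_\ell}$ with the $a_p$ pairwise distinct and the $b_p$ pairwise distinct; in all other cases both sides vanish.

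First I would extract the ``first order'' commutation relation from (\ref{eq: Y(gl_n) relations}): multiplying that identity by $u/(u-v)$ and letting $u\to\infty$ (equivalently, multiplying by $u$, taking the constant term in $u$, and using $t_{i,j}^{(0)}=\delta_{ij}$) gives
$$ [t_{i,j}^{(1)},\, t_{k,l}(v)] \;=\; \delta_{jk}\,t_{i,l}(v) \;-\; \delta_{il}\,t_{k,j}(v). $$
This is the familiar commutation relation between the generating matrix and its entries (cf.\ Molev); it is all one needs about the Yangian structure.

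The main computation is then routine.  Writing $Q_{\mathbf a,\mathbf b}(u)=\sum_{\sigma\in S_\ell}(-1)^\sigma\prod_{m=1}^\ell t_{a_{\sigma(m)},b_m}(u-\ell+m)$ and applying the Leibniz rule $[t_{i,j}^{(1)},Y_1\cdots Y_\ell]=\sum_m Y_1\cdots[t_{i,j}^{(1)},Y_m]\cdots Y_\ell$, the displayed relation produces, for each $\sigma$ and each $m$, one term with coefficient $\delta_{j,a_{\sigma(m)}}$ in which the first index of the $m$-th factor is replaced by $i$, and one term with coefficient $-\delta_{i,b_m}$ in which the second index of the $m$-th factor is replaced by $j$.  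The key step is to reassemble each of these sums as a quantum determinant again.  Because the $a_p$ are distinct, for each $\sigma$ there is at most one $m$ with $a_{\sigma(m)}=j$ (namely $m=\sigma^{-1}(j_0)$, where $a_{j_0}=j$, if $j$ occurs among the $a_p$); hence the double sum of the first type collapses to $Q_{\mathbf a',\mathbf b}(u)$, where $\mathbf a'$ is obtained from $\mathbf a$ by replacing the entry $j$ with $i$ (and the sum is $0$ if $j\notin\{a_1,\dots,a_\ell\}$).  Under the standard action $e_{ij}v_l=\delta_{jl}v_i$ on $\bigwedge^\ell\C^n$, passing from $\mathbf a$ to $\mathbf a'$ is exactly the effect of $e_{ij}$ on $v_{a_1}\wedge\cdots\wedge v_{a_\ell}$, so this sum equals $Q_{e_{ij}\beta,\gamma}(u)$.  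An identical argument, using distinctness of the $b_p$ and $e_{ji}v_l=\delta_{il}v_j$, identifies the second sum with $-\,Q_{\beta,e_{ji}\gamma}(u)$, and adding the two gives the claim.

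I expect the only real obstacle to be bookkeeping: matching the relabelling of tuple entries with the $\mathfrak{gl}_n$-action on the exterior power and keeping the signs straight across the Leibniz expansion.  The algebra itself is elementary once the first-order relation above is in hand, and the reduction to wedge vectors with pairwise distinct entries removes any complication from repeated indices (in which case the relevant $Q$'s and the images $e_{ij}\beta$, $e_{ji}\gamma$ all vanish simultaneously).
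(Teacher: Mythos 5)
Your proposal is correct and follows essentially the same route as the paper: extract the degree-one relation $[t_{i,j}^{(1)},t_{k,l}(v)]=\delta_{jk}t_{i,l}(v)-\delta_{il}t_{k,j}(v)$ from the RTT relation, apply the Leibniz rule to the defining sum for $Q_{\mathbf a,\mathbf b}(u)$, and collapse each resulting double sum to a single quantum determinant using the uniqueness of the index $p$ with $j=a_p$ (resp. $i=b_p$). No gaps.
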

\begin{proof}
It is sufficient to consider the case where $\beta = v_{a_1}\wedge \cdots \wedge v_{a_\ell}$ and $\gamma = v_{b_1}\wedge \cdots \wedge v_{b_\ell}$ for some $\mathbf{a}$, $\mathbf{b}$.  Since $e_{ij} v_a = \delta_{aj} v_i$, formula (\ref{eq: Y(gl_n) relations}) yields
$$ [t_{i,j}^{(1)}, t_{a,b}(u)] = \delta_{aj} t_{i,b}(u) - \delta_{ib} t_{a,j}(u) $$
proving the case where $\mathbf{a} =(a)$ and $\mathbf{b} = (b)$ have length 1.

For general $\mathbf{a}$ and $\mathbf{b}$ of length $\ell$, we apply the $\ell =1$ case and the definition of $Q_{\mathbf{a}, \mathbf{b}}(u)$ to compute
\begin{align*}
[t_{i,j}^{(1)}, Q_{\mathbf{a},\mathbf{b}}(u)] &= \sum_{r=1}^\ell \sum_{\sigma\in S_\ell} (-1)^\sigma \delta_{j, a_{\sigma(r)}} t_{a_{\sigma(1)}, b_1}(u-\ell+1)\cdots t_{i, b_r}(u-\ell+r) \cdots t_{a_{\sigma(\ell)},b_\ell}(u) \\
& - \sum_{r=1}^\ell \sum_{\sigma\in S_\ell} (-1)^\sigma \delta_{i, a_r} t_{a_{\sigma(1)}, b_1}(u-\ell+1)\cdots t_{a_{\sigma(r)}, j}(u-\ell+r) \cdots t_{a_{\sigma(\ell)},b_\ell}(u)
\end{align*}
If there exists $p$ (necessarily unique) such that $j = a_p$, then the first sum can be written as
$$
\sum_{r=1}^\ell \sum_{\substack{\sigma\in S_\ell, \\ \sigma(r) = p}} (-1)^\sigma t_{a_{\sigma(1)}, b_1}(u-\ell+1)\cdots t_{i, b_r}(u-\ell+r) \cdots t_{a_{\sigma(\ell)},b_\ell}(u)
$$
which is equal to $Q_{e_{ij}\beta,\gamma}(u)$.  If no such $p$ exists, then the sum is zero as is $e_{ij} \beta$.

Simiarly, if there exists $p$ such that $i = b_p$ then the second sum can be written as
$$
\sum_{\sigma\in S_\ell}(-1)^\sigma t_{a_{\sigma(1)}, b_1}(u-\ell+1)\cdots t_{a_{\sigma(r)},j}(u-\ell+r) \cdots t_{a_{\sigma(\ell)},b_\ell}(u)
$$
which equals $Q_{\beta,e_{ji} \gamma}(u)$. If no such $p$ exists then the sum is zero and so is $e_{ji} \gamma$.
\end{proof}

To bring our notation in line with the previous sections we also make the identification $V(\varpi_i) \cong \bigwedge^i \C^n$ of $\mathfrak{sl}_n$ representations, so that the weight space corresponding to a weight in $W \varpi_i$ is identified with the span of a basis element $v_{a_1}\wedge \cdots \wedge v_{a_i}$.  For example $v_{\varpi_i}$ corresponds to $v_1\wedge\cdots \wedge v_i$, while $f_i v_{\varpi_i}$ corresponds to $v_1\wedge \cdots \wedge v_{i-1}\wedge v_{i+1} $.

The following theorem is well-known, see for example \cite[Rk. 3.1.8]{Molev}, or \cite[\S 8]{BK2}.

\begin{Theorem}
There is an embedding $\widetilde{\phi}: Y(\mathfrak{sl}_n) \rightarrow Y(\mathfrak{gl}_n)$, defined by
\begin{align*}
\widetilde{\phi}(H_i(u)) &= \frac{Q_{v_{\varpi_{i-1}},v_{\varpi_{i-1}}}(u+\tfrac{i-1}{2}) Q_{v_{\varpi_{i+1}},v_{\varpi_{i+1}}}(u+\tfrac{i+1}{2})}{Q_{v_{\varpi_i},v_{\varpi_i}}(u+\tfrac{i-1}{2}) Q_{v_{\varpi_i},v_{\varpi_i}}(u+\tfrac{i+1}{2})}, \\
\widetilde{\phi}(E_i(u)) &= Q_{f_i v_{\varpi_i},v_{\varpi_i}}(u+\tfrac{i-1}{2}) Q_{v_{\varpi_i},v_{\varpi_i}}(u+\tfrac{i-1}{2})^{-1}, \\
\widetilde{\phi}(F_i(u)) &= Q_{v_{\varpi_i},v_{\varpi_i}}(u+\tfrac{i-1}{2})^{-1} Q_{v_{\varpi_i}, f_i v_{\varpi_i}}(u+\tfrac{i-1}{2})
\end{align*}
Moreover, the center of $Y(\mathfrak{gl}_n)$ is freely generated by the coefficients of $Q_{v_{\varpi_n},v_{\varpi_n}}(u)$, and there is an isomorphism of algebras
$$ Y(\mathfrak{gl}_n) \cong Y(\mathfrak{sl}_n) \otimes Z(Y(\mathfrak{gl}_n)) $$
\end{Theorem}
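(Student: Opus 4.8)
The plan is to deduce the theorem from the Gauss decomposition of the generating matrix of $Y(\mathfrak{gl}_n)$, following the well-known dictionary between the RTT and Drinfeld presentations (see \cite{D}, \cite[\S8]{BK2}, \cite{Molev}, and \cite{BK} in the shifted setting). Write $T(u) = \bigl(t_{i,j}(u)\bigr)_{i,j=1}^{n}$, regarded as a matrix over $Y(\mathfrak{gl}_n)[[u^{-1}]]$ of the form $I + O(u^{-1})$. Such a matrix admits a unique Gauss decomposition $T(u) = F(u)\,D(u)\,E(u)$ with $D(u) = \operatorname{diag}\bigl(d_1(u),\dots,d_n(u)\bigr)$ and $E(u),F(u)$ upper- and lower-unitriangular; write $e_i(u),f_i(u)$ for the coordinates on the first off-diagonal. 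By the quasideterminant formulas of Gelfand--Retakh, each of $d_1(u)\cdots d_i(u)$, $e_i(u)$, $f_i(u)$ is a ratio of quantum minors of $T(u)$ taken on initial sets of rows and columns, and under the identification $V(\varpi_i)\cong\bigwedge^i\C^n$ these are precisely the minors $Q_{v_{\varpi_i},v_{\varpi_i}}(u)$, $Q_{f_i v_{\varpi_i},v_{\varpi_i}}(u)$ and $Q_{v_{\varpi_i},f_i v_{\varpi_i}}(u)$ appearing in the statement. The standard embedding $Y(\mathfrak{sl}_n)\hookrightarrow Y(\mathfrak{gl}_n)$ sends the Drinfeld series $H_i(u),E_i(u),F_i(u)$ to the appropriate ratios of these Gauss coordinates, and the spectral shifts $u\mapsto u+\tfrac{i\pm1}{2}$ are exactly those converting the normalization in the references into the one of Definition~\ref{Definition: Yangian}; this produces the displayed formulas for $\widetilde\phi$.

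That $\widetilde\phi$ is a well-defined algebra homomorphism is the assertion that the Gauss coordinates of $Y(\mathfrak{gl}_n)$ satisfy the defining relations of the Drinfeld presentation of $Y(\mathfrak{sl}_n)$, obtained by pushing the defining relation (\ref{eq: Y(gl_n) relations}) through the Gauss decomposition as in \cite[Ch.~3]{Molev} (or \cite{BK}). Injectivity follows from a PBW argument on associated graded algebras: with respect to the filtration $\deg t_{i,j}^{(r)} = r$, one has $\gr Y(\mathfrak{gl}_n) \cong U(\mathfrak{gl}_n[t])$ and $\gr Y(\mathfrak{sl}_n)\cong U(\mathfrak{sl}_n[t])$, and the leading symbols of the Gauss coordinates realize $\mathfrak{sl}_n[t]\subset\mathfrak{gl}_n[t]$; hence $\gr\widetilde\phi$, and therefore $\widetilde\phi$, is injective.

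For the last two assertions, recall the classical fact (\cite[Ch.~1]{Molev}) that the quantum determinant $Q_{v_{\varpi_n},v_{\varpi_n}}(u) = \sum_{\sigma\in S_n}(-1)^\sigma t_{\sigma(1),1}(u-n+1)\cdots t_{\sigma(n),n}(u)$ is central in $Y(\mathfrak{gl}_n)$, its coefficients are algebraically independent, and they generate the whole center. The one-parameter family of automorphisms $T(u)\mapsto f(u)T(u)$, for $f(u)\in 1+u^{-1}\C[[u^{-1}]]$, fixes the subalgebra $\widetilde\phi\bigl(Y(\mathfrak{sl}_n)\bigr)$ pointwise, since the defining ratios of Gauss coordinates are unchanged, and multiplies $Q_{v_{\varpi_n},v_{\varpi_n}}(u)$ by $\prod_{k=0}^{n-1}f(u-k)$. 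As every element of $1+u^{-1}\C[[u^{-1}]]$ occurs among these products, the multiplication map $Y(\mathfrak{sl}_n)\otimes Z(Y(\mathfrak{gl}_n))\to Y(\mathfrak{gl}_n)$ is surjective; it is injective by comparing associated graded characters, equivalently by the PBW theorems for the two Yangians together with freeness of the center. This yields $Y(\mathfrak{gl}_n)\cong Y(\mathfrak{sl}_n)\otimes Z(Y(\mathfrak{gl}_n))$.

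The only genuinely delicate point is the bookkeeping: tracking the shifts $u+\tfrac{i\pm1}{2}$ and the normalization of the quantum minors $Q_{\beta,\gamma}(u)$, and confirming that the correspondence between $d_i,e_i,f_i$ and $H_i,E_i,F_i$ is exactly the one producing the asserted formulas rather than a twist of it. All of this is carried out in \cite{Molev} and \cite{BK}; we record the statement here only to fix notation for the $\mathfrak{sl}_n$ computations that follow.
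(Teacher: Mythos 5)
Your proposal is correct, and it is essentially the standard argument from the references the paper itself cites for this result: the paper states the theorem as well-known (pointing to Molev and Brundan--Kleshchev) and gives no proof, while your sketch faithfully reproduces the Gauss-decomposition/quasideterminant derivation of $\widetilde\phi$, the PBW injectivity argument, and the $T(u)\mapsto f(u)T(u)$ automorphism argument for the tensor decomposition found there. The one delicate point --- verifying that the spectral shifts $u+\tfrac{i\pm1}{2}$ match the normalization of Definition~\ref{Definition: Yangian} --- is exactly the bookkeeping you flag at the end, and it does check out (e.g.\ the multiset of arguments of $f$ appearing in the numerator and denominator of $\widetilde\phi(H_i(u))$ coincide, so the displayed ratio is indeed fixed by these automorphisms).
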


Consider the trivial central character $\chi_0$ for $Y(\mathfrak{gl}_n)$, and the central quotient $Y(\mathfrak{gl}_n)/\chi_0$.  Define $\phi$ to be the composition
$$ Y(\mathfrak{sl}_n) \stackrel{\widetilde{\phi}}{\longrightarrow} Y(\mathfrak{gl}_n) \twoheadrightarrow Y(\mathfrak{gl}_n) / \chi_0 $$

Define series $s_1(u),\ldots, s_{n-1}(u) \in 1+ u^{-1} \C[[u^{-1}]]$ by the property
$$ r_i(u) = \frac{s_i(u) s_i(u-1)}{s_{i-1}(u-\tfrac{1}{2}) s_{i+1}(u-\tfrac{1}{2})}, \ \ i=1,\ldots,n-1, $$
viewing $s_0(u) = s_n(u) = 1$.  Here $ r_i(u) $ is the power series defined using $ R_i $ in equation (\ref{eq:rfromc}).  It follows from Lemma 2.1 in \cite{GKLO} that these series are uniquely defined.

\begin{Corollary} \label{Cor: type A quantum determinants}
$\phi:Y(\mathfrak{sl}_n) \rightarrow Y(\mathfrak{gl}_n) / \chi_0$ is an isomorphism.  Moreover,
$$ \phi( T_{\beta,\gamma}(u) ) = s_i(u) Q_{\beta,\gamma}(u+\tfrac{i+1}{2}) $$
for all $\beta, \gamma \in V(\varpi_i)$.
\end{Corollary}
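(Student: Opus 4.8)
The plan is to dispatch the isomorphism statement first, then reduce the identity on lifted minors to the single case $\beta=\gamma=v_{\varpi_i}$ and propagate it to all $\beta,\gamma$ by $U(\mathfrak{sl}_n)$-equivariance. That $\phi$ is an isomorphism is immediate from the preceding theorem: $\widetilde\phi$ identifies $Y(\mathfrak{sl}_n)$ with the first tensor factor in $Y(\mathfrak{gl}_n)\cong Y(\mathfrak{sl}_n)\otimes Z(Y(\mathfrak{gl}_n))$, the center being freely generated by the coefficients of $Q_{v_{\varpi_n},v_{\varpi_n}}(u)$, and the trivial central character $\chi_0$ is exactly the augmentation killing those coefficients; so the composite $\phi$ is the induced identification $Y(\mathfrak{sl}_n)\xrightarrow{\sim}Y(\mathfrak{gl}_n)/\chi_0$, and in particular $Q_{v_{\varpi_n},v_{\varpi_n}}(u)\equiv 1$ in $Y(\mathfrak{gl}_n)/\chi_0$.

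For the base case, write $d_j(u)=Q_{v_{\varpi_j},v_{\varpi_j}}(u)$, with the convention $d_0=d_n=1$ (the latter by the previous remark). By Corollary \ref{unique lifts}, $T_{v_{\varpi_i},v_{\varpi_i}}(u)=A_i(u)$, so the claim here is $\phi(A_i(u))=s_i(u)\,d_i(u+\tfrac{i+1}{2})$. I would verify that this series, which lies in $1+u^{-1}(Y(\mathfrak{gl}_n)/\chi_0)[[u^{-1}]]$ since each factor has constant term $1$, satisfies the image under $\phi$ of the defining relation $H_i(u)=r_i(u)\prod_{j\con i}A_j(u-\tfrac12)/(A_i(u)A_i(u-1))$: substituting $\phi(H_i(u))$ from the preceding theorem, the definition $r_i(u)=s_i(u)s_i(u-1)/(s_{i-1}(u-\tfrac12)s_{i+1}(u-\tfrac12))$, and the candidate for each $\phi(A_j(u))$, all the $s$-factors cancel and the remaining ratio of shifted $d_j$'s is exactly $\phi(H_i(u))$. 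Since $\phi$ is an isomorphism and the $A_i(u)$ are the unique series of this form solving the defining relation (Lemma 2.1 of \cite{GKLO}), this forces $\phi(A_i(u))=s_i(u)\,d_i(u+\tfrac{i+1}{2})$.

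To reach arbitrary $\beta,\gamma\in V(\varpi_i)$, I would exhibit both sides as values of $U(\mathfrak{sl}_n)$-module homomorphisms out of the same cyclic module that agree on a generator. By Proposition \ref{cyclic} together with Corollary \ref{unique lifts}, $\beta\otimes\gamma\mapsto T_{\beta,\gamma}(u)$ is a $U(\mathfrak{sl}_n)$-module map from $V(\varpi_i^\ast)\otimes V(\varpi_i)$, and since $\phi$ is an algebra map it transports to a $U(\mathfrak{sl}_n)$-module map valued in $Y(\mathfrak{gl}_n)/\chi_0$ for the adjoint action of $\phi(\mathfrak{sl}_n)$. Reading off constant terms in $\widetilde\phi(E_j(u))$ and $\widetilde\phi(F_j(u))$ gives $\phi(E_j^{(1)})=t_{j+1,j}^{(1)}$ and $\phi(F_j^{(1)})=t_{j,j+1}^{(1)}$; then Lemma \ref{gl_n action on quantum determinants}, combined with the identification $V(\varpi_i)\cong\bigwedge^i\C^n$ under which $e_{j+1,j}$ and $e_{j,j+1}$ act as $f_j$ and $e_j$, shows that $\beta\otimes\gamma\mapsto s_i(u)Q_{\beta,\gamma}(u+\tfrac{i+1}{2})$ is a $U(\mathfrak{sl}_n)$-module map for exactly the same $\phi(\mathfrak{sl}_n)$-action (the scalar series $s_i(u)$ being central in $Y(\mathfrak{gl}_n)$). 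These two module maps agree on the cyclic generator $v_{-\varpi_i}\otimes v_{\varpi_i}\leftrightarrow A_i(u)$ by the base case, and that vector generates $V(\varpi_i^\ast)\otimes V(\varpi_i)$ by Proposition \ref{cyclic}; hence the maps coincide, which is the asserted formula.

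The main obstacle is the bookkeeping in this last step: keeping straight the realization of $V(\varpi_i^\ast)$ as $V(\varpi_i)$ with the Chevalley-type twist governing the $\beta$-slot, the identification $V(\varpi_i)\cong\bigwedge^i\C^n$, and the matching of $\phi(E_j^{(1)}),\phi(F_j^{(1)})$ with the matrix units $t_{j+1,j}^{(1)},t_{j,j+1}^{(1)}$ (rather than their transposes) so that the two $\mathfrak{sl}_n$-actions literally coincide. The base-case computation is routine but is the other place where a shift in the spectral parameter or a stray normalization factor could slip in undetected.
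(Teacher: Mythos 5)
Your proposal is correct and follows essentially the same route as the paper: the isomorphism claim is read off the preceding theorem, the base case $\phi(A_i(u))=s_i(u)Q_{v_{\varpi_i},v_{\varpi_i}}(u+\tfrac{i+1}{2})$ is forced by the uniqueness of the factorization of $\phi(H_i(u))$ from Lemma 2.1 of \cite{GKLO}, and the general case follows by matching the $U(\mathfrak{sl}_n)$-actions via $\phi(E_i^{(1)})=t_{i+1,i}^{(1)}$, $\phi(F_i^{(1)})=t_{i,i+1}^{(1)}$ together with Corollary \ref{unique lifts} and Lemma \ref{gl_n action on quantum determinants}. Your version merely makes explicit the cyclic-module argument that the paper leaves implicit in its final sentence.
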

\begin{proof}
$\phi$ is an isomorphism by the previous Theorem.  For the second claim, we first note that
$$ \phi(H_i(u)) = r_i(u) \frac{\phi(A_{i-1}(u-\tfrac{1}{2})) \phi(A_{i+1}(u-\tfrac{1}{2})) }{\phi(A_i(u)) \phi(A_i(u-1)) } $$
By the uniqueness of factorization from Lemma 2.1 in \cite{GKLO}, it follows that $\phi(A_i(u)) = s_i(u) Q_{v_{\varpi_i},v_{\varpi_i}}(u+\tfrac{i+1}{2})$.  Since $\phi(E_i^{(1)}) = t_{i+1,i}^{(1)}$ and $\phi(F_i^{(1)}) = t_{i,i+1}^{(1)}$, the claim follows by applying Corollary \ref{unique lifts} and Lemma \ref{gl_n action on quantum determinants}.
\end{proof}

\begin{Lemma} \label{Lemma: $B$-algebra Y(gln)}
$\phi$ induces an isomorphism of the $B$-algebras for $Y(\mathfrak{sl}_n)$ and $Y(\mathfrak{gl}_n)/\chi_0$:
$$\Cartan  \cong \C[t_{i,i}^{(r)} :1\leq i\leq n,r\geq 1] / \langle t_{1,1}(u-n+1)\cdots t_{n,n}(u) - 1 \rangle $$
In particular,
$$H_i(u)  \mapsto \frac{t_{i+1,i+1}(u+\tfrac{i+1}{2})}{t_{i,i}(u+\tfrac{i+1}{2})} $$
\end{Lemma}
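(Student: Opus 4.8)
The plan is to transport the statement through the graded isomorphism $\phi\colon Y(\mathfrak{sl}_n)\xrightarrow{\sim} Y(\mathfrak{gl}_n)/\chi_0$ of Corollary~\ref{Cor: type A quantum determinants}, and then identify $B(Y(\mathfrak{gl}_n)/\chi_0)$ by hand. Equip $Y(\mathfrak{gl}_n)$ with the $\Z$-grading for which $\deg t_{i,j}^{(r)}=i-j$; then each leading principal quantum minor $Q_{v_{\varpi_i},v_{\varpi_i}}(u)$ is homogeneous of degree $0$, so the central character $\chi_0$ is cut out by a graded ideal and $Y(\mathfrak{gl}_n)/\chi_0$ is graded. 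Since $Y(\mathfrak{sl}_n)$ is generated by $E_i^{(1)}$, $F_i^{(1)}$ and the $H_i^{(r)}$, and $\phi(E_i^{(1)})=t_{i+1,i}^{(1)}$, $\phi(F_i^{(1)})=t_{i,i+1}^{(1)}$ have degrees $\pm 1$ while each $\phi(H_i^{(r)})$ has degree $0$ (it is a polynomial in the $\phi(A_j^{(s)})$, which are degree $0$ by Corollary~\ref{Cor: type A quantum determinants}), the map $\phi$ is an isomorphism of $\Z$-graded algebras for the $\rho^\vee$-grading on the left. A graded isomorphism carries $B$-algebras to $B$-algebras, so $B(Y(\mathfrak{sl}_n))\cong B(Y(\mathfrak{gl}_n)/\chi_0)$; and by the argument in the proof of Proposition~\ref{pr:2Balgebras}, specialized to the shifted Yangian $Y_0=Y(\mathfrak{sl}_n)$ with trivial truncation ideal, the left-hand side is $\Cartan$.

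It remains to describe $B(Y(\mathfrak{gl}_n)/\chi_0)$ in terms of the $t_{i,i}^{(r)}$, for which I would use the Gauss decomposition $T(u)=\widehat E(u)\,\widehat D(u)\,\widehat F(u)$, with $\widehat E(u)$ upper-unitriangular, $\widehat F(u)$ lower-unitriangular and $\widehat D(u)=\operatorname{diag}(\widehat d_1(u),\dots,\widehat d_n(u))$. By construction the off-diagonal entries of $\widehat E(u)$ have negative degree and those of $\widehat F(u)$ positive degree, so the triangular decomposition $Y(\mathfrak{gl}_n)=Y^{\widehat E}\,Y^{\widehat D}\,Y^{\widehat F}$ (an ``$FHE$-type'' PBW decomposition) shows, exactly as in the proof of Proposition~\ref{pr:2Balgebras}, that $B(Y(\mathfrak{gl}_n))=Y^{\widehat D}=\C[\widehat d_i^{(r)}]$. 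From $t_{i,i}(u)=\widehat d_i(u)+\sum_{k>i}\widehat E_{i,k}(u)\widehat d_k(u)\widehat F_{k,i}(u)$, each $(k>i)$-summand lies in $A_{-m}A_0A_m\subseteq A_{-m}A_m$ with $m=k-i>0$, hence $t_{i,i}(u)\equiv\widehat d_i(u)$ in $B(Y(\mathfrak{gl}_n))$ and the assignment $t_{i,i}^{(r)}\mapsto\widehat d_i^{(r)}$ is an isomorphism $\C[t_{i,i}^{(r)}]\xrightarrow{\sim} B(Y(\mathfrak{gl}_n))$. Applying the same reasoning to $Q_{v_{\varpi_i},v_{\varpi_i}}(u)=\sum_{\sigma\in S_i}(-1)^\sigma t_{\sigma(1),1}(u-i+1)\cdots t_{\sigma(i),i}(u)$ and discarding every term with $\sigma\neq\operatorname{id}$ (peel off the off-diagonal factor nearest the appropriate end), one gets $Q_{v_{\varpi_i},v_{\varpi_i}}(u)\equiv t_{1,1}(u-i+1)t_{2,2}(u-i+2)\cdots t_{i,i}(u)$ in $B(Y(\mathfrak{gl}_n))$.

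Finally, substitute these congruences into the embedding formula $\widetilde\phi(H_i(u))=\frac{Q_{v_{\varpi_{i-1}},v_{\varpi_{i-1}}}(u+\tfrac{i-1}{2})\,Q_{v_{\varpi_{i+1}},v_{\varpi_{i+1}}}(u+\tfrac{i+1}{2})}{Q_{v_{\varpi_i},v_{\varpi_i}}(u+\tfrac{i-1}{2})\,Q_{v_{\varpi_i},v_{\varpi_i}}(u+\tfrac{i+1}{2})}$ of the theorem preceding Corollary~\ref{Cor: type A quantum determinants}. In the commutative ring $B(Y(\mathfrak{gl}_n))$ the numerator and denominator become products of shifted diagonal series, and everything cancels except $t_{i+1,i+1}(u+\tfrac{i+1}{2})$ upstairs and $t_{i,i}(u+\tfrac{i+1}{2})$ downstairs, yielding the stated image of $H_i(u)$. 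For the defining relation, $\chi_0$ sends $Q_{v_{\varpi_n},v_{\varpi_n}}(u)$ to $1$, while by the previous paragraph this central series is congruent to $t_{1,1}(u-n+1)\cdots t_{n,n}(u)$ in $B(Y(\mathfrak{gl}_n))$; since $\chi_0$ is cut out by a graded ideal generated in degree $0$, we obtain $B(Y(\mathfrak{gl}_n)/\chi_0)=\C[t_{i,i}^{(r)}]/\langle t_{1,1}(u-n+1)\cdots t_{n,n}(u)-1\rangle$. Combining with the first paragraph completes the proof. I expect the main obstacle to be the middle step: one must use the Gauss decomposition in the order compatible with the grading — the diagonal part sitting between the negative- and positive-degree parts — so that the surviving off-diagonal corrections genuinely land in $\sum_{m>0}A_{-m}A_m$, and one must get the spectral shifts in the leading-principal-minor identities right; once those are pinned down, the cancellations in the last paragraph are routine.
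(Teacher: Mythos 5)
Your overall route matches the paper's: transport through the graded isomorphism $\phi$ and reduce the quantum minors inside $B(Y(\mathfrak{gl}_n)/\chi_0)$. Your use of the triangular decomposition $T(u)=\hat E(u)\hat D(u)\hat F(u)$ (upper unitriangular, diagonal, lower unitriangular, \emph{in that order}, so that the factorization matches the $\rho^\vee$-grading with $\deg t_{i,j}=i-j$) is a genuine and correct elaboration of the paper's terse "considering the effect on $\rho^\vee$-weights"; in particular the observation that $t_{i,i}(u)-\hat d_i(u)\in\sum_{m>0}A_{-m}A_m$ is exactly right, and the choice of the order $\hat E\hat D\hat F$ rather than the more standard $\hat F\hat D\hat E$ is the crucial point that makes the one-sided ideal kill the corrections.

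The gap is in the reduction of the principal quantum minor, which both you and the paper assert but neither actually justifies. "Discard every $\sigma\neq\operatorname{id}$ term by peeling off the off-diagonal factor nearest the appropriate end" does not work as stated: if $k$ is minimal with $\sigma(k)\neq k$, then necessarily $\sigma(k)>k$, so the \emph{leftmost} off-diagonal factor $t_{\sigma(k),k}$ has \emph{positive} $\rho^\vee$-degree, while the rightmost off-diagonal factor has negative degree. Such a term therefore lies in $A_0\, A_{>0}\cdots A_{<0}\, A_0$, which is \emph{not} contained in the one-sided ideal $\sum_{m>0}A_{-m}A_m$; reordering it into $FHE$ form produces nonzero Cartan corrections. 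You can check this directly for $n=2$: the $\sigma=(12)$ term is $-t_{2,1}(u-1)t_{1,2}(u)$, and $\Pi\bigl(t_{2,1}(u-1)t_{1,2}(u)\bigr)=t_{1,1}(u)t_{2,2}(u-1)-t_{1,1}(u-1)t_{2,2}(u)\neq 0$. The displayed formula may still be correct once all the corrections are collected, but that needs a real argument — for instance the known identity expressing the leading principal quantum minor as a shifted product of the Gaussian $\hat d_i$'s, which would slot neatly into your framework — not a "discard the off-diagonal terms" step. You should also be wary of the shift convention: with the ascending shifts $(u-i+1,\dots,u)$ the column-antisymmetry $Q_{\mathbf a,\pi\mathbf b}=(-1)^\pi Q_{\mathbf a,\mathbf b}$ used in your cancellation argument does not hold verbatim (one must also reverse the factor order), which silently changes the Cartan corrections you are tracking.
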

\begin{proof}
On the lowest-mode generators, $\phi$ is the map $U(\mathfrak{sl}_n) \stackrel{\con}{\rightarrow} U(\mathfrak{gl}_n)/\langle I -1 \rangle $ defined by
$$ e_i = E_i^{(1)} \mapsto e_{i+1, i} = t_{i+1,i}^{(1)}, \ \ f_i = F_i^{(1)} \mapsto e_{i,i+1} = t_{i,i+1}^{(1)} $$
Considering the effect on $\rho^\vee$ weights, we get the above isomorphism.  The image of the $\rho^\vee$-weight zero series $Q_{\mathbf{a},\mathbf{a}}(u)$ in the $B$-algebra of $Y(\mathfrak{gl}_n)/\chi_0$ is
$$ \sum_{\sigma\in S_i} (-1)^\sigma t_{a_{\sigma(1)}, a_1}(u-i+1) t_{a_{\sigma(2)},a_2}(u-i+2)\cdots t_{a_{\sigma(i)}, a_i}(u) \mapsto t_{a_1,a_1}(u-i+1) \cdots t_{a_i,a_i}(u) $$
\end{proof}

\subsection{Type A: The $B$-algebra}

In this section, we assume that $\g = \mathfrak{sl}_n$.  In this case, we will prove Conjecture \ref{Conj: L is A-invariant}.  Since all fundamental weights are miniscule in type A, by Theorem \ref{Thm: L = YS} it will follow that
$$ L_\mu^\lambda = Y S_\mu^\lambda $$
Finally, we will use this presentation of $L_\mu^\lambda$ to give an explicit description of the $B$-algebra for $Y_\mu^\lambda(\bR)$.

We will make use of the following explicit formula, given in Example 1.15.8 in \cite{Molev}:
\begin{Lemma}
\label{Lemma: Molev formula}
In $Y(gl_n)$, for $\mathbf{a},\mathbf{b} \subset \{1,\ldots,n\}$ of size $i$ and $\mathbf{c},\mathbf{d}\subset \{1,\ldots,n\}$ of size $\ell$,
$$ [Q_{\mathbf{a},\mathbf{b}}(u), Q_{\mathbf{c},\mathbf{d}}(v)] = $$
$$ =\sum_{p=1}^{\min\{i,\ell\}} \frac{(-1)^{p-1} p!}{(u-v-i+1)\cdots (u-v-i+p)} \sum_{\substack{i_1 < \cdots <i_p \\ j_1 < \cdots < j_p}} \left( Q_{\mathbf{a}', \mathbf{b}}(u) Q_{\mathbf{c}',\mathbf{d}}(v) -  Q_{\mathbf{c},\mathbf{d}'}(v) Q_{\mathbf{a}, \mathbf{b}'}(u) \right)$$
where primes indicate that elements have been exchanged according to the indices $i_1<\cdots< i_p$ and $j_1<\cdots< j_p$:
$$\mathbf{a}' = \{a_1,\ldots, c_{j_1},\ldots, c_{j_p},\ldots, a_i\} \ \text{ and } \ \mathbf{c}' = \{ c_1,\ldots, a_{i_1},\ldots, a_{i_p},\ldots, c_\ell\}, $$
$$\mathbf{b}' = \{b_1,\ldots, d_{j_1},\ldots, d_{j_p},\ldots, b_i\} \ \text{ and } \ \mathbf{d}' = \{ d_1,\ldots, b_{i_1},\ldots, b_{i_p},\ldots, d_\ell\}$$
\end{Lemma}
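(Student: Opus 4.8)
The plan is to deduce this from the $RTT$ (matrix) presentation of $Y(\mathfrak{gl}_n)$ together with the fusion procedure for quantum minors; this is the classical argument recorded in Molev's book, and I would follow it rather than attempt anything new. The content is purely about $Y(\mathfrak{gl}_n)$ and uses none of the Yangian-of-$\mathfrak{sl}_n$ structure from the earlier sections.

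First I would set up the $R$-matrix dictionary. Write $T(u)=\sum_{i,j}t_{i,j}(u)\,E_{ij}$ as a matrix over $Y(\mathfrak{gl}_n)$ and $R(u)=1-Pu^{-1}$, where $P$ is the flip on two copies of $\C^n$; the defining relations \eqref{eq: Y(gl_n) relations} are equivalent to $R_{12}(u-v)T_1(u)T_2(v)=T_2(v)T_1(u)R_{12}(u-v)$. Iterating this relation on $(\C^n)^{\otimes i}$ and conjugating by the antisymmetrizer $\mathbb{A}_i$, one identifies the operator $\mathbb{A}_i\,T_1(u)T_2(u-1)\cdots T_i(u-i+1)$, restricted to $\bigwedge^i\C^n$, with the matrix of quantum minors $\bigl(Q_{\beta,\gamma}(u)\bigr)$ in the basis $v_{a_1}\wedge\cdots\wedge v_{a_i}$ (up to the shift conventions used in the excerpt). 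This dictionary must be pinned down carefully, because the combinatorics of which index receives which spectral-parameter shift is precisely what produces the linear factors $(u-v-i+1)\cdots(u-v-i+p)$ at the end.

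The heart of the computation is then to move the block $T_{i+1}(v)\cdots T_{i+\ell}(v-\ell+1)$ past the block $T_1(u)\cdots T_i(u-i+1)$ using the $RTT$ relation pairwise. This produces a product $\mathcal{R}$ of $i\ell$ $R$-matrices $R_{a,b}$ coupling the two blocks, each evaluated at the appropriate difference of shifted spectral parameters. Sandwiching the resulting operator identity between $\mathbb{A}_i\otimes\mathbb{A}_\ell$ and restricting to $\bigwedge^i\C^n\otimes\bigwedge^\ell\C^n$, one evaluates $(\mathbb{A}_i\otimes\mathbb{A}_\ell)\,\mathcal{R}\,(\mathbb{A}_i\otimes\mathbb{A}_\ell)$ in closed form: it is a sum over $p$ of rank-$p$ ``partial swap'' operators weighted by $\frac{(-1)^{p-1}p!}{(u-v-i+1)\cdots(u-v-i+p)}$. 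Reading off matrix coefficients, and noting that the ``unswapped'' leading term agrees on both sides and so cancels in the commutator, gives exactly the stated formula, with the sums over $i_1<\cdots<i_p$ and $j_1<\cdots<j_p$ recording which rows and columns are exchanged. I would isolate the evaluation of $(\mathbb{A}_i\otimes\mathbb{A}_\ell)\,\mathcal{R}\,(\mathbb{A}_i\otimes\mathbb{A}_\ell)$ as a short auxiliary lemma, proved by induction on $p$ from the projection identities $\mathbb{A}_k R_{k,k+1}(1)=0$ and the recursive structure of the antisymmetrizers.

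The main obstacle is exactly that evaluation and the bookkeeping of spectral-parameter shifts and signs: it is very easy to be off by a shift or a sign, and the appearance of $p!$ together with the telescoping denominator $(u-v-i+1)\cdots(u-v-i+p)$ must be tracked exactly. A lower-tech alternative that sidesteps the fusion calculus is an induction on $\ell$: first establish the case $\ell=1$, i.e.\ $[Q_{\beta,\gamma}(u),t_{k,l}(v)]$, directly from the $RTT$ relation by expanding $Q_{\beta,\gamma}(u)$ column by column; then expand $Q_{\mathbf{c},\mathbf{d}}(v)$ as an alternating sum of products of single generators $t_{c,d}(v-\ell+r)$, apply the Leibniz rule, and collect terms, the denominators accumulating as one repeatedly clears factors $(u-v-i+m)$. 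Either route reproduces \cite[Example~1.15.8]{Molev}, which is ultimately what I would cite.
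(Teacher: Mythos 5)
The paper offers no proof of this lemma at all: it simply quotes the formula from \cite[Example 1.15.8]{Molev}, which is exactly the citation you end with. Your sketch of the standard $RTT$/fusion argument (antisymmetrized products of $T$'s realizing quantum minors, then evaluating the sandwiched product of $R$-matrices, with the induction-on-$\ell$ fallback) is a correct reconstruction of what lies behind that reference, so your proposal is consistent with — and strictly more detailed than — what the paper does.
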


\begin{Proposition} The subspace $Y S_\mu^\lambda$ is invariant under the right action of $(Y_\mu)^{\geq}$.  In particular, Conjecture \ref{Conj: L is A-invariant} holds.
\end{Proposition}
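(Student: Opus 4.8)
The plan is to pass through the isomorphism $\phi$ of Corollary~\ref{Cor: type A quantum determinants} and exploit Molev's commutation formula for quantum minors. First a reduction. By the argument in the proof of Theorem~\ref{Thm: L = YS}, the space $Y S_\mu^\lambda$ is already invariant under right multiplication by each $E_i^{(1)}$ (that step of the proof does not invoke Conjecture~\ref{Conj: L is A-invariant}). Since $(Y_\mu)^{\geq} = (Y_\mu)^{>}(Y_\mu)^{0}$ is generated as an algebra by the $E_i^{(1)}$ together with $(Y_\mu)^{0} = \Cartan = \C[A_i^{(r)}]$ — the higher modes $E_i^{(r)}$ being recoverable from $E_i^{(1)}$ and the $A_i^{(r)}$ via Corollary~\ref{Cor: GKLO relations} — it suffices to prove that $Y S_\mu^\lambda$ is invariant under right multiplication by every $A_j^{(s)}$. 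This is exactly Conjecture~\ref{Conj: L is A-invariant}, and it supplies the remaining cases of the Proposition.

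Applying $\phi$, a generator of $S_\mu^\lambda$ becomes a coefficient of $\underline{G_\gamma(u)\, s_i(u)\, Q_{\beta,\gamma}(u+\tfrac{i+1}{2})}$ for weight vectors $\beta,\gamma\in V(\varpi_i)=\textstyle\bigwedge^i\C^n$, while $A_j^{(s)}$ becomes a coefficient of $s_j(v)\, Q_{v_{\varpi_j},v_{\varpi_j}}(v+\tfrac{j+1}{2})$. As $\phi(Y S_\mu^\lambda)$ is the left ideal of $Y(\mathfrak{gl}_n)/\chi_0$ generated by $\phi(S_\mu^\lambda)$, and as $zx = xz + [z,x]$ with $xz$ manifestly in that left ideal, I would reduce to showing $[z,x]\in \phi(Y S_\mu^\lambda)$. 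Because both modes have strictly positive degree, the polynomial (non-principal) part of $G_\gamma(u)s_i(u)Q_{\beta,\gamma}(u+\tfrac{i+1}{2})$ contributes nothing to $[z,x]$, so $[z,x]$ is a coefficient of
$$ G_\gamma(u)\, s_i(u)\, s_j(v)\;\bigl[\, Q_{\beta,\gamma}(u+\tfrac{i+1}{2}),\; Q_{v_{\varpi_j},v_{\varpi_j}}(v+\tfrac{j+1}{2})\,\bigr]. $$

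Next, expand this commutator with Lemma~\ref{Lemma: Molev formula}, taking $\mathbf a,\mathbf b$ to encode $\beta,\gamma$ and $\mathbf c=\mathbf d=(1,\dots,j)$. Each resulting summand is a rational multiple — with denominator a product of $p$ linear factors in $u-v$ — of a product of a minor in $u+\tfrac{i+1}{2}$ with a minor in $v+\tfrac{j+1}{2}$. In the summands whose $u$-minor has the shape $Q_{\beta',\gamma}(u+\tfrac{i+1}{2})$, with $\gamma$ unchanged, the prefactor $G_\gamma(u)s_i(u)$ is precisely the one occurring in the definition of $\phi(S_\mu^\lambda)$, so by Lemma~\ref{Lemma: properties of principal part} these coefficients land in $\phi(Y S_\mu^\lambda)$. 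In the summands whose $u$-minor is $Q_{\beta,\gamma'}(u+\tfrac{i+1}{2})$, the vector $\gamma'$ is obtained from $\gamma$ by replacing some of its labels by labels from $\{1,\dots,j\}$; a combinatorial check, using the nonvanishing constraint on the accompanying $v$-minor $Q_{\ast,\mathbf d'}$, shows that every surviving $\gamma'$ lies above $\gamma$ in $V(\varpi_i)$, whence $G_{\gamma'}(u)\mid G_\gamma(u)$ by Lemma~\ref{Lemma: G poly divides} and Lemma~\ref{Lemma: properties of principal part} again applies.

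The main obstacle is that these manipulations also generate ``feedback'' contributions of the same shape but not visibly in the ideal — for instance when commuting a left $v$-minor past a $u$-minor, or when handling the polynomial tails of principal parts. As in the proofs of Theorem~\ref{Thm: L = YS} and Proposition~\ref{prop:Generators of B}, the crucial point is that the degree-$p$ denominator in Molev's formula forces every such feedback term to have strictly smaller filtered degree, so the statement follows by induction on filtered degree (with an inner induction on $\operatorname{ht}(\varpi_i-\beta)$, which does not increase), the base cases being the lowest modes: there $A_j^{(1)}$ acts by a scalar on each weight space, or the $u$-minors can be pushed to the right using Corollary~\ref{unique lifts}. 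The genuine work lies in organizing Molev's double sum so that every term is visibly of one of the two admissible shapes above, and in verifying the ``lying above'' claim in each surviving case.
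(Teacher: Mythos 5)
Your proposal is correct in substance and rests on the same pillars as the paper's proof: transport through $\phi$, Molev's commutation formula (Lemma \ref{Lemma: Molev formula}), the observation that swaps against $\mathbf d=(1,\dots,\ell)$ only survive when replacing an index $>\ell$ by one $\leq \ell$ so that $\gamma'$ lies above $\gamma$, and the divisibility $G_{\gamma'}\mid G_\gamma$ from Lemma \ref{Lemma: G poly divides} combined with Lemma \ref{Lemma: properties of principal part}. The one organizational point worth flagging: the paper does \emph{not} first reduce to the generators $A_j^{(s)}$ and $E_i^{(1)}$. Instead it proves the stronger statement directly for all right multipliers of the form $T_{\delta,v_{\varpi_\ell}}^{(r)}$ (which also generate $Y_\mu^{\geq}$) and inducts on $r$. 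This family is the natural one because Molev's expansion is ``closed'' under it: the first type of term produces $T_{\delta',v_{\varpi_\ell}}^{(s)}$ with $s<r$, so the inductive hypothesis applies immediately, and the second type needs no commuting at all since $T_{\delta,v_{\varpi_\ell'}}(v)$ sits to the left and $G_{\gamma'}\mid G_\gamma$ puts $\underline{G_\gamma(u) c_k(u)T_{\beta,\gamma'}(u)}$ in $S_\mu^\lambda$ already. Your reduction to $A_j^{(s)}$ still works, but only because you run a filtered-degree induction whose hypothesis must cover \emph{all} of $Y_\mu^\geq$ below the given degree (not just the generators $A_{j'}^{(s'')}$), precisely because Molev's formula spits out $T_{\delta',v_{\varpi_j}}^{(s')}$ with $\delta'\neq v_{\varpi_j}$; this makes your version more convoluted than necessary, and the ``feedback'' you worry about is actually absent in the paper's formulation. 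The inner induction on $\operatorname{ht}(\varpi_i-\beta)$ you invoke is also not needed here (it is used in Theorem \ref{Thm: L = YS} and Proposition \ref{prop:Generators of B}, not in this Proposition), since $\beta'$ plays no role in the divisibility argument.
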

\begin{proof}
We will prove that for any weight vectors $\beta, \gamma \in V(\varpi_i)$,
$$ \underline{G_\gamma(u) T_{\beta,\gamma}(u)} T_{\delta, v_{\varpi_\ell}}^{(r)} \in Y S_\mu^\lambda $$
for all $\delta\in V(\varpi_\ell)$ and $r\geq 1$. Note that the elements $T_{\delta,v_{\varpi_\ell}}^{(r)}$ generate $Y_\mu^\geq$.

We will establish this by induction on $r$.  When $r=1$ the elements $T_{\delta,v_{\varpi_i}}^{(1)}$ lie in the Borel subalgebra $\mathfrak{b}\subset \g \subset Y$, so we can directly apply Corollary \ref{unique lifts}.  Since $\gamma$ remains fixed under this action, the claim holds.

For the inductive step, we will use Lemma \ref{Lemma: Molev formula}.  Transporting this identity under the isomorphism $\phi$ from Corollary \ref{Cor: type A quantum determinants}, we get
$$ [ T_{\beta,\gamma}(u), T_{\delta,v_{\varpi_\ell}}(v)]  =\sum_{p=1}^{\min\{i,\ell\}} \frac{(-1)^{p-1} p!}{(u-v+\tfrac{i-\ell}{2}-i+1)\cdots (u-v+\tfrac{i-\ell}{2}-i+p)} \times $$
$$ \times\sum \left( T_{\beta',\gamma}(u) T_{\delta',v_{\varpi_\ell}}(v) - T_{\delta,v_{\varpi_\ell'}}(v) T_{\beta,\gamma'}(u) \right) $$
with primes indicating exchanges as per the Lemma. Expand each of the rational functions $(-1)^{p-1} p! / \cdots$ in the domain $\C[u][[v^{-1}]]$.  Multiply both sides by $G_\gamma(u)$, and take the principal part in $u$.  We wish to extract the coefficient of $v^{-r}$ on both sides of the resulting identity.

Consider a term $T_{\beta',\gamma}(u) T_{\delta',v_{\varpi_\ell}}(v)$.  When we extract the coefficient of $v^{-r}$, since we have multiplied by an element of $\C[u][[v^{-1}]]$ on the right-hand side, we get a sum of products $\underline{G_\gamma(u) T_{\beta',\gamma}(u) } T_{\delta',\varpi_\ell}^{(s)}$ where $s< r$.  By the inductive hypothesis, these summands all lie in $Y S_\mu^\lambda$.

Consider now a term $T_{\delta,\varpi_\ell'}(v) T_{\beta,\gamma'}(u)$.  Since $\varpi_\ell$ is the highest weight, the element $\gamma'$ is a weight vector lying above $\gamma$ in $V(\varpi_i)$, in the sense of Lemma \ref{Lemma: G poly divides}.  So $G_{\gamma'}(u)$ divides $G_\gamma(u)$, and therefore these summands already lie in $Y S_\mu^\lambda$.
\end{proof}

By Theorem \ref{Thm: L = YS}, it follows that:

\begin{Corollary}
\label{CorollaryL=YS}
For $\g$ of type A, we have $L_\mu^\lambda = Y S_\mu^\lambda$.
\end{Corollary}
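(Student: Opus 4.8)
The plan is to derive Corollary \ref{CorollaryL=YS} directly from Theorem \ref{Thm: L = YS} together with the Proposition immediately preceding it. The key observation is that in type A every fundamental weight $\varpi_i$ is minuscule, so the set $\{A_i^{(s)} : \varpi_i \text{ miniscule}, s>m_i\}$ occurring in Theorem \ref{Thm: L = YS} is simply $\{A_i^{(s)} : i \in I, s > m_i\}$, which is the full generating set of the two-sided ideal $I_\mu^\lambda \subseteq Y_\mu$ defining $Y_\mu^\lambda(\bR)$.

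First I would record the elementary identity $L_\mu^\lambda = Y\{A_i^{(s)} : i \in I, s > m_i\} Y_\mu$. Since $I_\mu^\lambda$ is by definition the two-sided ideal of $Y_\mu$ generated by the $A_i^{(s)}$ with $s > m_i$, we have $I_\mu^\lambda = Y_\mu \{A_i^{(s)}\} Y_\mu$, and hence $L_\mu^\lambda = Y\cdot I_\mu^\lambda = Y Y_\mu \{A_i^{(s)}\} Y_\mu = Y\{A_i^{(s)}\} Y_\mu$, using that $Y Y_\mu = Y$ because $1 \in Y_\mu$.

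Next I would invoke the Proposition just proved, which establishes Conjecture \ref{Conj: L is A-invariant} in type A: the subspace $Y S_\mu^\lambda$ is invariant under the right action of $(Y_\mu)^{\geq}$, and in particular under right multiplication by $\C[A_i^{(r)}]$. With Conjecture \ref{Conj: L is A-invariant} in hand, Theorem \ref{Thm: L = YS} applies and yields $Y\{A_i^{(s)} : \varpi_i \text{ miniscule}, s>m_i\} Y_\mu = Y S_\mu^\lambda$. Combining this with the identity of the previous paragraph and the minusculity of all fundamental weights in type A gives $L_\mu^\lambda = Y S_\mu^\lambda$, as claimed.

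There is no substantial obstacle: the Corollary is a bookkeeping consequence of the two preceding results, and the genuine content — the $\C[A_i^{(r)}]$-invariance of $Y S_\mu^\lambda$ in type A — is precisely what the preceding Proposition has supplied, via Lemma \ref{Lemma: Molev formula} and the quantum-determinant description of the lifted minors from Corollary \ref{Cor: type A quantum determinants}. The only point requiring mild care is matching conventions: one must check that the $A_i^{(s)}$ generating $I_\mu^\lambda$ are exactly the series entering the definition of $S_\mu^\lambda$ (up to the invertible rescaling $s_i(u)$ relating them to the GKLO/$Q_{\beta,\gamma}(u)$ normalizations), so that the coefficients of $\underline{G_\gamma(u)T_{\beta,\gamma}(u)}$ genuinely lie in $L_\mu^\lambda$; but this compatibility is already built into Proposition \ref{Prop: higher F relations} and Corollary \ref{Cor: right ideal Borel}.
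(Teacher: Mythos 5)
Your proof is correct and follows exactly the route the paper takes: all fundamental coweights are minuscule in type A, the preceding Proposition supplies Conjecture \ref{Conj: L is A-invariant}, and Theorem \ref{Thm: L = YS} then gives the equality, with your identity $L_\mu^\lambda = Y\cdot I_\mu^\lambda = Y\{A_i^{(s)}\}Y_\mu$ being the only (easy) bookkeeping the paper leaves implicit.
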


Applying Proposition \ref{prop:Generators of B}, we can now give an explicit description of the $B$-algebra from Section \ref{Verma modules for truncated shifted Yangians}.
\begin{Corollary}\label{B algebra}
The $B$-algebra of $ Y^\lambda_\mu(\bR) $ is the quotient of the ring $\Cartan$ by the ideal generated by the coefficients of
$\Pi (\underline{G_\gamma(u) T_{\gamma,\gamma}(u)}) $, running over all $i\in I$  and weight vectors $\gamma\in V(\varpi_i)$.
\end{Corollary}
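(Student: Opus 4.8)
The plan is to assemble three results established in the preceding subsections. First I would invoke Lemma \ref{lem:Compute with L}, which identifies the $B$-algebra of $Y^\lambda_\mu(\bR)$ with $\Cartan/\Pi(L^\lambda_\mu)$, where $L^\lambda_\mu = Y\cdot I^\lambda_\mu$ is the left $Y$-ideal generated by the defining relations of $Y^\lambda_\mu(\bR)$. This reduces the corollary to computing the image $\Pi(L^\lambda_\mu)$ inside the Cartan subalgebra $\Cartan$.

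Second, I would note that since $\g = \mathfrak{sl}_n$, every fundamental coweight $\varpi_i$ is minuscule, so the recurring hypothesis ``$\varpi_i$ minuscule'' is automatically in force. In particular Corollary \ref{CorollaryL=YS} applies and yields $L^\lambda_\mu = Y S^\lambda_\mu$, where $S^\lambda_\mu$ consists of the coefficients of the series $\underline{G_\gamma(u) T_{\beta,\gamma}(u)}$ for all $i\in I$ and all weight vectors $\beta,\gamma\in V(\varpi_i)$. Hence $\Pi(L^\lambda_\mu) = \Pi(Y S^\lambda_\mu)$.

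Third, I would apply Proposition \ref{prop:Generators of B}, according to which $\Pi(Y S^\lambda_\mu)$ is generated as an ideal of $\Cartan$ by the coefficients of the ``principal'' series $\Pi(\underline{G_\gamma(u) T_{\gamma,\gamma}(u)})$, with $\gamma$ ranging over the weight vectors of $V(\varpi_i)$ for all minuscule $i\in I$ --- and again in type A the adjective ``minuscule'' is vacuous. Chaining the three identities $B \cong \Cartan/\Pi(L^\lambda_\mu)$, $\Pi(L^\lambda_\mu) = \Pi(Y S^\lambda_\mu)$, and the statement of Proposition \ref{prop:Generators of B} gives exactly the asserted presentation.

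Thus the corollary itself is a short deduction with no new obstacle; the genuine difficulties all sit in its inputs --- the type-A verification of Conjecture \ref{Conj: L is A-invariant} via Lemma \ref{Lemma: Molev formula} and Corollary \ref{Cor: type A quantum determinants}, and the PBW ``FHE'' analysis together with the lifted Poisson-bracket reduction behind Proposition \ref{prop:Generators of B} --- and these have already been dispatched above. If I had to name the step most likely to hide a subtlety, it would be keeping track of the $G_\gamma(u)$-divisibility bookkeeping of Lemma \ref{Lemma: G poly divides} when passing from $T_{\beta,\gamma}$ to the principal minors $T_{\gamma,\gamma}$, but this is routine. As a consistency check, transporting the description through Lemma \ref{Lemma: $B$-algebra Y(gln)} recovers the expected presentation of the $B$-algebra in terms of the diagonal RTT generators $t_{i,i}(u)$.
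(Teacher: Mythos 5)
Your proposal is correct and follows exactly the paper's (implicit) argument: combine Lemma \ref{lem:Compute with L}, Corollary \ref{CorollaryL=YS} (valid since every $\varpi_i$ is minuscule in type A, so $L^\lambda_\mu = YS^\lambda_\mu$), and Proposition \ref{prop:Generators of B}. Nothing further is needed.
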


Next we will describe precisely the images $\Pi(\underline{G_\gamma(u) T_{\gamma,\gamma}(u)})$, also providing a connection to monomial crystals.
\begin{Proposition} \label{multiset conditions}
Let $i\in I$ and suppose $\gamma = f_{i_d}\cdots f_{i_1} v_{\varpi_i} \in V(\varpi_i)$ is non-zero. Consider the corresponding element
$$
q_\gamma:=\tilde{f}_{i_d}\cdots \tilde{f}_{i_1}(y_{i,0}) = y_{i,0} z_{\mathbf{U}}^{-1}
$$
of the monomial crystal $ \B(\varpi_i, 0)$.  Then
$$\Pi\left( T_{\gamma,\gamma}(u) \right) = A_i(u) \prod_{(j,k-2) \in \mathbf{U}} H_j(u+\tfrac{1}{2}k )$$
If $q_\gamma = \prod_{j,k} y_{j,k}^{b_{j,k}}$, then
$$ \Pi(G_\gamma(u) T_{\gamma,\gamma}(u)) = \Big( \prod_{j,k} \prod_{c\in R_j} (u-\tfrac{1}{2}c+\tfrac{1}{2}k)^{U_j(k-2) }\Big)   \prod_{j,k} \left((u+\tfrac{1}{2}k)^{m_j} A_j(u+\tfrac{1}{2}k) \right)^{b_{j,k}} $$
\end{Proposition}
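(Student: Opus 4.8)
The plan is to establish the two displayed identities in turn: the formula for $\Pi(T_{\gamma,\gamma}(u))$ first, and then deduce the formula for $\Pi(G_\gamma(u)T_{\gamma,\gamma}(u))$ from it by substituting the defining relation (\ref{eq: def of A}) for the $A_j(u)$ and bookkeeping the polynomial factor $G_\gamma(u)$. For the first identity I would reduce to $\gamma=v_{a_1}\wedge\cdots\wedge v_{a_i}\in\bigwedge^i\C^n=V(\varpi_i)$ with $1\le a_1<\cdots<a_i\le n$: since $\varpi_i$ is minuscule, $f_{i_d}\cdots f_{i_1}v_{\varpi_i}$ is exactly such a basis vector up to a sign, which affects neither $T_{\gamma,\gamma}(u)$ nor $G_\gamma(u)$ nor $q_\gamma$. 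Then I translate into the $\mathfrak{gl}_n$ picture. By Corollary~\ref{Cor: type A quantum determinants}, $\phi(T_{\gamma,\gamma}(u))=s_i(u)Q_{\gamma,\gamma}(u+\tfrac{i+1}{2})$, and by Lemma~\ref{Lemma: $B$-algebra Y(gln)} the isomorphism $\phi$ identifies $\Cartan=B(Y)$ with $\C[t_{r,r}^{(s)}]/\langle\cdots\rangle$, sending the $B$-algebra image of $Q_{\mathbf a,\mathbf a}(v)$ to $t_{a_1,a_1}(v-i+1)\cdots t_{a_i,a_i}(v)$ and $H_j(u)$ to $t_{j+1,j+1}(u+\tfrac{j+1}{2})/t_{j,j}(u+\tfrac{j+1}{2})$. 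Applying this to $T_{\gamma,\gamma}(u)$ and to $A_i(u)=T_{v_{\varpi_i},v_{\varpi_i}}(u)$ (using $\Pi(A_i(u))=A_i(u)$), and dividing — the scalar series $s_i(u)$ cancels — gives, as an identity of invertible series over $\Cartan$,
\[
\Pi(T_{\gamma,\gamma}(u)) = A_i(u)\prod_{r=1}^i\frac{t_{a_r,a_r}(w_r)}{t_{r,r}(w_r)} = A_i(u)\prod_{r=1}^i\ \prod_{l=r}^{a_r-1}H_l\!\left(u+\tfrac{2r-i-l}{2}\right),\qquad w_r:=u+\tfrac{1-i}{2}+r,
\]
the telescoping $t_{a_r,a_r}/t_{r,r}=\prod_{l=r}^{a_r-1}t_{l+1,l+1}/t_{l,l}$ being legitimate because $a_r\ge r$.

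It then remains to identify the right-hand side with $A_i(u)\prod_{(j,k-2)\in\mathbf{U}}H_j(u+\tfrac12 k)$, i.e.\ to verify the purely combinatorial statement that $q_\gamma=y_{i,0}\prod_{r=1}^i\prod_{l=r}^{a_r-1}z_{l,\,2r-i-l-2}^{-1}$ in $\B(\varpi_i,0)$, equivalently $\mathbf{U}=\{(l,2r-i-l-2):1\le r\le i,\ r\le l\le a_r-1\}$. I would prove this either by specializing Proposition~\ref{pr:explicit} to type $A$, or by an induction on the length of a Kashiwara path run in parallel with the computation above: applying $f_j$ to $\gamma$ is possible precisely when $j$ occupies some slot $r$ of $(a_1<\cdots<a_i)$ and $j+1$ occupies none, it replaces $a_r$ by $a_r+1$, and on the monomial-crystal side one checks directly that $\tilde f_j(q_\gamma)=z_{j,\,2r-i-j-2}^{-1}q_\gamma$, matching the new factor $t_{j+1,j+1}(w_r)/t_{j,j}(w_r)=H_j(u+\tfrac{2r-i-j}{2})$ produced on the Yangian side. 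The base case $\gamma=v_{\varpi_i}$, $\mathbf{U}=\varnothing$, is trivial, and the case $\gamma=f_iv_{\varpi_i}$ reproduces $T_{f_iv_{\varpi_i},f_iv_{\varpi_i}}(u)=D_i(u)$ and $\Pi(D_i(u))=H_i(u)A_i(u)$ as a consistency check. I expect this shift-bookkeeping — keeping the half-integer shifts coming from $Q_{\beta,\gamma}(u+\tfrac{i+1}{2})$, from Lemma~\ref{Lemma: $B$-algebra Y(gln)}, and from the dictionary $z_{j,k-2}\leftrightarrow H_j(u+\tfrac12 k)$ all mutually consistent — to be the main obstacle.

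For the second identity, start from the first and substitute (\ref{eq: def of A}), $H_j(v)=r_j(v)\,\dfrac{\prod_{j'\con j}A_{j'}(v-\tfrac12)}{A_j(v)A_j(v-1)}$. The observation that drives the computation is that the ``$A$-part'' of $H_j(u+\tfrac12 k)$ is precisely the image of $z_{j,k-2}^{-1}$ under the multiplicative map $y_{j,k}\mapsto A_j(u+\tfrac12 k)$ from Laurent monomials in the $y_{j,k}$ to invertible series over $\Cartan$; applying this map to the monomial identity $q_\gamma=y_{i,0}z_{\mathbf{U}}^{-1}=\prod_{j,k}y_{j,k}^{b_{j,k}}$ gives
\[
A_i(u)\prod_{(j,k-2)\in\mathbf{U}}\frac{\prod_{j'\con j}A_{j'}(u+\tfrac{k-1}{2})}{A_j(u+\tfrac12 k)\,A_j(u+\tfrac{k-2}{2})}=\prod_{j,k}A_j(u+\tfrac12 k)^{b_{j,k}},
\]
so that $\Pi(G_\gamma(u)T_{\gamma,\gamma}(u))=G_\gamma(u)\big(\prod_{(j,k-2)\in\mathbf{U}}r_j(u+\tfrac12 k)\big)\prod_{j,k}A_j(u+\tfrac12 k)^{b_{j,k}}$. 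It remains to check that the polynomial prefactor equals $\big(\prod_{j,k}\prod_{c\in R_j}(u-\tfrac12 c+\tfrac12 k)^{U_j(k-2)}\big)\prod_{j,k}(u+\tfrac12 k)^{m_j b_{j,k}}$. Expanding $r_j$ and using $\lambda-\mu=\sum_l m_l\alpha_l^\vee$ (so $\lambda_j-\mu_j=2m_j-\sum_{j'\con j}m_{j'}$) shows $(u+\tfrac12 k)^{\mu_j}r_j(u+\tfrac12 k)=(u+\tfrac12 k)^{-m_j}\,\prod_{c\in R_j}(u+\tfrac12 k-\tfrac12 c)\cdot\dfrac{\prod_{j'\con j}(u+\tfrac{k-1}{2})^{m_{j'}}}{(u+\tfrac{k-2}{2})^{m_j}}$; combined with $G_\gamma(u)=u^{m_i}\prod_{(j,k-2)\in\mathbf{U}}(u+\tfrac12 k)^{\mu_j}$ (from Definitions~\ref{def: G poly} and~\ref{def: G poly 2}), the $\prod_{c\in R_j}(u+\tfrac12 k-\tfrac12 c)$ factors assemble into the asserted $R$-part, while the exponent of each linear factor $(u+\tfrac12\ell)$ in what remains is $m_i\delta_{\ell,0}-\sum_j m_j U_j(\ell)-\sum_j m_j U_j(\ell-2)+\sum_j U_j(\ell-1)\sum_{j'\con j}m_{j'}$, which equals $\sum_j m_j b_{j,\ell}$ by the incidence identity $b_{j,k}=\delta_{(j,k),(i,0)}-U_j(k)-U_j(k-2)+\sum_{j'\con j}U_{j'}(k-1)$ read off from $z_{\mathbf{U}}=\prod_{(l,m)\in\mathbf{U}}z_{l,m}$. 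This yields the claimed formula; here the only nontrivial input beyond routine algebra is again the matching of spectral shifts, which was settled in the first part.
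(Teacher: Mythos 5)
Your proposal is correct and follows essentially the same route as the paper: both pass to $Y(\mathfrak{gl}_n)$ via $\phi$ and Corollary \ref{Cor: type A quantum determinants}, compute $\Pi(T_{\gamma,\gamma}(u))$ in the $B$-algebra of $Y(\mathfrak{gl}_n)/\chi_0$ as a telescoping product of ratios $t_{a_r,a_r}/t_{r,r}$ matched against the crystal data (your closed-form description of $\mathbf U$ is exactly what the paper's Lemma \ref{lemma: embedding of sl_n crystals} supplies, and the paper organizes the matching as an induction on $\operatorname{ht}(\varpi_i-\operatorname{wt}\gamma)$), and then deduce the second identity by substituting (\ref{eq: def of A}) into each $H_j(u+\tfrac12 k)$ and observing that the $A$-factors reassemble into $q_\gamma$ while the $(u+\tfrac12 k)^{\mu_j}$ factors account for $u^{-m_i}G_\gamma(u)$. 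Your more explicit exponent bookkeeping in the last step is just a spelled-out version of the paper's observation that the formula for $z_{j,k-2}^{-1}$ links the two expressions for $q_\gamma$.
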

\begin{proof}
The proof of the first part will proceed by induction on $\operatorname{ht}(\varpi_i - \operatorname{wt} \gamma)$, by verifying that the images of both sides are equal under the isomorphism of Lemma \ref{Lemma: $B$-algebra Y(gln)}.

The case $\varpi_i = \operatorname{wt} \gamma$ is straightforward: in this case $\gamma = v_{\varpi_i}$, so $q_\gamma = y_{i,0}$, $G_\gamma(u) = u^{m_i}$, and $T_{\gamma,\gamma}(u) = A_i(u)$.

Suppose now that $\gamma $ corresponds to the the vector $v_{a_1}\wedge \cdots \wedge v_{a_i} $ under the isomorphism $V(\varpi_i) \cong \bigwedge^i \C^n$. Then $\phi(T_{\gamma,\gamma}(u)) = s_i(u) Q_{\mathbf{a}, \mathbf{a}}(u+\tfrac{i+1}{2})$ by Corollary \ref{Cor: type A quantum determinants}, and the image of $T_{\gamma,\gamma}(u)$ in the B -algebra of $Y(gl_n)/\chi_0$ is
$$ s_i(u) t_{a_1,a_1}(u+\tfrac{i+1}{2}-i+1) \cdots t_{a_r,a_r}(u+\tfrac{i+1}{2}-i+r)\cdots t_{a_i,a_i}(u+\tfrac{i+1}{2})$$
Recall that the image of $H_j(u)$ is
$$\frac{t_{j+1,j+1}(u+\tfrac{j+1}{2})}{t_{j,j}(u+\tfrac{j+1}{2})} $$

Now assume $f_j(\gamma) \neq 0$, so in particular $j = a_r$ for some $r$.  To complete the induction, it suffices to show that the Kashiwara operator $\tilde{f}_j$ acts on $\tilde{f}_{i_d}\cdots \tilde{f}_{i_1}(y_{i,0})\in \B(\varpi_i, 0)$ as multiplication by $z_{j,k-2}^{-1}$ where $k = -i -a_r +2r$.  This follows from Lemma \ref{lemma: embedding of sl_n crystals} below.

The prove the second part, use the following formula which combines (\ref{eq: def of A}) and (\ref{eq:rfromc}):
$$
H_j(u) = \frac{\prod_{c\in R_j} (u- \tfrac{1}{2}c)}{u^{\mu_j}} \frac{\prod_{\ell \con j} (u-\tfrac{1}{2})^{m_\ell} A_\ell(u-\tfrac{1}{2})}{u^{m_j} A_j(u) (u-1)^{m_j} A_j(u-1) }
$$
In the expression for $\Pi(T_{\gamma,\gamma}(u))$ from the first part, replace all $H_j(u+\tfrac{1}{2}k)$ which appear by applying this formula.  The corresponding product of factors $(u+\tfrac{1}{2}k)^{\mu_j}$ in the denominator is exactly $u^{-m_i} G_\gamma(u)$.  The product of $A$'s mimics the definition
$$z_{j,k-2}^{-1} = \frac{\prod_{\ell\con j} y_{\ell,k-1}}{y_{j,k} y_{j,k-2}} $$
Since this formula for $z_{j,k-2}^{-1}$ links the two expressions given for $q_\gamma$, the claim follows.
\end{proof}

\begin{Lemma} \label{lemma: embedding of sl_n crystals}
There is an containment of $\mathfrak{sl}_n$ monomial crystals
$$ \B(\varpi_i, 0) \subset \B(\varpi_1, -i+1) \B(\varpi_1, -i+3)\cdots \B(\varpi_1, i-1) $$
The element of $\B(\varpi_i,0)$ corresponding to the weight space $\C \cdot v_{a_1}\wedge \cdots \wedge v_{a_i}\subset \bigwedge^i \C^n$, where $a_1 < \ldots < a_i$, can be expressed as
$$ (\tilde{f}_{a_1-1}\cdots \tilde{f}_1)(y_{1,-i+1}) \cdot (\tilde{f}_{a_2-1}\cdots \tilde{f}_1)(y_{1,-i+3})\cdots (\tilde{f}_{a_i-1}\cdots \tilde{f}_1)(y_{1,i-1}) =$$
$$ = \frac{y_{a_1, -i-a_1+2}}{y_{a_1-1,-i-a_1+1}} \cdots \frac{y_{a_r, -i - a_r +2r}}{y_{a_r-1,-i-a_r+2r-1}} \cdots \frac{y_{a_i,i-a_i}}{y_{a_i-1,i-a_i-1}} $$
\end{Lemma}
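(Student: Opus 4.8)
The plan is to prove the explicit monomial formula first and read off the containment from it. Put $c_r := 2r-i-1$, so that $\{c_1,\dots,c_i\}=\{-i+1,-i+3,\dots,i-1\}$, and for an $i$-element subset $A=\{a_1<\dots<a_i\}\subseteq\{1,\dots,n\}$ set $m(A):=\prod_{r=1}^i f(c_r,a_r)$, where $f(c,a)=y_{a,c-a+1}/y_{a-1,c-a}$ (with $y_0=y_n=1$) is the monomial of $\B(\varpi_1,c)$ from the proof of Proposition \ref{Prop:crys-isom}. First I would check, by an easy induction on $a$ using the Kashiwara operators of Section \ref{subsection:Definitions}, that $(\tilde{f}_{a-1}\cdots\tilde{f}_1)(y_{1,c})=f(c,a)$: at each stage the only variable $y_{a,\bullet}$ occurring in $f(c,a)$ is $y_{a,c-a+1}$, with exponent $+1$, so $\tilde{f}_a$ acts by multiplication by $z_{a,c-a-1}^{-1}$, and $z_{a,c-a-1}^{-1}f(c,a)=f(c,a+1)$ directly from the definition of $z_{a,\bullet}$. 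This is exactly the first equality in the Lemma, and it identifies the displayed product with $m(A)$ for $A=\{a_1<\dots<a_i\}$.

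Next, the product over the ``top'' subset telescopes: $m(\{1,\dots,i\})=\prod_{r=1}^i y_{r,r-i}/y_{r-1,r-i-1}=y_{i,0}$, the highest-weight monomial generating $\B(\varpi_i,0)$. I then claim that for every $\gamma\in W\varpi_i$, with $A(\gamma)$ the corresponding $i$-subset, $m(A(\gamma))$ equals the unique weight-$\gamma$ monomial $y_{\gamma,0}$ of $\B(\varpi_i,0)$ — which is enough since $\varpi_i$ is minuscule. I would prove this by induction on $\operatorname{ht}(\varpi_i-\gamma)$. An elementary fact about $i$-subsets is that every $\gamma\neq\varpi_i$ receives a $\tilde{f}_j$-arrow from some $\gamma'$ with $\operatorname{ht}(\varpi_i-\gamma')<\operatorname{ht}(\varpi_i-\gamma)$ and $\langle\gamma',\alpha_j\rangle=1$, i.e.\ $A(\gamma)=(A(\gamma')\setminus\{j\})\cup\{j+1\}$ with $j\in A(\gamma')$ and $j+1\notin A(\gamma')$. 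By induction $m(A(\gamma'))=y_{\gamma',0}$; since $j+1\notin A(\gamma')$, the only $y_{j,\bullet}$ appearing in $m(A(\gamma'))$ is the numerator $y_{j,c_r-j+1}$ of the factor with $a_r=j$, occurring with exponent $+1$, so $\varphi_j(m(A(\gamma')))=1$ and $\tilde{f}_j$ acts on it as multiplication by $z_{j,c_r-j-1}^{-1}$. A two-line computation from the definition of $z_{j,\bullet}$ shows this replaces the factor $f(c_r,j)$ by $f(c_r,j+1)$, i.e.\ $\tilde{f}_j\, m(A(\gamma'))=m(A(\gamma))$. On the other hand $\tilde{f}_j\, y_{\gamma',0}=y_{\gamma,0}$ inside $\B(\varpi_i,0)$, so $m(A(\gamma))=y_{\gamma,0}$ and the induction closes. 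Hence $\B(\varpi_i,0)=\{m(A):A\}$, which is the explicit formula; and since each $m(A)=\prod_r f(c_r,a_r)$ is a product of one monomial from each $\B(\varpi_1,c_r)$, every element of $\B(\varpi_i,0)$ lies in $\B(\varpi_1,-i+1)\B(\varpi_1,-i+3)\cdots\B(\varpi_1,i-1)$, giving the asserted containment.

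The step I expect to be the main obstacle is making the two Kashiwara-operator computations airtight: checking that $\tilde{f}_j$, whose definition involves selecting the largest index maximizing $\varphi_j^k$, acts on the particular monomials $m(A)$ simply as multiplication by the expected $z_{j,\bullet}^{-1}$, and that this multiplication updates $m(A)$ to $m((A\setminus\{j\})\cup\{j+1\})$. The hypothesis $\langle\gamma',\alpha_j\rangle=1$ — equivalently $j\in A(\gamma')$ and $j+1\notin A(\gamma')$ — is precisely what makes both clean: it forces the $y_{j,\bullet}$-part of $m(A(\gamma'))$ to be a single variable, so there is no ambiguity in the maximizing index, and the update is then a local cancellation from the formula for $z_{j,\bullet}$. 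Everything else is routine bookkeeping. As an alternative to the induction one could instead specialize Proposition \ref{pr:explicit} at $c=0$, taking $w\in S_n$ to be the minimal-length representative of the coset corresponding to $A(\gamma)$ and computing each $h(\pm w^{-1}\alpha_j)$ explicitly; this reproves the monomial formula directly, though less cleanly.
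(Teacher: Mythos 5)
Your proof is correct and is essentially the paper's argument: the paper's proof consists of exactly your telescoping identity $y_{i,0}=\prod_r f(c_r,r)$ together with the remark that the explicit formula "follows by a straightforward induction," and your induction on $\operatorname{ht}(\varpi_i-\gamma)$ (with the local $z_{j,\bullet}^{-1}$ cancellation turning $f(c_r,j)$ into $f(c_r,j+1)$) is precisely that omitted induction. The only cosmetic difference is the order: the paper deduces the containment from generation of $\B(\varpi_i,0)$ by the $\tilde f_j$ (implicitly using that the product crystal is closed under them), while you derive it directly from the explicit formula.
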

\begin{proof}
There is an equality of monomials
\begin{align*}
y_{i,0} &= y_{1,-i+1} \frac{y_{2,-i+2}}{y_{1,-i+1}}\frac{y_{3,-i+3}}{y_{2,-i+2}}\cdots \frac{y_{i,0}}{y_{i-1,-1}} = \\
 &= y_{1,-i+1}\cdot \tilde{f}_1(y_{1,-i+3}) \cdot (\tilde{f}_2 \tilde{f}_1)(y_{1,-i+5}) \cdots (\tilde{f}_{i-1}\cdots \tilde{f}_1)(y_{1,i-1}),
\end{align*}
Since $y_{i,0}$ generates $\B(\varpi_i,0)$ under the $\tilde{f}_j$, this proves that there is a containment of crystals.  The second claim follows by a straightforward induction.
\end{proof}

\subsection{Example: Lifted minors for $\g = \mathfrak{sl}_3$}
\label{Section: lifted minor examples}
Let us fix dominant coweights $\lambda \geq \mu$ for $\mathfrak{sl}_3$.  In general, to effectively compute the elements $T_{\beta,\gamma}(u)$ it is helpful to use the following relations:
\begin{Lemma} For any $i,j \in I$,
\begin{enumerate}
\item $[B_i^{(1)}, H_j(u)] = - a_{ij} H_j(u) E_i(u +\tfrac{1}{2}a_{ij}) $ and $[H_j(u), C_i^{(1)}] = - a_{ij} F_i(u+\tfrac{1}{2} a_{ij}) H_j(u)$,
\item $[B_i^{(1)}, A_i(u)] =  B_i(u) = A_i(u) E_i(u) $ and $[A_j(u), C_i^{(1)}] = C_i(u) = F_i(u) A_i(u)$
\item $[E_i(u), C_i^{(1)}] = [B_i^{(1)}, F_i(u)]= H_i(u) - 1$
\end{enumerate}
\end{Lemma}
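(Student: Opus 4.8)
The plan is to treat the three parts separately, in roughly increasing order of difficulty; parts (2) and (3) are essentially immediate, and only part (1) requires a genuine (if short) generating-function computation.

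For part (3), recall $B_i^{(1)} = E_i^{(1)}$ and $C_i^{(1)} = F_i^{(1)}$. The defining relation $[E_i^{(r)}, F_j^{(s)}] = \delta_{ij} H_i^{(r+s-1)}$ of Definition~\ref{Definition: Yangian} gives $[E_i^{(r)}, F_i^{(1)}] = H_i^{(r)}$, and summing against $u^{-r}$ over $r \geq 1$ yields $[E_i(u), C_i^{(1)}] = \sum_{r \geq 1} H_i^{(r)} u^{-r} = H_i(u) - 1$; the identity $[B_i^{(1)}, F_i(u)] = H_i(u) - 1$ is obtained the same way from $[E_i^{(1)}, F_i^{(s)}] = H_i^{(s)}$. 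For part (2), note that $B_i^{(1)} = E_i^{(1)}$ is the coefficient of $u^{-1}$ in $B_i(u) = A_i(u) E_i(u)$, so the first identity asserts $[E_i^{(1)}, A_i(u)] = A_i(u) E_i(u)$; this is exactly the coefficient of $v^0$ (equivalently the $v \to \infty$ limit) of the first relation of Corollary~\ref{Cor: GKLO relations}, because the $v^0$-part of $(u-v)[A_i(u), E_i(v)]$ is $-[A_i(u), E_i^{(1)}]$ while that of $A_i(u)(E_i(u) - E_i(v))$ is $A_i(u) E_i(u) = B_i(u)$. The identity $[A_i(u), C_i^{(1)}] = F_i(u) A_i(u) = C_i(u)$ follows identically from the second relation of Corollary~\ref{Cor: GKLO relations}.

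For part (1), the heart of the matter is to derive, from the two families of defining relations $[H_i^{(1)}, E_j^{(s)}] = a_{ij} E_j^{(s)}$ and $[H_i^{(r+1)}, E_j^{(s)}] - [H_i^{(r)}, E_j^{(s+1)}] = \tfrac12 a_{ij}(H_i^{(r)} E_j^{(s)} + E_j^{(s)} H_i^{(r)})$, the two-variable identity
$$ \bigl(u - v - \tfrac12 a_{ij}\bigr) H_j(u) E_i(v) - \bigl(u - v + \tfrac12 a_{ij}\bigr) E_i(v) H_j(u) = [E_i^{(1)}, H_j(u)]. $$
This drops out of multiplying $[H_j(u), E_i(v)] = \sum_{r, s \geq 1} [H_j^{(r)}, E_i^{(s)}] u^{-r} v^{-s}$ by $(u - v)$ and re-indexing: the interior terms ($r, s \geq 1$) assemble, via the second relation, into $\tfrac12 a_{ij}(H_j(u) E_i(v) + E_i(v) H_j(u)) - a_{ij} E_i(v)$, the $r = 0$ row contributes $a_{ij} E_i(v)$ by the first relation (cancelling the last term), and the $s = 0$ column contributes $-[H_j(u), E_i^{(1)}]$; collecting these and using $[H_j(u), E_i(v)] = H_j(u)E_i(v) - E_i(v)H_j(u)$ gives the displayed identity. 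Now specialize $v = u + \tfrac12 a_{ij}$: the first coefficient becomes $-a_{ij}$ and the second becomes $0$, so $[B_i^{(1)}, H_j(u)] = [E_i^{(1)}, H_j(u)] = -a_{ij} H_j(u) E_i(u + \tfrac12 a_{ij})$, which is the first claim. The second claim, $[H_j(u), C_i^{(1)}] = -a_{ij} F_i(u + \tfrac12 a_{ij}) H_j(u)$, follows either by the entirely parallel computation using the $H$--$F$ relations in place of the $H$--$E$ relations, or by applying the anti-automorphism of $Y$ that fixes every $H_i^{(r)}$ (hence every $A_i^{(s)}$ and every series $H_i(u)$) and exchanges $E_i^{(r)} \leftrightarrow F_i^{(r)}$, which one checks respects all the relations of Definition~\ref{Definition: Yangian} and sends $B_i^{(1)} \mapsto C_i^{(1)}$ and $H_j(u) E_i(u + \tfrac12 a_{ij}) \mapsto F_i(u + \tfrac12 a_{ij}) H_j(u)$.

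The one place that requires care is the substitution $v = u + \tfrac12 a_{ij}$, since $H_j(u) E_i(v)$ is a formal power series in the two variables $u^{-1}, v^{-1}$. I would make this rigorous by reading the two-variable identity in $Y((u^{-1}))((v^{-1}))$ and observing that substituting the power series $(u + \tfrac12 a_{ij})^{-1} \in u^{-1}\C[[u^{-1}]]$ for $v^{-1}$ defines a continuous algebra homomorphism on the completed subalgebra containing $H_j(u) E_i(v)$ and $E_i(v) H_j(u)$ — the coefficient of $v^{-s}$ in each of these series has $u$-order bounded independently of $s$, so the substituted series converge in $Y((u^{-1}))$. (Alternatively one can first multiply the identity through by a suitable unit series in $v^{-1}$ so that both sides become honest elements of $Y[[u^{-1}, v^{-1}]]$, and only then set $v = u + \tfrac12 a_{ij}$.) With this understood, all remaining steps are routine generating-function bookkeeping.
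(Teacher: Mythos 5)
Your proof is correct. The paper states this lemma without proof (it appears as a computational aid in the $\mathfrak{sl}_3$ example section), so there is no argument to compare against; what you give is the natural one. Parts (2) and (3) are exactly the intended extractions from the defining relation $[E_i^{(r)},F_j^{(s)}]=\delta_{ij}H_i^{(r+s-1)}$ and from Corollary \ref{Cor: GKLO relations}, and your two-variable identity
$$\bigl(u-v-\tfrac12 a_{ij}\bigr)H_j(u)E_i(v)-\bigl(u-v+\tfrac12 a_{ij}\bigr)E_i(v)H_j(u)=[E_i^{(1)},H_j(u)]$$
is the standard repackaging of the $H$--$E$ relations of Definition \ref{Definition: Yangian}; I checked the bookkeeping of the $r=0$ row, the $s=0$ column, and the interior terms, and the specialization $v=u+\tfrac12 a_{ij}$, and all of it is right. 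Your care about the legitimacy of that substitution (coefficientwise finiteness of $(u+c)^{-s}$-expansions) is exactly the right point to flag. One small remark: the statement $[A_j(u),C_i^{(1)}]=C_i(u)$ in part (2) is a typo in the paper for $[A_i(u),C_i^{(1)}]$ (for $j\neq i$ the bracket vanishes by Proposition \ref{Prop: GKLO relations}), and you have implicitly and correctly read it that way.
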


Here are several examples of lifted minors for $\g = \mathfrak{sl}_3$, written in PBW form, calculated inductively using Corollary \ref{unique lifts} and the previous Lemma:
\begin{align*}
T_{v_{\varpi_1}, v_{\varpi_1}}(u) & = A_1(u), \\
T_{f_1 v_{\varpi_1}, v_{\varpi_1}}(u) & = A_1(u) E_1(u), \\
T_{f_1 v_{\varpi_1}, f_1 v_{\varpi_1}}(u) &= H_1(u) A_1(u) + F_1(u) A_1(u) E_1(u), \\
T_{f_2f_1 v_{\varpi_1}, f_1 v_{\varpi_1}}(u) &= H_1(u) A_1(u) E_2(u-\tfrac{1}{2}) + F_1(u) A_1(u) [E_2^{(1)}, E_1(u)], \\
T_{f_2 f_1 v_{\varpi_1}, f_2 f_1 v_{\varpi_1}}(u) &= H_2(u-\tfrac{1}{2}) H_1(u) A_1(u) + F_2(u-\tfrac{1}{2}) H_1(u) A_1(u) E_2(u-\tfrac{1}{2})+ \\ &+ [F_1(u), F_2^{(1)}] A_1(u) [E_2^{(1)}, E_1(u)]
\end{align*}

Observe that the images under $\Pi$ are
\begin{align*}
\Pi(T_{v_{\varpi_1}, v_{\varpi_1}}(u)) &= A_1(u), \\
\Pi( T_{f_1 v_{\varpi_1}, f_1 v_{\varpi_1}}(u) ) &= H_1(u) A_1(u),  \\
\Pi( T_{f_2 f_1 v_{\varpi_1}, f_2 f_1 v_{\varpi_1}}(u)) &= H_2(u-\tfrac{1}{2}) H_1(u) A_1(u),
\end{align*}
in agreement with Proposition \ref{multiset conditions}, as the monomial crystal $\B(\varpi_1, 0)$ is
$$
\xymatrix{
y_{1,0} \ar[rr]^{ z_{1,-2}^{-1}}&& \frac{y_{2,-1}}{y_{1,-2}} \ar[rr]^{z_{2,-3}^{-1}} && \frac{1}{y_{2,-3}}
}
$$
From the crystal we also see that
$$G_{v_{\varpi_1}}(u) = u^{m_1}, \ \ G_{f_1 v_{\varpi_1}}(u) = u^{\mu_1 + m_1}, \ \ G_{f_2 f_1 v_{\varpi_1}}(u) = (u-\tfrac{1}{2})^{\mu_2} u^{\mu_1 + m_1} $$
By symmetry, analogous formulas hold for lifted minors for $V(\varpi_2)$.

In particular, the B-algebra for $Y_\mu^\lambda(\bR)$ is the quotient of $\C[H_i^{(s)}: i = 1,2, \ s\geq 1]$ by the principal parts of the series
\begin{align*}
u^{m_i} A_i(u) &, \\  u^{\mu_i+m_i} H_i(u) A_i(u) &= \prod_{c\in R_i}(u-\tfrac{1}{2} c)\frac{ (u-\tfrac{1}{2})^{m_j} A_j(u-\tfrac{1}{2})}{(u-1)^{m_i} A_i(u-1)}, \\ (u-\tfrac{1}{2})^{\mu_j} u^{\mu_i+m_i} H_j(u-\tfrac{1}{2}) H_i(u) A_i(u) &= \prod_{c_i\in R_i}(u-\tfrac{1}{2} c_i) \prod_{c_j \in R_j}(u-\tfrac{1}{2}c_j- \tfrac{1}{2}) \frac{1}{(u-\tfrac{3}{2})^{m_j} A_j(u-\tfrac{3}{2})}
\end{align*}
from both labellings $\{i,j\} = \{1,2\}$.

\section{Description of the product monomial crystal} \label{se:ProofConjecture}

In this section we give a combinatorial characterization of the product monomial crystal, using fundamental monomial crystals.  We then combine this characterization with the results of the previous section to prove the main conjecture in type A.

\subsection{Monomial data and regularity}

Recall the monomial crystal $\B$ from Section \ref{subsection:Definitions}.  Throughout this section, fix an integral collection of parameters $ \bR $.

\begin{Definition}
A $\bR$--{\em monomial datum} is an element $p\in \B$ of the form $p = y_\bR z_\bS^{-1}$, where $\bS = (S_i)_{i\in I}$ is an integral collections of multisets (in the sense of Section \ref{subsection:Definitions}).
\end{Definition}

We will now describe the product monomial crystal combinatorially, using conditions indexed by elements of fundamental monomial crystals $\B(\varpi_i, n) $, where $ i $ and $ n $ have opposite parity.

We fix some notation: we reserve the letter $p$ to denote monomial data
$$p = y_\bR z_\bS^{-1} = \prod_{i,k} y_{i,k}^{a_{i,k}}\in \B,$$
while we will reserve the letter $q$ to denote an element of a fundamental crystal with opposite parity condition
$$ q = y_{i,n} z_{\bU}^{-1} = \prod_{j,k} y_{j,k}^{b_{j,k}} \in \B(\varpi_i, n) $$
For a multiset $S$, let $S(k)$ denote the multiplicity of the element $k$ in $S$.
\begin{Definition}
For $p, q$ as above we set
$$ E_q(p) := \sum_{j,k} U_j(k) R_j(k+1) + \sum_{j,k} b_{j,k} S_j(k-1) $$
\end{Definition}
\begin{Definition}
\label{Def:reg}
We call a $\bR$--monomial data $p$ {\em regular} if $E_q(p)\geq 0$ for all $q\in \B(\varpi_i, n)$, where $i\in I_{\overline{n+1}}$.
\end{Definition}

\subsection{Diagrams and partitions}
\label{Sec:diagrams}

In this section we will give an alternate diagrammatic formulation of monomial data and regularity, for the case that $\g$ is of type A.

Fix a monomial datum $p = y_\bR z_\bS^{-1}$, and consider a square grid whose vertices are at the points $(i,k) \in I \times \Z$ such that $ i $ and $k $ have the same parity.  Draw $R_i(k)$ circles around the vertex $(i,k)$, and place the number $S_i(k)$ in the square whose base vertex is $(i,k)$. See Figure \ref{fig:TypeA-monom-data}.

In this notation, elements of the product monomial crystal correspond precisely to tuples of partitions: those data which can be decomposed into a union of partitions with vertices at the circled points, such that the label of each square is equal to the number of partitions containing it. See Figure \ref{fig:TypeA-monom-data-partitions}.  This decomposition need not be unique.

\begin{figure}
\begin{tikzpicture}[scale=0.5]

\draw (0.5,5.5)--(2.5,7.5);
\draw (0.5,3.5)--(4.5,7.5);
\draw (0.5,1.5)--(6.5,7.5);

\draw (1.5,0.5)--(8.5,7.5);
\draw (3.5,0.5)--(8.5,5.5);
\draw (5.5,0.5)--(8.5,3.5);
\draw (7.5,0.5)--(8.5,1.5);

\draw (2.5,0.5)--(0.5,2.5);
\draw (4.5,0.5)--(0.5,4.5);
\draw (6.5,0.5)--(0.5,6.5);
\draw (8.5,0.5)--(1.5,7.5);
\draw (8.5,2.5)--(3.5,7.5);
\draw (8.5,4.5)--(5.5,7.5);
\draw (8.5,6.5)--(7.5,7.5);

\draw (3,6) circle(0.2cm);
\draw (3,6) circle(0.3cm);
\draw (3,4) circle(0.2cm);
\draw (5,6) circle(0.2cm);
\draw (7,6) circle(0.2cm);
\draw (6,5) circle(0.2cm);
\draw (6,3) circle(0.2cm);

\draw node at (2,4) {1};
\draw node at (3,3) {2};
\draw node at (4,2) {4};

\draw node at (3,5) {2};
\draw node at (4,4) {1};
\draw node at (5,3) {3};

\draw node at (5,5) {1};
\draw node at (6,4) {2};

\draw node at (7,5) {1};

\draw node at (8,4) {1};
\draw node at (7,3) {2};

\draw node at (1,0) {\tiny 1};
\draw node at (2,0) {\tiny 2};
\draw node at (3,0) {\tiny 3};
\draw node at (4,0) {\tiny 4};
\draw node at (5,0) {\tiny 5};
\draw node at (6,0) {\tiny 6};
\draw node at (7,0) {\tiny 7};
\draw node at (8,0) {\tiny 8};

\draw node at (0,1) {\tiny 0};
\draw node at (0,2) {\tiny 1};
\draw node at (0,3) {\tiny 2};
\draw node at (0,4) {\tiny 3};
\draw node at (0,5) {\tiny 4};
\draw node at (0,6) {\tiny 5};
\draw node at (0,7) {\tiny 6};

\end{tikzpicture}

\caption{\label{fig:TypeA-monom-data}
Example monomial data for $A_8$. Here e.g. $\bR_3 = \{3, 5^2\}$ and $\bS_4 = \{0^4,2\}$.  The labels along the bottom are the nodes of the Dynkin diagram, while the integers $k$ are on the vertical axis.
}

\end{figure}
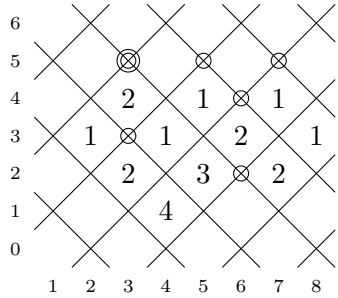

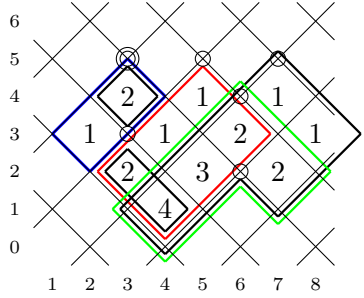
\begin{figure}
\begin{tikzpicture}[scale=0.5]

\draw[line width = 0.03cm] (3,5.8)--(3.8,5);
\draw[line width = 0.03cm] (3,5.8)--(2.2,5);
\draw[line width = 0.03cm] (3,4.2)--(3.8,5);
\draw[line width = 0.03cm] (3,4.2)--(2.2,5);

{\color{blue}
\draw[line width = 0.04cm] (3,6)--(4,5);
\draw[line width = 0.04cm] (4,5)--(2,3);
\draw[line width = 0.04cm] (2,3)--(1,4);
\draw[line width = 0.04cm] (1,4)--(3,6);
}

{\color{red}
\draw[line width = 0.03cm] (5,5.8)--(6.8,4);
\draw[line width = 0.03cm] (6.8,4)--(4,1.2);
\draw[line width = 0.03cm] (4,1.2)--(2.2,3);
\draw[line width = 0.03cm] (2.2,3)--(5,5.8);
}

\draw[line width = 0.03cm] (7,6.2)--(9.2,4);
\draw[line width = 0.03cm] (9.2,4)--(7,1.8);
\draw[line width = 0.03cm] (7,1.8)--(6,2.8);
\draw[line width = 0.03cm] (6,2.8)--(4,0.8);
\draw[line width = 0.03cm] (4,0.8)--(2.8,2);
\draw[line width = 0.03cm] (2.8,2)--(7,6.2);

\draw[line width = 0.03cm] (3,3.6)--(4.6,2);
\draw[line width = 0.03cm] (4.6,2)--(4,1.4);
\draw[line width = 0.03cm] (4,1.4)--(2.4,3);
\draw[line width = 0.03cm] (2.4,3)--(3,3.6);

{\color{green}
\draw[line width = 0.03cm] (6,5.4)--(8.4,3);
\draw[line width = 0.03cm] (8.4,3)--(7,1.6);
\draw[line width = 0.03cm] (7,1.6)--(6,2.6);
\draw[line width = 0.03cm] (6,2.6)--(4,0.6);
\draw[line width = 0.03cm] (4,0.6)--(2.6,2);
\draw[line width = 0.03cm] (2.6,2)--(6,5.4);

}

\draw (0.5,5.5)--(2.5,7.5);
\draw (0.5,3.5)--(4.5,7.5);
\draw (0.5,1.5)--(6.5,7.5);

\draw (1.5,0.5)--(8.5,7.5);
\draw (3.5,0.5)--(8.5,5.5);
\draw (5.5,0.5)--(8.5,3.5);
\draw (7.5,0.5)--(8.5,1.5);

\draw (2.5,0.5)--(0.5,2.5);
\draw (4.5,0.5)--(0.5,4.5);
\draw (6.5,0.5)--(0.5,6.5);
\draw (8.5,0.5)--(1.5,7.5);
\draw (8.5,2.5)--(3.5,7.5);
\draw (8.5,4.5)--(5.5,7.5);
\draw (8.5,6.5)--(7.5,7.5);

\draw (3,6) circle(0.2cm);
\draw (3,6) circle(0.3cm);
\draw (3,4) circle(0.2cm);
\draw (5,6) circle(0.2cm);
\draw (7,6) circle(0.2cm);
\draw (6,5) circle(0.2cm);
\draw (6,3) circle(0.2cm);

\draw node at (2,4) {1};
\draw node at (3,3) {2};
\draw node at (4,2) {4};

\draw node at (3,5) {2};
\draw node at (4,4) {1};
\draw node at (5,3) {3};

\draw node at (5,5) {1};
\draw node at (6,4) {2};

\draw node at (7,5) {1};

\draw node at (8,4) {1};
\draw node at (7,3) {2};

\draw node at (1,0) {\tiny 1};
\draw node at (2,0) {\tiny 2};
\draw node at (3,0) {\tiny 3};
\draw node at (4,0) {\tiny 4};
\draw node at (5,0) {\tiny 5};
\draw node at (6,0) {\tiny 6};
\draw node at (7,0) {\tiny 7};
\draw node at (8,0) {\tiny 8};

\draw node at (0,1) {\tiny 0};
\draw node at (0,2) {\tiny 1};
\draw node at (0,3) {\tiny 2};
\draw node at (0,4) {\tiny 3};
\draw node at (0,5) {\tiny 4};
\draw node at (0,6) {\tiny 5};
\draw node at (0,7) {\tiny 6};

\end{tikzpicture}

\caption{\label{fig:TypeA-monom-data-partitions}
This data decomposes into partitions with vertices at the circled lattice points, allowing multiplicities. This is shown by outlining the relevant partitions, and colors are just visual aids. Saying that this is a decomposition of the monomial data means that the label of each square is the number of partitions that contain it. }

\end{figure}

The number $E_q(p)$ can also be encoded diagrammatically, as in Figure \ref{fig:TypeA-condition}.  Just as elements of the product monomial crystal correspond to tuples of partitions, we can associate a single partition to each element $q \in \B(\varpi_j, n)$, satisfying the opposite parity condition $j \in I_{\overline{n+1}}$.

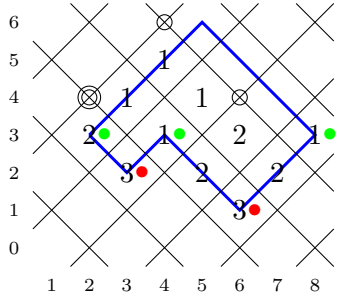
\begin{figure}
\begin{tikzpicture}[scale=0.5]

\draw[line width = 0.04cm, color=blue] (2,4)--(3,3)--(4,4)--(6,2)--(8,4)--(5,7)--(2,4);

\draw (0.5,5.5)--(2.5,7.5);
\draw (0.5,3.5)--(4.5,7.5);
\draw (0.5,1.5)--(6.5,7.5);

\draw (1.5,0.5)--(8.5,7.5);
\draw (3.5,0.5)--(8.5,5.5);
\draw (5.5,0.5)--(8.5,3.5);
\draw (7.5,0.5)--(8.5,1.5);

\draw (2.5,0.5)--(0.5,2.5);
\draw (4.5,0.5)--(0.5,4.5);
\draw (6.5,0.5)--(0.5,6.5);
\draw (8.5,0.5)--(1.5,7.5);
\draw (8.5,2.5)--(3.5,7.5);
\draw (8.5,4.5)--(5.5,7.5);
\draw (8.5,6.5)--(7.5,7.5);

\draw (4,7) circle(0.2cm);
\draw (6,5) circle(0.2cm);
\draw (2,5) circle(0.2cm);
\draw (2,5) circle(0.3cm);

\draw node at (4,6) {1};

\draw node at (3,5) {1};
\draw node at (5,5) {1};

\draw node at (2,4) {2};
\draw node at (4,4) {1};
\draw node at (6,4) {2};
\draw node at (8,4) {1};

\draw node at (3,3) {3};
\draw node at (5,3) {2};
\draw node at (7,3) {2};

\draw node at (6,2) {3};

\draw node at (1,0) {\tiny 1};
\draw node at (2,0) {\tiny 2};
\draw node at (3,0) {\tiny 3};
\draw node at (4,0) {\tiny 4};
\draw node at (5,0) {\tiny 5};
\draw node at (6,0) {\tiny 6};
\draw node at (7,0) {\tiny 7};
\draw node at (8,0) {\tiny 8};

\draw node at (0,1) {\tiny 0};
\draw node at (0,2) {\tiny 1};
\draw node at (0,3) {\tiny 2};
\draw node at (0,4) {\tiny 3};
\draw node at (0,5) {\tiny 4};
\draw node at (0,6) {\tiny 5};
\draw node at (0,7) {\tiny 6};

\draw node at (2.4,4) {{\color{green} $\bullet$}};
\draw node at (4.4,4) {{\color{green} $\bullet$}};
\draw node at (8.4,4) {{\color{green} $\bullet$}};

\draw node at (3.4,3) {{\color{red} $\bullet$}};
\draw node at (6.4,2) {{\color{red} $\bullet$}};

\end{tikzpicture}

\caption{\label{fig:TypeA-condition}
This monomial data is not in the product monomial crystal, since the condition $E_q$ shown in blue fails. That is, if you add up the circles in the partition and the numbers at the inner corners (labeled by green dots), then subtract the numbers at the outer corners (labeled by red dots), you get a negative number: $1 + (2+1+1)-(3+3)=-1.$
\newline The monomial corresponding to the data is $p =y_\bR z_\bS^{-1}$, where $R_2 = \{4^2\}, R_4 = \{6\}, R_6 = \{4\},$ and $S_2 = \{2^2\}, S_3= \{ 1^3, 3\}, S_4 = \{2,4\}, S_5 = \{1^2, 3\}, S_6 = \{0^3, 2^2\}, S_7 = \{1^2\}, S_8 = \{2\}$.
\newline The monomial corresponding to the condition is
$q = y_{5,6} z_\bU^{-1}$,where $\bU$ has $U_3 =\{2\}, U_4 = \{3\}, U_5 = \{2, 4\}, U_6 = \{1,3\}, U_7 = \{2\}$.
}
\end{figure}

\subsection{Product monomial crystal}
\label{subsection: Product monomial crystal}

In this section we will prove the following theorem, which gives our promised characterization of the product monomial crystal.
\begin{Theorem}
\label{Theorem: regular = product monomial crystal}
Consider a $\bR$--monomial datum $p = y_\bR z_\bS^{-1}$.  Then $p$ is regular if and only if it is an element of the product monomial crystal $\B(\lambda,\bR)$, where $\lambda = \sum_i |\bR_i | \varpi_i^\vee$.
\end{Theorem}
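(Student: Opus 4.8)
The plan is to prove the two implications separately; essentially all of the work is in the direction ``regular $\Rightarrow$ element of $\B(\lambda,\bR)$''.

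\emph{Easy direction: every element of $\B(\lambda,\bR)$ is regular.} The key observation is that $p \mapsto E_q(p)$ is additive with respect to products of monomial data: if $p = p'p''$, where $p'$ is a monomial datum for $\bR'$ and $p''$ one for $\bR''$ with $\bR = \bR'\cup\bR''$, then (using the uniqueness in Remark \ref{Remark: uniqueness of monomial factorization}) $\bS = \bS'\cup \bS''$, and since each of the two summands defining $E_q$ is bilinear in the pair $(\bR,\bS)$ of $p$ and the fixed data of $q$, we get $E_q(p) = E_q(p') + E_q(p'')$. By the very definition of $\B(\lambda,\bR)$ as the product of the fundamental crystals $\B(\varpi_i,c)$, it thus suffices to check $E_q(p') \ge 0$ when $p'$ ranges over a single $\B(\varpi_i,c)$ and $q$ over a fundamental crystal $\B(\varpi_{i'},n')$ of the opposite parity. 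Both of these crystals, and the monomials in them, are completely explicit: in type A (and in general in the minuscule case) via Proposition \ref{pr:explicit}. So this reduces to a single combinatorial inequality between two ``interlacing'' fundamental monomials, checked by direct computation; in the diagram language of Section \ref{Sec:diagrams} it is the statement that a partition and a dual partition hung on circled vertices always pair non-negatively.

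\emph{Hard direction: every regular monomial datum lies in $\B(\lambda,\bR)$.} Here I would induct on the number of parameters $N = \sum_i |R_i|$. When $N = 0$ regularity forces $\bS = \emptyset$ and $p$ is the empty monomial. For $N \ge 1$, given a regular $p = y_\bR z_\bS^{-1}$, choose a parameter $c$ maximal among all elements of all $R_i$, occurring at a node $i$ (with some fixed tie-breaking rule among nodes). The crucial step --- the \textbf{peeling lemma} --- is to exhibit a monomial $p_0 \in \B(\varpi_i,c)$ whose $\bS$-data $\bS_0$ is contained in $\bS$ and such that the quotient $p/p_0 = y_{\bR\setminus\{c\}}\, z_{\bS\setminus\bS_0}^{-1}$ is again a genuine monomial datum for $\bR' = \bR\setminus\{c\}$ and is still regular. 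Granting this, $p/p_0 \in \B(\lambda',\bR')$ by induction, and then $p = p_0\cdot(p/p_0) \in \B(\varpi_i,c)\cdot\B(\lambda',\bR') = \B(\lambda,\bR)$ straight from the product definition. To build $p_0$ one should be greedy: take the largest sub-monomial of the appropriate shape sitting ``at $c$'' that still fits inside $p$. Maximality of $c$ guarantees that the variables it uses (which all have second index $\le c$, as in the proof of the well-spaced Proposition) do not interfere with the rest of $p$ so as to break the monomial-datum form, and the regularity inequalities $E_q(p) \ge 0$ for conditions $q$ supported near $c$ are exactly what forces enough of $\bS$ to be available to carry out the peeling. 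One then checks that each inequality $E_q(p/p_0)\ge 0$ follows from $E_q(p)\ge 0$ together with $E_{q'}(p)\ge 0$ for the finitely many $q'$ obtained by ``extending $q$'' across the peeled region --- a bookkeeping computation, transparent in the partition picture of Figures \ref{fig:TypeA-monom-data}--\ref{fig:TypeA-condition}.

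\emph{Main obstacle.} The peeling lemma is the heart of the matter. Its content is an integral LP-duality (equivalently Hall's marriage / max-flow--min-cut) statement: a monomial datum decomposes into a tuple of partitions hung on the prescribed circled vertices precisely when no ``blocking'' configuration occurs, and the functionals $E_q$ are exactly the blocking certificates. The difficulty is two-fold. First, the fundamental layer $p_0$ must be chosen canonically enough that the induction closes --- a naive greedy pick can leave a non-regular remainder, so the use of a maximal $c$ and the tie-breaking rule have to be exploited with care. Second, one must track how the conditions indexed by $q\in\B(\varpi_{i'},n')$ transform when a fundamental layer is removed, in order to deduce regularity of $p/p_0$ from regularity of $p$. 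Once the combinatorics of how fundamental monomials embed inside $p$ is pinned down --- this is where the explicit descriptions of Section \ref{subsection: Collections of multisets and monomials} (and, in type A, the diagram formalism of Section \ref{Sec:diagrams}) do the real work --- both the peeling step and the descent of regularity become finite verifications.
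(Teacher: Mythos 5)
Your proposal has a genuine gap in the hard direction, and it is not a small one. The entire content of ``regular $\Rightarrow$ element of $\B(\lambda,\bR)$'' is packed into your unproved ``peeling lemma'': that for a maximal parameter $c$ one can extract a fundamental layer $p_0\in\B(\varpi_i,c)$ with $\bS_0\subseteq\bS$ such that $p/p_0$ is again a monomial datum and is again regular. You correctly identify this as the heart of the matter, but you then only assert that the greedy choice works, that maximality of $c$ prevents interference, and that the descent of regularity is ``a bookkeeping computation.'' None of these is established, and there is strong evidence they are not routine: the paper's own proof deliberately avoids any such layer-by-layer decomposition, and the authors explicitly state that even the auxiliary closure statements they need (that $\B(\lambda,\bR)$ is closed under multiplication by $z_{i,k}^{\pm1}$ under sign conditions on $a_{i,k}$, $a_{i,k+2}$) have no known combinatorial proof --- they are obtained geometrically, via non-emptiness of strata $X(\bS)$ in graded quiver varieties. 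The paper instead defines an equivalence relation generated by such $z_{i,k}^{\pm1}$-moves, shows both ``regular'' and ``$\in\B(\lambda,\bR)$'' are invariant under it (Lemma \ref{lemma: regular kashiwara} and Corollary \ref{Cor: equiv rel on monomial crystal}), and shows every class contains a representative with all $a_{i,k}\ge 0$, for which both properties are verified directly (Lemma \ref{lemma:positive data}). Your route is genuinely different, but as written it reduces the theorem to an unproved combinatorial statement at least as hard as the theorem itself.

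Two further points. In the easy direction, your additivity observation $E_q(p'p'')=E_q(p')+E_q(p'')$ is correct and does reduce the claim to pairs of fundamental monomials; but the resulting ``direct computation'' is only available where $\B(\varpi_i,c)$ is explicit, i.e.\ in the minuscule case (Proposition \ref{pr:explicit}), whereas the theorem is asserted for all simply-laced $\g$, including non-minuscule fundamental coweights in types $D$ and $E$. Even in the minuscule case the pairing inequality between two fundamental monomials of opposite parity is a nontrivial root-combinatorial verification that you do not carry out. Note also that the paper's proof of the easy direction is not via additivity at all: it moves an arbitrary element of $\B(\lambda,\bR)$ to a representative with all $a_{i,k}\ge 0$ and checks regularity only there.
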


We begin with several lemmas.
\begin{Lemma} \label{lem:pqE}
Consider $p, q$ as in the previous section.  Then
\begin{enumerate}
\item $ E_q (z_{i,k}^\pm p) = E_q(p) \mp b_{i,k+1} $,
\item $ E_{z_{i,k}^\pm q} (p) = E_q(p) \mp a_{i,k+1}$.
\end{enumerate}
whenever the above are well-defined.
\end{Lemma}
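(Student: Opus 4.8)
The statement is purely a bookkeeping computation: both assertions follow by tracking how multiplication by $z_{i,k}^{\pm 1}$ changes the multiset data $\bS$ (for part (1)) or $\bU$ (for part (2)), and then plugging into the definition
$$ E_q(p) = \sum_{j,k} U_j(k) R_j(k+1) + \sum_{j,k} b_{j,k}\, S_j(k-1). $$
First I would record the effect of the $z$'s on the combinatorial data. Writing $p = y_\bR z_\bS^{-1} = \prod_{j,l} y_{j,l}^{a_{j,l}}$, multiplication by $z_{i,k}$ replaces $\bS$ by $\bS'$ with $S'_i = S_i \setminus \{k\}$ (when $k \in S_i$) and $S'_j = S_j$ for $j \neq i$; multiplication by $z_{i,k}^{-1}$ replaces $S_i$ by $S_i \cup \{k\}$. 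In terms of the exponents $a_{j,l}$, the monomial $z_{i,k} = y_{i,k} y_{i,k+2} \big/ \prod_{j \con i} y_{j,k+1}$, so $z_{i,k}^{\pm 1} p$ differs from $p$ only by $\pm$ on the exponents at $(i,k)$, $(i,k+2)$, and $(j,k+1)$ for $j\con i$. Similarly for $q = y_{i,n} z_\bU^{-1} = \prod_{j,l} y_{j,l}^{b_{j,l}}$, multiplying by $z_{i,k}^{\pm1}$ changes $U_i$ by removing/adjoining $k$ and changes the exponents $b_{j,l}$ at the three sites $(i,k),(i,k+2),(j,k+1)$.

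\textbf{Part (1).} Here $q$ (hence $\bU$ and the $b_{j,l}$) is fixed and only $\bS$ changes. In the formula for $E_q$, the first sum $\sum U_j(k) R_j(k+1)$ is untouched, so $E_q(z_{i,k}^{\pm} p) - E_q(p) = \sum_{j,l} b_{j,l}\,(S'_j(l-1) - S_j(l-1))$. For $z_{i,k} p$ we have $S'_i(l) = S_i(l) - \delta_{l,k}$ and $S'_j = S_j$ otherwise, so the only surviving term is $j=i$, $l-1 = k$, i.e. $l = k+1$, contributing $b_{i,k+1}\cdot(-1)$. Thus $E_q(z_{i,k} p) = E_q(p) - b_{i,k+1}$, and the reverse sign for $z_{i,k}^{-1}$; this is exactly the claimed $E_q(z_{i,k}^{\pm}p) = E_q(p) \mp b_{i,k+1}$.

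\textbf{Part (2).} Now $p$ (hence $\bS$) is fixed and we vary $q$ to $z_{i,k}^{\pm} q$. Both sums in $E_q$ depend on $q$: the first through $\bU$, the second through the exponents $b_{j,l}$. For $z_{i,k} q$, $\bU$ changes by $U'_i(l) = U_i(l) - \delta_{l,k}$, contributing $-R_i(k+1)$ to the first sum. For the second sum, $b'_{j,l} = b_{j,l} + (\delta_{j,i}\delta_{l,k} + \delta_{j,i}\delta_{l,k+2} - \sum_{j'\con i}\delta_{j,j'}\delta_{l,k+1})$, so $\sum_{j,l}(b'_{j,l}-b_{j,l})S_j(l-1) = S_i(k-1) + S_i(k+1) - \sum_{j\con i} S_j(k)$, which is precisely $-a_{i,k+1}^{(0)}$-type combination — more cleanly, it equals $-\big(\,$coefficient pattern of $z_{i,k+1}\,\big)$ applied to $\bS$. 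I would then observe that $R_i(k+1) + \big(\sum_{j\con i}S_j(k) - S_i(k-1) - S_i(k+1)\big)$ is exactly the multiplicity $a_{i,k+1}$ of $y_{i,k+1}$ in $p = y_\bR z_\bS^{-1}$ — this is immediate from the defining factorization $y_\bR z_\bS^{-1} = \prod y_{j,l}^{a_{j,l}}$, reading off the exponent at $(i,k+1)$: a contribution $+R_i(k+1)$ from $y_\bR$, $+\sum_{j\con i}S_j(k)$ from the numerators $\prod_{j\con i}y_{j,k+1}$ of the $z_{j,k}^{-1}$ factors, and $-S_i(k-1) - S_i(k+1)$ from the denominators $y_{i,\cdot}y_{i,\cdot+2}$. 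Hence $E_q(z_{i,k}q)-E_q(p)\cdot(\text{rel. to } q)$... more precisely $E_{z_{i,k}q}(p) = E_q(p) - a_{i,k+1}$, and the opposite sign for $z_{i,k}^{-1}$, giving $E_{z_{i,k}^{\pm}q}(p) = E_q(p) \mp a_{i,k+1}$.

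\textbf{Main obstacle.} There is no real obstacle — the only thing to be careful about is the ``whenever well-defined'' clause (i.e. $k \in S_i$ resp. $k \in U_i$ so that the multiset difference makes sense, and the resulting monomial still lies in $\B$), and getting the three sign/site bookkeeping right, especially in part (2) where one must correctly identify the combination $R_i(k+1) + \sum_{j\con i}S_j(k) - S_i(k-1) - S_i(k+1)$ with the exponent $a_{i,k+1}$ of $y_{i,k+1}$ in $p$. I would present part (2) by first proving this exponent identity as a one-line consequence of \eqref{eq:CSmonomial}, then the rest is substitution.
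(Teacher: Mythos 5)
Your proof is correct. The paper states Lemma \ref{lem:pqE} without proof, treating it as immediate bookkeeping from the definition of $E_q(p)$ and of $z_{i,k}$, and your computation — in particular the key identity $a_{i,k+1} = R_i(k+1) + \sum_{j\con i}S_j(k) - S_i(k-1) - S_i(k+1)$ read off from \eqref{eq:CSmonomial}, which makes part (2) a substitution — is exactly the verification that was left to the reader.
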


For the proofs of the next two Lemmas, we use the connection of monomial crystals to quiver varieties; see Section \ref{section: Link to quiver varieties}.  We do not know purely combinatorial proofs.

\begin{Lemma}
\label{lemma: regular kashiwara}
Multiplication by $z_{i,k}$ gives a bijection of sets
$$ \begin{pmatrix} \text{Regular } \bR\text{--monomial data } p \\ \text{with } a_{i,k} <0 \text{ and } a_{i,k+2} \geq 0 \end{pmatrix} \stackrel{\con}{\longrightarrow} \begin{pmatrix} \text{Regular } \bR\text{--monomial data } p' \\ \text{with } a_{i,k}' \leq 0 \text{ and } a_{i,k+2}' >0 \end{pmatrix}$$
In particular, the set of regular $\bR$--monomial data is invariant under the Kashiwara operators $\tilde{e}_i, \tilde{f}_i$.
\end{Lemma}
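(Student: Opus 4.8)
The plan is to deduce the ``In particular'' assertion from the bijection, and to establish the bijection itself by transporting the whole statement to Nakajima's graded quiver varieties.

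Assume for the moment the bijection. Recall from Section~\ref{subsection:Definitions} that whenever $\tilde e_i p\neq 0$ one has $\tilde e_i p=z_{i,k}p$, where $k$ is the smallest integer of the parity of $i$ with $\varepsilon_i^k(p)=\varepsilon_i(p)>0$. Writing $p=\prod_{j,l}y_{j,l}^{a_{j,l}}$ and using the identities $\varepsilon_i^k(p)-\varepsilon_i^{k-2}(p)=-a_{i,k}$ and $\varepsilon_i^k(p)-\varepsilon_i^{k+2}(p)=a_{i,k+2}$, the minimality of $k$ and the fact that it attains the maximum force $a_{i,k}<0$ and $a_{i,k+2}\geq 0$. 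Since $z_{i,k}$ raises the exponents of $y_{i,k}$ and of $y_{i,k+2}$ each by $1$, the monomial $p'=z_{i,k}p$ satisfies $a'_{i,k}\leq 0$ and $a'_{i,k+2}>0$. Thus $p$ lies in the left-hand set of the bijection for this $k$, and $\tilde e_i p$ lies in the right-hand set. Since $\B$ is a normal crystal (Theorem~\ref{Thm: B normal crystal}), $\tilde f_i$ is the partial inverse of $\tilde e_i$, and on the right-hand set it is exactly the inverse of multiplication by $z_{i,k}$. Hence the bijection immediately gives that the collection of regular $\bR$--monomial data is closed under $\tilde e_i$ and $\tilde f_i$.

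For the bijection itself, the combinatorics of $\bS$ is routine bookkeeping: using the uniqueness of the factorization $p=y_\bR z_\bS^{-1}$ (Remark~\ref{Remark: uniqueness of monomial factorization}), one checks that for $p$ in the left-hand set one has $k\in S_i$, that $z_{i,k}p=y_\bR z_{\bS'}^{-1}$ with $S'_i=S_i\smallsetminus\{k\}$ and $S'_j=S_j$ for $j\neq i$ is again an $\bR$--monomial datum, that its exponents are exactly the ones prescribed, and that conversely every datum in the right-hand set arises this way from a unique such $p$. So the only real content is the equivalence
$$ p\ \text{regular}\quad\Longleftrightarrow\quad z_{i,k}p\ \text{regular} $$
for $p$ in the left-hand set (this equivalence must also subsume the claim that $a_{i,k}<0$ together with regularity of $p$ forces $k\in S_i$). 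By Lemma~\ref{lem:pqE}(1), writing $q=\prod_{j,l}y_{j,l}^{b_{j,l}}\in\B(\varpi_{i'},n)$ with $i'\in I_{\overline{n+1}}$, this reads: $E_q(p)\geq 0$ for all such $q$ if and only if $E_q(p)-b_{i,k+1}\geq 0$ for all such $q$. Because the exponent $b_{i,k+1}$ of $q$ can be negative, both directions are substantive (Lemma~\ref{lem:pqE}(2) does relate the various constraints, but this does not seem to close the argument combinatorially).

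Instead, I would use the dictionary between monomial data and Nakajima's graded quiver varieties from Section~\ref{section: Link to quiver varieties}: an $\bR$--monomial datum $p=y_\bR z_\bS^{-1}$ records a pair of graded dimension vectors (one built from $\bR$, one from $\bS$), the numbers $E_q(p)$ compute ranks of the structure maps entering the graded quiver data, and the regularity of $p$ is equivalent to the non-emptiness of the graded quiver variety $\fM_p$ attached to $p$ --- the inequalities $E_q(p)\geq 0$ being precisely Nakajima's dimension bounds cutting out the locus where $\fM_p\neq\emptyset$. Multiplication by $z_{i,k}$ on monomial data corresponds, under this dictionary, to Nakajima's Hecke correspondence between $\fM_p$ and $\fM_{z_{i,k}p}$; this correspondence maps onto each of the two varieties with fibres that are projective spaces (Grassmannians in general), and the exponent conditions $a_{i,k}<0,\ a_{i,k+2}\geq 0$ on one side and $a'_{i,k}\leq 0,\ a'_{i,k+2}>0$ on the other are exactly what guarantees that both projections are surjective onto their targets. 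Therefore $\fM_p\neq\emptyset$ if and only if $\fM_{z_{i,k}p}\neq\emptyset$, which is the desired regularity equivalence. The main obstacle is precisely this last step: matching multiplication by $z_{i,k}$ with the correct Hecke correspondence and checking surjectivity of both of its legs under the stated exponent constraints; once this geometric input is available, the remainder of the proof is the bookkeeping sketched above.
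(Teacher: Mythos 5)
Your reduction to the ``regularity is preserved'' equivalence and your discussion of the $\bS$-bookkeeping are fine, and you correctly identify the crux: when $b_{i,k+1}>0$ the inequality $E_q(z_{i,k}p)=E_q(p)-b_{i,k+1}\ge 0$ does not follow directly from regularity of $p$. But the route you propose to close this does not work in the logical structure of the paper. You want to use that ``regularity of $p$ is equivalent to non-emptiness of the graded quiver variety $\fM_p$''; that equivalence is precisely the content of Theorem~\ref{Theorem: regular = product monomial crystal} together with Proposition~\ref{pr:pi0andB}, and the proof of Theorem~\ref{Theorem: regular = product monomial crystal} uses Lemma~\ref{lemma: regular kashiwara}. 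So invoking it here is circular. If instead you intend to establish that equivalence by an independent appeal to ``Nakajima's dimension bounds,'' that is a substantial geometric claim which the paper nowhere proves (the authors explicitly say they know no combinatorial proof and no such characterization of non-empty graded quiver variety components is cited); you cannot simply assert it.

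The idea you are missing, which is the actual content of the paper's proof, is to apply the quiver-variety input not to $p$ but to the \emph{constraint} monomial $q\in\B(\varpi_j,n)$. Lemma~\ref{Lemma: other multiplications} (proved via quiver varieties, but only for monomials already known to lie in a product monomial crystal --- here the fundamental crystal $\B(\varpi_j,n)$) shows that when $b_{i,k+1}>0$ one may form $q'=z_{i,k-1}^{-b_{i,k+1}}q\in\B(\varpi_j,n)$. Then Lemma~\ref{lem:pqE}(2) gives $E_{q'}(p)=E_q(p)+b_{i,k+1}a_{i,k}$, which is $\ge 0$ by regularity of $p$; and since $a_{i,k}\le -1$ this is $\le E_q(p)-b_{i,k+1}=E_q(z_{i,k}p)$, proving the needed inequality. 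Taking $q=y_{i,k-1}$ in this chain also shows $S_i(k)\ge 1$, i.e.\ that $z_{i,k}p$ is genuinely $\bR$-monomial data (a point you flag as needing checking but do not resolve). So the quiver-variety input required is only the already-proved Lemma~\ref{Lemma: other multiplications}, used on the fundamental crystal, not a full non-emptiness/regularity dictionary. Your proposal as written has a genuine gap here.
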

\begin{proof}
Considering only the conditions on $a_{i,k} $ and $a_{i,k+2}$, this is straightforward.  We must show that the property of being regular $\bR$--monomial data is preserved in both directions.

Suppose that $p = y_\bR z_\bS^{-1}$ has $a_{i,k}<0$ and $a_{i,k+2} \geq 0$. By Lemma \ref{lem:pqE},
$$ E_q(z_{i,k}p) = E_q(p) - b_{i,k+1}. $$
If $b_{i,k+1}\leq 0$ this is non-negative because $p$ is regular. If $b_{i,k+1}>0$ then Lemma \ref{Lemma: other multiplications} applies, so $q' = z_{i,k-1}^{-b_{i,k+1}} q \in \B(\varpi_j,n)$. Then
\begin{equation} \label{eq:rwt}
0\leq E_{q'} (p) = E_q(p) + b_{i,k+1} a_{i,k} \leq  E_q(z_{i,k} p ),\end{equation}
where the last inequality is because $a_{i,k} \leq -1, b_{i,k+1} \geq 1$.  In particular, taking $q = y_{i,k+1}$ we get $ 0 \leq E_{q}(z_{i,k} p) = S_i(k)-1 $, i.e. that $S_i(k) \geq 1$.  This shows that $p' = z_{i,k}p$ is indeed $\bR$--monomial data.  Equation \eqref{eq:rwt} proves regularity.

The proof for the other direction is similar.
\end{proof}

\begin{Lemma}
\label{lemma:positive data}
Consider $\bR$--monomial data $p$.  If $a_{i,k}\geq 0$ for all pairs $(i,k)$, then
\begin{enumerate}
\item $p$ is regular,
\item $p \in \B(\lambda, \bR)$, where $\lambda = \sum_i |R_i| \varpi_i^\vee$.
\end{enumerate}
\end{Lemma}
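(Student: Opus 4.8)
The plan is to handle the two parts separately: part (1) reduces to a single combinatorial identity, while for part (2) I would ultimately invoke the quiver-variety correspondence of Section~\ref{section: Link to quiver varieties} (a purely combinatorial argument seems possible but delicate).

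For part (1) I would first establish, for every element $q = y_{i,n} z_\bU^{-1} = \prod_{j,k} y_{j,k}^{b_{j,k}} \in \B(\varpi_i,n)$ of a fundamental crystal (with $\bU=(U_j)$ its genuine, non-negative, multiset collection) and every $\bR$--monomial datum $p = y_\bR z_\bS^{-1} = \prod_{j,k} y_{j,k}^{a_{j,k}}$, the identity
\[
E_q(p) = S_i(n-1) + \sum_{j,k} U_j(k)\, a_{j,k+1}.
\]
This I would prove by induction on $|\bS|$: when $\bS=\emptyset$ we have $p=y_\bR$, so $a_{j,k}=R_j(k)$ and the right-hand side is $\sum_{j,k} U_j(k) R_j(k+1) = E_q(y_\bR)$; for the inductive step pick $(i',k')$ with $S_{i'}(k')\ge 1$, write $p = z_{i',k'}^{-1} p^-$ with $|\bS(p^-)| = |\bS|-1$, apply Lemma~\ref{lem:pqE}(1) (which gives $E_q(p) = E_q(p^-) + b_{i',k'+1}$), and compare exponents using $a_{j,m} = R_j(m) + \sum_{j'\con j} S_{j'}(m-1) - S_j(m) - S_j(m-2)$ together with the analogous expansion of $b_{i',k'+1}$; the $U$-contributions cancel against the change in $\sum_{j,k} U_j(k) a_{j,k+1}$. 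Granting the identity, part (1) is immediate: if all $a_{j,k}\ge 0$ then every term on the right is non-negative (also $S_i(n-1)\ge 0$ and $U_j(k)\ge 0$), so $E_q(p)\ge 0$ for all relevant $q$, i.e.\ $p$ is regular; so this part, at least, admits a purely combinatorial proof.

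For part (2), two structural facts help. Positivity gives $\varepsilon_i^k(p) = -\sum_{l\le k} a_{i,l} \le 0$ for all $k$, with equality for $k\ll 0$, so $\varepsilon_i(p)=0$ for every $i$: $p$ is a highest-weight monomial of $\B$, of dominant weight $\operatorname{wt}(p) = \lambda - \sum_i |S_i|\alpha_i$. And if $\bS\ne\emptyset$ with top non-empty level $\ell_{\max}$, say $S_{i_1}(\ell_{\max})>0$, then $0\le a_{i_1,\ell_{\max}+2} = R_{i_1}(\ell_{\max}+2) - S_{i_1}(\ell_{\max})$ forces $\ell_{\max}+2\in R_{i_1}$, so $\bS$ is supported in levels $\le c_0-2$, where $c_0$ is a maximal element of $\bigcup_i R_i$. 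I would then try induction on $|\bR| = \sum_i \lambda_i$ (base case $\bR=\emptyset$, forcing $p=1$): for $\bR\ne\emptyset$ with $c_0\in R_{i_0}$ maximal, the localization of $\bS$ below $c_0-2$ is used to split off a factor $p_{i_0,c_0}\in\B(\varpi_{i_0},c_0)$, writing $p = p_{i_0,c_0}\cdot p'$ with $p' = y_{\bR'} z_{\bS'}^{-1}$ a positive $\bR'$--monomial datum for $\bR' = \bR\setminus\{c_0\}$; then $p'\in\B(\lambda',\bR')$ by induction and $p\in\B(\varpi_{i_0},c_0)\cdot\B(\lambda',\bR') = \B(\lambda,\bR)$.

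The main obstacle is exactly this peeling step: one must choose the part $\bS^{(i_0,c_0)}$ of $\bS$ removed together with $c_0$ so that $p_{i_0,c_0}$ genuinely lies in $\B(\varpi_{i_0},c_0)$ \emph{and} the remainder $p'$ remains positive — a careless choice can push some exponent negative — and for non-minuscule $\varpi_{i_0}$ one lacks the explicit description of $\B(\varpi_{i_0},c_0)$ from Proposition~\ref{pr:explicit}. For this reason I expect the clean route is the one indicated in Section~\ref{section: Link to quiver varieties}: interpret $p = y_\bR z_\bS^{-1}$ as the monomial label of a stratum of the relevant graded Nakajima quiver variety, whose connected components are indexed by $\B(\lambda,\bR)$, and observe that positivity of all $a_{j,k}$ forces this stratum to be non-empty (a representation realizing the prescribed graded dimensions can be written down directly), so $p$ labels an actual component and hence $p\in\B(\lambda,\bR)$; Theorem~\ref{th:subcrystal} then also records that $\B(\lambda,\bR)$ is a genuine subcrystal. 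I would fall back on this argument for part (2).
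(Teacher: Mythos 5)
Your part (1) is correct and is essentially the paper's argument in disguise: the identity $E_q(p)=S_i(n-1)+\sum_{j,k}U_j(k)\,a_{j,k+1}$ is exactly what the paper establishes, only it gets there by walking down the fundamental crystal from $q=y_{i,n}$ (where $E_q(p)=S_i(n-1)$) using Lemma~\ref{lem:pqE}(2), whereas you induct on $|\bS|$ using Lemma~\ref{lem:pqE}(1). Both inductions close, and your cancellation check is the right computation.

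Part (2) has a genuine gap. Your primary route (peeling off a factor of $\B(\varpi_{i_0},c_0)$ by downward induction on $|\bR|$) is, as you yourself note, not carried out: choosing $\bS^{(i_0,c_0)}$ so that the factor lies in $\B(\varpi_{i_0},c_0)$ while the remainder stays positive is precisely the hard point, and for non-minuscule $\varpi_{i_0}$ you have no description of the fundamental crystal to work with. Your fallback --- ``positivity of all $a_{j,k}$ forces the stratum $X(\bS)$ to be non-empty; a representation realizing the prescribed graded dimensions can be written down directly'' --- asserts exactly the nontrivial content without proof: non-emptiness of $X(\bS)$ requires producing a \emph{stable} point of $\mu^{-1}(0)$ with prescribed $\C^\times$-weights, and not every dimension vector admits one. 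The paper's actual proof avoids both problems: it sets $p(k_{\mathrm{max}})=y_\bR\in\B(\lambda,\bR)$ and descends level by level, defining $p(k)=\bigl(\prod_i z_{i,k}^{-S_i(k)}\bigr)p(k+1)$; the hypothesis $a_{i,k+2}\ge 0$ guarantees the exponent of $y_{i,k+2}$ in $p(k+1)$ is $a_{i,k+2}+S_i(k)\ge S_i(k)$, so Lemma~\ref{Lemma: other multiplications}(1) (whose proof is where the quiver-variety construction of the required stable representation actually happens, one extension at a time) applies at each step, and $p(k_{\mathrm{min}})=p$. To repair your write-up you would need either to carry out this level-by-level descent or to supply the explicit stable representation you allude to; as it stands, part (2) is not proved.
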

\begin{proof}
For (1), consider first a highest weight element $q = y_{j,n+1} \in \B(\varpi_j, n)$.  Then $E_q(p) = S_j(n-1) \geq 0$.  Every other element of $q'\in \overline{B}(\varpi_j,n)$ can be reached from $q$ by multiplying by a sequence of $z_{i,k}^{-1}$, and by Lemma \ref{lem:pqE} multiplying by $z_{i,k}^{-1}$ corresponds to adding $a_{i,k+1}$ to $E_q(p)$.  Since all $a_{i,k+1}\geq 0$, we get $E_{q'}(p)\geq 0$ for all $q'$.

For (2), we define a sequence of elements $p(k)\in \B(\lambda, \bR)$ for $k_{\text{min}} \leq k\leq k_{\text{max}}$.  Here $k_{\text{max}}$ denotes the maximal value of $k$ for which some $R_i(k) > 0$, while $k_{\text{min}}$ denotes the minimal value of $k$ for which some $S_i(k) >0$.  By the construction we will have $p(k_{\text{min}}) = p$, proving the claim.

Define $p(k_{\text{max}}) = \prod_{i,k} y_{i,k}^{R_i(k)}$, so $p(k_{\text{max}})\in \B(\lambda,\bR)$ is the element of highest weight.  We define the rest of the sequence iteratively. Assuming $p(k+1)\in \B(\lambda,\bR)$ has been defined, put
$$ p(k) = \Big(\prod_i z_{i,k}^{-S_i(k)} \Big) p(k+1) $$
We claim that $p(k) \in \B(\lambda,\bR)$.   From the iterative definition, the exponent of $y_{i,k+2}$ in $p(k+1)$ is
$$ R_i(k+2) - S_i(k+2) + \sum_{j\con i} S_j(k+1) = a_{i,k+2} + S_i(k)$$
By assumption $a_{i,k+2}\geq 0$, so
$z_{i,k}^{-S_i(k)} p(k+1)\in \B(\lambda,\bR)$ by Lemma \ref{Lemma: other multiplications}.  Multiplication by $z_{i,k}^{-S_i(k)}$ does not change the exponent of $y_{j,k+2}$ for $j\neq i$, so by applying the same argument for all $j$ it follows that $p(k)\in \B(\lambda,\bR)$.

The assumption that all $a_{i,k} \geq 0$ forces $S_i(k) = 0$ for $k\geq k_{\text{max}}$.  With this in mind it is clear that $p(k_{\text{min}}) = p$, proving that $p \in \B(\lambda,\bR)$.

\end{proof}

\begin{proof}[Proof of Theorem \ref{Theorem: regular = product monomial crystal}]
Consider an equivalence relation on the set of regular $\bR$--monomial data, defined by extending the relation from Lemma \ref{lemma: regular kashiwara}: define $p$ and $p'$ to be equivalent if $p'$ can be obtained from $p$ by a series of multiplications by some $z_{i,k}$ (or $z_{i,k}^{-1}$), where at each step we had $a_{i,k}<0$ and $a_{i,k+2}\geq 0$ (resp. $a_{i,k}\leq 0$ and $a_{i,k+2}>0$).  From Corollary \ref{Cor: equiv rel on monomial crystal}, it follows that if some representative of an equivalence class is in $\B(\lambda,\bR)$, then all representatives are also in $\B(\lambda,\bR)$.

Similarly, define an equivalence relation on $\B(\lambda,\bR)$ by extending the relation from Corollary \ref{Cor: equiv rel on monomial crystal}: $p$ and $p'$ are again defined to be equivalent if $p'$ can be obtained from $p$ by a series of multiplications by $z_{i,k}^{\pm 1}$ as above.  By Lemma \ref{lemma: regular kashiwara}, if some representative of an equivalence class is regular, then all representatives are regular.

In both cases, we claim that every equivalence class contains a representative $p^+$ satisfying $a_{i,k}^+\geq 0$ for all $i,k$. Indeed, starting from $p$ a regular $\bR$--monomial data (resp. $p \in \B(\lambda,\bR)$), choose $a_{i,k}<0$ with $k$ maximal (assuming some $a_{i,k}<0$). Then $a_{i,k+2}\geq 0$, so $z_{i,k} p$ is also regular $\bR$--monomial data by Lemma \ref{lemma: regular kashiwara} (resp. $z_{i,k}p \in \B(\lambda,\bR)$ by Corollary \ref{Cor: equiv rel on monomial crystal}).  Iterating this argument, we produce an element $p^+$ in the same equivalence class as claimed.

By Lemma \ref{lemma:positive data}, such an element $p^+$ is both regular and lies in $\B(\lambda, \bR)$.  We conclude that all regular $\bR$--monomial data is in $\B(\lambda,\bR)$, and vice versa.
\end{proof}

\subsection{Monomial data and highest weights}

Fix $\lambda, \mu, \bR$.  We assume that $\lambda$ is dominant and that $\lambda \geq \mu$, but will not require $\mu$ to be dominant (although connections with the Yangian $Y_\mu^\lambda(\bR)$ and $H_\mu^\lambda(\bR)$ as defined in this paper only make sense in the dominant case). Our goal now is to relate the product monomial crystal $\B(\lambda,\bR)_\mu$ to the set of highest weights $H^\lambda_\mu(\bR)$.

We will first define a commutative algebra $\widetilde{B}_\mu^\lambda(\bR)$, and show that $\MaxSpec \widetilde{B}_\mu^\lambda(\bR)$ is in bijection with $\B(\lambda,\bR)_\mu$.  In type A, $\widetilde{B}_\mu^\lambda(\bR)$ is isomorphic to the B-algebra of $Y_\mu^\lambda(\bR)$, and we expect that this holds in other types.  Since the maximal spectrum of the B-algebra is precisely the set of highest weights $H_\mu^\lambda(\bR)$, this will prove Conjecture \ref{co:main} in type A.

To define $\widetilde{B}_\mu^\lambda(\bR)$, we will mimic the presentation for the B-algebra given in Corollary \ref{B algebra} and Proposition \ref{multiset conditions}.  Consider the commutative ring $\Cartan$.  For each element $q \in \B(\varpi_i, 0)$, write $q = y_{i,0} z_{\mathbf{U}}^{-1} = \prod_{j,k} y_{j,k}^{b_{j,k}}$, and define an element of $\Cartan((u^{-1}))$ by
\begin{equation}
\label{eq: def of H_q}
H_q(u) = \Big( \prod_{j,k} \prod_{c\in R_j} (u-\tfrac{1}{2}c+\tfrac{1}{2}k)^{U_j(k-2)} \Big)   \prod_{j,k} \big((u+\tfrac{1}{2}k)^{m_j} A_j(u+\tfrac{1}{2}k) \big)^{b_{j,k}}  
\end{equation}

\begin{Definition}
The algebra $\widetilde{B}_\mu^\lambda(\bR)$ is defined to be the quotient of $\Cartan$ by the ideal generated by all coefficients of the principal parts of all series $\underline{H_q(u)}$, over all $q\in \B(\varpi_i,0)$ and $i\in I$.
\end{Definition}
By Corollary \ref{B algebra} and Proposition \ref{multiset conditions}, we have
\begin{Corollary}
\label{cor: B-algebra in type A}
In type A, $\widetilde{B}_\mu^\lambda(\bR)$ and the B-algebra $B\big(Y_\mu^\lambda(\bR)\big)$ are equal as quotients of $\Cartan$.
\end{Corollary}

To relate $\MaxSpec \widetilde{B}_\mu^\lambda(\bR)$ with $\B(\lambda,\bR)_\mu$, we will make use of the following simple property of principal parts of rational functions:
\begin{Lemma}
\label{Lemma: properties of principal part 2}
Suppose that $X(u) \in \C((u^{-1}))$ is the expansion at $u=\infty$ of a rational function $f(u) / g(u)$, where $f, g\in \C[u]$.  Then
$$ \underline{X(u)} = 0 \ \ \Longleftrightarrow \ \ g \text{ divides } f $$
\end{Lemma}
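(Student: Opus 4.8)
The plan is to reduce the statement to Euclidean division with remainder. First I would write $f = q g + r$ with $q, r \in \C[u]$ and $\deg r < \deg g$, so that inside $\C((u^{-1}))$ we have $X(u) = q(u) + r(u)/g(u)$. The summand $q(u)$ is a polynomial and hence contributes only non-negative powers of $u$; therefore $\underline{X(u)} = \underline{\,r(u)/g(u)\,}$, and the problem becomes: the expansion of $r/g$ has vanishing principal part if and only if $g \mid f$.

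Next I would observe that, since $\deg r < \deg g$, the expansion of $r(u)/g(u)$ at $u = \infty$ lies entirely in $u^{-1}\C[[u^{-1}]]$ (the rational function has a zero at infinity). Consequently $\underline{\,r(u)/g(u)\,}$ equals the \emph{entire} expansion of $r(u)/g(u)$, so $\underline{X(u)} = 0$ if and only if that expansion vanishes identically.

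Finally I would invoke the standard fact that expansion at $u = \infty$ gives an injective ring homomorphism $\C(u) \hookrightarrow \C((u^{-1}))$; equivalently, if $r/g$ is nonzero with $\deg r < \deg g$, its leading Laurent coefficient at $u^{\deg r - \deg g}$ is the ratio of the leading coefficients of $r$ and $g$, hence nonzero, so the expansion is nonzero. Thus the expansion of $r/g$ vanishes precisely when $r = 0$, i.e. when $g \mid f$. The reverse implication is immediate: if $g \mid f$ then $X(u) = f/g \in \C[u]$ has no strictly negative powers of $u$, so $\underline{X(u)} = 0$. No step here is a genuine obstacle; the only point needing a moment's care is the injectivity of expansion at infinity, which the leading-term computation just described makes self-contained.
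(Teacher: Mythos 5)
Your proof is correct, and it is the standard argument the authors evidently have in mind (the paper states this lemma without proof as a ``simple property''): Euclidean division isolates the polynomial part, and injectivity of the expansion $\C(u)\hookrightarrow\C((u^{-1}))$ at $u=\infty$, verified by the leading-coefficient computation, finishes the job. Nothing is missing.
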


The series corresponding to the highest weight element $y_{i,0}\in \B(\varpi_i,0)$ is $H_{y_{i,0}}(u) = u^{m_i} A_i(u)$, and so the elements $A_i^{(r)}$ with $r>m_i$ are in the ideal defining $\widetilde{B}_\mu^\lambda(\bR)$.  Because of this, we may think of $\widetilde{B}_\mu^\lambda(\bR)$ as a quotient of
$$\C[A_i^{(s)}: i\in I, 1\leq s\leq m_i] $$
Therefore a point in $\MaxSpec \widetilde{B}_\mu^\lambda(\bR)$ is equivalent to a tuple of multisets $\bS = (S_i)_{i\in I}$ of complex numbers, with $|S_i| = m_i$: such a tuple corresponds to the homomorphism $\Cartan \rightarrow \C$ given in series form by
$$ A_i(u) \mapsto \prod_{s\in S_i} (1-\tfrac{1}{2} s u^{-1}) $$

\begin{Lemma}
\label{Lemma: multiset conditions from series}
Let $q \in \B(\varpi_i,0)$, and write
$$ q = y_{i,0} z_{\mathbf{U}}^{-1} = \prod_{j,k} y_{j,k}^{b_{j,k}} $$
Consider $\bS = (S_i)_{i\in I}$ and the corresponding map $\Cartan \rightarrow \C$, as above.  Then the image of $\underline{H_q(u)}$ under this map is zero if and only if there is an inclusion of multisets of $\C$:
\begin{equation} \label{eq: multiset containment}
\bigcup_{\substack{j,k\\ b_{j,k} < 0}} (S_j - k)^{-b_{j,k}} \subset \Bigg( \bigcup_{j,k} (R_j - k)^{U_j(k-2)}\Bigg) \cup \Bigg( \bigcup_{\substack{j,k \\ b_{j,k} >0}} (S_j - k)^{b_{j,k}} \Bigg)
\end{equation}
Here, for a multiset $X$ and integer $n$, $X^n$ denotes the multiset union $\cup_{\ell =1 }^n X$. By convention, $X^0 = \emptyset$.
\end{Lemma}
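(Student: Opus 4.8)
The plan is to evaluate $H_q(u)$ explicitly under the homomorphism $\Cartan\to\C$ attached to a tuple $\bS$, then quote Lemma~\ref{Lemma: properties of principal part 2} to reduce the vanishing of the principal part to a polynomial divisibility, and finally translate that divisibility into the stated multiset inclusion.

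First I would spell out the evaluation. A point of $\MaxSpec\widetilde{B}_\mu^\lambda(\bR)$ corresponds, as in the paragraph preceding the statement, to a tuple $\bS=(S_j)_{j\in I}$ with $|S_j|=m_j$, realized by $A_j(u)\mapsto\prod_{s\in S_j}(1-\tfrac12 su^{-1})$. Substituting $u\mapsto u+\tfrac12 k$ and using $|S_j|=m_j$ gives $(u+\tfrac12 k)^{m_j}A_j(u+\tfrac12 k)\mapsto\prod_{s\in S_j}\bigl(u-\tfrac12(s-k)\bigr)$, a monic polynomial in $u$ whose multiset of roots is $\tfrac12(S_j-k):=\{\tfrac12(s-k):s\in S_j\}$. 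Plugging this into the defining product for $H_q(u)$, together with the identity $u-\tfrac12 c+\tfrac12 k=u-\tfrac12(c-k)$, shows that the image of $H_q(u)$ is the expansion at $u=\infty$ of a rational function $f(u)/g(u)$, where $f$ collects the factors of nonnegative exponent — the $R$-factors $(u-\tfrac12(c-k))^{U_j(k-2)}$ together with the $S$-factors raised to the power $b_{j,k}$ for $b_{j,k}>0$ — and $g$ collects the $S$-factors raised to the power $-b_{j,k}$ for $b_{j,k}<0$. By Lemma~\ref{Lemma: properties of principal part 2}, the image of $\underline{H_q(u)}$ vanishes if and only if $g\mid f$ in $\C[u]$; since the irreducibles of $\C[u]$ are the linear polynomials, $g\mid f$ holds if and only if the multiset of roots of $g$ is contained in that of $f$. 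Reading off the two root multisets, namely $\bigcup_{j,k:\,b_{j,k}<0}\tfrac12(S_j-k)^{-b_{j,k}}$ for $g$ and $\bigl(\bigcup_{j,k}\tfrac12(R_j-k)^{U_j(k-2)}\bigr)\cup\bigl(\bigcup_{j,k:\,b_{j,k}>0}\tfrac12(S_j-k)^{b_{j,k}}\bigr)$ for $f$, and clearing the ubiquitous factor of $\tfrac12$ by the bijection $u\mapsto\tfrac12 u$ of $\C$, yields exactly the inclusion (\ref{eq: multiset containment}).

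I do not expect a genuine obstacle here: once Lemma~\ref{Lemma: properties of principal part 2} is in hand the argument is pure bookkeeping. The only points requiring care are tracking which factors of $H_q(u)$ land in the numerator versus the denominator — determined by the sign of $b_{j,k}$, the exponents $U_j(k-2)$ being automatically $\ge 0$ — keeping the multiplicities straight, and noting that $f$ and $g$ need not be coprime. The last is harmless, since Lemma~\ref{Lemma: properties of principal part 2} and the divisibility-equals-root-containment criterion apply verbatim to any presentation of the rational function as $f/g$.
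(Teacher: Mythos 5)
Your proposal is correct and follows essentially the same route as the paper: evaluate $H_q(u)$ under the homomorphism attached to $\bS$ to obtain a rational function whose numerator and denominator factors are sorted by the signs of the exponents, apply Lemma \ref{Lemma: properties of principal part 2} to convert vanishing of the principal part into divisibility, and read off the root multisets. Your explicit remarks on the harmless factor of $\tfrac12$ and on the irrelevance of coprimality are points the paper's terser proof leaves implicit, but they do not constitute a different argument.
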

\begin{proof}
By the definition of $H_q(u)$, its image under the map corresponding to $\bS$ is the rational function
$$ \Big(\prod_{j,k} \prod_{c\in R_j} (u-\tfrac{1}{2}c + \tfrac{1}{2} k)^{U_j(k-2)} \Big) \Big( \prod_{j,k} \prod_{s\in S_j} (u-\tfrac{1}{2} s+\tfrac{1}{2}k)^{b_{j,k}} \Big) $$
By Lemma \ref{Lemma: properties of principal part 2}, the principal part of this rational function is zero if and only if its denominator divides its numerator.  Since the multisets in (\ref{eq: multiset containment}) encode the roots of these polynomials, the principal part is zero if and only if (\ref{eq: multiset containment}) holds.
\end{proof}

Since $\MaxSpec \widetilde{B}_\mu^\lambda(\bR)$ is exactly the set of $\bS$ for which all $\underline{H_q(u)}$ map to zero, this Lemma is the key tool in the following result:

\begin{Theorem}
\label{thm: maxspec of conj B-algebra}
There is a bijection of sets
$$ \MaxSpec \widetilde{B}_\mu^\lambda(\bR) \longrightarrow \B(\lambda,\bR)_\mu$$
defined by $\bS \mapsto y_\bR z_\bS^{-1}$.
\end{Theorem}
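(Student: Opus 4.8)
The plan is to combine Lemma~\ref{Lemma: multiset conditions from series} (which translates the defining relations of $\widetilde{B}_\mu^\lambda(\bR)$ into multiset containments) with Theorem~\ref{Theorem: regular = product monomial crystal} (which characterizes $\B(\lambda,\bR)$ by the regularity inequalities $E_q(p)\ge 0$). By Lemma~\ref{Lemma: multiset conditions from series}, a tuple of complex multisets $\bS = (S_i)$ with $|S_i| = m_i$ lies in $\MaxSpec\widetilde{B}_\mu^\lambda(\bR)$ exactly when the containment \eqref{eq: multiset containment} holds for every $i\in I$ and every $q\in\B(\varpi_i,0)$. For a fixed $q = y_{i,0}z_\mathbf{U}^{-1}$, comparing the multiplicity of each integer on the two sides of \eqref{eq: multiset containment} and re-indexing shows (a short bookkeeping computation) that this containment is equivalent to the family of inequalities $E_{q^{(n)}}(p)\ge 0$ for all $n\in\Z$, where $p = y_\bR z_\bS^{-1}$ and $q^{(n)}\in\B(\varpi_i,n)$ is the translate of $q$ by $n$; here this re-indexing is legitimate once $\bS$ is integral, since then all the multisets involved consist of integers. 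As $q$ runs over $\B(\varpi_i,0)$ and $n$ over $\Z$, the translates $q^{(n)}$ run over $\bigsqcup_{i\in I,\,n\in\Z}\B(\varpi_i,n)$, so the defining conditions of $\MaxSpec\widetilde{B}_\mu^\lambda(\bR)$ become precisely $\{E_{q'}(p)\ge 0 : q'\in\B(\varpi_i,n),\ i\in I,\ n\in\Z\}$.

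I would then dispose of the direction where $p = y_\bR z_\bS^{-1}$ is already known to lie in $\B(\lambda,\bR)_\mu$. Then $\bS$ consists of integral multisets of the correct parities (as $p\in\B$), $|S_i| = m_i$ since $\operatorname{wt}(p)=\mu$, and $p$ is regular by Theorem~\ref{Theorem: regular = product monomial crystal}. The inequalities $E_{q'}(p)\ge 0$ for $q'$ of \emph{opposite} parity ($q'\in\B(\varpi_i,n)$, $i\in I_{\overline{n+1}}$) are exactly the regularity conditions; for $q'$ of the \emph{same} parity the inequality holds automatically, because any negative summand $b'_{j,k}S_j(k-1)$ in $E_{q'}(p)$ vanishes by a parity mismatch: $b'_{j,k}\neq 0$ forces $j\in I_{\bar k}$, while $S_j$ is supported on integers of the parity of $j$, i.e.\ on integers $k-1$ with $j\in I_{\overline{k+1}}$, and $I_{\bar k}\cap I_{\overline{k+1}}=\emptyset$. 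Hence all defining inequalities of $\widetilde{B}_\mu^\lambda(\bR)$ hold and $\bS\in\MaxSpec\widetilde{B}_\mu^\lambda(\bR)$.

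For the reverse direction the one genuinely new ingredient is to show that a point of $\MaxSpec\widetilde{B}_\mu^\lambda(\bR)$ --- a priori just a tuple of complex multisets --- is automatically integral with the correct parities. Granting this, $p = y_\bR z_\bS^{-1}$ is a genuine $\bR$--monomial datum of weight $\mu$, the inequalities $E_{q'}(p)\ge 0$ then hold for all $q'$ (in particular for those of opposite parity), so $p$ is regular and $p\in\B(\lambda,\bR)_\mu$ by Theorem~\ref{Theorem: regular = product monomial crystal}. To prove integrality I would use only the containments coming from $q = \tilde f_i(y_{i,0}) = y_{i,0}z_{i,-2}^{-1}$, which by the computation above reduce to $S_i + 2 \subset R_i\cup\bigcup_{j\con i}(S_j+1)$, equivalently to value-preserving injections $\psi_i : S_i\hookrightarrow (R_i-2)\sqcup\bigsqcup_{j\con i}(S_j-1)$. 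Starting from any element $s$ of any $S_i$ and iterating $s\mapsto\psi_i(s)$, the value strictly increases along the chain while $\bigsqcup_i S_i$ is finite, so the chain terminates in some $R_{j}-2$; that endpoint is an integer of parity $j$, and walking back down the chain --- using that $I = I_{\bar 0}\cup I_{\bar 1}$ is a bipartition of the Dynkin diagram, so adjacent vertices have opposite parity --- forces every entry of the chain, in particular $s$, to be an integer of the correct parity.

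Finally, bijectivity is immediate: $\bS$ is recovered from $p = y_\bR z_\bS^{-1}$ by Remark~\ref{Remark: uniqueness of monomial factorization}, giving injectivity, while every element of $\B(\lambda,\bR)_\mu$ has the form $y_\bR z_\bS^{-1}$ with $|S_i| = m_i$ by \eqref{eq:CSmonomial} and the weight computation, giving surjectivity. I expect the main obstacles to be the integrality step and the matching of the two families of conditions --- in particular the parity argument that the ``extra'' same-parity conditions built into $\widetilde{B}_\mu^\lambda(\bR)$ are vacuous on honest $\bR$--monomial data.
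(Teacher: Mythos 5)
Your proof is correct and follows essentially the same route as the paper's: translate the defining relations of $\widetilde{B}_\mu^\lambda(\bR)$ via Lemma \ref{Lemma: multiset conditions from series} into multiset containments, match these with the regularity inequalities $E_{q_n}(p)\ge 0$, and conclude with Theorem \ref{Theorem: regular = product monomial crystal}. Your chain argument for the integrality and parity of $\bS$ is a sound expansion of a step the paper dismisses as ``not hard to see,'' and your parity argument that the same-parity conditions are vacuous matches the paper's remark that only opposite-parity multiplicities can be nonzero.
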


\begin{proof}
Firstly, we will show that the image $p := y_\bR z_\bS^{-1}$ necessarily lands in $\B$, i.e. that all variables $y_{i,k}$ which appear have $k\in \Z$ and satisfy the parity condition $i \in I_{\overline{k}}$ as per Section \ref{subsection:Notation}.  Consider the element
$$\tilde{f}_i(y_{i,0}) = y_{i,0} z_{i,-2}^{-1} = y_{i,-2}^{-1} \prod_{j\con i} y_{j,-1} \in \B(\varpi_i,0)$$
The principal part of the corresponding series $H_{\tilde{f}_i(y_{i,0})}(u)$ must map to zero under $\bS$. By Lemma \ref{Lemma: multiset conditions from series}, this is equivalent to the inclusion
$$ S_i + 2 \subset R_i \cup \bigcup_{j\con i} (S_j + 1) $$
It is not hard to see that these containments for all $i\in I$, together with the integrality and parity conditions on $\bR$, imply the desired integrality and parity conditions on $\bS$.

Next, we will show that the image $p$ is a {\em regular} element of $\B$.  For each $n\in \Z$, there is an isomorphism of crystals $\B(\varpi_i, 0) \stackrel{\con}{\rightarrow} \B(\varpi_i, n)$ which acts by translation on the variables: $y_{j,k} \mapsto y_{j, k+n}$.   For $q \in \B(\varpi_i,0)$, denote its image by $q_n \in \B(\varpi_i, n)$.

For any fixed $q$, we claim that the corresponding inclusion of multisets (\ref{eq: multiset containment}) is equivalent to the inequalities $E_{q_n}(p) \geq 0$ for all $n$ of opposite parity to $i$.  Indeed, the integers $E_{q_n}(p)$ encode the difference in multiplicity of the number $n-1$ between the multisets appearing on the right-hand and left-hand sides of (\ref{eq: multiset containment}).  Because $\bR$ and $\bS$ satisfy the parity conditions, there is an inclusion of multisets (\ref{eq: multiset containment}) if and only if these multiplicities are non-negative.  By considering all $q \in \B(\varpi_i, 0)$, it follows that $p$ is regular.

Finally, since $p = y_\bR z_\bS^{-1}$ is regular, by Theorem \ref{Theorem: regular = product monomial crystal} we know that $p \in \B(\lambda,\bR)$.  Since $|S_i| = m_i$ for all $i$, it follows that $p \in \B(\lambda,\bR)_\mu$ as claimed.
\end{proof}

This completes the proof of Conjecture \ref{co:main} in type A.
\begin{Corollary}
\label{Cor:typeA}
For $\g$ of type A, the map $J\mapsto y(J)$ gives a bijection $H_\mu^\lambda(\bR) \cong \B(\lambda,\bR)_\mu$.
\end{Corollary}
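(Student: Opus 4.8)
The plan is to deduce the corollary by chaining together the three bijections already established and checking that their composite is literally the map $J \mapsto y(J)$. First I would recall, from Proposition~\ref{Prop: B algebra and Vermas} and the surrounding discussion of the $B$-algebra, that the set $H^\lambda_\mu(\bR)$ --- viewed as a set of characters $\Cartan \to \C$ --- is exactly $\MaxSpec B$, where $B = \Cartan/\Pi(I^\lambda_\mu)$ is the $B$-algebra of $Y^\lambda_\mu(\bR)$. Since $A_i^{(s)} \in I^\lambda_\mu$ for $s > m_i$, such a character is equivalent to a tuple of multisets $\bS = (S_i)$ with $|S_i| = m_i$, via $A_i(u) \mapsto \prod_{s\in S_i}(1-\tfrac{1}{2} s u^{-1})$; this is precisely the tuple $\bS$ attached to $J$ in Section~\ref{subsec:highest}, where it was observed that $y(J) = y_\bR z_\bS^{-1}$.

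Next I would invoke the hypothesis $\g = \mathfrak{sl}_n$: all fundamental weights are then minuscule, so Corollary~\ref{B algebra} together with Proposition~\ref{multiset conditions} identifies the generators of the defining ideal of $B$ with the coefficients of the principal parts $\underline{H_q(u)}$ appearing in the definition of $\widetilde{B}_\mu^\lambda(\bR)$. Hence $B \cong \widetilde{B}_\mu^\lambda(\bR)$ as quotients of $\Cartan$ --- this is the Corollary stated just before the one we are proving --- and in particular the isomorphism is the identity on $\Cartan$, hence on the parametrizing data $\bS$.

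Finally I would apply the Theorem proved immediately above, which supplies a bijection $\MaxSpec\widetilde{B}_\mu^\lambda(\bR) \to \B(\lambda,\bR)_\mu$ sending the character indexed by $\bS$ to $y_\bR z_\bS^{-1}$; its proof also establishes that $y_\bR z_\bS^{-1}$ lies in $\B$ (the integrality and parity statement of Proposition~\ref{prop:integrality and parity}), so that $y(J)$ is well defined. Composing $H^\lambda_\mu(\bR) \cong \MaxSpec B \cong \MaxSpec\widetilde{B}_\mu^\lambda(\bR) \cong \B(\lambda,\bR)_\mu$, a highest weight $J$ with associated multiset data $\bS$ is sent to $y_\bR z_\bS^{-1} = y(J)$, which is exactly the claimed map. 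I do not expect a genuine obstacle at this stage: all of the substance --- the presentation of the $B$-algebra through lifted minors and quantum determinants, and the combinatorial characterization of the product monomial crystal by regularity --- has already been carried out in the preceding sections. The only point requiring care is verifying that each of the three bijections is ``the identity on parameters,'' so that the composite is literally $J \mapsto y(J)$ and not merely some abstract bijection between $H^\lambda_\mu(\bR)$ and $\B(\lambda,\bR)_\mu$.
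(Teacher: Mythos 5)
Your proposal is correct and follows exactly the paper's (implicit) argument: the corollary is deduced by composing the identification $H_\mu^\lambda(\bR)=\MaxSpec B$ from Propositions \ref{Prop: B algebra and Vermas} and \ref{pr:2Balgebras}, the type-A isomorphism $B\cong\widetilde{B}_\mu^\lambda(\bR)$ as quotients of $\Cartan$, and the bijection $\MaxSpec\widetilde{B}_\mu^\lambda(\bR)\to\B(\lambda,\bR)_\mu$, $\bS\mapsto y_\bR z_\bS^{-1}$, noting that the composite is literally $J\mapsto y(J)$. Your care about the three maps being the identity on the parametrizing data $\bS$, and about well-definedness of $y(J)$ via the integrality and parity statement, matches what the paper relies on.
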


For general $\g$, we expect that there exist series $T_{\gamma,\gamma}(u) \in Y[[u^{-1}]]$ such that the coefficients of $\underline{G_\gamma(u) T_{\gamma,\gamma}(u)}$ are in $L_\mu^\lambda$, such that in $\Cartan$ we have
$$\Pi(G_\gamma(u) T_{\gamma,\gamma}(u)) = H_\gamma(u), $$
and such that the coefficients of $\underline{H_\gamma(u)}$ generate $B(Y_\mu^\lambda(\bR))$. Exhibiting such elements would prove Conjecture \ref{co:main} in general. We can also hope to generalize Corollary \ref{cor: B-algebra in type A} outside of type A:

\begin{Conjecture}
\label{conj: B-algebra is correct in all types}
For any $\g$ of simply-laced type, $\widetilde{B}_\mu^\lambda(\bR)$ and $B\big(Y_\mu^\lambda(\bR)\big)$ are equal as quotients of $\Cartan$.
\end{Conjecture}

By a calculation similar those at the beginning of the proof of Proposition \ref{Prop: higher F relations} one can show that the series
$$ \Pi( \underline{ u^{\mu_i+ m_i} T_{f_i v_{\varpi_i}, f_i v_{\varpi_i}}(u) }) = \underline{H_{\tilde{f}_i(y_{i,0})}(u)}$$
are always in the ideal defining the B-algebra $B(Y_\mu^\lambda(\bR))$.  The next result follows from the argument at the beginning of the proof of the previous Theorem.

\begin{Proposition}
\label{prop:integrality and parity}
For general $\g$, consider a highest weight $J\in H_\mu^\lambda(\bR)$, and encode the action of $A_i(u)$ in a tuple of multisets $\bS = (S_i)_{i\in I}$.  Then these satisfy the inclusions of multisets
$$ S_i + 2 \subset R_i \cup \bigcup_{j\con i} (S_j + 1 ) $$
In particular, the map $J \mapsto y(J) = y_\bR z_\bS^{-1}$ lands in $\B$.
\end{Proposition}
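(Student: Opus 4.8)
The plan is to apply, directly to an arbitrary $J\in H_\mu^\lambda(\bR)$, the argument that opens the proof of the Theorem identifying $\MaxSpec\widetilde{B}_\mu^\lambda(\bR)$ with $\B(\lambda,\bR)_\mu$, keeping only those relations that are available in arbitrary simply-laced type.

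First I would note that, by Proposition \ref{Prop: B algebra and Vermas}, the hypothesis $J\in H_\mu^\lambda(\bR)$ says exactly that $J$, viewed as a character $\Cartan\to\C$, annihilates $\Pi(I_\mu^\lambda)$, equivalently that $J$ factors through the $B$-algebra $B=\Cartan/\Pi(I_\mu^\lambda)$. Because $A_i^{(s)}$ acts by $0$ on the highest weight vector for $s>m_i$ and by scalars for $s\le m_i$, the action of $A_i(u)$ is multiplication by a polynomial in $u^{-1}$ with constant term $1$; writing it as $\prod_{k\in S_i}(1-\tfrac12 k u^{-1})$ defines the multisets $S_i$ and shows that $J$ coincides with the character $\chi_\bS$ of Lemma \ref{Lemma: multiset conditions from series}.

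Next comes the key input, recalled just before the statement: for every $i\in I$ the coefficients of $\Pi(\underline{u^{\mu_i+m_i}T_{f_iv_{\varpi_i},f_iv_{\varpi_i}}(u)})=\underline{H_{\tilde f_i(y_{i,0})}(u)}$ lie in the ideal $\Pi(I_\mu^\lambda)$ of $B$. This is the length-one case of the computation in the proof of Proposition \ref{Prop: higher F relations}: since $T_{f_iv_{\varpi_i},f_iv_{\varpi_i}}(u)=D_i(u)=H_i(u)A_i(u)+F_i(u)A_i(u)E_i(u)$ and $\Pi$ kills any PBW monomial of the form $F\,H\,E$, one gets $\Pi(T_{f_iv_{\varpi_i},f_iv_{\varpi_i}}(u))=H_i(u)A_i(u)$, and expanding $H_i(u)$ via (\ref{eq: def of A}) and (\ref{eq:rfromc}) identifies $\Pi(u^{\mu_i+m_i}T_{f_iv_{\varpi_i},f_iv_{\varpi_i}}(u))$ with $H_{\tilde f_i(y_{i,0})}(u)$; the membership in $\Pi(L_\mu^\lambda)=\Pi(I_\mu^\lambda)$ then follows as in that proof from the $\underline{u^{m_i}A_i(u)}F_i^{(s+1)}$-identity, the relation $[E_i^{(1)},T_{v_{\varpi_i},f_iv_{\varpi_i}}(u)]=T_{f_iv_{\varpi_i},f_iv_{\varpi_i}}(u)-A_i(u)$ (using $e_if_iv_{\varpi_i}=v_{\varpi_i}$), and the fact that $L_\mu^\lambda$ is a left $Y$-ideal. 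Granting this, $J$ kills all coefficients of $\underline{H_{\tilde f_i(y_{i,0})}(u)}$; and since $\tilde f_i(y_{i,0})=y_{i,0}z_{i,-2}^{-1}$ has (in the notation of Lemma \ref{Lemma: multiset conditions from series}) $U_i=\{-2\}$, $U_j=\emptyset$ for $j\ne i$, and $b_{i,-2}=-1$, $b_{j,-1}=1$ for $j\con i$ with all other $b$'s zero, that Lemma translates $\chi_\bS(\underline{H_{\tilde f_i(y_{i,0})}(u)})=0$ into precisely the claimed inclusion
$$ S_i+2\subset R_i\cup\bigcup_{j\con i}(S_j+1). $$

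Finally I would deduce integrality and parity of $\bS$, and hence $y(J)=y_\bR z_\bS^{-1}\in\B$, from these inclusions. Suppose some element of some $S_i$ fails to be an integer of the same parity as $i$, and among all such elements pick one, $c\in S_i$, of maximal real part. The inclusion puts $c+2$ in $R_i$ or in $S_j+1$ for some $j\con i$; the former is impossible, since $R_i$ consists of integers of parity $i$ and then $c=(c+2)-2$ would be good. Hence $c+1\in S_j$ for some $j\con i$, and since $I=I_{\bar{0}}\cup I_{\bar{1}}$ is a proper $2$-colouring of the Dynkin diagram we have $\bar{j}=\bar{i}+1$, so $c+1$ is again ``bad'' and has strictly larger real part, a contradiction. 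Thus each $S_i$ consists of integers of parity $i$, and all variables $y_{j,k}$ occurring in $y_\bR z_\bS^{-1}$ satisfy the parity condition defining $\B$. The one genuinely delicate point is the imported membership statement for the coefficients of $\underline{H_{\tilde f_i(y_{i,0})}(u)}$ in arbitrary type; the identification of the relevant element of $\B(\varpi_i,0)$, the appeal to Lemma \ref{Lemma: multiset conditions from series}, and the maximal-real-part argument are all routine.
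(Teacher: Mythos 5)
Your proposal is correct and follows essentially the same route as the paper: the key input is that the coefficients of $\Pi(\underline{u^{\mu_i+m_i}T_{f_iv_{\varpi_i},f_iv_{\varpi_i}}(u)})=\underline{H_{\tilde f_i(y_{i,0})}(u)}$ lie in the ideal defining the $B$-algebra, after which Lemma \ref{Lemma: multiset conditions from series} applied to $q=\tilde f_i(y_{i,0})$ yields the multiset inclusions, and these force integrality and parity of $\bS$. You merely fill in two details the paper leaves implicit (the $d=1$/$[E_i^{(1)},-]$ calculation establishing the membership, and the maximal-real-part argument for parity), both correctly.
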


\subsection{Connection to comultiplication for shifted Yangians}
\label{section: coproducts}

In this subsection we will outline an expected connection between B-algebras, the `product' structure on the product monomial crystal, and the comultiplication for shifted Yangians as defined in \cite{FKPRW}.

First consider the homomorphism $\Cartan \rightarrow \Cartan\otimes \Cartan$, defined in series form as
\begin{equation}
\label{eq: coproducts 1}
H_i(u) \mapsto H_i(u) \otimes H_i(u), \quad \text{for each } i \in I
\end{equation}

Suppose that we are given dominant coweights $\lambda = \lambda' + \lambda''$, coweights $\mu = \mu' + \mu''$ with $\lambda \geq \mu, \lambda' \geq \mu', \lambda'' \geq \mu''$, and a decomposition $\bR = \bR' \cup \bR''$ of tuples of multisets of sizes $\lambda, \lambda'$ and $\lambda''$ respectively.

\begin{Proposition}\label{prop: conj B-algebra coproduct}
\mbox{}

\begin{enumerate}
\item[(a)] The homomorphism $\Cartan \rightarrow \Cartan \otimes \Cartan$ defined by (\ref{eq: coproducts 1}) descends to the quotient algebras
\begin{equation}
\label{eq: coproducts 2}
\widetilde{B}_\mu^\lambda(\bR) \longrightarrow \widetilde{B}_{\mu'}^{\lambda'}(\bR') \otimes \widetilde{B}_{\mu''}^{\lambda''}(\bR'')
\end{equation}

\item[(b)] The corresponding map on maximal spectra (using Theorem \ref{thm: maxspec of conj B-algebra}),
$$ \B(\lambda', \bR')_{\mu'} \times \B(\lambda'', \bR'')_{\mu''} \rightarrow \B(\lambda, \bR)_\mu $$
is precisely the multiplication of monomials.
\end{enumerate}
\end{Proposition}

\begin{proof}
A monomial $p\in \B$ encodes a specialization $H_i(u) \mapsto \C[[u^{-1}]]$ for each $i\in I$, corresponding to the expansions of some rational functions determined by $p$ at $u = \infty$ (cf. Section \ref{subsec:highest}).  It is thus clear that the map (\ref{eq: coproducts 1}) corresponds to multiplication of monomials, on the level of maximal spectra.  So claim (b) follows from claim (a).

To prove claim (a), we observe that for each $q \in \B(\varpi_i, 0)$,
$$ H_q(u) \mapsto H_q'(u) \otimes H_q''(u) $$
where $H_q(u)$ (resp. $H'_q(u), H''_q(u)$) denotes the series (\ref{eq: def of H_q}) defined using data $\lambda, \mu, \bR$ (resp. $\lambda',\mu',\bR'$ and $\lambda'', \mu'', \bR''$). This is an application of the definition (\ref{eq: def of H_q}).  Hence the ideal defining $\widetilde{B}_\mu^\lambda(\bR)$ maps into the ideal defining $\widetilde{B}_{\mu'}^{\lambda'}(\bR')\otimes \widetilde{B}_{\mu''}^{\lambda''}(\bR'')$ as a quotient of $\Cartan\otimes \Cartan$, proving (a).
\end{proof}

In \cite[Section 4]{FKPRW}, a comultiplication homomorphism 
$$Y_\mu \rightarrow Y_{\mu'}\otimes Y_{\mu''}$$ 
is defined for any $\mu = \mu' + \mu''$ as above (including non-dominant cases).  This map preserves the principal gradations on both sides, and hence yields a map of their B-algebras 
$$B(Y_\mu) \rightarrow B(Y_{\mu'}) \otimes B( Y_{\mu''}).$$

\begin{Conjecture}
The comultiplication $Y_\mu \rightarrow Y_{\mu'}\otimes Y_{\mu''}$ descends to a homomorphism of quotients
$$ Y_\mu^\lambda(\bR) \rightarrow Y_{\mu'}^{\lambda'}(\bR') \otimes Y_{\mu''}^{\lambda''}(\bR'')$$
In particular, it yields a homomorphism of B-algebras
$$ B\big( Y_\mu^\lambda(\bR) \big) \rightarrow B\big( Y_{\mu'}^{\lambda'} (\bR') \big) \otimes B\big( Y_{\mu''}^{\lambda''}(\bR'') \big) $$
\end{Conjecture}

Let us assume for the moment that the above conjecture holds, and also that $\widetilde{B}_\mu^\lambda(\bR) = B\big( Y_\mu^\lambda(\bR) \big)$ as in Conjecture \ref{conj: B-algebra is correct in all types}.  Then it is easy to see that the coproduct defined by the above conjecture agrees with the coproduct defined explicitly in Proposition \ref{prop: conj B-algebra coproduct}.  Indeed, the comultiplication for $Y_\mu$ has the property that 
$$ H_i(u) \mapsto H_i(u) \otimes H_i(u) + Y_{\mu'}^< \otimes Y_{\mu''}^>,$$ 
This agrees with (\ref{eq: coproducts 1}) modulo terms which vanish  upon passing to B-algebras.  Since we have assumed that $\widetilde{B}_\mu^\lambda(\bR)$ and $B\big(Y_\mu^\lambda(\bR)\big)$ agree as quotients of $\Cartan$, this forces equality of both comultiplications.

\begin{Remark}
In the case of $\g = \mathfrak{sl}_n$ and dominant $\mu$, a proof of the previous conjecture was given in the fourth author's thesis \cite[Proposition 4.1.15]{AWthesis}.  The proof uses the connection between lifted minors and quantum determinants, as in Section \ref{section: type A, quantum determinants}.
\end{Remark}

\section{Link to quiver varieties}
\label{section: Link to quiver varieties}
In this section, we make the link with Nakajima's quiver varieties.  Throughout, fix a dominant coweight $ \lambda $ and an integral set of parameters $\bR$.

\subsection{Quiver varieties}

Consider the doubled quiver of the Dynkin diagram of $ \g $.   Recall that we have a fixed bipartition $ I = I_0 \cup I_1 $ of the vertex set $ I $.  Let $ \Omega $ denote those edges in the quiver which go from $ I_1 $ to $ I_0 $ and let $ \overline{\Omega} $ denote those edges which go from $ I_0 $ to $I _1 $.  For $ h \in \Omega \cup \overline{\Omega} $ we write $ \out(h), \inn(h) $ to denote the source and target of the edge $ h $.

Fix an $ I$-graded vector $V= \oplus_i V_i$ space with $ \dim V_i = m_i $ for all $ i \in I$.
Let
$$
\bM(V,W) = \bigoplus_{h \in \Omega \cup \overline{\Omega}} \Hom(V_{\out(h)}, V_{\inn(h)}) \oplus \bigoplus_{i \in I} \Hom(W_i, V_i) \oplus \bigoplus_{i \in I} \Hom(V_i, W_i).
$$
We will write an element of $ \bM(V,W) $ as
$$ (B, \eta, \varepsilon) =  ((B_h)_{h \in \Omega \cup \overline{\Omega}}, (\eta_i)_{i \in I}, (\varepsilon_i)_{i \in I}). $$
The group $ G_V = \prod_i GL(V_i) $ acts on $ \bM(V,W)$ by
$$
g \cdot ((B_h), (\eta_i), (\varepsilon_i)) = ((g_{\inn(h)} B_h g_{\out(h)}^{-1}), (g_i \eta_i), (\varepsilon_i g_i^{-1})).
$$
This action has moment map
$$
\mu(B, \eta, \varepsilon) = \sum_{h \in \Omega} B_h B_{\overline{h}} - \sum_{h \in \Omega} B_{\overline{h}} B_h + \sum_i \eta_i\varepsilon_i.
$$

Fix an $ I$-graded vector space $W$ with $ \dim W_i = \lambda_i $ for all $ i$.
Nakajima's quiver variety $ \fM(\bm,W) $ is the Hamiltonian reduction of $ \bM(V,W) $ by $ G_V $ at level $0$.  We work with the usual stability condition (see Definition 2.7 in \cite{Nak010}), and
$$
\fM(\bm, W) = \mu^{-1}(0)^s / G_V.
$$
Let $\displaystyle \fM(W) = \bigcup_{\bm} \fM(\bm,W) $ be the disjoint union of these quiver varieties.

\subsection{Graded quiver varieties} \label{se:gradedQV}

There is also an action of the group  $ G_W = \prod_i GL(W_i) $ on $ \bM(V,W) $, which descends to an action on $ \fM(\bm, W) $.

Fix homomorphisms $ \rho_i : \C^\times \rightarrow GL(W_i)$ so that
$ R_i $ is the set of weights for the action of $ \C^\times $ on $ W_i $.
This gives us an action of $ \C^\times $ on $ \bM(V,W) $ defined by
$$
t * ((B_h), (\eta_i), (\varepsilon_i)) = ((t B_h), ( \eta_i \rho_i(t)), (t^2 \rho_i(t)^{-1} \varepsilon_i))
$$
which descends to an action on $ \fM(\bm, W) $.
We let
$$
\fM(W, \bR) = \fM(W)^{\C^\times} \quad \text{ and } \quad  \fM(\bm,W, \bR) = \fM(\bm, W)^{\C^\times}
$$
be the fixed points for this action.  These varieties are called {\bf graded quiver varieties.}

By  \cite[\S 4.1]{Nak991}, each point $ x = [B, \eta, \varepsilon] \in \fM(\bm, W, \bR) $ determines maps  (unique up to conjugation)  $ \sigma_i: \C^\times \rightarrow GL(V_i) $,  such that $ \sigma_i(t) \cdot (B, \eta, \varepsilon) = t * (B, \eta, \varepsilon) $.

Thus a point $ x $ determines a collection $ \bS = ( S_i )_{i \in I} $
of multisets of integers with $ |S_i | = m_i$, which are the weights
of the above $\C^\times$ action on $V_i$ under $\sigma_i$.

Given such a collection $ \bS $, we let $ X(\bS) $ denote the
corresponding subset of $ \fM(\bm, W,\bR) $.  Put another way,
$X(\bS)$ is the collection of fixed points under $\C^\times$ where the
induced action on the fiber of the tautological bundle $V_i$ at that
point has spectrum $S_i$.

\begin{Remark}
In Nakajima's papers, $ \bS $ is denoted $ \rho$ and $ X(\bS) $ is denoted $ \mathfrak{Z}(\rho) $ or $ \mathfrak{M}(\rho)$.  We have changed certain signs in the $\C^\times$-actions to match the conventions from previous sections.
\end{Remark}

To make this more explicit, denote by $W_i(k)$ and $V_i(k)$ the $\C^\times$-weight spaces of weight $k\in \Z$ under the actions $\rho_i$, resp. $\sigma_i$.  Then the multisets $R_i$, $S_i$ encode dimensions:
$$ R_i(k) = \dim W_i(k), \ \ S_i(k) = \dim V_i(k).$$
The identity $\sigma_i(t) \cdot (B,\eta,\varepsilon) = t * (B,\eta, \varepsilon)$ is equivalent to
$$ B_h ( V_{\out(h)} (k) ) \subset V_{\inn(h)}(k+1), \ \ \eta_i(W_i(k)) \subset V_i(k), \ \ \varepsilon(V_i(k)) \subset W_i(k+2). $$

Consider the maps
$$
\begin{aligned}
& {\phi_i(k)}=  \varepsilon_i\oplus \bigoplus_{\out(h) = i} B_h: V_i(k-2) \rightarrow W_i(k) \oplus \bigoplus_{\out(h) = i} V_{\inn(h)}(k-1),  \text{ and} \\
& \tau_i(k) =  \eta_i - \sum_{\out(h) = i, h \in \Omega} B_{\overline{h}} + \sum_{\out(h) = i, h \in \overline{\Omega}} B_{\overline{h}} :  W_i(k) \oplus \bigoplus_{\out(h) = i} V_{\inn(h)}(k-1) \rightarrow V_i(k).
\end{aligned}
 $$
It follows from the stability condition that $\phi_i(k)$ is injective, c.f. \cite[Lemma 2.9.2]{Nak991}, and by the definition of the quiver variety that $\tau_i(k) \circ \phi_i(k)=0$.

Thus we get a complex $C_i(k)^p $, $ p = -1, 0, 1$ as follows
$$
V_i(k-2)   \xrightarrow{\phi_i(k)} W_i(k) \oplus \bigoplus_{\out(h) = i} V_{\inn(h)}(k-1)  \xrightarrow{\tau_i(k)} V_i(k).
$$

As in Section \ref{subsection: Collections of multisets and monomials}, given collections of multisets $ \bR, \bS$, we consider
$$
y_\bR z_\bS^{-1} =  \prod_{i \in I, c \in R_i} y_{i,c} \prod_{i \in I, k \in S_i} z_{i,k}^{-1} = \prod_{i,k} y_{i,k}^{a_{i,k}}.
$$
A simple computation shows the following result.
\begin{Lemma} \label{le:aikandN}
On the locus $ X(\bS) $, we have $ a_{i,k} = \sum_p (-1)^p \dim C_i(k)^p.$
\end{Lemma}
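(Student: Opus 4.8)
The plan is a direct comparison of the two sides of the claimed identity, which amounts to the ``simple computation'' alluded to above. First I would read off the exponent $a_{i,k}$ from the definition
$$y_\bR z_\bS^{-1} = \prod_{i \in I, c \in R_i} y_{i,c} \prod_{i \in I, k \in S_i} z_{i,k}^{-1}$$
together with the formula $z_{i,k} = y_{i,k} y_{i,k+2} \big/ \prod_{j \con i} y_{j,k+1}$. The variable $y_{i,k}$ is produced in exactly the following ways: by the factor $y_\bR$, contributing the multiplicity $R_i(k)$ of $k$ in $R_i$; by the denominator of $z_{i,k}^{-1}$, contributing $-S_i(k)$; by the denominator of $z_{i,k-2}^{-1}$, since $y_{i,(k-2)+2} = y_{i,k}$, contributing $-S_i(k-2)$; and by the numerator of $z_{j,k-1}^{-1}$ for each neighbour $j \con i$, since $y_{i,(k-1)+1} = y_{i,k}$, contributing $+\sum_{j \con i} S_j(k-1)$. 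Hence
$$a_{i,k} = R_i(k) - S_i(k) - S_i(k-2) + \sum_{j \con i} S_j(k-1),$$
where $R_i(\ell)$ and $S_i(\ell)$ denote multiplicities as in Section \ref{Sec:diagrams}.

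Next I would compute the right-hand side on $X(\bS)$. There we have the weight-space identifications $R_i(k) = \dim W_i(k)$ and $S_i(k) = \dim V_i(k)$, and the complex $C_i(k)^\bullet$ has terms $C_i(k)^{-1} = V_i(k-2)$, $C_i(k)^0 = W_i(k) \oplus \bigoplus_{\out(h)=i} V_{\inn(h)}(k-1)$, and $C_i(k)^1 = V_i(k)$. Therefore
$$\sum_p (-1)^p \dim C_i(k)^p = -\dim V_i(k-2) + \dim W_i(k) + \sum_{\out(h)=i} \dim V_{\inn(h)}(k-1) - \dim V_i(k).$$
Because the quiver is the double of the Dynkin diagram, the arrows $h$ with $\out(h) = i$ are in bijection with the neighbours $j \con i$, each having $\inn(h) = j$; so the middle sum equals $\sum_{j \con i} \dim V_j(k-1) = \sum_{j \con i} S_j(k-1)$. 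Substituting the identifications turns the Euler characteristic into $R_i(k) - S_i(k) - S_i(k-2) + \sum_{j \con i} S_j(k-1)$, which is exactly the formula for $a_{i,k}$ obtained in the first step, proving the lemma.

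The argument involves no real difficulty; the only points that require care are the bookkeeping of the doubled-quiver conventions — which arrows count among those with $\out(h) = i$, and that each Dynkin edge $\{i,j\}$ contributes exactly one of them, with target $j$ — and checking that the $\C^\times$-weight shifts built into $\phi_i(k)$ and $\tau_i(k)$, coming from the action $t * (B,\eta,\varepsilon) = ((tB_h),(\eta_i \rho_i(t)),(t^2 \rho_i(t)^{-1}\varepsilon_i))$, line up with the indices $k-2, k-1, k$ appearing in $z_{i,k}$. Once these conventions are pinned down, the identity is the one-line comparison above.
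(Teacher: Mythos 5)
Your computation is correct and is exactly the ``simple computation'' the paper alludes to without writing out: expanding $y_\bR z_\bS^{-1}$ gives $a_{i,k} = R_i(k) - S_i(k) - S_i(k-2) + \sum_{j\con i} S_j(k-1)$, and this matches the Euler characteristic of $C_i(k)$ once one identifies $R_i(k)=\dim W_i(k)$, $S_i(k)=\dim V_i(k)$ and notes that the arrows with $\out(h)=i$ correspond bijectively to the neighbours $j\con i$. Nothing is missing.
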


The following result follows from Nakajima \cite{Nak991}, Theorem 5.5.6.

\begin{Proposition} \label{prop:ccomp}
For each $ \bS $, if $ X(\bS) $ is non-empty, then it is a connected component of $ \fM(\bm, W, \bR) $.
\end{Proposition}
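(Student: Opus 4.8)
The plan is to show that each non-empty $X(\bS)$ is simultaneously open, closed and connected inside $\fM(\bm,W,\bR)=\fM(\bm,W)^{\C^\times}$, which is exactly the assertion. The starting observation is that with the standard stability condition $\fM(\bm,W)$ is a smooth variety, so its $\C^\times$-fixed locus $\fM(\bm,W,\bR)$ is closed and smooth; in particular it has only finitely many connected components, each open and closed in it. Thus the statement reduces to: (i) $\bS$ is a locally constant invariant on $\fM(\bm,W,\bR)$, and (ii) each non-empty level set $X(\bS)$ is connected.

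For (i), recall that a fixed point $x=[B,\eta,\varepsilon]$ determines the cocharacters $\sigma_i\colon\C^\times\to GL(V_i)$, which are canonical up to conjugation (within a chosen representative $(B,\eta,\varepsilon)$ they are uniquely determined, since the stabilizer of a stable point in $G_V$ is trivial). These assemble into a $\C^\times$-equivariant structure on the tautological bundle $V_i$ over the fixed locus, equivalently a decomposition $V_i=\bigoplus_k V_i(k)$ into the weight subbundles. The ranks $S_i(k)=\dim V_i(k)$ of a subbundle are locally constant on the base, so the multiset data $\bS$ is constant on each connected component of $\fM(\bm,W,\bR)$. Hence $X(\bS)$ is a union of connected components, i.e. open and closed.

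For (ii) I would invoke Nakajima's work. On $X(\bS)$ the quiver data $(B,\eta,\varepsilon)$ respects the $\Z$-gradings of $V$ and $W$ in the manner recorded just before Lemma~\ref{le:aikandN} ($B_h$ raises the $\Z$-degree by one, $\eta_i$ preserves it, $\varepsilon_i$ raises it by two), so that $X(\bS)$ is canonically identified with a Nakajima quiver variety attached to the ``graded quiver'' on vertex set $I\times\Z$, with dimension vectors $(S_i(k))$ and $(R_i(k))$ and with the restriction of the standard stability condition. The connectedness of $X(\bS)$ is then precisely what Theorem 5.5.6 of \cite{Nak991} supplies, once one checks that the restricted stability condition coincides with the standard one on the graded quiver. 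The only genuine work is bookkeeping: one must verify that our $\C^\times$-action, which differs from Nakajima's by the sign changes noted in the Remark above, yields the same decomposition of the fixed locus and the same (standard) stability on the graded quiver. I expect this convention-matching to be the main, and only mild, obstacle; the rest is formal.
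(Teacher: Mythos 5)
Your proposal is correct and takes essentially the same route as the paper: the paper's entire proof is the citation of Theorem 5.5.6 of \cite{Nak991}, which is precisely the connectedness input you reduce to, and your open-and-closed step (local constancy of the ranks of the weight subbundles of the tautological bundles over the smooth closed fixed locus) is the standard complement left implicit there. No gap.
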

Thus we have an injective map
$$
\pi_0(\fM(W, \bR)) \rightarrow \B.
$$

\subsection{Comparison with weight 1 action} \label{se:WeightOne}
The following idea was suggested by Nakajima.

Define a new action of $ \C^\times $ on $ \bM(V,W) $ by
$$
t *' (B, \eta, \varepsilon) = ( (B_h)_{h \in \Omega}, (t^2 B_h)_{h \in \overline{\Omega}}, (\eta_i \rho_i(t))_{i \in I_0}, ( t\eta_i \rho_i(t))_{i \in I_1}, (t^2\rho_i(t)^{-1} \varepsilon_i)_{i \in I_0}, (t \rho_i(t)^{-1} \varepsilon_i)_{i \in I_1})
$$
Again the action descends to $ \fM(\bm,W) $, where it becomes the same action as before.

\begin{Lemma} \label{le:ActionsAgree}
For all $ [B, \eta, \varepsilon] \in \fM(\bm, W) $, we have $ t *' [B, \eta, \varepsilon] = t * [B, \eta, \varepsilon].$
\end{Lemma}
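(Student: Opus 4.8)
The plan is to show that the two $\C^\times$-actions $*$ and $*'$ on $\bM(V,W)$ differ by a gauge transformation, i.e.\ by the action of a cocharacter of $G_V$. Since $G_V$ acts trivially on the GIT quotient $\fM(\bm,W) = \mu^{-1}(0)^s/G_V$, this at once yields $t *'[B,\eta,\varepsilon] = t*[B,\eta,\varepsilon]$ for all $[B,\eta,\varepsilon]\in\fM(\bm,W)$.

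First I would define $g\colon\C^\times\to G_V$, $g(t) = (g_i(t))_{i\in I}$, by $g_i(t) = \mathrm{id}_{V_i}$ for $i\in I_0$ and $g_i(t) = t^{-1}\,\mathrm{id}_{V_i}$ for $i\in I_1$. Then I would verify, component by component over the arrows $h\in\Omega\cup\overline{\Omega}$ and the maps $\eta_i$, $\varepsilon_i$, the identity
$$ g(t)\cdot\bigl(t *'(B,\eta,\varepsilon)\bigr) = t*(B,\eta,\varepsilon), \qquad (B,\eta,\varepsilon)\in\bM(V,W). $$
For $h\in\Omega$ one has $\out(h)\in I_1$, $\inn(h)\in I_0$; the $*'$-action fixes $B_h$, and then $g(t)$ sends it to $g_{\inn(h)}(t)\,B_h\,g_{\out(h)}(t)^{-1} = tB_h$, which is exactly $t*B_h$. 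For $h\in\overline{\Omega}$ the $*'$-action gives $t^2 B_h$ and $g(t)$ contributes a factor $t^{-1}$ on the left, so again one gets $tB_h$. For $\eta_i,\varepsilon_i$ with $i\in I_1$ the $*'$-action produces $t\eta_i\rho_i(t)$ and $t\,\rho_i(t)^{-1}\varepsilon_i$, and applying $g(t)$ — which multiplies on the $V_i$ side by $t^{-1}$ on $\eta_i$ and by $t$ on $\varepsilon_i$ — turns these into $\eta_i\rho_i(t)$ and $t^2\rho_i(t)^{-1}\varepsilon_i$, matching $*$; for $i\in I_0$ nothing is changed. The $\rho_i(t)$ factors never interfere, since $g_i(t)$ is scalar on $V_i$.

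There is no real obstacle here: the only point is to fix the exponents of $t$ correctly, and they are forced — the two arrow conditions both amount to $g_i(t)\,g_j(t)^{-1} = t$ for $i\in I_0$, $j\in I_1$, and this is compatible with the conditions coming from $\eta$ and $\varepsilon$, so a consistent cocharacter $g$ exists (unique up to a central scalar). As a byproduct the same computation shows that $*'$ commutes with $G_V$ and that $\mu(t *'(B,\eta,\varepsilon)) = t^2\mu(B,\eta,\varepsilon)$, so $*'$ preserves $\mu^{-1}(0)^s$; this also justifies the assertion, made just before the statement, that $*'$ descends to $\fM(\bm,W)$.
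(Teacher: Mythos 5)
Your proof is correct and is essentially the paper's argument: both exhibit the two actions as differing by the cocharacter of $G_V$ that is trivial on $V_i$ for $i\in I_0$ and scalar on $V_i$ for $i\in I_1$ (yours is the inverse of the one in the paper, which is immaterial). Your component-by-component verification and the remark that $*'$ preserves $\mu^{-1}(0)^s$ are just more explicit than what the paper writes.
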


\begin{proof}
Define a map $ \C^\times \rightarrow G_V $ where $ t $ acts by $t $ on $V_i $ for $ i \in I_1 $ and by $ 1 $ for $ i \in I_0 $.  Using the action of $ G_V $ on $ \bM(V, W) $, this defines an action of $ \C^\times $ on $ \bM(V,W) $.

Then for any $(B, \eta, \varepsilon) \in \bM(V,W) $ we have
$$
t *' (B, \eta, \varepsilon) = t \cdot t * (B, \eta, \varepsilon)
$$
and so the result follows.
\end{proof}

Now, define a map $ \rho'_i : \C^\times \rightarrow GL(W_i) $ by using $ -R_i/2 $ as the set of weights if $ i \in I_0 $ and using $ -(R_i + 1)/2$ as the set of weights if $ i \in I_1 $.   Thus
$$
\rho'_i(t^2) = \begin{cases}
\rho_i(t)^{-1}, \text{ if } i \in I_0\\
t^{-1}\rho_i(t)^{-1}, \text{ if } i \in I_1
\end{cases}
$$

Then define an action of $ \C^\times $ on $ \bM(V,W) $  using $ \rho' $ by
$$
t \diam (B, \eta, \varepsilon) = ( (B_h)_{h \in \Omega}, (t B_h)_{h \in \overline{\Omega}}, (\eta_i \rho'_i(t)^{-1})_{i \in I}, (t \rho'_i(t) \varepsilon_i)_{i \in I})
$$
It is easy to see that we have $ t^2 \diam (B, \eta, \varepsilon) = t *' (B, \eta, \varepsilon)$.
In particular:

\begin{Proposition} \label{pr:TwoActions}
We have an equality
$$
\fM(W)^{\C^\times, *} = \fM(W)^{\C^\times, \diam}
$$
where the left hand side is the fixed point set for our original action and the right hand side is the fixed point set for this new action. \qed
\end{Proposition}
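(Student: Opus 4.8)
The plan is to prove the equality by inserting the auxiliary action $*'$ as an intermediate step, establishing
$$\fM(W)^{\C^\times,*} = \fM(W)^{\C^\times,*'} = \fM(W)^{\C^\times,\diam},$$
where all three fixed loci are taken componentwise on $\fM(W) = \bigsqcup_\bm \fM(\bm,W)$ (all three actions depend on the choice of $V$, hence on $\bm$).

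The first equality is immediate from Lemma \ref{le:ActionsAgree}: that lemma asserts $t*'x = t*x$ for every $x\in\fM(\bm,W)$ and every $t\in\C^\times$, so $*$ and $*'$ are literally the same action on $\fM(W)$ and hence share the same fixed-point set. Along the way one records that $\diam$ does descend to a genuine action on $\fM(\bm,W)$: it commutes with the $G_V$-action (it only rescales the arrows in $\overline\Omega$ and twists $\eta,\varepsilon$ by scalars on the $W$-side), and it preserves $\mu^{-1}(0)^s$ because for any $t\in\C^\times$ we may write $t = s^2$ and then $t\diam x = s^2\diam x = s*'x$, with $*'$ preserving $\mu^{-1}(0)^s$ by construction.

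For the second equality I would use the identity $t^2\diam x = t*'x$ directly on $\fM(W)$. If $x$ is $\diam$-fixed then $t*'x = t^2\diam x = x$ for all $t$, so $x$ is $*'$-fixed; conversely, if $x$ is $*'$-fixed then $t^2\diam x = x$ for all $t\in\C^\times$, and since squaring is surjective on $\C^\times$ this forces $s\diam x = x$ for all $s\in\C^\times$, i.e. $x$ is $\diam$-fixed. Combining the two steps yields the Proposition. There is essentially no obstacle: the content is entirely contained in Lemma \ref{le:ActionsAgree} and the bookkeeping identity $t^2\diam = t*'$, and the only point needing (minimal) care is the elementary observation that $\C^\times$ is $2$-divisible, which is exactly what lets fixing the squared parameter force fixing the whole $\diam$-action.
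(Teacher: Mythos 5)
Your argument is correct and follows exactly the route the paper intends: the first equality is Lemma \ref{le:ActionsAgree} and the second is the identity $t^2 \diam x = t *' x$ together with surjectivity of squaring on $\C^\times$. The paper compresses this into "In particular:" followed by the Proposition statement with a \qed, so you have simply unwound the same one-line argument into its two explicit steps.
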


\subsection{Crystal structure}

In \cite[\S 8]{Nak010} Nakajima studies the $\diam$ action, and constructs a crystal structure on $ \pi_0(\fM(W)^{\C^\times, \diam})$, which by Proposition \ref{pr:TwoActions} is equal to $ \pi_0(\fM(W, \bR))$.

\begin{Theorem}
The map
\begin{align*}
\phi : \pi_0(\fM(W, \bR)) &\rightarrow \B \\
X(\bS) &\mapsto  y_\bR z_\bS^{-1}
\end{align*}
is an injective map of crystals.
\end{Theorem}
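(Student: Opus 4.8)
The plan is to establish two things about the map $\phi\colon \pi_0(\fM(W,\bR)) \to \B$, $X(\bS) \mapsto y_\bR z_\bS^{-1}$: that it is injective, and that it intertwines the crystal structures (which in particular forces its image to be a subcrystal, hence normal). Injectivity is essentially formal: by Remark \ref{Remark: uniqueness of monomial factorization}, the assignment $\bS \mapsto y_\bR z_\bS^{-1}$ is injective as a function of $\bS$, and by Proposition \ref{prop:ccomp} distinct nonempty $X(\bS)$ are distinct connected components; so it suffices to observe that $\bS$ is recovered from the component $X(\bS)$, which is exactly how $X(\bS)$ was defined (the spectrum of the induced $\C^\times$-action on the tautological bundle $V_i$). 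So the content is in the crystal morphism claim.

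For the crystal structure, I would invoke Nakajima's construction in \cite[\S 8]{Nak010}, where the Kashiwara operators on $\pi_0(\fM(W)^{\C^\times,\diamond})$ are defined via the attracting set of each fixed-point component under the $\diamond$-action, together with the identification $\pi_0(\fM(W)^{\C^\times,\diamond}) = \pi_0(\fM(W,\bR))$ from Proposition \ref{pr:TwoActions}. The key computational input is Lemma \ref{le:aikandN}: on $X(\bS)$ the exponent $a_{i,k}$ in $y_\bR z_\bS^{-1} = \prod_{i,k} y_{i,k}^{a_{i,k}}$ equals the Euler characteristic $\sum_p (-1)^p \dim C_i(k)^p$ of the three-term complex $C_i(k)^\bullet$. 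Nakajima's analysis of the tangent/normal directions of the fixed-point components (and the transversal slices relating $X(\bS)$ to $X(\bS')$ where $\bS'$ differs from $\bS$ by adding or removing one integer $k$ to $S_i$) shows that the edges of the crystal graph on $\pi_0(\fM(W,\bR))$ are precisely given by $S_i \rightsquigarrow S_i \cup \{k\}$ with the combinatorial rule governing when such a component is nonempty matching the monomial-crystal rule $p \mapsto z_{i,k}^{\pm 1} p$. Concretely, I would: (i) recall that $\tilde f_i$ on the monomial crystal multiplies by $z_{i,k-2}^{-1}$ at the appropriate $k$, which on the level of multisets is $S_i \mapsto S_i \cup \{k-2\}$; (ii) recall from Nakajima that $\tilde f_i$ on $\pi_0(\fM(W,\bR))$ sends a component to the unique component (if nonempty) obtained by the corresponding modification of $\bS$, determined by the same extremal condition on the sign sequence / the dimensions $\dim C_i(k)^\bullet$; (iii) conclude via Lemma \ref{le:aikandN} that these two prescriptions agree, so $\phi \circ \tilde f_i = \tilde f_i \circ \phi$ and likewise for $\tilde e_i$, while $wt(X(\bS)) = wt(y_\bR z_\bS^{-1})$ is immediate from the dimension bookkeeping $R_i(k) = \dim W_i(k)$, $S_i(k) = \dim V_i(k)$.

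The main obstacle is matching Nakajima's combinatorial description of the crystal operators on graded quiver varieties with the explicit $z_{i,k}$-multiplication rules of the monomial crystal, since the two are phrased in genuinely different languages: one in terms of attracting sets, Hecke correspondences, and nonemptiness of strata cut out by the complexes $C_i(k)^\bullet$, the other in terms of maximizing $\varepsilon_i^k$ over monomials. I expect this to require carefully tracking which index $k$ gets modified — i.e.\ verifying that the ``smallest $k$ with $\varepsilon_i^k = \varepsilon_i$'' rule coincides with Nakajima's extremal choice — which is the crux of why this theorem is stated as a consequence of the quiver-variety picture rather than proved combinatorially (as flagged after Theorem \ref{th:subcrystal}). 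Once the edge-matching is in place, the statement that $\phi$ is an \emph{injective} crystal morphism, and hence that $\B(\lambda,\bR) = \operatorname{im}\phi$ is a subcrystal of the normal crystal $\B$, follows; combined with the fact that $y_\bR$ generates a copy of $\B(\lambda)$ and that every element of $\B(\lambda,\bR)$ is a product of monomials from the $\B(\varpi_i,c)$, this also yields the sandwich $\B(\lambda) \subseteq \B(\lambda,\bR) \subseteq \otimes_i \B(\varpi_i)^{\otimes\lambda_i}$ asserted in Theorem \ref{th:subcrystal}.
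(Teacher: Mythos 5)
Your proposal is correct and follows the same route as the paper: injectivity via Proposition \ref{prop:ccomp} (distinct $\bS$ give distinct monomials and distinct nonempty components), and the crystal-morphism claim by combining Lemma \ref{le:aikandN} with Nakajima's attracting-set construction of the crystal structure on $\pi_0(\fM(W)^{\C^\times,\diam})$ — the edge-matching you identify as the crux is exactly the content of Lemma 8.4 of \cite{Nak010}, which is what the paper cites rather than reproving.
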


\begin{proof}
 Lemma 8.4 of \cite{Nak010} combined with Lemma \ref{le:aikandN} imply that $ \phi $ is a crystal map. $\phi$ is injective by Proposition \ref{prop:ccomp}.
\end{proof}

\begin{Proposition} \label{pr:pi0andB}
The image of the map $ \phi : \pi_0(\fM(W, \bR)) \rightarrow \B $ is $ \B(\lambda, \bR)$.  Thus we have an isomorphism of crystals $ \pi_0(\fM(W, \bR)) \cong \B(\lambda, \bR)$.
\end{Proposition}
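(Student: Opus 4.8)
The plan is to combine the combinatorial characterization of $\B(\lambda,\bR)$ in Theorem~\ref{Theorem: regular = product monomial crystal} with Nakajima's theory of graded quiver varieties. Since $\phi$ is an injective map of crystals sending $X(\bS)$ to $y_\bR z_\bS^{-1}$, and since Theorem~\ref{Theorem: regular = product monomial crystal} identifies $\B(\lambda,\bR)$ with the set of \emph{regular} $\bR$--monomial data, it suffices to prove: a component $X(\bS)$ is non-empty if and only if $p:=y_\bR z_\bS^{-1}$ is regular, i.e.\ $E_q(p)\ge 0$ for every $q=y_{i,n}z_\bU^{-1}\in\B(\varpi_i,n)$ with $i\in I_{\overline{n+1}}$. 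The rest of the argument then runs entirely on the quiver-variety side.

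For the inclusion $\operatorname{im}\phi\subseteq\B(\lambda,\bR)$, fix a point $x\in X(\bS)$ and such a $q$. Writing $q=y_{i,n}\prod_{j,k}z_{j,k}^{-U_j(k)}$ and substituting $z_{j,k}^{-1}=\bigl(\prod_{l\con j}y_{l,k+1}\bigr)/(y_{j,k}y_{j,k+2})$, a direct bookkeeping from the definition of $E_q$ gives $E_q(p)=S_i(n-1)+\sum_{j,k}U_j(k)\,a_{j,k+1}$, and Lemma~\ref{le:aikandN} then rewrites this as $S_i(n-1)+\sum_{j,k}U_j(k)\sum_p(-1)^p\dim C_j(k+1)^p$. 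The crucial step is to recognize this alternating sum as the dimension of a single cohomology group: it is the Euler characteristic of the $\Hom$--complex between the framed quiver representation carried by $x$ and the combinatorial $\C^\times$--fixed framed representation attached to $q$ (one-dimensional framing of weight $n$ at vertex $i$, vertex spaces with graded dimensions $\bU$). The opposite-parity hypothesis $i\in I_{\overline{n+1}}$ shifts the two outer terms of this complex so that injectivity of the maps $\phi_j(k)$ provided by the stability condition (cf.\ \cite[Lemma 2.9.2]{Nak991}), together with the dual surjectivity statement, makes the complex exact outside the middle degree. Hence the Euler characteristic equals an honest dimension, so $E_q(p)\ge 0$ and $p$ is regular.

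For the reverse inclusion $\B(\lambda,\bR)\subseteq\operatorname{im}\phi$, note that $\operatorname{im}\phi$ is a subcrystal of $\B$ closed under $\tilde e_i,\tilde f_i$ (being the image of the injective crystal morphism $\phi$, with crystal operators realized geometrically by Nakajima), and that both $\operatorname{im}\phi$ and $\B(\lambda,\bR)$ are normal crystals. It therefore suffices to realize every highest-weight element of $\B(\lambda,\bR)$ in the image, and then close up under the $\tilde f_i$. This can be extracted from Nakajima's nonemptiness criterion for graded quiver varieties \cite[Theorem 5.5.6]{Nak991} once it is matched, via the formula $E_q(p)=S_i(n-1)+\sum_{j,k}U_j(k)a_{j,k+1}$ above, with the regularity inequalities. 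Conceptually this is transparent: the decomposition $W=\bigoplus_{i,\,c\in R_i}W^{(i,c)}$ into one-dimensional framings realizes each factor $\B(\varpi_i,c)$ as $\phi$ of $\fM(W^{(i,c)},\bR^{(i,c)})^{\C^\times}$, and the product structure $\B(\lambda,\bR)=\prod_{i,c\in R_i}\B(\varpi_i,c)$ mirrors the behaviour of graded quiver varieties under such a decomposition of $W$; in particular products $\prod_{i,c}y_{\gamma_{i,c},c}$, hence all highest-weight elements of $\B(\lambda,\bR)$, lie in the image. Combining the two inclusions with Theorem~\ref{Theorem: regular = product monomial crystal} gives $\operatorname{im}\phi=\B(\lambda,\bR)$, and since $\phi$ is an injective crystal morphism this is an isomorphism of crystals; as a by-product this also proves Theorem~\ref{th:subcrystal}.

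The main obstacle is the middle paragraph: exhibiting $E_q(p)$ as the Euler characteristic of a complex with cohomology concentrated in one degree. This requires setting up the correct $\Hom$/$\Ext$ complex between the moduli data at a point of $X(\bS)$ and the combinatorial data of $q$, and verifying that the opposite-parity shift together with the stability condition forces vanishing of the outer cohomology; essentially all of the geometric input of Nakajima's theory is concentrated here, and the combinatorial identity relating $E_q$ to the tautological complexes $C_j(k)$ is the bridge that makes it applicable.
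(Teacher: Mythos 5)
There is a genuine gap, and in fact two. First, your reduction is circular: Theorem~\ref{Theorem: regular = product monomial crystal} (regular $=$ product monomial crystal) is not available at this point in the logical order. Its proof rests on Lemmas~\ref{lemma: regular kashiwara} and~\ref{lemma:positive data}, which in turn invoke Lemma~\ref{Lemma: other multiplications} and Corollary~\ref{Cor: equiv rel on monomial crystal}; the proof of Lemma~\ref{Lemma: other multiplications} begins precisely by applying Proposition~\ref{pr:pi0andB} to produce a non-empty component $X(\bS)$ from an element of $\B(\lambda,\bR)$. So you cannot use the regularity characterization to establish $\operatorname{im}\phi=\B(\lambda,\bR)$ without first proving the proposition by other means. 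Second, even granting the reduction, the step you yourself flag as the crux --- interpreting $E_q(p)=S_i(n-1)+\sum_{j,k}U_j(k)\,a_{j,k+1}$ (the identity itself is correct) as the dimension of a single cohomology group of a $\Hom$-complex that the parity shift and stability force to be exact off the middle degree --- is asserted, not proved. That vanishing statement is exactly the hard content, and without it you have only an Euler characteristic, which carries no sign information.

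The paper's argument avoids all of this and is considerably more elementary. For $\lambda=\varpi_i$ the crystal $\pi_0(\fM(W,\bR))$ is connected and $\phi$ sends the highest weight component to $y_{i,c}$, so the fundamental case is immediate. For general $\lambda$, the inclusion $\B(\lambda,\bR)\subseteq\operatorname{im}\phi$ follows by factoring $p=p_1\cdots p_N$ and taking the direct sum $x_1\oplus\cdots\oplus x_N$ of points realizing each factor (no restriction to highest-weight elements is needed). For the reverse inclusion, one uses that a non-empty $X(\bS)$ is $T_W$-invariant and, by \cite[Proposition 4.1.2]{Nak991}, deformation retracts onto the projective set $X(\bS)\cap\mathcal{L}(W)$, which therefore contains a $T_W$-fixed point; such a point decomposes as a direct sum over the weights of $T_W$, reducing again to the fundamental case. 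If you want to pursue your route, you would need to supply an independent, purely quiver-theoretic proof that non-emptiness of $X(\bS)$ implies all the inequalities $E_q(p)\ge 0$ \emph{and} an independent proof of the combinatorial characterization of $\B(\lambda,\bR)$; the authors explicitly state they do not know a combinatorial proof of the latter.
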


\begin{proof}
If $ \lambda = \varpi_i $ is a fundamental weight (and so $ \bR = \bR^{(i,c)}$) the crystal $ \pi_0(\fM(W, \bR))$ is connected. Thus it suffices to check that $\phi $ takes the highest weight element to $ y_{i,c} $.  But this is obvious.  Now we consider general $\lambda$.

We first show the image of $\phi $ contains  $\B(\lambda, \bR)$. So, fix $p \in \B(\lambda, \bR)$.
By the definition of the product monomial crystal $ p = p_1 \cdots p_N $ for some choices of $ p_k \in \B(\varpi_{i_k}, c_k) $.  Since the result holds when $\lambda$ is a fundamental weight, for each $ k$ we can choose $ x_k \in X(\bS_k)$ such that $p_k =  y_{c_k} z_{\bS_k}^{-1}) $.
Then $ x = x_1 \oplus \cdots \oplus x_N $ is in $X(\bS) $ and so $ X(\bS) $ is non-empty.

It remains to show that the image is contained in $\B(\lambda, \bR)$. So, fix $\bS$ such that  $X(\bS) $ is non-empty.  The variety $X(\bS)$ is invariant under the maximal torus $T_W \subset G_W$.  By \cite[Proposition 4.1.2]{Nak991}, we have that $X(\bS)$ is homotopic to the intersection $X(\bS)\cap \mathcal L(W)$; in particular, the latter variety is non-empty.
Since $X(\bS)\cap \mathcal L(W)$ is projective and invariant under $T_W$, it contains at least 1 fixed point, and so
the fixed point locus $X(\bS)^{T_W}$ is non-empty.  The representation given by such a fixed point can be decomposed according to the weights of the associated $T_W$-action. Thus it is of the form
$x'= x'_1 \oplus \cdots \oplus x'_N$ with each $x'_p$ in some $m(c_p, \bS_p)$.  This is a simple calculation, and can be found in \cite[Lemma 3.2]{Nak010}. It is then immediate from the case when $\lambda$ is a fundamental weight that the image is in the product monomial crystal.
\end{proof}

The following is immediate from  Proposition \ref{pr:pi0andB} and is the first assertion of Theorem \ref{th:subcrystal}.

\begin{Corollary}
$\B(\lambda, \bR)$ is a subcrystal of the monomial crystal $ \B $. \qed
\end{Corollary}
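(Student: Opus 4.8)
The plan is to read the statement off from Proposition~\ref{pr:pi0andB} together with the theorem immediately preceding it. Those results say that the map
\[
\phi\colon \pi_0(\fM(W,\bR)) \longrightarrow \B,\qquad X(\bS)\longmapsto y_\bR z_\bS^{-1},
\]
is an injective morphism of crystals, where $\pi_0(\fM(W,\bR))$ carries Nakajima's crystal structure from \cite[\S 8]{Nak010}, and that its image is exactly the subset $\B(\lambda,\bR)\subseteq\B$. So the corollary will follow once we observe that the image of an injective crystal morphism is always a subcrystal of the target.

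First I would spell this out. Since $\phi$ is a morphism of crystals it intertwines the Kashiwara operators: for every $i\in I$ and every component $X(\bS)$ we have $\tilde e_i\,\phi(X(\bS)) = \phi(\tilde e_i X(\bS))$ and $\tilde f_i\,\phi(X(\bS)) = \phi(\tilde f_i X(\bS))$, with the convention that $\phi$ sends the formal symbol $0$ to $0$. Given $p = \phi(X(\bS))\in\B(\lambda,\bR)$, the element $\tilde f_i p$ computed inside $\B$ therefore equals $\phi(\tilde f_i X(\bS))$, which lies in the image of $\phi$ or is $0$; since that image is $\B(\lambda,\bR)$ by Proposition~\ref{pr:pi0andB}, we get $\tilde f_i p\in\B(\lambda,\bR)\cup\{0\}$, and likewise for $\tilde e_i$. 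Hence $\B(\lambda,\bR)$ is stable under the operators of $\B$, i.e.\ it is a subcrystal. Moreover its induced crystal structure is the one transported along the bijection $\phi$ from $\pi_0(\fM(W,\bR))$, which Proposition~\ref{pr:pi0andB} identifies with the product crystal structure by which $\B(\lambda,\bR)$ was defined in Section~\ref{Tsection}; so $\B(\lambda,\bR)$ \emph{with its defining structure} is a subcrystal of $\B$. Normality, and the chain $\B(\lambda)\subseteq\B(\lambda,\bR)\subseteq\otimes_i\B(\varpi_i)^{\otimes\lambda_i}$ of Theorem~\ref{th:subcrystal}, then come for free: $\pi_0(\fM(W,\bR))$ is a normal crystal in Nakajima's theory, and the two inclusions correspond to the inclusion of the component of $y_\bR$ and to the decomposition $x = x_1\oplus\cdots\oplus x_N$ of a $T_W$-fixed point used in the proof of Proposition~\ref{pr:pi0andB}.

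The deduction is formal; the content lives in the two imported facts. The first is that $\phi$ really is a crystal morphism --- this uses Lemma~8.4 of \cite{Nak010} (Nakajima's computation of the crystal operators via attracting sets of the $\diam$-action) together with Lemma~\ref{le:aikandN} relating the exponents $a_{i,k}$ to the alternating sums of dimensions of the three-term complexes $C_i(k)$. The second is the identification of the image with the product monomial crystal, which relies on the compatibility of the quiver-variety construction with tensor products. I expect the main obstacle in making the argument self-contained to be exactly this compatibility: verifying that Nakajima's crystal on $\pi_0(\fM(W,\bR))$ is the tensor-product crystal, so that the structure transported to $\B(\lambda,\bR)$ via $\phi$ agrees with its combinatorial definition as a product of fundamental monomial crystals. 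Everything after that is bookkeeping with the definitions of ``morphism of crystals'' and ``subcrystal''.
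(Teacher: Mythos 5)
Your argument is correct and is essentially the paper's: the Corollary is stated as immediate from Proposition~\ref{pr:pi0andB} (together with the preceding theorem that $\phi$ is an injective crystal morphism), and what you have done is simply spell out why an injective, Kashiwara-operator-intertwining map with the convention $\phi(0)=0$ has image a subcrystal. The extra remarks about normality and the chain of inclusions from Theorem~\ref{th:subcrystal} are consistent with the paper but go beyond what this Corollary asserts.
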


Nakajima's crystal structure on fixed points has a natural geometric realization, which we will describe below. Consider the subvariety $\widetilde{\mathfrak Z}_\diam \subset \fM(m, W)$ consisting of points which have a limit in $\fM(W)^{\C^\times, \diam}$ under the ${\Bbb C}^\times$ action as $t\to \infty$.
There is a bijection between $\pi_0(\fM(W, \bR))$ and the set $\text{Irr} \ \widetilde{\mathfrak Z}_\diam$ of irreducible components of this subvariety sending a connected component to the closure of the points limiting to it. The latter set of components has a natural geometric crystal structure, defined as in \cite[Sec. 4]{Nak010}.  The crystal structure on the graded quiver variety can then be obtained from the geometric one using the bijection described above (much as in \cite{SamTin}).

This perspective makes the relationship to the tensor product crystal clearer.  The variety $\widetilde{\mathfrak Z}_\diam$ is in turn naturally a sub-variety of Nakajima's tensor product variety $\widetilde{\mathfrak Z}$ for $ \otimes_i \B(\varpi_i)^{\otimes \lambda_i}$, consisting of a subset of the irreducible components of that variety.  The crystal structure on $\text{Irr} \ \widetilde{\mathfrak Z}_\diam$ is by definition identical to the one on $\text{Irr} \ \widetilde{\mathfrak Z}$. 
This establishes that $ \B(\lambda, \bR) \subseteq \otimes_i \B(\varpi_i)^{\otimes \lambda_i} $, completing the proof of Theorem \ref{th:subcrystal}. 

Nakajima's variety $\widetilde{\mathfrak Z}$ is defined using a different torus action, and its irreducible components correspond to connected components of a different graded quiver variety. 
We note that, on the level of the graded quiver varieties, the embedding from Theorem \ref{th:subcrystal} can be subtle: the two torus actions are quite different, so a particular irreducible component of $\widetilde{\mathfrak Z}_\diam \subseteq \widetilde{\mathfrak Z}$ can correspond to quite different looking connected components of the two relevant graded quiver varieties. See e.g. \cite{SamTin} for some discussion of this phenomenon. 


\subsection{Some results used earlier} \label{se:earlier}

We can also now give the proof of Proposition \ref{pr:MaxSing}.
\begin{proof}[Proof of Proposition \ref{pr:MaxSing}]
Let $ \bR $ be as in the statement.  Then $ \bR $ determines $ \rho' $ as in Section \ref{se:WeightOne}.  Then $ \rho'_i(t) = I_{W_i} $ for all $ i $.  As observed by Nakajima, \cite[\S 8]{Nak010}, in this case the attracting set is precisely the core of the quiver variety and thus $ \pi_0(\fM(W, \bR))  \cong \B(\lambda) $ as desired.
\end{proof}

The following two results were used in the proofs of Lemmas \ref{lemma: regular kashiwara} and \ref{lemma:positive data}.

\begin{Lemma}
\label{Lemma: other multiplications}
Let $p = y_\bR z_\bS^{-1} \in \B(\lambda,\bR)$, and write $p = \prod_{i,k} y_{i,k}^{a_{i,k}}$.
\begin{enumerate}
\item If $a_{i,k} > 0$, then $z_{i,k-2}^{-1} p \in \B(\lambda,\bR)$.
\item If $a_{i,k} < 0$, then $z_{i,k} p \in \B(\lambda,\bR)$.
\end{enumerate}
\end{Lemma}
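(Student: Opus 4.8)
The plan is to transport the statement to graded quiver varieties via Proposition \ref{pr:pi0andB}, which identifies $\B(\lambda,\bR)$ with $\pi_0(\fM(W,\bR))$ under $y_\bR z_\bS^{-1}\leftrightarrow X(\bS)$. So the assertion becomes: if $X(\bS)\neq\emptyset$, then $X(\bS')\neq\emptyset$, where $\bS'$ is obtained from $\bS$ by adjoining one copy of $k-2$ to $S_i$ in case (1), and by deleting one copy of $k$ from $S_i$ in case (2); note that $z_{i,k-2}^{-1}(y_\bR z_\bS^{-1}) = y_\bR z_{\bS'}^{-1}$ and $z_{i,k}(y_\bR z_\bS^{-1}) = y_\bR z_{\bS'}^{-1}$ respectively. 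In case (2) I would first check that this even makes sense, i.e. that $S_i(k)\geq 1$: Lemma \ref{Lemma: eq:contain} gives $S_i(k-2)\leq R_i(k)+\sum_{j\con i}S_j(k-1)$, while $a_{i,k}<0$ reads $R_i(k)+\sum_{j\con i}S_j(k-1)<S_i(k)+S_i(k-2)$, and together these force $S_i(k)>0$.

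Fix $x=[B,\eta,\varepsilon]\in X(\bS)$ and recall the complex $C_i(k)\colon V_i(k-2)\xrightarrow{\phi_i(k)} W_i(k)\oplus\bigoplus_{\out h=i}V_{\inn h}(k-1)\xrightarrow{\tau_i(k)} V_i(k)$ from Section \ref{se:gradedQV}. Since $\phi_i(k)$ is injective on $X(\bS)$ by the stability condition and $a_{i,k}=\sum_p(-1)^p\dim C_i(k)^p$ on $X(\bS)$ by Lemma \ref{le:aikandN}, we get $a_{i,k}=\dim H^0(C_i(k))-\dim\operatorname{coker}\tau_i(k)$. Hence in case (1) the hypothesis $a_{i,k}>0$ yields $\operatorname{im}\phi_i(k)\subsetneq\ker\tau_i(k)$, while in case (2) the hypothesis $a_{i,k}<0$ yields $\operatorname{coker}\tau_i(k)\neq 0$, i.e. $\tau_i(k)$ is not surjective.

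For case (2) I would take a hyperplane $H\subsetneq V_i(k)$ containing $\operatorname{im}\tau_i(k)$ and replace $V_i(k)$ by $H$: since every arrow into $V_i(k)$ has image inside $\operatorname{im}\tau_i(k)\subseteq H$, this defines a $B$-stable graded subspace $V''\subseteq V$, and restricting $(B,\eta,\varepsilon)$ to $V''$ produces a $\C^\times$-fixed point of the quiver variety with dimension vector $\bm-e_i$ whose $V$-spectrum is $\bS'$; the moment map equations survive the restriction and the stability condition is inherited by a $B$-stable subspace, so $X(\bS')\neq\emptyset$. For case (1) I would instead enlarge $V$ by a one-dimensional graded summand $\C v'$ sitting in degree $(i,k-2)$, keep every arrow of $x$ (choosing the new incoming arrows at $v'$ to land in $V_i(k-2)$), and define the outgoing data at $v'$ — namely $\varepsilon_i(v')$ together with the $B_h(v')$ for $\out h=i$, packaged as a vector $\xi\in W_i(k)\oplus\bigoplus_{\out h=i}V_{\inn h}(k-1)$ — to be any element of $\ker\tau_i(k)\smallsetminus\operatorname{im}\phi_i(k)$, which exists precisely because $H^0(C_i(k))\neq 0$. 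Then the enlarged $\phi_i(k)$ is still injective (as $\xi\notin\operatorname{im}\phi_i(k)$), so stability holds, and the only moment map equation that could be spoiled, $\tau_i(k)\xi=0$, holds by the choice of $\xi$; this gives a $\C^\times$-fixed point with dimension vector $\bm+e_i$ and $V$-spectrum $\bS'$, so $X(\bS')\neq\emptyset$.

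The main obstacle is the bookkeeping concealed in the phrases ``survive the restriction'' and ``only moment map equation that could be spoiled'': one must check that changing the relevant $V_i(\cdot)$ by one dimension affects only three of the complexes $C_{i'}(m)$ — namely $C_i(k)$, the neighbouring $C_i$ two steps away, and the $C_j$ one step away for $j\con i$ — and that among the corresponding moment map equations all but one are automatically preserved, the surviving one being exactly the cohomological condition on $C_i(k)$ governed by the sign of $a_{i,k}$; one must also verify that the relevant form of the stability condition (the one recalled in Section \ref{se:gradedQV} that makes the $\phi_i(k)$ injective) translates, for the modified data, precisely into injectivity of the modified $\phi_i(k)$.
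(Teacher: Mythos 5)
Your proof is correct and follows essentially the same route as the paper's: pass to the graded quiver variety via Proposition \ref{pr:pi0andB}, use Lemma \ref{le:aikandN} to read off non-vanishing cohomology of the complex $C_i(k)$ from the sign of $a_{i,k}$, then extend $V_i(k-2)$ by a line mapping to an element of $\ker\tau_i(k)\smallsetminus\operatorname{im}\phi_i(k)$ in case (1), or restrict $V_i(k)$ to a hyperplane containing $\operatorname{im}\tau_i(k)$ in case (2). The bookkeeping you flag at the end (which moment-map equations survive, and stability of the modified datum) is exactly what the paper defers to Proposition 4.5 of \cite{Nak3}, and your sketch of it is sound.
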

\begin{proof}
By Proposition \ref{pr:pi0andB}, we can find $X(\bS) \in  \pi_0(\fM(W, \bR))$ corresponding to $p$.
Fix $[B,\eta,\varepsilon] \in X(\bS)$.  In each case we will produce a point in the appropriate $X(\bS')$, proving that it is non-empty, and hence corresponds to an element of $ \B(\lambda,\bR)$.

{\bf Case  (1)}: Since $a_{i,k} >0$, Lemma \ref{le:aikandN} gives $ \dim \operatorname{Ker} \tau_i(k) / \operatorname{Im} \phi_i(k) > 0 $. Choose an embedding $\C \hookrightarrow \operatorname{Ker} \tau_i(k)$ whose image is not contained in $\operatorname{Im} \phi_i(k)$, and extend $B$ and $\varepsilon$ to $V_i(k-1) \oplus \C$  as the components of this embedding.  Reasoning as in Proposition 4.5 in \cite{Nak3}, we see that this extended datum lies in $\mu^{-1}(0)^s$. The dimension of $V_i(k-1)$ has increased by one, which corresponds to multiplying by  $z_{i,k-2}^{-1}$.

{\bf Case (2)}:  Since $a_{i,k}<0$, Lemma \ref{le:aikandN} implies $\tau_i(k)$ is not surjective.  Choose a codimension one subspace $\operatorname{Im} \tau_i(k) \subset V_i'(k+1) \subset V_i(k+1)$, and define $(B',\eta',\varepsilon')$ as the restrictions of $(B,\eta, \varepsilon)$. This decreases $V_i(k+1)$ by 1, so corresponds to multiplying by $z_{i,k}$.
\end{proof}

\begin{Corollary} \label{Cor: equiv rel on monomial crystal}
Multiplication by $z_{i,k}$ defines a bijection of sets
$$ \begin{pmatrix}  p\in \B(\lambda,\bR) \text{ with} \\ a_{i,k} <0 \text{ and } a_{i,k+2} \geq 0 \end{pmatrix} \stackrel{\con}{\longrightarrow} \begin{pmatrix} p'\in \B(\lambda,\bR) \text{ with}\\  a_{i,k}' \leq 0 \text{ and } a_{i,k+2}' >0 \end{pmatrix}$$

\end{Corollary}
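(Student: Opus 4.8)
The plan is to deduce this corollary directly from Lemma \ref{Lemma: other multiplications}, together with the explicit effect of multiplying a monomial by $z_{i,k}^{\pm 1}$ on its exponents. Recall that $z_{i,k} = y_{i,k} y_{i,k+2} \big/ \prod_{j \con i} y_{j,k+1}$, so if $p' = z_{i,k} p$ with $p = \prod_{j,l} y_{j,l}^{a_{j,l}}$ and $p' = \prod_{j,l} y_{j,l}^{a'_{j,l}}$, then $a'_{j,l} = a_{j,l}$ except that $a'_{i,k} = a_{i,k}+1$, $a'_{i,k+2} = a_{i,k+2}+1$, and $a'_{j,k+1} = a_{j,k+1}-1$ for each $j \con i$.

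First I would check that multiplication by $z_{i,k}$ carries the left-hand set into the right-hand set. Let $p \in \B(\lambda,\bR)$ have $a_{i,k} < 0$ and $a_{i,k+2} \geq 0$. Since $a_{i,k} < 0$, part (2) of Lemma \ref{Lemma: other multiplications} gives $z_{i,k} p \in \B(\lambda,\bR)$. From the exponent formulas above, $a'_{i,k} = a_{i,k}+1 \leq 0$ and $a'_{i,k+2} = a_{i,k+2}+1 > 0$, so $z_{i,k}p$ indeed lies in the right-hand set.

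Next I would construct the inverse map, which is multiplication by $z_{i,k}^{-1}$. Let $p' \in \B(\lambda,\bR)$ have $a'_{i,k} \leq 0$ and $a'_{i,k+2} > 0$. Applying part (1) of Lemma \ref{Lemma: other multiplications} with its index $k$ replaced by $k+2$, the hypothesis $a'_{i,k+2} > 0$ yields $z_{i,(k+2)-2}^{-1} p' = z_{i,k}^{-1} p' \in \B(\lambda,\bR)$; and its exponents satisfy $a_{i,k} = a'_{i,k} - 1 < 0$ and $a_{i,k+2} = a'_{i,k+2} - 1 \geq 0$, so $z_{i,k}^{-1} p'$ lies in the left-hand set. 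Since multiplication by $z_{i,k}$ and by $z_{i,k}^{-1}$ are visibly mutually inverse operations on monomials, the two maps are inverse bijections, proving the claim.

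I do not expect any genuine obstacle here: all the substance is contained in Lemma \ref{Lemma: other multiplications}, which is itself proved using the identification $\pi_0(\fM(W,\bR)) \cong \B(\lambda,\bR)$ from Proposition \ref{pr:pi0andB} together with an extension/restriction argument on quiver representations. Given that lemma, the corollary is bookkeeping with the exponents of $z_{i,k}^{\pm 1}$. The only point requiring a moment of care is the index shift when invoking part (1): one multiplies by $z_{i,k}^{-1}$ to lower the exponent at the lattice point $(i,k+2)$, not at $(i,k)$.
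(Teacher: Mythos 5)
Your proof is correct and is exactly the argument the paper intends: the Corollary is stated without proof as an immediate consequence of Lemma \ref{Lemma: other multiplications}, and your write-up supplies precisely the bookkeeping (part (2) for the forward map, part (1) with the index shift $k \mapsto k+2$ for the inverse, plus the exponent checks) that the paper leaves implicit.
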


\section{Conjectures of Hikita and Nakajima}

\subsection{Hikita's conjecture} \label{se:Hikita}
Consider the  action of $ \C^\times $ on $ \Grlmbar$ by using $ \rho^\vee : \C^\times \rightarrow T $.  (This is the classical limit of the action of $ \C^\times $ on $ Y^\lambda_\mu(\bR) $ which was used to defined the $B$-algebra.)  Under this $ \C^\times $ action, $ \Grlmbar $ has a unique fixed point, but an interesting $ \C^\times $ fixed subscheme, which we denote $ (\Grlmbar)^{\C^\times}$.  In this section, we will be interested in the coordinate ring $ \C[ (\Grlmbar)^{\C^\times}]$.  Note that this ring is the same thing as the $B$-algebra of the commutative ring $ \C[\Grlmbar] $.

On $\Grlmbar $, there is the commuting $\C^\times$ action by loop rotation and this descends to a $ \C^\times $ action on $ (\Grlmbar)^{\C^\times} $.  This makes $ \C[ (\Grlmbar)^{\C^\times}]$ into a graded algebra.

In our context, Hikita's conjecture is the following statement, which we will prove.
\begin{Theorem}
\label{theorem-Hikita}
There is an isomorphism of graded algebras
$$
\C[(\Grlmbar)^{\C^\times}] \cong H^*(\fM(\mathbf m, W))
$$
\end{Theorem}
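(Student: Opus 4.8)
The plan is to present both sides explicitly and match them. For the left side, recall that $\C[(\Grlmbar)^{\C^\times}]$ is the $B$-algebra $B(\C[\Grlmbar])$, with grading coming from loop rotation. By Theorem \ref{th:quantize} this is (modulo a nilpotent kernel that one checks does not affect the $B$-algebra) the classical limit of $B(Y^\lambda_\mu(\bR))$ computed in Section \ref{se:Balgebra}; moreover Proposition \ref{multiset conditions} gives the \emph{exact} formula $\Pi(T_{\gamma,\gamma}(u)) = A_i(u)\prod_{(j,k-2)\in\mathbf{U}}H_j(u+\tfrac{1}{2}k)$, which is visibly insensitive to the deformation, so the quantum and classical $B$-algebras are literally the same quotient of the polynomial ring $\Cartan$. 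Combining this with Corollary \ref{B algebra} and the definition of $\widetilde{B}_\mu^\lambda(\bR)$ in Section \ref{se:ProofConjecture}, we get $\C[(\Grlmbar)^{\C^\times}] \cong \widetilde{B}_\mu^\lambda(\bR)$: the quotient of $\Cartan$ by the coefficients of the principal parts $\underline{H_q(u)}$ over all $q \in \B(\varpi_i,0)$, $i\in I$. (In type A this identification is complete; in other types one at least gets a surjection $\widetilde{B}_\mu^\lambda(\bR) \twoheadrightarrow B(\C[\Grlmbar])$, to be upgraded to an isomorphism by the dimension count below, or one computes $B(\C[\Grlmbar])$ directly from the geometry of $\Grlmbar$.)

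For the right side one must show $H^*(\fM(\bm,W)) \cong \widetilde{B}_\mu^\lambda(\bR)$. By Kirwan surjectivity for quiver varieties (due to McGerty and Nevins), $H^*(\fM(\bm,W))$ is generated by the Chern classes $c_s(V_i)$ of the tautological bundles, equivalently by the coefficients of $a_i(u) := \sum_{s\ge 0}(-1)^s c_s(V_i)\,u^{-s}$. I would extract the relations by localizing to the $\ast$-fixed locus $\fM(\bm,W)^{\C^\times} = \fM(\bm,W,\bR) = \bigsqcup_\bS X(\bS)$ of Section \ref{se:gradedQV}: on $X(\bS)$, Lemma \ref{le:aikandN} identifies the exponents $a_{i,k}$ of $y_\bR z_\bS^{-1}$ with the alternating dimensions of the tautological complexes $C_i(k)$, and the injectivity of $\phi_i(k)$ (stability) together with $\tau_i(k)\phi_i(k) = 0$ (moment map) are exactly the cohomological manifestation of the divisibility encoding ``$\underline{H_q(u)} = 0$''. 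After matching $a_i(u) \leftrightarrow A_i(u)$ --- with the same half-integer $u$-shifts that appear in $H_q(u)$ --- this presents $H^*(\fM(\bm,W))$ by precisely the coefficients of the $\underline{H_q(u)}$, $q\in\B(\varpi_i,0)$, i.e. the same relations as on the left. That there are no further relations follows from a dimension count: the Bialynicki--Birula/cell decomposition of $\fM(\bm,W)$ (Nakajima) puts $H^*$ in even degrees with graded dimension controlled by $\pi_0(\fM(\bm,W,\bR)) \cong \B(\lambda,\bR)_\mu$ (Proposition \ref{pr:pi0andB}), which matches $\dim \widetilde{B}_\mu^\lambda(\bR)$.

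This yields the algebra isomorphism $\C[(\Grlmbar)^{\C^\times}] \cong H^*(\fM(\bm,W))$; since the loop-rotation weight $s$ of $A_i^{(s)}$ corresponds to the cohomological degree $2s$ of $c_s(V_i)$ and the cohomology sits in even degrees, this respects the gradings up to the standard doubling. I expect the main obstacle to be the second step: producing a clean closed-form presentation of $H^*(\fM(\bm,W))$ and matching its relations term by term with the principal parts of the $H_q(u)$ --- in particular getting the normalizations and $u$-shifts right, handling the passage between the graded quiver variety (where Lemma \ref{le:aikandN} applies) and the ungraded one, and closing the ``no extra relations'' gap via the dimension count. The Nakajima refinement of Section \ref{sec:conjecture-nakajima} would be the $T_W$-equivariant version of the same computation.
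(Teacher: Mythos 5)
Your proposal has a genuine gap at its very first step, and it propagates through everything else. You identify $\C[(\Grlmbar)^{\C^\times}]$ with the $B$-algebra of the quantization $Y^\lambda_\mu(\bR)$, i.e.\ with $\widetilde{B}^\lambda_\mu(\bR)$, claiming the quantum and classical $B$-algebras are ``literally the same quotient of $\Cartan$''. They are not: the left-hand side of the theorem does not depend on $\bR$ at all, while $B(Y^\lambda_\mu(\bR))$ does --- its maximal spectrum is $H^\lambda_\mu(\bR)\cong\B(\lambda,\bR)_\mu$, which varies with $\bR$ (compare a generic versus a maximally singular choice). Concretely, the relations in Proposition \ref{multiset conditions} contain the factors $\prod_{c\in R_j}(u-\tfrac12 c+\tfrac12 k)$, and in the associated graded these degenerate to their leading terms $u^{\lambda_j}$; the $\bR$-dependence lives precisely in the lower filtered pieces that the classical limit discards. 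What your second step then sets out to prove --- a presentation of $H^*(\fM(\bm,W,\bR))$ by the coefficients of the $\underline{H_q(u)}$ --- is exactly Nakajima's Conjecture \ref{co:Nakajima2}, which the paper explicitly states it cannot prove; it is strictly harder than Hikita's statement, not a route to it. Two further problems: (i) the parenthetical ``nilpotent kernel that one checks does not affect the $B$-algebra'' is the unproven reducedness conjecture for $\gr Y^\lambda_\mu(\bR)$, and in any case $(\Grlmbar)^{\C^\times}$ is a genuinely non-reduced scheme, so nilpotents are part of the answer rather than something to quotient away; (ii) the closing dimension count compares $\dim H^*(\fM(\bm,W))$, which equals $\dim\bigl(\otimes_i V(\varpi_i)^{\otimes\lambda_i}\bigr)_\mu$ and is independent of $\bR$, with $|\pi_0(\fM(\bm,W,\bR))|=|\B(\lambda,\bR)_\mu|$ and $\dim\widetilde B^\lambda_\mu(\bR)$, which shrink for singular $\bR$ --- so the count cannot close the argument.

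The paper's proof avoids the $B$-algebra of the quantization entirely. It first shows $(\Grlmbar)^{\C^\times}\cong\overline{\Gr^\lambda}{}^T_\mu$ (Proposition \ref{pr:CtimesT}), then invokes Zhu's theorem that restriction $\Gamma(\overline{\Gr^\lambda},\O(1))\to\Gamma(\overline{\Gr^\lambda}{}^T,\O(1))$ is an isomorphism of $T[[t]]$-modules, and Kodera--Naoi's isomorphism $H^*(\fM(W))\cong\Gamma(\overline{\Gr^\lambda},\O(1))$ of $\g[[t]]$-modules. Taking $\mu$-weight spaces identifies $H^*(\fM(\bm,W))$ and $\Gamma(\overline{\Gr^\lambda}{}^T_\mu,\O(1))$ as $U(\mathfrak t[[t]])$-modules. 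The algebra isomorphism then follows from the elementary observation (Lemma \ref{le:elementary}) that a commutative ring receiving a surjection from $R$ is $R/\mathrm{Ann}$: Kirwan surjectivity realizes $H^*(\fM(\bm,W))$ as a cyclic $U(\mathfrak t[[t]])$-algebra, and the Contou-Carr\`ere trivialization of $\O(1)$ on $\Gr_T$ (Lemma \ref{th:actbymult}) realizes $\C[\overline{\Gr^\lambda}{}^T_\mu]$ the same way, with the same annihilator. If you want to salvage your strategy, you would need to first prove Nakajima's conjecture; as a proof of Hikita's statement it is both circular in places and aimed at the wrong (i.e.\ $\bR$-dependent) objects.
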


\begin{Remark}
When $ \g = \mathfrak{sl}_n $, then $\Grlmbar $ is isomorphic to the intersection of a nilpotent orbit closure and a Slodowy slice and $ \fM(\mathbf m, W)$ is isomorphic to an $ S3 $ variety.  Formulated thus, Hikita proved this statement as Theorem A.1 of \cite{H}.
\end{Remark}

\subsection{Proof of Hikita's conjecture}
We begin by rewriting the $\C^\times $-fixed subscheme of $ \Grlmbar $ as a connected component of the $T$-fixed subscheme of $\overline{\Gr^\lambda} $.

The finite length scheme $\overline{\Gr^\lambda}^{T} $ is disconnected, with connected pieces $(\overline{\Gr^\lambda}^{T})_\nu $ indexed by the $ \C^\times $-fixed points $t^\nu $ in $ \overline{\Gr^\lambda} $, which are in turn labelled by those coweights $ \nu $ lying in the convex hull of the Weyl orbit through $\lambda $ and such that $ \lambda - \nu $ lies in the coroot lattice.  Let $ \overline{\Gr^\lambda}^T_\nu $ be the connected component of the $T$-fixed subscheme of $ \overline{\Gr^\lambda}$ which is supported at the point $ t^\nu $.

\begin{Proposition} \label{pr:CtimesT}
For dominant $ \mu$, there are natural isomorphisms
\begin{equation}
(\Grlmbar)^{\C^\times} \cong (\Grlmbar)^T \cong \overline{\Gr^\lambda}^T_\mu.\label{eq:C=T}
\end{equation}

\end{Proposition}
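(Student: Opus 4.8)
The plan is to prove the two isomorphisms in \eqref{eq:C=T} separately; the real content lies in passing from the torus $T$ to the one-parameter subgroup $\rho^\vee(\C^\times)$.

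I would begin with the second isomorphism $(\Grlmbar)^T\cong\overline{\Gr^\lambda}^T_\mu$, which is essentially formal. Since $\overline{\Gr^\lambda}$ is a projective variety on which $T$ acts with finitely many fixed points, its $T$-fixed subscheme is a finite disjoint union $\overline{\Gr^\lambda}^T=\bigsqcup_\nu\overline{\Gr^\lambda}^T_\nu$ of finite-length schemes, with $\overline{\Gr^\lambda}^T_\nu$ supported at $t^\nu$; on the other hand $\Gr^T$ is the reduced discrete scheme $\{t^\nu\}_\nu$. The slice $\Gr_\mu$ contains a unique $T$-fixed point $x_0$ — indeed $\Gr_\mu^T\subseteq\Gr^T$ is a subset of the $t^\nu$, and a $G_1[t^{-1}]$-orbit can contain at most one $T$-fixed point since the centralizer of $T$ in $G_1[t^{-1}]$ is trivial (the units of $\C[t^{-1}]$ being constants) — and $\Gr_\mu^T=\{x_0\}$ is reduced, being a subscheme of the reduced scheme $\Gr^T$. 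Write $\overline{\Gr^\lambda}^T_\mu$ for the component of $\overline{\Gr^\lambda}^T$ supported at $x_0$. Since formation of $T$-fixed subschemes commutes with fibre products and $\Grlmbar=\overline{\Gr^\lambda}\times_{\Gr}\Gr_\mu$, we obtain
\begin{equation*}
(\Grlmbar)^T=\overline{\Gr^\lambda}^T\times_{\Gr^T}\Gr_\mu^T=\overline{\Gr^\lambda}^T\times_{\Gr^T}\{x_0\}=\overline{\Gr^\lambda}^T_{\mu},
\end{equation*}
the last step because the structure map $\overline{\Gr^\lambda}^T_\mu\to\Gr^T$ already factors through the reduced point $\{x_0\}$, while the other components map to $t^\nu\neq x_0$, so the base change is trivial. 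The identical argument with $\rho^\vee(\C^\times)$ in place of $T$ — legitimate because $\Gr^{\rho^\vee(\C^\times)}$ is again the reduced discrete scheme $\{t^\nu\}$ (any connected component of the $\rho^\vee$-fixed locus of $\Gr$ is $T$-stable, complete, and has all tangent weights equal to roots, hence is a point, by regularity of $\rho^\vee$) — yields $(\Grlmbar)^{\C^\times}\cong\overline{\Gr^\lambda}^{\C^\times}_\mu$, the component of $\overline{\Gr^\lambda}^{\C^\times}$ supported at $x_0$.

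The first isomorphism $(\Grlmbar)^{\C^\times}\cong(\Grlmbar)^T$ then amounts to the equality $\overline{\Gr^\lambda}^{\C^\times}_\mu=\overline{\Gr^\lambda}^T_\mu$ of subschemes of $\overline{\Gr^\lambda}$, equivalently to $(\Grlmbar)^{\C^\times}=(\Grlmbar)^T$ inside $\Grlmbar$. One inclusion, $(\Grlmbar)^T\subseteq(\Grlmbar)^{\C^\times}$, is automatic. For the reverse, grade $\C[\Grlmbar]$ by $X^*(T)$: the ideal cutting out $(\Grlmbar)^T$ is generated by the pieces $\C[\Grlmbar]_\chi$ with $\chi\neq0$ and that of $(\Grlmbar)^{\C^\times}$ by those with $\langle\rho^\vee,\chi\rangle\neq0$, so one must show every $\C[\Grlmbar]_\chi$ with $\chi\neq0$ and $\langle\rho^\vee,\chi\rangle=0$ lies in the second ideal. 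Here I would exploit that the occurring weights lie in the root lattice and that the $T$-action preserves the Poisson structure on $\Grlmbar$, which pairs the pieces of positive and negative $\rho^\vee$-degree: this lets one write any function of $\rho^\vee$-degree zero and non-zero $T$-weight as a sum of products of functions of opposite non-zero $\rho^\vee$-degree — something one can check directly on the generalized minors $\Delta_{\beta,\gamma}^{(r)}$ generating $\C[\Gr_\mu]$, or on the generators of the $B$-algebra exhibited in Section~\ref{se:Balgebra}.

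The scheme-theoretic comparison in this last step is, I expect, the only real obstacle: at the level of reduced points it is the routine consequence of the regularity of $\rho^\vee$ used above, but because the whole point of $(\Grlmbar)^{\C^\times}$ is its non-reduced structure — it is $\Spec$ of the $B$-algebra of $\C[\Grlmbar]$, the object of Hikita's conjecture — this step genuinely requires either the transverse-slice description of $\overline{\Gr^\lambda}$ near $x_0$ or the explicit $B$-algebra computation, together with the compatibility of the $T$-action with the Poisson/symplectic structure.
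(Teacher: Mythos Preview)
Your argument for the second isomorphism rests on the assertion that $\Gr^T$ is the reduced discrete scheme $\{t^\nu\}$, but this is false: as the paper itself notes a few lines later (in the proof of Theorem~\ref{theorem-Hikita}), there is an identification $\Gr^T=\Gr_T$ with the affine Grassmannian of the torus, whose connected components are copies of the non-reduced scheme $T_1[t^{-1}]$. In particular the structure map $\overline{\Gr^\lambda}^T_\mu\to\Gr^T$ does \emph{not} factor through the reduced point $\{x_0\}$, so the fibre product $\overline{\Gr^\lambda}^T\times_{\Gr^T}\{x_0\}$ would kill exactly the nilpotents you are trying to keep. The paper instead uses the transverse-slice description you mention only at the end: one has a $T$-equivariant product decomposition $\overline{\Gr^\lambda}\cap U\cong H\times\Grlmbar$ in a neighbourhood $U$ of $t^\mu$, with $H$ unipotent and $H^T=\{e\}$ reduced, whence $(\overline{\Gr^\lambda})^T_\mu\cong(\Grlmbar)^T$.

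For the first isomorphism you are making the problem much harder than it is. The paper observes that $\C[\Gr_\mu]$ is a polynomial ring whose generators all have $T$-weight a root of $\g$; since $\rho^\vee$ pairs nontrivially with every nonzero root, the ideal generated by the variables of nonzero $T$-weight coincides with the ideal generated by those of nonzero $\rho^\vee$-weight, so $(\Gr_\mu)^{\C^\times}=(\Gr_\mu)^T$ on the nose, and intersecting with $\overline{\Gr^\lambda}$ gives $(\Grlmbar)^{\C^\times}=(\Grlmbar)^T$. No Poisson structure or $B$-algebra computation is needed: the point is to compare the two fixed loci inside $\Gr_\mu$ rather than inside $\Grlmbar$, where the weight structure on generators is transparent.
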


\begin{proof}
We first check that $ \Gr_\mu^{\C^\times}\cong \Gr_\mu^T $. This is clear because $ \C[\Gr_\mu] $ is a polynomial ring in infinitely many variables, whose weights are all roots of $ \g $.  The first isomorphism of \eqref{eq:C=T} then follows from imposing the ideal vanishing in $\Grlmbar$ on both sides.

Now, let $U= t^\mu G_1[t^{-1}] \cdot G[[t]]/G[[t]]$.  This is an open neighborhood of $t^\mu$ on which $t^\mu G_1[t^{-1}] t^{-\mu}$ acts freely.  Note that if  $H=G[[t]]\cap t^\mu G_1[t^{-1}] t^{-\mu}$, then we have $U\cap \Gr^{\mu}=H\cdot t^\mu$, with $H$ acting freely on this set.  The subgroup $H$ is normalized by $T$ and thus has a $T$-action by conjugation.  The action map $H\times \Grlmbar\to \overline{Gr^\la}\cap U$ is a $T$-equivariant isomorphism.  Thus   $\overline{\Gr^\lambda}^T_\mu\cong (\Grlmbar)^T \times H^T$.  However $H^T$ is just the identity with reduced scheme structure, since $H$ is unipotent, and thus $\C[H]$ is a polynomial ring whose generators all have non-zero weight.
\end{proof}

The key to our proof of Hikita's conjecture is the work of Zhu \cite{Z} on the $ \C^\times $--fixed subscheme of the affine Grassmannian.

We have the standard line bundle $ \O(1) $ on $ \Gr $ which we can restrict to $ \overline{\Gr^\lambda} $.  The following result is due to Zhu \cite{Z}.

\begin{Theorem}
The natural restriction map
$$
\Gamma( \overline{\Gr^\lambda}, \O(1)) \rightarrow \Gamma(\overline{\Gr^\lambda}^T, \O(1))$$
is an isomorphism of $ T[[t]] $-representations.
\end{Theorem}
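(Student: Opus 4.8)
The plan is to reduce the statement to a surjectivity claim together with a dimension count. Since $\overline{\Gr^\lambda}$ is a finite-dimensional projective variety and $\O(1)$ restricts to an ample line bundle on it, the space $\Gamma(\overline{\Gr^\lambda},\O(1))$ is finite-dimensional, and the $T$-fixed subscheme $\overline{\Gr^\lambda}^T$ is a finite-length scheme, so $\Gamma(\overline{\Gr^\lambda}^T,\O(1))$ is finite-dimensional as well. Because the restriction map is $T[[t]]$-equivariant by construction, it then suffices to prove that it is surjective and that $\dim \Gamma(\overline{\Gr^\lambda}^T,\O(1)) = \dim \Gamma(\overline{\Gr^\lambda},\O(1))$.

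For surjectivity I would pass to positive characteristic and use Frobenius splitting. The affine Grassmannian of $G$ is Frobenius split, with every spherical Schubert variety $\overline{\Gr^\lambda}$ compatibly split (Faltings, Mathieu, Littelmann), and the splitting may be taken to be $T$-equivariant. Using the standard fact that the $T$-fixed subscheme of a $T$-variety carrying a compatible $T$-equivariant Frobenius splitting is again compatibly split, we conclude that $\overline{\Gr^\lambda}^T$ is compatibly Frobenius split inside $\overline{\Gr^\lambda}$. For a compatibly split closed subscheme of a proper Frobenius split scheme, the restriction of sections of any line bundle is surjective; applied to $\O(1)$ this gives surjectivity in characteristic $p$, and hence in characteristic $0$ by the usual reduction-mod-$p$ argument together with the relevant Kempf-type vanishing (which makes the dimensions of the spaces of sections characteristic-independent).

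For the dimension count, the left-hand side is classical: by Borel--Weil for Schubert varieties, $\Gamma(\overline{\Gr^\lambda},\O(1))^{*}$ is the affine Demazure module $D_\lambda$ inside a suitable integrable highest-weight module of the affine Kac--Moody algebra attached to $G$, corresponding to the translation element $t^\lambda$; in particular its $T[[t]]$-character is the corresponding affine Demazure character. For the right-hand side, one writes $\overline{\Gr^\lambda}^T = \bigsqcup_\nu \overline{\Gr^\lambda}^T_\nu$, a disjoint union of local Artinian schemes indexed by the $T$-fixed points $t^\nu \in \overline{\Gr^\lambda}$, so that $\Gamma(\overline{\Gr^\lambda}^T,\O(1)) = \bigoplus_\nu \C[\overline{\Gr^\lambda}^T_\nu]\otimes \bigl(\O(1)|_{t^\nu}\bigr)$. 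One then analyzes $\overline{\Gr^\lambda}^T_\nu$ inside the opposite cell $t^\nu G_1[t^{-1}]t^{-\nu}\cdot t^\nu$ around $t^\nu$: its $T$-fixed part is cut out of the $T$-fixed part of the pro-unipotent group $G_1[t^{-1}]$ by the defining ideal of $\overline{\Gr^\lambda}$, and the claim is that the graded coordinate ring so obtained, twisted by the $T[[t]]$-weight of $\O(1)$ at $t^\nu$, has character equal to the sum of the weight spaces of $D_\lambda$ over the affine weights that restrict to $\nu$ on $T$. Summing over $\nu$ identifies the two characters, hence the two dimensions, and surjectivity then upgrades this to the asserted $T[[t]]$-equivariant isomorphism.

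The hard part will be this last step: pinning down the scheme structure of $\overline{\Gr^\lambda}^T$ precisely enough to compute $\C[\overline{\Gr^\lambda}^T_\nu]$ as a graded $T[[t]]$-module and to match it with the restricted weight multiplicities of the affine Demazure module. The non-reducedness of $\overline{\Gr^\lambda}^T$ is essential here --- it is exactly this thickening that makes the two dimensions agree --- so a careful local analysis at each fixed point cannot be avoided. This is the content of Zhu's work \cite{Z}, which we follow.
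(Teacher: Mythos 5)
The paper does not actually prove this statement: it is quoted verbatim from Zhu's work \cite{Z} ("The following result is due to Zhu"), so there is no internal proof to compare yours against. Your outline is consistent with the strategy of \cite{Z} --- surjectivity of restriction via a $T$-equivariant, compatible Frobenius splitting of $\overline{\Gr^\lambda}$ and of its $T$-fixed subscheme, combined with an identification of the character of $\Gamma(\overline{\Gr^\lambda}{}^T,\O(1))$ with the affine Demazure character of $\Gamma(\overline{\Gr^\lambda},\O(1))^*$ --- but, as you say yourself, the entire difficulty lives in the last step (computing the length and $T[[t]]$-module structure of the local Artinian pieces $\overline{\Gr^\lambda}{}^T_\nu$ and matching them with restricted Demazure weight multiplicities), and for that you defer to \cite{Z}. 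So your proposal is a correct road map rather than a proof; it stands in exactly the same relation to Zhu's paper as the citation in the text does.
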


Now we will relate $ \Gamma( \overline{\Gr^\lambda}, \O(1)) $ to the quiver variety.  First, there is an action of the algebra $ \g[[t]] $ on $ H^*(\fM(W)) $.  For this action, the weight decomposition is $ H^*(\fM(W)) \cong \oplus_\mu H^*(\fM(\bm, W)) $ (where as usual $ \mu $ and $ \bm $ are related by $ \lambda -\mu = \sum_i m_i \alpha_i $).  By a theorem of Kodera-Naoi \cite{KN} we have that $ H^*(\fM(W)) $ is isomorphic to a dual Weyl module.  By Fourier-Littelmann \cite{FL} the dual module is isomorphic to $\Gamma( \overline{\Gr^\lambda}, \O(1))$, so combining these results we have:

\begin{Theorem}
 There is an isomorphism of graded $\g[[t]]$-modules $ H^*(\fM(W)) \cong \Gamma( \overline{\Gr^\lambda}, \O(1))$.
\end{Theorem}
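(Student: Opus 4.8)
The plan is to identify both sides with one and the same object: the graded global Weyl module $W(\lambda)$ of the current algebra $\g[t]$, which in simply-laced type is also a level-one affine Demazure module. Both $H^*(\fM(W))$ and $\Gamma(\overline{\Gr^\lambda},\O(1))$ carry an action of $\g[[t]]$ and a commuting loop-rotation $\C^\times$-action: on the quiver-variety side these are Nakajima's operators (Hecke correspondences for $\g$, together with the induced Yangian/current-algebra operators — or their classical limit from Varagnolo's Yangian action) and the $\C^\times$ giving the cohomological grading, while on the affine-Grassmannian side they are the differential of the tautological $G[[t]]$-action on $\overline{\Gr^\lambda}$ together with loop rotation. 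In each case the lowest graded piece — $H^*(\fM(0,W))=H^*(\mathrm{pt})$ on the one hand, the extremal weight line at the point $t^\lambda$ on the other — is one-dimensional, is annihilated by the ``raising'' part of $\g[[t]]$, has weight $\lambda$, and generates the whole module over $\g[[t]]$.

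For the affine-Grassmannian side, $\overline{\Gr^\lambda}$ is a finite-dimensional Schubert variety in $\Gr_G$, and since $\g$ is simply-laced $\O(1)$ is (a suitable power of) the ample generator of $\mathrm{Pic}$ and has level one. By the Frobenius-splitting theory of Schubert varieties in Kac--Moody flag varieties (Mathieu, Kumar, Littelmann; see Kumar's book), $\Gamma(\overline{\Gr^\lambda},\O(1))$ is the restricted dual of the Demazure submodule of the basic representation $L(\Lambda_0)$ of $\widehat\g$ cut out by the translation element $t^\lambda$, with the loop-rotation grading matching the homogeneous grading. The theorem of Fourier--Littelmann and Naoi then identifies this level-one Demazure module, as a graded $\g[t]$-module, with $W(\lambda)$. (One checks that differentiating the geometric $G[[t]]$-action recovers the algebraic $\g[[t]]$-action; this is standard for homogeneous spaces of Kac--Moody groups, and on each graded piece the action factors through $\g[t]/t^N\g[t]$ for $N\gg0$.)

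For the quiver-variety side, this is precisely the content of \cite{KN}: one shows that $\bigoplus_\bm H^*(\fM(\bm,W))$, with cohomological degree shifted by half the complex dimension and with the $\g[[t]]$-action above, is a cyclic module of weight $\lambda$ satisfying the defining relations of $W(\lambda)$. The inputs are Nakajima's description of the underlying $\g$-module, compatibility of the higher current-algebra operators with the convolution product on Borel--Moore homology, and a character/dimension count matching Poincaré polynomials of graded quiver varieties with $t$-analogues of weight multiplicities (Kostka--Foulkes polynomials in type A). Comparing the two paragraphs yields the asserted isomorphism of graded $\g[[t]]$-modules $H^*(\fM(W))\cong\Gamma(\overline{\Gr^\lambda},\O(1))$.

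The main difficulty is not a single deep step but the coherent bookkeeping of normalizations so that the two $\g[[t]]$-actions and the two $\Z$-gradings agree on the nose: the choice of line bundle and level on $\Gr_G$ (adjoint versus simply-connected $G$, where $\mathrm{Pic}$ differs), the cohomological degree shift and the sign of the loop-rotation weight on the quiver variety, and the translation between the affine Demazure and current-algebra pictures. A secondary technical point is making Nakajima's current-algebra action on $H^*(\fM(W))$ precise enough to compare with the algebraic side; passing to the classical limit of the quantum-affine/Yangian action is the cleanest route.
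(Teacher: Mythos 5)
The paper offers no proof of this statement at all: it is quoted verbatim as a theorem of Kodera--Naoi \cite{KN}, and your sketch is a faithful reconstruction of exactly that argument (identify both sides with the same level-one Demazure/Weyl module, via Demazure-module theory and Fourier--Littelmann/Naoi on the Grassmannian side, and via Nakajima's current-algebra action on cohomology on the quiver side). One correction: the common intermediary is the \emph{local} Weyl module $W_{\mathrm{loc}}(\lambda)$, not the global one --- the global Weyl module is infinite-dimensional (free of infinite rank over a polynomial ring), whereas both $\Gamma(\overline{\Gr^\lambda},\O(1))$ and $\bigoplus_{\bm}H^*(\fM(\bm,W))$ are finite-dimensional; with that substitution your outline is correct and is the standard proof.
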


Combining together the previous two theorems, we deduce that there is an isomorphism of graded $ T[[t]] $-modules
\begin{equation} \label{eq:HandGamma}
 H^*(\fM(W)) \cong \Gamma(\overline{\Gr^\lambda}^T, \O(1)).
\end{equation}
The action of $ T $ on the fibre of $\O(1)$ over the point $ t^\mu $ is by the weight $ \mu $ (here we identify coweights with weights).  Thus taking the $ \mu $ weight spaces in (\ref{eq:HandGamma}) yields an isomorphism of $ T[[t]]$-modules
$$
H^*(\fM(\bm, W)) \cong \Gamma(\overline{\Gr^\lambda}^T_\mu, \O(1))
$$

Let $K^\la_\mu\subset U(\mathfrak t[[t]])$ be the annihilator of $H^*(\fM(\bm,W)) $ which is the same as the annihilator of $\Gamma(\overline{\Gr^\lambda}^T_\mu, \O(1))$ .

The following elementary Lemma will be crucial for us.

\begin{Lemma} \label{le:elementary}
 Let $ R, A $ be commutative rings and suppose we have a surjective map $ R \rightarrow A $.  Then $ A \cong R/K $, where $ K $ is the annihilator of $ A $ regarded as an $R$-module.
\end{Lemma}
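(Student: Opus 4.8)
The plan is to recognize that this is the first isomorphism theorem for rings in disguise: the whole content of the statement is the identification of the annihilator $K$ with the kernel of the given surjection. So the only thing to really verify is $K = \ker\varphi$, where $\varphi\colon R\to A$ is the given map.

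Concretely, I would proceed as follows. First, give $A$ its $R$-module structure through $\varphi$, so that $r\cdot a = \varphi(r)\,a$, and set $K = \operatorname{Ann}_R(A) = \{r\in R : \varphi(r)\,a = 0 \text{ for all } a\in A\}$. Next, I would check the two inclusions. If $r\in\ker\varphi$ then $\varphi(r)=0$, hence $\varphi(r)\,a = 0$ for every $a\in A$, so $r\in K$. Conversely, if $r\in K$, then specializing to $a = 1_A$ gives $\varphi(r) = \varphi(r)\cdot 1_A = 0$, so $r\in\ker\varphi$. This proves $K=\ker\varphi$. Finally, since $\varphi$ is surjective, the first isomorphism theorem for commutative rings gives $A\cong R/\ker\varphi = R/K$, as claimed.

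Honestly there is no real obstacle here; the lemma is purely formal. The single point worth flagging in the write-up is the use of the multiplicative unit of $A$ in the step $K\subseteq\ker\varphi$ — without a unit the annihilator could be strictly larger than the kernel — but this is harmless since in the intended application both sides are unital (a coordinate ring and, via \eqref{eq:HandGamma}, the cohomology ring $H^*(\fM(\bm,W))$). Indeed, the point of the lemma is that once we have a surjection from some ambient ring $R$ (a symmetric algebra, or $U(\mathfrak t[[t]])$ as in the paragraph preceding the lemma) onto $\C[(\Grlmbar)^{\C^\times}]$, we may identify that coordinate ring with $R/K^\lambda_\mu$, using the annihilator $K^\lambda_\mu$ that has already been computed from $\Gamma(\overline{\Gr^\lambda}^T_\mu,\O(1))\cong H^*(\fM(\bm,W))$; comparing the two resulting presentations $R/K^\lambda_\mu$ then yields Theorem~\ref{theorem-Hikita}.
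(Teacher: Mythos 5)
Your proof is correct and is exactly the standard argument the paper has in mind: the paper states this lemma without proof (as "elementary"), and the only content is the identification $K=\ker\varphi$ via the unit of $A$, followed by the first isomorphism theorem. Your remark about needing $1_A$ for the inclusion $K\subseteq\ker\varphi$ is the right point to flag, and it is indeed harmless in the paper's applications.
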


\begin{Proposition} \label{pr:Kirwan}
We have an isomorphism of algebras $H^*(\fM(\bm, W)) \cong  U(\mathfrak t[[t]]) / K^\la_\mu$.
\end{Proposition}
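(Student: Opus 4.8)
The plan is to realise $H^*(\fM(\bm,W))$ as a quotient of the commutative ring $U(\mathfrak{t}[[t]])$ — here $\mathfrak{t}[[t]]=\mathfrak{t}\otimes\C[t]$ is abelian, so $U(\mathfrak{t}[[t]])$ is just a polynomial ring — and then to invoke Lemma~\ref{le:elementary}. The key object is a graded algebra homomorphism
$$\phi\colon U(\mathfrak{t}[[t]])\longrightarrow H^*(\fM(\bm,W)),\qquad x\longmapsto x\cdot 1,$$
where $\cdot$ is the restriction to $\mathfrak{t}[[t]]$ of Nakajima's geometric $\g[[t]]$-action on $H^*(\fM(W))$ (which preserves each weight space $H^*(\fM(\bm,W))$), and $1\in H^0(\fM(\bm,W))$ is the unit. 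Two geometric facts make this work. First, the Cartan currents $h\otimes t^r$ act on $H^*(\fM(\bm,W))$ by \emph{cup product} with tautological classes built from the Chern roots of the tautological bundles $V_i$; in particular the $\mathfrak{t}[[t]]$-action is by endomorphisms of $H^*(\fM(\bm,W))$ as a module over itself, i.e. $x\cdot\xi=(x\cdot 1)\cup\xi$. Second, $H^*(\fM(\bm,W))$ is generated as a ring by the Chern classes $c_k(V_i)$ — this is Kirwan surjectivity, which is known for Nakajima quiver varieties.

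Granting these, $\phi$ is an algebra map because $\phi(xy)=(xy)\cdot 1=x\cdot(y\cdot 1)=(x\cdot 1)\cup(y\cdot 1)=\phi(x)\cup\phi(y)$, the third equality being the cup-product property. Its image is therefore a commutative subring; and since the operator $h_i\otimes t^r$ applied to $1$ is, modulo lower-degree tautological classes, the $r$-th power sum of the Chern roots of $V_i$, an induction on cohomological degree (using Newton's identities to pass from power sums to elementary symmetric functions) shows that every $c_k(V_i)$ lies in the image. Hence $\phi$ is surjective; in particular $H^*(\fM(\bm,W))$ is generated by even-degree classes, so $H^{\mathrm{odd}}(\fM(\bm,W))=0$ and $H^*(\fM(\bm,W))$ is genuinely commutative, and Lemma~\ref{le:elementary} applies.

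By Lemma~\ref{le:elementary} with $R=U(\mathfrak{t}[[t]])$ and $A=H^*(\fM(\bm,W))$ (regarded as an $R$-module through $\phi$), we get $A\cong R/\operatorname{Ann}_R(A)$. By the cup-product property, the $R$-module structure through $\phi$ — namely $r\cdot a=\phi(r)\cup a$ — coincides with the geometric $\mathfrak{t}[[t]]$-action, so $\operatorname{Ann}_R(A)$ is exactly the ideal $K^\la_\mu$ defined before the statement. As $\phi$ is graded by construction, this gives the desired isomorphism $H^*(\fM(\bm,W))\cong U(\mathfrak{t}[[t]])/K^\la_\mu$ of graded algebras.

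The main obstacle is the first geometric fact: identifying the Cartan part of Nakajima's $\g[[t]]$-action with multiplication by tautological classes, with normalisations chosen so that the resulting annihilator is precisely the $K^\la_\mu$ in the statement. Kirwan surjectivity for quiver varieties is by now standard, and once both inputs are in hand the remainder of the argument is formal. Note that the alternative description $K^\la_\mu=\operatorname{Ann}_{U(\mathfrak{t}[[t]])}\Gamma(\overline{\Gr^\lambda}^T_\mu,\O(1))$ recorded before the statement plays no role in this proof; it is the bridge used afterwards to deduce Hikita's conjecture itself.
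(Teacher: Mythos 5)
Your proof is correct and follows essentially the same route as the paper: the paper's (much terser) proof simply observes that the $U(\mathfrak t[[t]])$-action is by definition the composite of the map $U(\mathfrak t[[t]])\to H^*_{G_V}(pt)$ with the surjective Kirwan map $H^*_{G_V}(pt)\to H^*(\fM(\bm,W))$, and then invokes Lemma \ref{le:elementary}. Your write-up just makes explicit the two inputs the paper leaves implicit — that the Cartan currents act by cup product with tautological classes (so the action map is a ring homomorphism onto the tautological subring) and Kirwan surjectivity — and correctly notes that the annihilator for this module structure is by definition $K^\la_\mu$.
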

\begin{proof}
  By definition, the action of $ U(\mathfrak t[[t]]) $ on $ H^*(\fM(\bm, W)) $ comes from the surjective Kirwan map $$  U(\mathfrak t[[t]]) \rightarrow H^*_{G_V}(pt) \rightarrow H^*(\fM(\bm, W)). $$

  Thus, by Lemma \ref{le:elementary}, the result follows.
\end{proof}

We will now need to relate $ \Gamma(\overline{\Gr^\lambda}^T_\mu, \O(1)) $ with $ \C[\overline{\Gr^\lambda}^T_\mu] $.  It will be easiest to do this by working in the affine Grassmannian of $ T $.

Recall that the affine Grassmannian $ \Gr_T $ is a non-reduced disconnected scheme with connected components $ \Gr_{T,\nu} $, where $ \nu $ ranges over the coweight lattice.  Each component $ \Gr_{T, \nu} $ has a simple transitive action of the group scheme $ T_1[t^{-1}]$.  Moreover, $T_1[t^{-1}] $ acts on the line bundle $ \O(1) $ and thus we can trivialize the line bundle to provide an isomorphism (of vector spaces) $ \C[T_1[t^{-1}]] \cong \Gamma(\Gr_{T,\nu})$.

We would like to understand the action of $ \mathfrak t[[t]] $ after this isomorphism.  To understand this, we recall that there is a $\C^\times $-extension of $ T((t)) $, denoted $ \widehat{T((t))} $, and a commutator map $$ \{, \} : T((t))  \times T((t))  \rightarrow \C^\times$$
which is defined using the Contou-Carr\`ere symbol (see for example \cite{Z}).  This extension gives rise to the line bundle $ \O(1) $ on $ \Gr_T $.

The adjoint action of $ \widehat{T((t))} $ on its Lie algebra is given by a map $ \widehat{T((t))} \rightarrow \mathfrak{t}((t))^* $ or equivalently by a map $  \mathfrak{t}((t)) \rightarrow \C[\widehat{T((t))}] $.  Thus we obtain a map $ \alpha : \mathfrak{t}[[t]] \rightarrow \C[T_1[t^{-1}]] $.

\begin{Lemma} \label{th:actbymult}
 The isomorphism $ \C[T_1[t^{-1}]] \cong \Gamma(\Gr_{T,\nu}, \O(1)) $ is $\mathfrak{t}[[t]]$-equivariant, where $ X \in \mathfrak{t}[[t]] $ acts on $ \C[T_1[t^{-1}]] $ by multiplication by $ \alpha(X) + \nu(X(0)) $.
\end{Lemma}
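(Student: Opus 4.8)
The plan is to realize both sides as function spaces on the central extension $\widehat{T((t))}$ and to differentiate the left translation action through the given trivialization. Recall that $\O(1)$ on $\Gr_T$ is built from $\widehat{T((t))}$, so a section over $\Gr_{T,\nu}$ is a function $f$ on the preimage of $\Gr_{T,\nu}$ that is invariant under right translation by the canonical lift of $T[[t]]$ and transforms with weight $1$ under the central $\C^\times$; the $\mathfrak{t}[[t]]$-action is by differentiating left translation, using the canonical lift $X\mapsto\widehat X$ of $\mathfrak{t}[[t]]$ into $\widehat{\mathfrak{t}}((t))$. The isomorphism $\C[T_1[t^{-1}]]\cong\Gamma(\Gr_{T,\nu},\O(1))$ of the statement is restriction of $f$ along the section $g\mapsto\widetilde g\,\widetilde{t^\nu}$, where $\widetilde g$ is the canonical lift of $g\in T_1[t^{-1}]$ and $\widetilde{t^\nu}$ a fixed lift of $t^\nu$.

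First I would compute $\exp(s\widehat X)\,\widetilde g\,\widetilde{t^\nu}$ in $\widehat{T((t))}$ for $X\in\mathfrak{t}[[t]]$. Since $T((t))$ is abelian, $\exp(s\widehat X)$ commutes with $\widetilde g$ and with $\widetilde{t^\nu}$ up to central factors governed by the commutator (Contou--Carr\`ere) pairing, and expanding to first order in $s$ one gets
\begin{equation*}
\exp(s\widehat X)\,\widetilde g\,\widetilde{t^\nu} = \widetilde g\,\widetilde{t^\nu}\,\exp(s\widehat X)\,\exp\!\big(-s(\alpha(X)(g)+\ell_\nu(X))K + O(s^2)\big),
\end{equation*}
where $K$ is the central generator, $\alpha$ is the map from the statement (coming from the adjoint action of $\widehat{T((t))}$ on its Lie algebra, evaluated on the lifts of points of $T_1[t^{-1}]$), and $\ell_\nu\in\mathfrak{t}[[t]]^\ast$ is the functional obtained by applying that same adjoint action to $\widetilde{t^\nu}$.

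Next I would identify $\ell_\nu(X)=\nu(X(0))$. This is the standard residue computation for the basic central extension: $\mathrm{Ad}(\widetilde{t^\nu})$ fixes $\mathfrak{t}((t))$ pointwise and shifts the central direction by $\operatorname{Res}_{t=0}\!\big(X(t)\,\tfrac{d(t^\nu)}{t^\nu}\big)=\operatorname{Res}_{t=0}\!\big(\nu\,X(t)\,\tfrac{dt}{t}\big)=\nu(X(0))$, using the invariant form on $\mathfrak g$ and the identification of coweights with weights; I would cite Zhu \cite{Z} for the precise normalization of the extension and of the Contou--Carr\`ere symbol. A parallel residue computation shows $\alpha$ vanishes on the constant subalgebra $\mathfrak{t}\subset\mathfrak{t}[[t]]$, so the constant term of $X$ contributes only the scalar $\nu(X(0))$, as it must, since $\O(1)$ has weight $\nu$ at the $T$-fixed point $t^\nu$.

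Finally, applying $f$ to the displayed identity and using right invariance under $\exp(s\widehat X)\in\widehat{T[[t]]}$ together with the central weight-$1$ equivariance gives $f(\exp(s\widehat X)\widetilde g\,\widetilde{t^\nu})=e^{-s(\alpha(X)(g)+\nu(X(0)))}f(\widetilde g\,\widetilde{t^\nu})+O(s^2)$; differentiating at $s=0$ shows $X$ acts on $\C[T_1[t^{-1}]]$ by multiplication by $\alpha(X)+\nu(X(0))$, up to the overall sign fixed by the convention for the left action (which one normalizes to agree). The main obstacle is not the residue computation but matching conventions: the precise construction of $\O(1)$ and of the canonical lifts, the normalization of the Contou--Carr\`ere symbol, and the direction of the $\mathfrak g[[t]]$-action inherited from the Kodera--Naoi isomorphism \cite{KN}, so that the signs and the pairing in $\nu(X(0))$ emerge exactly as stated.
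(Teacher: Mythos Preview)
Your proposal is correct and follows the same approach as the paper: the paper's proof simply observes that for $g\in T[[t]]$ the action on $f\in\C[T_1[t^{-1}]]$ is $g\cdot f=\{g,-\}\{g,t^\nu\}f$ and says ``differentiating, we obtain the desired result,'' while you carry out exactly this differentiation explicitly via $\exp(s\widehat X)$, the commutator pairing, and the residue computation identifying $\ell_\nu(X)=\nu(X(0))$. Your careful bookkeeping of lifts, signs, and the Contou--Carr\`ere normalization is precisely the content hidden in the paper's two-line sketch.
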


\begin{proof}
 Let $ g \in T[[t]] $.  Then the action of $ g $ on an element $ f \in \C[T_1[t^{-1}]] $ is given by $ g \cdot f = \{ g, - \} \{g, t^\nu \} f $ where $ \{g, - \} $ is regarded as element of $ \C[T_1[t^{-1}]] $.  Differentiating, we obtain the desired result.
\end{proof}

We are now is a position to complete the proof of Hikita's conjecture.
\begin{proof}[Proof of Theorem \ref{theorem-Hikita}]
From \cite{Z}, we know that there is a natural identification $ \Gr^T = \Gr_T $.  Thus we can embed $ \overline{\Gr^\lambda}^T_\mu $ as a subscheme of $ \Gr_{T,\mu} $.  Thus the trivialization of $ \O(1) $ on $ \Gr_{T, \mu} $ gives us a trivialization of $ \O(1) $ on $ \overline{\Gr^\lambda}^T_\mu $ and thus an isomorphism
\begin{equation}
\C[\overline{\Gr^\lambda}^T_\mu] \cong \Gamma(\overline{\Gr^\lambda}^T_\mu, \O(1)).\label{eq:fixed}
\end{equation}
This isomorphism is compatible with the isomorphism from Lemma \ref{th:actbymult} and thus we deduce that the map of \eqref{eq:fixed} is a $\mathfrak t[[t]]$-module isomorphism where $ \mathfrak t[[t]] $ acts on $\C[\overline{\Gr^\lambda}^T_\mu] $ by multiplication.   Thus we are in the situation of Lemma \ref{le:elementary} and thus we have an induced isomorphism $ \C[\overline{\Gr^\lambda}^T_\mu] \cong  U(\mathfrak t[[t]]) / K^\la_\mu $.

Applying Propositions \ref{pr:CtimesT} and \ref{pr:Kirwan} completes the proof.
\end{proof}

\subsection{Nakajima's conjecture}
\label{sec:conjecture-nakajima}
Nakajima has extended Hikita's conjecture to the non-commutative situation.  We can consider the $B$-algebra $B_{h}(\bR)$ of the
Rees algebra $Y_\mu^\lambda(\bR)_h \subset Y_\mu^\lambda(\bR)[h,h^{-1}]$; here we
consider the parameters $\bR$ as formal variables, and thus we obtain
a finite algebra over the polynomial ring $\C[e_k(R_i),h]$.  This
polynomial ring is isomorphic to the cohomology of the
classifying space of the group $G_{W}\times \C^\times$.
\begin{Conjecture}[Nakajima]
  There is an isomorphism of $\C[e_k(R_i),h]\cong
  H^*_{G_{W}\times \C^\times}(*)$-algebras \[B_{h}(\bR) \cong H^*_{G_{W}\times
    \C^\times}(\fM(\mathbf{m},W)).\]

In particular, specializing $ \bR$ to numerical values and setting $h=1/2$, we have $ B(Y^\lambda_\mu(\bR)) \cong H^*(\fM(\bm, W, \bR)) $.
\end{Conjecture}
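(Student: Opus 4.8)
The plan is to reduce to type A, where Corollary \ref{B algebra} and Proposition \ref{multiset conditions} give an explicit presentation of the $B$-algebra, and then to run the proof of Hikita's theorem (Theorem \ref{theorem-Hikita}, via the Kirwan presentation of Proposition \ref{pr:Kirwan}) in a $(G_W\times\C^\times)$-equivariant, quantized family. Concretely I would treat the components $\bR$ as formal Chern roots, so that the base ring $\C[e_k(R_i),h]$ is $H^*_{G_W\times\C^\times}(\ast)$; replace the classical coordinate ring $\C[\Grlmbar]$ by the Rees algebra $Y^\lambda_\mu(\bR)_h$; and replace $H^*(\fM(\bm,W))$ by $H^*_{G_W\times\C^\times}(\fM(\bm,W))$. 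Both sides then form flat families over $\C[e_k(R_i),h]$ whose specialization at $h=0$ is the isomorphism already established in Theorem \ref{theorem-Hikita}; the goal is to upgrade this to an isomorphism of families, and, as in Hikita's argument, the mechanism should be to present \emph{both} sides as a quotient of one ring by the \emph{same} annihilator ideal (Lemma \ref{le:elementary}).

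On the algebraic side the first step is to compute $B_h(\bR)$. Carrying out the analysis of Section \ref{se:Balgebra} over $\C[e_k(R_i),h]$ rather than over $\C$, with $h$ recording the Rees grading, one expects $B_h(\bR)$ to be the quotient of $\C[e_k(R_i),h]\big[A_i^{(s)}:i\in I,\ 1\le s\le m_i\big]$ by the coefficients of the $h$-homogenized series $\Pi\big(\underline{G_\gamma(u)\,T_{\gamma,\gamma}(u)}\big)$ of Corollary \ref{B algebra}. What must be checked is that these coefficients are genuinely polynomial in the $e_k(R_i)$, in $h$, and in the $A_i^{(s)}$, and that they cut out a flat family; granting this, $B_h(\bR)=\widetilde R/\widetilde K$ for an explicit polynomial ring $\widetilde R$, and specializing the $e_k(R_i)$ to numerical values of $\bR$ together with $h=1/2$ recovers $B(Y^\lambda_\mu(\bR))$ on the nose.

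On the geometric side I would need $(G_W\times\C^\times)$-equivariant refinements of the two inputs to Hikita's proof: a Kirwan surjection $H^*_{G_V\times G_W\times\C^\times}(\ast)\twoheadrightarrow H^*_{G_W\times\C^\times}(\fM(\bm,W))$, presenting the target by Lemma \ref{le:elementary} as the quotient by the annihilator of equivariant cohomology over itself; and loop-rotation- and $G_W$-equivariant versions of Zhu's identification $\overline{\Gr^\lambda}^T\cong\Gr_T$ \cite{Z} and of the Kodera--Naoi isomorphism $H^*(\fM(W))\cong\Gamma(\overline{\Gr^\lambda},\O(1))$ \cite{KN}. Matching the resulting presentations should identify the $G_W$-equivariant parameters with the $e_k(R_i)$, the loop-rotation parameter with $h$, and --- the point that does the real work --- the image of the $G_V$-Kirwan map with the classes $A_i^{(s)}$, forcing the two annihilator ideals to coincide. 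The ``in particular'' clause should then follow by localization: specializing the equivariant parameters to the values determined by numerical $\bR$ and $h=1/2$ cuts out the one-parameter subgroup of $G_W\times\C^\times$ whose fixed locus on $\fM(\bm,W)$ is the graded quiver variety $\fM(\bm,W,\bR)$, so $H^*_{G_W\times\C^\times}(\fM(\bm,W))$ specializes to $H^*(\fM(\bm,W,\bR))$; this is also the consistency check with Conjecture \ref{co:main}, since taking $\MaxSpec$ of both sides returns the bijection $H^\lambda_\mu(\bR)\cong\B(\lambda,\bR)_\mu$ through $\pi_0(\fM(W,\bR))\cong\B(\lambda,\bR)$ of Proposition \ref{pr:pi0andB}.

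I expect the main obstacle to be twofold. Outside type A the $B$-algebra is only conjecturally described (it rests on Conjecture \ref{Conj: L is A-invariant}), so the argument as sketched is confined to the case where the algebraic side is understood; pushing past that would require the speculative general description of $\Pi(L_\mu^\lambda)$ mentioned at the end of Section \ref{se:ProofConjecture}. More fundamentally, the equivariant Zhu and Kodera--Naoi statements are not available off the shelf and would have to be proved: doing so amounts to showing that forming classical $\C^\times$-fixed points commutes with passage to the quantization/Rees family, which is exactly the ``quantization $\leftrightarrow$ equivariance'' principle at the heart of the Hikita--Nakajima picture, and I would expect this to be the hard part. Finally, there is a delicate bookkeeping of normalizations: the half-integer shifts $u-\tfrac12 c$ that pervade the Yangian side and the special value $h=1/2$ both come from spin/$\rho^\vee$-twists, and pinning down the corresponding twist of the loop-rotation action on $\fM(\bm,W)$ --- equivalently, the precise equivariant structure on $\O(1)$ --- is what is required for the isomorphism of \emph{graded} algebras, and not just of abstract algebras, to come out correctly.
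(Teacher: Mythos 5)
First, a point of order: the statement you are proving is presented in the paper as a \emph{conjecture} (due to Nakajima), and the authors state explicitly that they do not know how to prove it. There is no proof in the paper to compare yours against; what the paper actually establishes is the classical case (Theorem \ref{theorem-Hikita}), plus the observation that the set-theoretic shadow of the refined Conjecture \ref{co:Nakajima2} is equivalent to Conjecture \ref{co:main}. Your proposal must therefore stand on its own, and it does not close: its decisive steps are precisely the content of the conjecture. The classical argument works because Lemma \ref{le:elementary} is applied to a \emph{single} $U(\mathfrak t[[t]])$-module, $\Gamma(\overline{\Gr^\lambda}^T_\mu,\O(1))$, which is identified with $H^*(\fM(\bm,W))$ on one side (Kodera--Naoi \cite{KN}) and with $\C[(\Grlmbar)^{\C^\times}]$ on the other (Zhu \cite{Z} plus the trivialization over $\Gr_T$); both algebras are then the quotient of $U(\mathfrak t[[t]])$ by the annihilator of that one module. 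In the quantized, equivariant setting you exhibit no such module, so the claim that matching presentations ``forces the two annihilator ideals to coincide'' has nothing driving it; and the equivariant upgrades of \cite{Z} and \cite{KN} that you invoke are, as you concede, unproved --- establishing them is essentially equivalent to proving the conjecture, not a reduction of it.

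Several subsidiary steps are also genuine gaps rather than routine checks. Outside type A, the presentation of $B_h(\bR)$ you start from rests on Conjecture \ref{Conj: L is A-invariant}, so even the algebraic side is conjectural there. Flatness of $B_h(\bR)$ over $\C[e_k(R_i),h]$ is asserted but not shown; the substance of Nakajima's conjecture is exactly that the family of $B$-algebras does not degenerate at special $\bR$, so this cannot be assumed. And the ``in particular'' clause is not automatic localization: specializing $H^*_{G_W\times\C^\times}(\fM(\bm,W))$ at the maximal ideal determined by numerical $\bR$ and $h=1/2$ recovers $H^*(\fM(\bm,W,\bR))$ only given equivariant formality and a fixed-point localization statement for the subtorus cut out by $\bR$, which requires proof. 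If you want a route with a realistic chance of closing in type A, the paper's own remark at the end of Section \ref{sec:conjecture-nakajima} points to one: combine the explicit description of the $B$-algebra (Corollary \ref{B algebra} and Proposition \ref{multiset conditions}) with an equivariant version of the Brundan--Ostrik presentation \cite{BrO} of the cohomology of the relevant quiver varieties, and compare generators and relations directly, rather than attempting to quantize the Hikita argument wholesale.
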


At the moment, we do not know how to prove this conjecture.  However, let us explain how this conjecture is related to Conjecture \ref{co:main}.  First, we extend Nakajima's conjecture in the following way.  Recall that we have a surjection $ \Cartan \rightarrow B(Y^\lambda_\mu(\bR)) $ by Proposition \ref{pr:2Balgebras} .  Now, we define a map $ \Cartan \rightarrow H^*(\fM(\bm, W, \bR)) $ as the composition
$$
\Cartan \rightarrow H^*_{\C^\times}\big(\fM(\mathbf{m},W)\big) \rightarrow  H^*\big(\fM(\bm, W, \bR)\big)
$$
where the first map is given by sending $ A_i^{(s)} $ to the $s$th Chern class of the tautological vector bundle $V_i$, where the action of $ \C^\times $ on $ \fM(\mathbf{m}, W) $ is given by $ \bR $ (as in Section \ref{se:gradedQV}), and where the second arrow is given by specializing the equivariant parameter to $h = 1/2$.

We extend Nakajima's conjecture to the following statement.
\begin{Conjecture} \label{co:Nakajima2}
There is an isomorphism of $ \Cartan$-algebras \[ B\big(Y^\lambda_\mu(\bR)\big) \cong H^*\big(\fM(\bm, W, \bR)\big) .\]
\end{Conjecture}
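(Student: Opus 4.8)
The plan is to reduce Conjecture \ref{co:Nakajima2} to a statement about the local factors of the two sides at the points of $\B(\lambda,\bR)_\mu$, and then to match those local factors by an equivariant enhancement of the proof of Hikita's conjecture given in Section \ref{se:Hikita}.

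First I would show that both $B := B(Y^\lambda_\mu(\bR))$ and $H^*(\fM(\bm,W,\bR))$ are finite direct products of local Artinian $\Cartan$-algebras indexed by $\B(\lambda,\bR)_\mu$, with matching residue characters. On the quiver variety side, $\fM(\bm,W,\bR) = \fM(\bm,W)^{\C^\times}$ is the disjoint union of its connected components, which by Propositions \ref{prop:ccomp} and \ref{pr:pi0andB} are precisely the non-empty loci $X(\bS)$ with $|S_i| = m_i$, indexed by $\B(\lambda,\bR)_\mu$; each $X(\bS)$ is homotopy equivalent to its projective core $X(\bS)\cap\mathcal L(W)$ (cf. the proof of Proposition \ref{pr:pi0andB}), so $H^*(X(\bS))$ is a local Artinian ring with residue field $\C$, and by Lemma \ref{le:aikandN} together with the definition of the map $\Cartan\to H^*(\bm,W,\bR)$ the induced residue character sends $A_i(u)$ to $\prod_{k\in S_i}(1-\tfrac12 k u^{-1})$. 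On the algebra side, $B$ is a finite-dimensional commutative $\Cartan$-algebra by Proposition \ref{pr:2Balgebras} and the surrounding discussion, hence a product over $\MaxSpec B = H^\lambda_\mu(\bR)$ of local Artinian algebras; by Corollary \ref{Cor:typeA} in type A (or Conjecture \ref{co:main} in general) $\MaxSpec B$ is canonically $\B(\lambda,\bR)_\mu$ with the same residue characters. So it suffices to produce, for each $\bS$, a $\Cartan$-algebra isomorphism between the completed local factor $\widehat{B}_\bS$ and $H^*(X(\bS))$, and to check that these assemble into a graded isomorphism.

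To match the $\bS$-factors I would parallel Section \ref{se:Hikita}. On the cohomology side one needs the Kodera--Naoi identification $H^*(\fM(W))\cong\Gamma(\overline{\Gr^\lambda},\O(1))$ and Zhu's theorem $\Gamma(\overline{\Gr^\lambda},\O(1))\cong\Gamma(\overline{\Gr^\lambda}^T,\O(1))$, both upgraded to track the $\C^\times$-action through $\bR$ (a mild strengthening, since Zhu's statement is already $T[[t]]$-equivariant and Kodera--Naoi already respects the homological grading); combined with equivariant Kirwan surjectivity and Lemma \ref{le:elementary}, this presents $H^*(X(\bS))$ as the quotient of $\Cartan$, completed at $\bS$, by the annihilator of a module of equivariant sections, just as in Proposition \ref{pr:Kirwan}. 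On the algebra side one wants a geometric model for $B$ — or rather for the $B$-algebra $B_h(\bR)$ of the Rees algebra $Y^\lambda_\mu(\bR)_h$ — as the ring of $T$-fixed sections of the quantization of $\O(1)$ over $\Grlmbar$; passing to the $T$-fixed subscheme as in Proposition \ref{pr:CtimesT} and trivializing over $\Gr_{T,\mu}$ as in Lemma \ref{th:actbymult} should identify this with the same sections module and hence the two annihilators. In type A one could instead hope to close the argument by brute comparison of presentations: the explicit description of $B$ in Corollary \ref{B algebra} and Proposition \ref{multiset conditions} (quotient of $\Cartan$ by the coefficients of $\underline{H_q(u)}$, $q\in\B(\varpi_i,0)$) against the presentation of $H^*(X(\bS))$ coming from Kirwan surjectivity, under the identification $A_i^{(s)}\leftrightarrow c_s(V_i)|_{h=1/2}$.

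The main obstacle is the geometric realization of $B$ on the algebra side in a form compatible with the passage to $T$-fixed points: $\C[\Grlmbar]$ is literally a ring of functions to which Zhu's fixed-point theorem applies, whereas $B$ is a subquotient of a noncommutative filtered algebra, and one must show its $B$-algebra is computed by the commutative ring of $T$-fixed sections of the quantized structure sheaf — this is essentially the content of Nakajima's conjecture, which the present methods do not reach. In the type A approach the obstacle is instead to prove that the two ideals of relations — the coefficients of $\underline{H_q(u)}$ and the Kirwan relations for $X(\bS)$ — cut out the same closed subscheme, which again seems to require input beyond what is developed here. Throughout, one must also verify that the loop-rotation grading on the algebra side matches the cohomological grading on $H^*$ after setting $h=1/2$, so that the asserted isomorphism is one of graded $\Cartan$-algebras.
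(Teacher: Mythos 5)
This statement is a \emph{conjecture} in the paper (Conjecture \ref{co:Nakajima2}); the authors state explicitly that they do not know how to prove it, and what they actually establish is strictly weaker: the equality of the underlying \emph{sets} $\MaxSpec B(Y^\lambda_\mu(\bR)) = \MaxSpec H^*(\fM(\bm,W,\bR))$ inside $\Spec\Cartan$ (equivalent to Conjecture \ref{co:main}, hence proved only in type A via Corollary \ref{Cor:typeA} and Lemma \ref{le:composite}), together with the non-equivariant Hikita isomorphism of Theorem \ref{theorem-Hikita}. Your reduction to matching local Artinian factors over $\B(\lambda,\bR)_\mu$ with matching residue characters is sound and reproduces exactly what the paper proves; the genuine content of the conjecture is the identification of the \emph{nilpotent} structure of the corresponding local factors, and that is the step your proposal does not supply.

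Concretely, both of your routes to closing that step stall at the same place, as you yourself acknowledge. In the Hikita-style route, the paper's argument crucially uses that $\C[(\Grlmbar)^{\C^\times}]$ is the $B$-algebra of the honest commutative ring $\C[\Grlmbar]$, so that Zhu's theorem on the $T$-fixed subscheme and Lemma \ref{th:actbymult} apply to an actual scheme of sections; for $B(Y^\lambda_\mu(\bR))$ there is no such geometric model of the quantization's $B$-algebra available in the paper, and producing one is essentially Nakajima's conjecture itself, not a "mild strengthening" of Zhu or Kodera--Naoi. In the type A route, you would need to show that the ideal generated by the coefficients of $\Pi(\underline{G_\gamma(u)T_{\gamma,\gamma}(u)})$ (Corollary \ref{B algebra}, Proposition \ref{multiset conditions}) coincides with the ideal of relations in $H^*(\fM(\bm,W,\bR))$ as ideals, not merely that they have the same radical; the paper's only suggestion here (the equivariant Brundan--Ostrik presentation, mentioned in the final Remark) is itself left unexecuted. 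So the proposal is a reasonable research outline consistent with the paper's framework, but it is not a proof, and the paper contains none to compare it against.
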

In other words, not only are these two algebras isomorphic, they are the same quotient of $ \Cartan $.  This conjecture is equivalent to the equality of $ \Spec  B\big(Y^\lambda_\mu(\bR)\big) $ and $ \Spec H^*\big(\fM(\bm, W, \bR)\big) $ as subschemes of $ \Spec \Cartan $.  If we pass to the underlying subsets, we return to our main conjecture.

\begin{Proposition}
 Conjecture \ref{co:main} is equivalent to the equality of $ \MaxSpec B\big(Y^\lambda_\mu(\bR)\big)$ and $ \MaxSpec H^*\big(\fM(\bm, W, \bR)\big) $ as subsets of $ \Spec \Cartan $.
\end{Proposition}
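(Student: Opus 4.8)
The plan is to translate both sides of the claimed equivalence into statements about two subsets of $\Spec\Cartan$ and to check that the translation is transparent once the parametrizations are pinned down. Recall first that $\MaxSpec B(Y^\lambda_\mu(\bR))$, viewed inside $\Spec\Cartan$ via the surjection $\Cartan\twoheadrightarrow B$ of Proposition \ref{pr:2Balgebras}, is precisely $H^\lambda_\mu(\bR)$ by Proposition \ref{Prop: B algebra and Vermas}; and a highest weight $J$, regarded as a point of $\Spec\Cartan$, is the character determined by $A_i(u)\mapsto\prod_{k\in S_i}(1-\tfrac12 k\,u^{-1})$, where $\bS=(S_i)$ is the tuple of multisets recording the action of the $A_i(u)$ on $\one$ (necessarily with $|S_i|=m_i$, since $y(J)=y_\bR z_\bS^{-1}$ has weight $\lambda-\sum_i|S_i|\alpha_i$, which must equal $\mu$). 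By the computation in Section \ref{subsec:highest} the composite $H^\lambda_\mu(\bR)\xrightarrow{J\mapsto y(J)}\B(\lambda,\bR)_\mu\hookrightarrow\Spec\Cartan$ — where the second map sends $y_\bR z_\bS^{-1}$ to the point $\bS$, and is injective by Remark \ref{Remark: uniqueness of monomial factorization} — agrees with the tautological inclusion $H^\lambda_\mu(\bR)\hookrightarrow\Spec\Cartan$ described above. Likewise Proposition \ref{pr:pi0andB} identifies $\B(\lambda,\bR)_\mu$, through the very same formula $\bS\mapsto y_\bR z_\bS^{-1}$, with $\{\bS:X(\bS)\neq\emptyset,\ |S_i|=m_i\}$.

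Granting this, the one genuinely new step is to compute $\MaxSpec H^*(\fM(\bm,W,\bR))$ as a subset of $\Spec\Cartan$. I would argue as follows. By Proposition \ref{prop:ccomp} the connected components of $\fM(\bm,W,\bR)=\fM(\bm,W)^{\C^\times}$ are exactly the nonempty $X(\bS)$ with $|S_i|=m_i$, so $H^*(\fM(\bm,W,\bR))=\prod_\bS H^*(X(\bS))$; since $H^*(\fM(\bm,W))$ is finite-dimensional (it is $\Gamma(\overline{\Gr^\lambda},\O(1))_\mu$ by the Kodera--Naoi and Zhu theorems used in Section \ref{se:Hikita}) and the $\C^\times$-action is equivariantly formal, $H^*$ of the fixed locus is finite-dimensional as well, so each factor $H^*(X(\bS))$ is a finite-dimensional connected graded $\C$-algebra, hence local with nilpotent maximal ideal. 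Thus $\MaxSpec H^*(X(\bS))$ is a single point, and one identifies it in $\Spec\Cartan$ by noting that $\Cartan\to H^*(X(\bS))$ sends $A_i(u)$ to $1+\sum_{s\ge1}c_s u^{-s}$, where $c_s$ is the image of the $h=\tfrac12$ specialization of the $s$th $\C^\times$-equivariant Chern class of the tautological bundle $V_i$; modulo positive-degree ordinary classes this reduces, because $V_i$ carries a $\C^\times$-action with weight multiset $S_i$ by construction (Section \ref{se:gradedQV}), to the appropriately signed version of $\prod_{k\in S_i}(1-\tfrac12 k\,u^{-1})$. Hence $\MaxSpec H^*(\fM(\bm,W,\bR))=\{\bS:X(\bS)\neq\emptyset,\ |S_i|=m_i\}$ inside $\Spec\Cartan$, which is exactly the subset $\B(\lambda,\bR)_\mu$ maps onto.

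With both descriptions in place the equivalence is formal. Conjecture \ref{co:main} says that $\bS\mapsto y_\bR z_\bS^{-1}$ restricts to a bijection $H^\lambda_\mu(\bR)\to\B(\lambda,\bR)_\mu$; since this map is injective on all tuples of multisets and carries $\{\bS:X(\bS)\neq\emptyset,\ |S_i|=m_i\}$ bijectively onto $\B(\lambda,\bR)_\mu$, such a bijection exists if and only if $H^\lambda_\mu(\bR)$ and $\{\bS:X(\bS)\neq\emptyset,\ |S_i|=m_i\}$ coincide as subsets of $\Spec\Cartan$, i.e. if and only if $\MaxSpec B(Y^\lambda_\mu(\bR))=\MaxSpec H^*(\fM(\bm,W,\bR))$ there.

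The main obstacle is the second paragraph: making the identification of $\MaxSpec H^*(\fM(\bm,W,\bR))$ inside $\Spec\Cartan$ airtight. Two points need care — first, surjectivity of $\Cartan\to H^*(\fM(\bm,W,\bR))$, so that ``subset of $\Spec\Cartan$'' is literally what we mean; this should follow from generation of $H^*_{\C^\times}(\fM(\bm,W))$ by tautological classes together with localization. Second, the bookkeeping of the several sign and normalization conventions in the $\C^\times$-actions, the tautological bundles, and the Chern-class map, so that the reduction modulo nilpotents matches $A_i(u)\mapsto\prod_{k\in S_i}(1-\tfrac12 k\,u^{-1})$ exactly rather than up to $k\mapsto-k$. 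Everything else is an assembly of results already established in the paper.
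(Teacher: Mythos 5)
Your proposal is correct and follows essentially the same route as the paper: identify $\MaxSpec B(Y^\lambda_\mu(\bR))$ with $H^\lambda_\mu(\bR)$ via Propositions \ref{Prop: B algebra and Vermas} and \ref{pr:2Balgebras}, identify $\MaxSpec H^*(\fM(\bm,W,\bR))$ with the set of nonempty components $X(\bS)$ via the locality of $H^*$ of a connected variety, and match the two parametrizations by the Chern-class computation on $X(\bS)$ — which is exactly the content of the paper's Lemma \ref{le:composite}. The points you flag as needing care (locality of each $H^*(X(\bS))$ and the normalization $c(V_i)\mapsto\prod_k(1+kh)^{S_i(k)}$ at $h=\tfrac12$) are handled in that lemma, so your argument is complete as stated.
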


\begin{proof}
 Recall that $ \MaxSpec B\big(Y^\lambda_\mu(\bR)\big)$ is the set of highest weights $H^\lambda_\mu(\bR) $ by Propositions \ref{Prop: B algebra and Vermas} and \ref{pr:2Balgebras}.  Moreover, given $ J \in H^\lambda_\mu(\bR) $, we can regard $ J $ as defining a $ \C$-algebra map $ \Cartan \rightarrow \C $ where $ A_i^{(s)} $ is sent to $ e_s(S_i/2) $ where $ y(J) = y_\bR z_\bS^{-1} $ (and $ e_s $ denotes the $s$th elementary symmetric function).

 Assuming Conjecture \ref{co:main}, this element $ y_\bR y_\bS^{-1} $ lies in the monomial crystal $ \B(\lambda, \bR) $ and thus gives us a connected component $X(\bS)$ of $ \fM(\bm, W, \bR) $ by Proposition \ref{pr:pi0andB}.  Note that for any algebraic variety $ Y $, the $\C$-algebra morphisms $ H^*(Y) \rightarrow \C $ are given by $ H^*(Y) \rightarrow H^*(Y_i) \rightarrow \C $, where $ Y_i $ ranges over the connected components of $ Y $.  Thus, Lemma \ref{le:composite} completes the proof.
\end{proof}

\begin{Lemma} \label{le:composite}
 For any $ \bS$ such that $  y_\bR y_\bS^{-1} \in \B(\lambda, \bR) $, the composite
\[ \Cartan \rightarrow H^*(\fM(\bm, W, \bR)) \rightarrow H^*(X(\bS)) \rightarrow \C, \]
takes $ A_i^{(s)} $ to the $s$th elementary symmetric function of the multiset $ S_i/2 $.

\end{Lemma}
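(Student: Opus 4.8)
The plan is to unwind the definition of the composite map and reduce to a standard equivariant Chern-class computation on the fixed locus. Recall that the first arrow $\Cartan \to H^*(\fM(\bm, W, \bR))$ is built as follows: $A_i^{(s)}$ is sent to the $\C^\times$-equivariant Chern class $c_s(V_i) \in H^*_{\C^\times}(\fM(\bm, W))$ of the tautological bundle $V_i$ (with $\C^\times$ acting via $\bR$ as in Section \ref{se:gradedQV}, so in particular $A_i^{(s)} \mapsto 0$ for $s > m_i$ since $\operatorname{rk} V_i = m_i$); this class is restricted to the fixed locus $\fM(\bm, W, \bR) = \fM(\bm, W)^{\C^\times}$; and the equivariant parameter $h$ is specialized to $1/2$. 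Restricting further to the component $X(\bS)$ — which is nonempty, hence a connected component of $\fM(\bm, W, \bR)$, by Propositions \ref{pr:pi0andB} and \ref{prop:ccomp} — produces an element of $H^*(X(\bS))$. Since $X(\bS)$ is connected, the final arrow $H^*(X(\bS)) \to \C$ is necessarily the augmentation onto $H^0(X(\bS)) = \C$, i.e.\ the projection killing every class of positive degree (cf.\ the argument in the proof of the preceding Proposition).

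The key geometric input, which I would take from \cite[\S 4.1]{Nak991} and the discussion in Section \ref{se:gradedQV}, is that over $X(\bS)$ the $\C^\times$-action is trivial on the base and the tautological bundle splits $\C^\times$-equivariantly as $V_i|_{X(\bS)} = \bigoplus_k V_i(k)$, where $V_i(k)$ is the eigen-subbundle on which $\C^\times$ acts (through the maps $\sigma_i$ of Section \ref{se:gradedQV}) with weight $k$, of rank $\dim V_i(k) = S_i(k)$. Consequently the total equivariant Chern class of $V_i|_{X(\bS)}$ factors as $\prod_k \prod_{j=1}^{S_i(k)} (1 + x^{(k)}_j + k h)$, where the $x^{(k)}_j$ are the ordinary Chern roots of $V_i(k)$; that is, the equivariant Chern roots of $V_i|_{X(\bS)}$ are the classes $x^{(k)}_j + k h$. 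Hence $A_i^{(s)}$ maps to $e_s\big(\{x^{(k)}_j + k h\}_{k,j}\big)$.

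It then remains only to carry out the two outstanding operations: setting $h = 1/2$ and applying the augmentation $H^*(X(\bS)) \to \C$, which sends every $x^{(k)}_j$ to $0$, leaves exactly $e_s$ of the multiset $\{\tfrac{1}{2} k : k \in S_i\}$, i.e.\ $e_s(S_i/2)$, which is the assertion. The one point that genuinely requires care — and which I expect to be the only real, essentially bookkeeping, obstacle — is matching conventions: the sign convention for Chern classes, the sign of the $\C^\times$-weights on $V_i$ as recorded by the $\sigma_i$, and the normalization $h = 1/2$ must all line up so that one lands on $e_s(S_i/2)$ exactly, rather than up to a sign $(-1)^s$. These conventions are pinned down in Section \ref{se:gradedQV} (the Remark there notes that signs in the $\C^\times$-actions were chosen precisely to match the earlier sections), so once the identification of the equivariant structure on $V_i|_{X(\bS)}$ with the stated $\sigma_i$-weight decomposition is confirmed, the computation above yields the claim immediately.
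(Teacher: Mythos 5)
Your proof is correct and follows essentially the same route as the paper's: both restrict the equivariant tautological bundle to $X(\bS)$, split it into the weight-$k$ eigensubbundles $V_i(k)$ of rank $S_i(k)$, observe that the non-equivariant Chern classes die under the map to $\C[h,h^{-1}]$ (your augmentation is the paper's restriction to a point in the connected component), and then set $h=1/2$. The only cosmetic difference is that you phrase the computation via equivariant Chern roots $x_j^{(k)}+kh$ while the paper writes $V_i(k)=U_i^{(k)}\otimes\C_k$ and computes the total Chern class as $(1+kh)^{S_i(k)}+\dots$; these are the same calculation.
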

\begin{proof}
For any $ \bS $, let us rewrite this composition as
\[ \Cartan \rightarrow H^*_{\C^\times}(\fM(\bm, W)) \rightarrow H^*_{\C^\times}(X(\bS)) \rightarrow \C[h,h^{-1}] \rightarrow \C, \]
where the third arrow comes from the inclusion of a point into the connected space $ X(\bS) $ and the fourth arrow is given by setting $h = 1/2 $.  Here we use the trivial action of $ \C^\times$ on $ X(\bS) $.

  On the locus $X(\bS)$, the restriction of the tautological bundle
  $V_i$ is a sum of bundles $V_i(k)$, with dimension $ \dim V_i(k) = S_i(k) $ and thus
  \[
  c(V_i) = \prod_k c(V_i(k))
  \]

  By definition, each $V_i(k)$ can be written as $ V_i(k) = U_i^{(k)} \otimes \C_k $, where $U_i^{(k)}$
  is a vector bundle with with trivial $\C^\times$ action and $ \C_k $ denotes the 1-dimensional vector space with action of $ \C^\times $ by weight $ k$.

 Thus, computing in the equivariant cohomology
  $H^*_{\C^{\times}}(X(\bS))$, we have $ c(V_i(k)) = (1 + kh)^{S_i(k)} + \dots $, where $ \dots $ represents an expression containing the Chern classes of the bundle $ U_i^{(k)} $.  Since the Chern classes of the bundle $ U_i^{(k)} $ are all sent to $ 0 $ in $\C[h,h^{-1}] $, we conclude that the image of $ c(V_i) $ in $ \C[h,h^{-1}] $ is $\prod_k (1 + kh)^{S_i(k)}$.  Setting $h = 1/2 $, we obtain the desired result.
\end{proof}

\begin{Remark}
Recall that we proved our main conjecture for $ \g = \mathfrak{sl}_n$.  In fact, in Proposition \ref{B algebra}, we have an explicit description of the $B$-algebra in this case.  Using this description together with Theorem \ref{theorem-Hikita}, a proof of Conjecture \ref{co:Nakajima2} for $\g = \mathfrak{sl}_n$ appears in \cite[Theorem 8.3.7]{AWthesis}.  It is also possible to prove this result by showing explicitly that the B-algebra agrees in this case with the (straightforward) equivariant generalization of the Brundan-Ostrik presentation \cite{BO} of the cohomology of S3 varieties.
\end{Remark}

\bibliography{./monbib}

\newcommand{\etalchar}[1]{$^{#1}$}
\def\cprime{$'$}
\providecommand{\bysame}{\leavevmode\hbox to3em{\hrulefill}\thinspace}
\providecommand{\MR}{\relax\ifhmode\unskip\space\fi MR }
\providecommand{\MRhref}[2]{%
  \href{http://www.ams.org/mathscinet-getitem?mr=#1}{#2}
}
\providecommand{\href}[2]{#2}
\begin{thebibliography}{KWWY14}

\bibitem[BF10]{BFI}
Alexander Braverman and Michael Finkelberg, \emph{Pursuing the double affine
  {G}rassmannian. {I}. {T}ransversal slices via instantons on
  {$A_k$}-singularities}, Duke Math. J. \textbf{152} (2010), no.~2, 175--206.
  \MR{2656088 (2011i:14024)}

\bibitem[BFN]{BFN}
Alexander Braverman, Michael Finkelberg, and Hiraku Nakajima, \emph{{Towards a
  mathematical definition of Coulomb branches of $3$-dimensional $\mathcal N=4$
  gauge theories, II}}, \arxiv{1601.03586}.

\bibitem[BK04]{BeKa}
Roman Bezrukavnikov and Dmitry Kaledin, \emph{Fedosov quantization in algebraic
  context}, Mosc. Math. J. \textbf{4} (2004), no.~3, 559--592, 782.
  \MR{MR2119140 (2006j:53130)}

\bibitem[BK05]{BK2}
Jonathan Brundan and Alexander Kleshchev, \emph{Parabolic presentations of the
  {Y}angian {$Y({\mathfrak{gl}}_n)$}}, Comm. Math. Phys. \textbf{254} (2005),
  no.~1, 191--220. \MR{2116743 (2005j:17012)}

\bibitem[BK08]{BK}
\bysame, \emph{Representations of shifted {Y}angians and finite
  {$W$}-algebras}, Mem. Amer. Math. Soc. \textbf{196} (2008), no.~918,
  viii+107. \MR{2456464 (2009i:17020)}

\bibitem[BLPW16]{BLPWgco}
Tom Braden, Anthony Licata, Nicholas Proudfoot, and Ben Webster,
  \emph{Quantizations of conical symplectic resolutions {II}: category
  {$\mathcal O$} and symplectic duality}, Ast\'erisque (2016), no.~384,
  75--179, with an appendix by I. Losev. \MR{3594665}

\bibitem[BO11]{BO}
Jonathan Brundan and Victor Ostrik, \emph{Cohomology of {S}paltenstein
  varieties}, Transform. Groups \textbf{16} (2011), no.~3, 619--648.
  \MR{2827037}

\bibitem[CP91]{CP}
Vyjayanthi Chari and Andrew Pressley, \emph{Fundamental representations of
  {Y}angians and singularities of {$R$}-matrices}, J. Reine Angew. Math.
  \textbf{417} (1991), 87--128. \MR{1103907 (92h:17010)}

\bibitem[CP95]{CP2}
\bysame, \emph{A guide to quantum groups}, Cambridge University Press,
  Cambridge, 1995, Corrected rarxiv of the 1994 original. \MR{MR1358358
  (96h:17014)}

\bibitem[CR08]{CR}
Joseph Chuang and Rapha{\"e}l Rouquier, \emph{Derived equivalences for
  symmetric groups and {$\mathfrak {sl}_2$}-categorification}, Ann. of Math.
  (2) \textbf{167} (2008), no.~1, 245--298.

\bibitem[Dri87]{D}
V.~G. Drinfel{\cprime}d, \emph{A new realization of {Y}angians and of quantum
  affine algebras}, Dokl. Akad. Nauk SSSR \textbf{296} (1987), no.~1, 13--17.
  \MR{88j:17020}

\bibitem[FKP{\etalchar{+}}18]{FKPRW}
Michael Finkelberg, Joel Kamnitzer, Khoa Pham, Leonid Rybnikov, and Alex
  Weekes, \emph{{Comultiplication for shifted Yangians and quantum open Toda
  lattice}}, Adv. Math. (2018), no.~327, 349--389, \arxiv{1608.03331}.

\bibitem[FL06]{FL}
G.~Fourier and P~Littelmann, \emph{Tensor products of structure of affine
  demazure modules and limit constructions}, Nagoya Math. J. \textbf{182}
  (2006), 171--198.

\bibitem[GKLO05]{GKLO}
A.~Gerasimov, S.~Kharchev, D.~Lebedev, and S.~Oblezin, \emph{On a class of
  representations of the {Y}angian and moduli space of monopoles}, Comm. Math.
  Phys. \textbf{260} (2005), no.~3, 511--525. \MR{2182434 (2006k:53038)}

\bibitem[Hik17]{H}
Tatsuyuki Hikita, \emph{An algebro-geometric realization of the cohomology ring
  of {H}ilbert scheme of points in the affine plane}, Int. Math. Res. Not. IMRN
  (2017), no.~8, 2538--2561. \MR{3658207}

\bibitem[Kas03]{Kash}
Masaki Kashiwara, \emph{Realizations of crystals}, Combinatorial and geometric
  representation theory ({S}eoul, 2001), Contemp. Math., vol. 325, Amer. Math.
  Soc., Providence, RI, 2003, pp.~133--139. \MR{1988989 (2004g:17014)}

\bibitem[KN12]{KN}
Ryosuke Kodera and Katsuyuki Naoi, \emph{Loewy series of {W}eyl modules and the
  {P}oincar\'e polynomials of quiver varieties}, Publ. Res. Inst. Math. Sci.
  \textbf{48} (2012), no.~3, 477--500. \MR{2973389}

\bibitem[KS14]{KS}
Jeong-Ah Kim and Dong-Uy Shin, \emph{Monomial realization of the tensor product
  of crystals for quantum finite algebras}, Comm. Algebra \textbf{42} (2014),
  no.~7, 3120--3136. \MR{3178063}

\bibitem[KWWY14]{KWWY}
Joel Kamnitzer, Ben Webster, Alex Weekes, and Oded Yacobi, \emph{Yangians and
  quantizations of slices in the affine {G}rassmannian}, Algebra Number Theory
  \textbf{8} (2014), no.~4, 857--893. \MR{3248988}

\bibitem[Mol07]{Molev}
Alexander Molev, \emph{Yangians and classical {L}ie algebras}, Mathematical
  Surveys and Monographs, vol. 143, American Mathematical Society, Providence,
  RI, 2007. \MR{2355506 (2008m:17033)}

\bibitem[Nak98]{Nak3}
Hiraku Nakajima, \emph{Quiver varieties and {K}ac-{M}oody algebras}, Duke Math.
  J. \textbf{91} (1998), no.~3, 515--560. \MR{MR1604167 (99b:17033)}

\bibitem[Nak01a]{Nak991}
\bysame, \emph{Quiver varieties and finite-dimensional representations of
  quantum affine algebras}, J. Amer. Math. Soc. \textbf{14} (2001), no.~1,
  145--238 (electronic). \MR{MR1808477 (2002i:17023)}

\bibitem[Nak01b]{Nak010}
\bysame, \emph{Quiver varieties and tensor products}, Invent. Math.
  \textbf{146} (2001), no.~2, 399--449. \MR{MR1865400 (2003e:17023)}

\bibitem[ST12]{SamTin}
Steven~V Sam and Peter Tingley, \emph{Combinatorial realizations of crystals
  via torus actions on quiver varieties}, Journal of Algebraic Combinatorics
  (2012), 1--30.

\bibitem[Web11]{WebWO}
Ben Webster, \emph{Singular blocks of parabolic category {$\mathcal O$} and
  finite {W}-algebras}, J. Pure Appl. Algebra \textbf{215} (2011), no.~12,
  2797--2804. \MR{2811563}

\bibitem[Web17]{Webqui}
\bysame, \emph{On generalized category $\mathcal{O}$ for a quiver variety},
  Mathematische Annalen \textbf{368} (2017), no.~1, 483--536.

\bibitem[Wee]{AWthesis}
Alex Weekes, \emph{Highest weights for truncated shifted {Yangians}},
  \url{https://tspace.library.utoronto.ca/handle/1807/77392}.

\bibitem[Wen18]{Wend}
Curtis Wendlandt, \emph{The {R}--matrix presentation for the {Yangian} of a
  simple {Lie} algebra}, Communications in Mathematical Physics (2018), no.~1,
  289--332.

\bibitem[WWY]{WWY}
Ben Webster, Alex Weekes, and Oded Yacobi, \emph{A quantum
  {M}irkovi\'c-{V}ybornov isomorphism}, \arxiv{1706.03841}.

\bibitem[Zhu09]{Z}
Xinwen Zhu, \emph{Affine {D}emazure modules and {$T$}-fixed point subschemes in
  the affine {G}rassmannian}, Adv. Math. \textbf{221} (2009), no.~2, 570--600.
  \MR{2508931 (2010d:14069)}

\end{thebibliography}
\bibliographystyle{amsalpha}

\end{document}